\newtheorem{theorem}{Theorem}[section]
\newtheorem{lemma}[theorem]{Lemma}
\newtheorem{prop}[theorem]{Proposition}
\newtheorem{open}{Open Problem}
\newtheorem*{apxtheorem*}{Theorem}
\DeclareMathOperator{\finitetype}{\phi-finite}
\DeclareMathOperator{\Ber}{Ber}
\DeclareMathOperator{\Exp}{Exp}
\DeclareMathOperator{\TV}{TV}
\DeclareMathOperator{\MC}{MC}
\DeclareMathOperator{\m}{m}
\DeclareMathOperator{\Agents}{Agents}
\DeclareMathOperator{\supp}{supp}
\DeclareMathOperator{\fs}{fs}
\DeclareMathOperator{\cs}{cs}
\DeclareMathOperator{\Ord}{\mathfrak{O}}
\DeclareMathOperator{\R}{\mathbb{R}}
\DeclareMathOperator{\E}{\mathbb{E}}
\DeclareMathOperator{\N}{\mathbb{N}}
\DeclareMathOperator{\SBO}{SBO}
\DeclareMathOperator{\Prb}{\mathbb{P}}
\DeclareMathOperator{\uniform}{unif}
\DeclareMathOperator{\setN}{[N]}
\begin{document}

\title{The Metric Coalescent}
\author{Daniel Lanoue}
\date{\today}

\maketitle

\begin{abstract}
The Metric Coalescent (MC) is a measure-valued Markov Process generalizing the classical Kingman Coalescent. We show how the MC arises naturally from a discrete agent based model (Compulsive Gambler) of social dynamics and prove an existence and uniqueness theorem extending the MC to the space of all Borel probability measures on any locally compact Polish space.
\end{abstract}

\tableofcontents

\section{Introduction}

Introduced in \cite{ALS14} the Compulsive Gambler (CG) process is an Finite Markov Information Exchange (FMIE) process modelling a finite set of agents meeting pairwise randomly and playing a fair winner-take-all game. The precise set-up of the model in the FMIE framework is two leveled. The first level consists of the agents and their meeting model, described by the finite set $\Agents$ and a non-negative array $(\nu_{ij})$ of meeting rates indexed by unordered pairs $\{i, j\}$ of agents. A pair of agents $i, j$ then meet at the times an independent Poisson process of rate $\nu_{ij}$.

The second level in the FMIE framework is the information exchange model, which describes the state $X_i(t)$ of each agent $i$ at time $t$ and the (deterministic or random) update rule for the states of two agents $i$ and $j$ upon meeting. In the CG process, when two agents with non-zero wealth $a$ and $b$ meet they play a fair game in which one agent acquires the combined amount $a + b$ and the other is left with nothing (thus with probabilities $\frac{a}{a + b}$ and $\frac{b}{a + b}$ respectively). Normalizing the total wealth present amongst the agents, the state of the CG process can be viewed at each time as a probability measure over $\Agents$.

Several directions for research concerning the CG process are suggested in \cite{ALS14} and this paper is devoted to a detailed study of one direction, the extension to continuous state space.  We introduce the Metric Coalescent (MC), a measure valued Markov process defined over any metric space $(S,d)$, a generalization of the CG process given by adding some geometry to the meeting model between agents. Viewing each agent as occupying a location in $S$, their meeting rates are then determined by the distances between them.

Our interest in the Metric Coalescent comes from the wide variety of techniques that can be exploited in its study. As a natural generalization of an FMIE process, we can apply tools from the study of finite Markov chains (or more generally Interacting Particle Systems) and in particular study the process dynamics of certain martingales associated to the process. As the MC is constructed from $(S, d)$ in terms of an exchangeable random distance array (as in \cite{vershik2004}), tools from the theory of exchangeability such de Finetti's Theorem and Kingman's Paintbox Theorem are applied. Finally, viewed through a symmetry breaking duality with an exchangeable partition, we make use of a comparison to the classical Kingman's Coalescent.

\subsection{Setup}

Let $(S, d)$ be an arbitrary metric space and write $P(S)$ for the space of Borel probability measures on $S$. We write $P_{\fs}(S)$ for the subspace of finitely supported measures and $P_{\cs}(S)$ for the subspace of compactly supported measures.

 We write $C(S)$ for the space of continuous functions on $S$, $C_b(S) \subset C(S)$ for the subset of bounded continuous functions and $C_0(S) \subset C_b(S)$ for the space of compactly supported functions. For $f \in C_b(S)$ and $\mu \in P(S)$, we write $\mu(f)$ for the integral
 $$
\mu(f) = \int_S f \, d \mu.
 $$
  We will view $P(S)$ as endowed with the weak topology, which we recall is given by weak convergence of measures; i.e. $\mu_i \rightarrow \mu$ weakly if and only if
$$
\mu_i(f) \rightarrow \mu(f)
$$
for all $f \in C_b(S)$. The topology of weak convergence on $P(S)$ is metrizable with the Prokhorov metric $d_P$. By Prokhorov's theorem, if $(S, d)$ is separable and complete so is $(P(S), d_P)$. \cite{billingsley2009convergence} \cite{prokhorov1956convergence}

\subsection{The Metric Coalescent}
\label{sec:define MC}

Here we will define the Metric Coalescent process $\mu_t, t \geq 0$ from any initial measure $\mu$ in $P_{\fs}(S)$. This process is essentially just a reformulation of the Compulsive Gambler process, a toy example of a Finite Markov Information Exchange process (FMIE).\cite{ALS14}

Let $\phi \colon \R_{\geq 0} \rightarrow \R_{\geq 0}$ be a fixed continuous function. For a finite measure $\mu \in P_{\fs}(S)$ we write $\# \supp \mu$ for the cardinality of the support of $\mu$. For any such $\mu$, choose arbitrarily a representation of $\mu$ as
$$
\mu = \sum_{n = 1}^{ \# \supp \mu} p_i \delta(s_i)
$$
where $s_i \in S$ for $1 \leq i \leq  \# \supp \mu$ and $p_i = \mu(s_i)$. For $1 \leq i, j \leq  \# \supp \mu$ and $i \neq j$ write 
$$
\nu_{ij} = \phi(d(s_i, s_j)),
$$
which will be the instantaneous rate of coalescence between the atoms at $s_i$ and $s_j$.

The Metric Coalescent process on $P_{\fs}(S)$ is defined as the continuous-time Markov chain with transition rates
$$
\mu \rightarrow \mu + p_i (\delta(s_j) - \delta(s_i)) 
$$
at rate $\frac{p_i}{p_i + p_j} \nu_{ij}$ for any $1 \leq i, j \leq  \# \supp \mu$. Clearly this does not depend on the choice of representation of $\mu$. Equivalently, at rate $\nu_{ij}$, atoms $s_i$ and $s_j$ 'meet' and then merge their mass either all into $s_i$ or $s_i$ with probabilities proportional to $p_i$ and $p_j$ respectively.

Our FMIE interpretation of the MC, by way of the Compulsive Gambler process, is that the sites $s_i \in \supp \mu$ are agents with wealth $p_i$. These agents then meet at rates according to their geometry in $S$ and upon meeting play a fair winner-take-all game.

The name Metric Coalescent is justified by the following theorem, which follows easily from a comparison with Kingman's Coalescent (as in \Cref{sec:Finite Support}) whenever the rate function is non-zero.

\begin{theorem}{If $\phi > 0$, for any initial measure $\mu_0 \in P_{\fs}(S)$ there is a stopping time $T < \infty$ almost surely, such that $\mu_T$ is a point mass distributed as
$$
\mu_T = \delta(\xi)
$$
where $\xi$ is distributed as $\mu_0$.}
\label{thrm:Coalescent}
\end{theorem}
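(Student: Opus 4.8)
The plan is to treat the two assertions separately: first that coalescence to a single atom happens at an almost surely finite time $T$, and then that the location $\xi$ of the surviving point mass has law $\mu_0$. Write $N = \#\supp\mu_0$ and let $N_t = \#\supp\mu_t$. Every transition of the chain moves all the mass of one atom onto another occupied site, so $N_t$ decreases by exactly one at each jump and the atoms always sit at a subset of the original support $\{s_1,\dots,s_N\}$. Hence the process reaches a single atom after precisely $N-1$ jumps, and $T$ is the time of the $(N-1)$st jump. Because $\phi > 0$ and the points of $\supp\mu_0$ are distinct, every rate $\nu_{ij} = \phi(d(s_i,s_j))$ is strictly positive, so from any state with at least two atoms the total jump rate $\sum_{\{i,j\}}\nu_{ij}$ is a strictly positive finite number; each inter-jump holding time is therefore an almost surely finite exponential variable, and a sum of $N-1$ of them is almost surely finite, giving $T<\infty$ a.s. Equivalently, forgetting the spatial labels, the block-merging dynamics dominate a Kingman coalescent run at the positive rate $\min_{i\neq j}\nu_{ij}$, which absorbs in finite time.

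For the distribution of $\xi$ I would exhibit a family of bounded martingales. Fix $f\in C_b(S)$ and set $M_t=\mu_t(f)$; since $\mu_t$ is a probability measure, $|M_t|\le \|f\|_\infty$, so $M$ is bounded. The process is pure-jump and $M$ is constant between jumps, so it suffices to check that the conditional expected change at each meeting vanishes. When atoms $s_i,s_j$ meet, the mass consolidates at $s_i$ with probability $\frac{p_i}{p_i+p_j}$, changing $\mu(f)$ by $p_j(f(s_i)-f(s_j))$, and at $s_j$ with probability $\frac{p_j}{p_i+p_j}$, changing it by $p_i(f(s_j)-f(s_i))$. The expected change is
$$
\frac{p_i}{p_i+p_j}\,p_j\bigl(f(s_i)-f(s_j)\bigr) + \frac{p_j}{p_i+p_j}\,p_i\bigl(f(s_j)-f(s_i)\bigr) = 0,
$$
which is the precise manifestation of the fairness of the winner-take-all game. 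Hence $M_t=\mu_t(f)$ is a martingale for the natural filtration, and this martingale identity is the real heart of the argument.

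To finish I would apply optional stopping. The stopped process $M_{t\wedge T}$ is a bounded martingale, so $\E[M_{t\wedge T}]=M_0=\mu_0(f)$ for every $t$; since $T<\infty$ a.s. and $M$ is constant after $T$, we have $M_{t\wedge T}\to M_T=f(\xi)$ almost surely, and bounded convergence gives $\E[f(\xi)]=\mu_0(f)$. As this holds for all $f\in C_b(S)$, the law of $\xi$ and the measure $\mu_0$ assign the same integral to every bounded continuous function, hence coincide as Borel probability measures, so $\xi\sim\mu_0$. The one point demanding genuine care is this passage to the stopping time: I expect it to be the main technical obstacle, and the clean route is exactly the stopped-martingale-plus-bounded-convergence argument above, which uses only the boundedness of $M$ and the almost sure finiteness of $T$ from the first step, thereby sidestepping any question of uniform integrability of the unstopped process.
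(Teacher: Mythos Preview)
Your argument is correct. For $T<\infty$, your direct count of $N-1$ jumps with strictly positive holding rates is a clean elementary version of what the paper points to, namely the comparison with Kingman's Coalescent in \Cref{sec:Finite Support}; the Kingman comparison is really only needed for the infinite-support case, and for finitely supported $\mu_0$ your holding-time argument is simpler and sufficient.

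For the law of $\xi$, your route and the paper's differ. You use the martingale $\mu_t(f)$ (which is exactly the content of \Cref{lem:MN Martingale}) together with optional stopping and bounded convergence; this is entirely self-contained and needs nothing beyond the transition description in \Cref{sec:define MC}. The paper instead obtains $\xi\sim\mu_0$ as a byproduct of the token process: token $1$ never merges into a higher token, so the surviving atom sits at $\xi_1$, an i.i.d.\ draw from $\mu_0$. Your approach buys a direct, machinery-free proof for the finitely supported setting; the paper's approach buys the result simultaneously for the full extended process (and the $t\to\infty$ statement for arbitrary $\mu_0$) once the token construction is in place.
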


\subsection{Extension to $P(S)$}

The original Metric Coalescent (MC) process as in \Cref{sec:define MC} is defined for any metric space $(S, d)$ starting from any finitely supported measure $\mu \in P_{\fs}(S)$. Our goal in this paper is to show for a complete, separable and locally compact metric space $(S, d)$ that this process can be extended to all of $P(S)$. We assume in the sequel unless otherwise stated that our metric space $(S,d)$ satisfies these conditions.

We will also need to make the following two extra assumptions on the continuous rate function $\phi$.

\begin{enumerate}
\item[(H1)] $\phi(x) > 0$ for all $x > 0$.

\item[(H2)] $\lim_{x \downarrow 0}\phi(0) = \infty$.
\end{enumerate}

The first assumption will ensure that the process ``coalesces''. Importantly we do not assume that $\phi$ is bounded away from zero. A generalization of the MC process without assuming non-zero meeting rates would be of interest; in particular in the Compulsive Gambler process on graphs, pairs of vertices (i.e. agents) only have positive meeting rates if they are connected by an edge \cite{ALS14}. However we do not consider such a generalization in this paper.

The second condition is necessary for the MC to be Feller continuous (see \Cref{sec:Examples}). Heuristically we think of our rate function as something like $\phi = \frac{1}{x}$ or $\phi = \frac{1}{x^2}$ although the class of allowable rate functions is clearly much more general.

Our main result is the following.

\begin{theorem}{There exists a unique cadlag Feller continuous $P(S)$-valued Markov process $\mu_t, t \geq 0$ defined from any initial probability measure $\mu_0 = \mu$ on $(S, d)$ such that for any $\mu \in P_{\cs}(S)$
\begin{enumerate}
\item $\mu_t \in P_{\fs}(S)$ for all $t > 0$, almost surely,
\item  For each $t_0 > 0$, the process $(\mu_t, t_0 \leq t < \infty)$ is distributed as the Metric Coalescent started at $\mu_{t_0}$.
\end{enumerate}
}
\label{thrm:Main Theorem}
\end{theorem}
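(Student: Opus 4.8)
The plan is to treat the Metric Coalescent as defining a Markov semigroup $(P_t)$ on the dense subset $P_{\fs}(S) \subset P(S)$ and to extend it continuously. Since every $\mu \in P(S)$ is a weak limit of measures in $P_{\cs}(S)$, and every $\mu \in P_{\cs}(S)$ is a weak limit of measures in $P_{\fs}(S)$ (by tightness and discretization), it suffices to (i) establish a Feller-type continuity estimate making $\mu \mapsto \text{Law}(\mu_t)$ continuous on $P_{\fs}(S)$, (ii) identify the limiting law and show it lands in $P_{\fs}(S)$ for $t > 0$, and (iii) check that the extended family is a Feller semigroup generating a cadlag process. I would first reduce to $\mu \in P_{\cs}(S)$ (where properties 1--2 are asserted) and only at the end push to all of $P(S)$ by density and Feller continuity.

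The analytic engine is a martingale. For any $f \in C_b(S)$, a short computation of the expected jump of $\mu_t(f) = \sum_k p_k f(s_k)$ under the mass-proportional winner rule shows that the expected change per meeting vanishes, so $M^f_t := \mu_t(f)$ is a bounded martingale with
\[
\frac{d}{dt}\langle M^f\rangle_t = \sum_{i<j}\phi\bigl(d(s_i,s_j)\bigr)\,p_i p_j\,\bigl(f(s_i)-f(s_j)\bigr)^2,
\]
the sum running over the current atoms. Boundedness gives $\E_\mu[\mu_t(f)] = \mu(f)$ and $\E\langle M^f\rangle_\infty < \infty$ for free, so the quadratic variation is finite even though its integrand blows up near the diagonal; for Lipschitz $f$ one bounds $(f(s_i)-f(s_j))^2 \le \|f\|_{\Lip}^2\, d(s_i,s_j)^2$ to relate the fluctuations to the geometry. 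These moment estimates supply the tightness in Skorokhod space, the path regularity, and the control of $\mu_t \to \mu_0$ as $t \downarrow 0$ that I need to identify weak limits of the approximating processes.

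To pin down the limit I would use the exchangeable (paintbox) construction. Sample $X_1, X_2, \ldots$ i.i.d.\ from $\mu$ and run the partition-valued coalescent in which the pair $\{i,j\}$ merges at rate $\phi(d(X_i,X_j))$; since the mass-proportional winner of a sequence of merges is a uniformly random representative of the block, I can read off $\mu_t$ as $\sum_k f_k\,\delta(X_{W_k})$, where $f_k$ is the asymptotic frequency of block $k$ (existing a.s.\ by de Finetti) and $W_k$ a uniform representative. The instant collapse to finite support (property 1) then follows by a comparison with Kingman's Coalescent in the spirit of \Cref{thrm:Coalescent}: cover the compact set $\supp\mu$ by finitely many balls of radius $\epsilon$, note that within each ball every pair merges at rate at least $\inf_{(0,2\epsilon)}\phi > 0$ (positivity near the diagonal, guaranteed by (H1) and (H2)), so the restricted dynamics dominates a Kingman coalescent, which comes down from infinity; hence at any $t>0$ there are finitely many blocks, each of positive frequency, and $\mu_t \in P_{\fs}(S)$. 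Consistency of this construction with the original chain for $\mu \in P_{\fs}(S)$, together with the Markov/consistency property of the coalescent, yields the Markov property and property 2 (conditionally on the finitely supported $\mu_{t_0}$, the future is the Metric Coalescent started at $\mu_{t_0}$).

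The hard part is the Feller continuity, and this is where I expect the real work and where (H2) is essential. I would prove it by coupling the approximations on one probability space: realize $\mu^{(n)} \to \mu$ through a single i.i.d.\ sequence (via a Skorokhod coupling of the samples) and run the one coalescent for all of them, showing that the block frequencies and representatives converge a.s. The obstacle is that the merge rates $\phi(d(\cdot,\cdot))$ are singular along the diagonal, so naive convergence of rates fails exactly for the nearby pairs that dominate the early dynamics; the resolution is that (H2) forces those pairs to coalesce essentially instantaneously in both the approximating and the limiting systems, which makes the resulting block structure robust and, crucially, independent of the approximating sequence. This sequence-independence is what both well-defines the extension $P_t$ on $P(S)$ and gives continuity of $\mu \mapsto \text{Law}(\mu_t)$; combined with the semigroup property (inherited from $P_{\fs}(S)$ in the limit) and strong continuity at $t=0$ (from the martingale estimate), it produces a Feller semigroup and hence a cadlag version. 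Uniqueness is then immediate: properties 1--2 determine all finite-dimensional distributions at positive times, Feller continuity and right-continuity determine the behavior at $t=0$, and density of $P_{\cs}(S)$ extends uniqueness to every initial law in $P(S)$.
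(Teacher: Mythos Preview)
Your proposal is essentially correct and follows the same architecture as the paper: the exchangeable ``paintbox'' construction you describe is exactly the paper's token process (\Cref{sec:TTP}), the martingale $\mu_t(f)$ with its quadratic variation is \Cref{prop:M martingale}, the Kingman comparison for coming down from infinity is \Cref{sec:Finite Support}, the coupling argument for Feller continuity is \Cref{sec:the coupling}, and uniqueness by density plus Feller continuity is \Cref{sec:Cont and uniq}.

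A few tactical differences are worth noting. For coming down from infinity you cover $\supp\mu$ by $\epsilon$-balls and bound rates within each ball by $\inf_{(0,2\epsilon)}\phi$; the paper instead observes directly that on any compact $C$ \emph{all} pairwise rates are bounded below by $\phi_{\min}(C)>0$ (using (H1), (H2) and continuity on $(0,\diam C]$), so a single Kingman comparison suffices without the covering. For the cadlag property you suggest tightness in Skorokhod space via the martingale moment estimates; the paper takes a more elementary route, showing that the finite-$N$ token measures converge to $\mu_t$ uniformly on $[t_0,\infty)$ in total variation (\Cref{lem:uniform tvd}), so the limit inherits cadlag paths on $(0,\infty)$ directly, with right-continuity at $0$ handled separately by the martingale argument (\Cref{prop:continuous at 0}). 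Finally, your heuristic that ``(H2) forces nearby pairs to coalesce essentially instantaneously in both systems'' is exactly right, but be aware that making this precise is the most delicate part of the paper (the $(G1)$--$(G6)$ coupling conditions in \Cref{lem:Pgood bound}): one must simultaneously control pairs that are genuinely close, pairs that are artificially close due to the coupling error, and the possibility of atoms in one measure but not the other, and the bookkeeping is substantial.
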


There are several variants of \Cref{thrm:Main Theorem}. If the initial measure $\mu_0$ is not compactly supported, then $\mu_t$ is still \textbf{locally} finitely supported following from \Cref{prop:K finite} and the local compactness of $S$. If $\phi(\cdot)$ is bounded away from zero on $\supp \mu_0$ for some $\mu_0 \in P(S)$ then \Cref{thrm:Coalescent} holds for $\mu_0$ by the same comparison to Kingman's Coalescent. More generally, while any initial measure $\mu_0$ may not every coalesce to a point mass in finite time (for a counterexample see \Cref{sec:Finite Supp Ex}), a simple corollary of the proof of \Cref{thrm:Main Theorem} is that
$$
\lim_{t \rightarrow \infty} \mu_t = \delta(\xi),
$$
almost surely; that is, the random measure $\mu_t$ converges weakly to the random measure given by a point mass chosen from $\mu_0$.

Our method in this paper will be to first construct one such extension of the MC process to all of $P(S)$. We then prove that our construction is Feller continuous which leads to uniqueness in distribution.

We will often refer to our extension as \textbf{the} Metric Coalescent, which will be of course justified by \Cref{thrm:Main Theorem}. When we need to distinguish the two, we will often refer to the \textbf{original} Metric Coalescent (as defined in \Cref{sec:define MC}) as such.

\subsection{Overview}

In \Cref{sec:TTP}, we define the Token Process and its empirical measures and use an exchangeability argument (\Cref{prop:Token Limit}) to construct the process $\mu_t, t \geq 0$ that will ultimately be our extension of the Metric Coalescent. In \Cref{sec:Markov Property}, we prove that the process $\mu_t, t \geq 0$ is a time homogeneous Markov process.

In \Cref{sec:Finite Support}, we show that for any compact set $K \subset S$, for all positive times $t > 0$, $\mu_t$ has finite support on $K$. An immediate corollary of this is that for any compactly supported initial measure $\mu \in P_{\cs}(S)$, for all positive times $\mu_t, t > 0$ is finitely supported; in \Cref{sec:MC} it's shown that this process in $P_{\fs}(S)$ is the originally defined Metric Coalescent.

In \Cref{sec:Martingales} we examine the real-valued processes $\mu_t(f), t \geq 0$ for $f \in C_b(S)$, show that for any $f$ the process is a martingale and use a second moment argument to prove that the MC process is right continuous at $t = 0$ almost surely. In \Cref{sec:TFC} we use a coupling argument to prove that MC process is Feller continuous.

Finally, in \Cref{sec:Cont and uniq} we prove that the MC process is almost surely cadlag and use this to prove that our construction is the unique such extension of the originally defined MC.

We conclude in \Cref{sec:Examples} and \Cref{sec:Open Probs} with some motivating examples of the MC process and some open problems and further directions for research.

\section{The Token Process}
\label{sec:TTP}

Our construction of the extension of the Metric Coalescent begins with a Markov process we will refer to as the \textbf{token process}. For the Compulsive Gambler, an alternative description is provided by augmenting the process with rankings of the agents, where the random winners at meetings between agents are then deterministically replaced by the rankings and their initial randomness. The equivalence in distribution of these two descriptions of the process is a fundamental tool in its study. \cite{ALS14}

Analogously, the token process gives an alternative description of the MC that replaces the mass (or in our heuristic description ``wealth'') held by agents throughout $S$ with partitions of the natural numbers. The randomness at meeting times is replaced by a simple deterministic model driven by some initial randomness of the process. Most importantly, unlike our original definition of the Metric Coalescent, this alternative description extends naturally to any measure in $P(S)$.

In this section, we first construct the token process and then prove in \Cref{prop:Token Limit} the existence of the limit process $\mu_t, t \geq 0$ that will turn out to be the Metric Coalescent.

\subsection{Construction}
\label{sec:Token Construction}

Fix any initial measure $\mu \in P(S)$. In this section we will define the token process 
$$
\{ (u_i(t), \xi_i) \}_{t \geq 0}^{1 \leq i < \infty}
$$
 generated from $\mu$. For $i \geq 1$ choose $\xi_i$ as IID samples of $\mu$. At time $t = 0$ set $u_i(0) = i$ for all $i$ such that $\xi_i$ is unique among the other samples (the case of non-uniqueness to follow). 

For each $1 \leq i, j$ with $i \neq j$ write 
$$
\nu_{ij} = \phi(d(\xi_i, \xi_j)).
$$
We view this as the meeting rate between $i$ and $j$ and write $t_{ij}$ -- distributed as an independent rate $\nu_{ij}$ exponential random variable -- for the meeting time of $i, j$. If $\xi_i = \xi_j$ -- i.e. heuristically the rate $\nu_{ij}$ is infinite -- then we set $t_{ij} = 0$. The token process will be completely determined by the initial locations $\xi_i, i \geq 1$ and the meeting times $t_{ij}$, $i \neq j \geq 1$.

  From the meeting times $t_{ij}, i, j \geq 1$ we construct a coalescing partition process 
  $$
  Z(t) = \{ Z_i(t) \}_{i \geq 1, t \geq 0 }
  $$
   over the natural numbers $\N$. To distinguish between the elements of this partition from $\N$ used as an index set, we refer to the elements of the copy of $\N$ being partitioned as \textbf{tokens}. At time $t = 0$, set 
 $$
 Z_i(0) = \{ i \}.
 $$
  At each meeting time $t_{ij}$ between $i < j$, if $Z_i \neq \emptyset$ we set 
  $$
  Z_i(t) = Z_i(t-) \cup Z_j(t-), \hspace{2 pc} Z_j(t) = \emptyset.
  $$
  If however $Z_i = \emptyset$ or $Z_j = \emptyset$ then we ignore the meeting and call it \textbf{trivial}. At any \textbf{nontrivial} meeting time $t_{ij}$, we say that $j$ \textbf{merges} with $i$. Note that each $j \geq 1$ may merge at most once. If a token $i$ has not merged by time $t$ we say that $i$ is \textbf{alive} at time $t$.
  
 For $N \geq 1$ we will also consider the $N$-th partition processes 
 $$
 Z^N(t) = \left( Z^N_1(t), \ldots, Z^N_N(t) \right)
 $$
 given by
 $$
 Z^N_n(t) = Z_n(t) \cap \setN,
 $$
 where $\setN = \{1, 2, \ldots, N \}$.

 In the case of two or more tokens initially occupying the same site -- i.e. whenever $\xi_i = \xi_j$ for some $i \neq j$ -- we merge all the tokens at that site instantaneously into the 'lowest' token. This is consistent if more than two tokens occupy the same site, since if $$t_{ij} = t_{i j'} = 0$$ then also $t_{j j'} = 0$. The end result is that for any site $s \in S$, the lowest token $\bar{i}$ at $s$ starts at $t  = 0$ owning all the other tokens at $s$, i.e.
 $$
 Z_{\bar{i}}(0) = \{ j \colon \xi_j = \xi_{\bar{i}} \}
 $$
 and $Z_{j}(0) = \null$ for all other $j$ with $\xi_j = \xi_{\bar{i}}$.

Write $u_i(t)$ for the owner of token $i$ at time $t$, that is define $u_i(t)$ by
$$
i \in Z_{u_i(t)}.
$$
The path of token $i$ transitions as follows: if $u_i(t_{jk}-) = j$ and $k < j$ is still alive at time $t_{jk}$, then at time $t_{jk}$ we have
$$
u_i(t_{jk}) = k.
$$
Thus, the path of each token $i$ -- i.e. $u_i(t), t \geq 0$ -- depends only on the meeting times amongst tokens $1$ to $i$, showing that the infinite token process can be defined consistently. 

\subsection{The Empirical Measures}

Our main objects of focus to begin will be the random empirical measures $\mu^N_t$, given by 
$$
\mu^N_t = \frac{1}{N} \sum_{i = 1}^N \delta(\xi_{u_i(t)}), \label{def:Define muN}
$$
or equivalently 
$$
\mu^N_t = \sum_{i = 1}^N \frac{\#Z_i^N(t)}{N} \delta(\xi_i).
$$

 From the $N$-token process, via a limiting argument, we will define the stochastic process $\mu_t, t \geq 0$ in $P(S)$ that will be shown to be the extension of the Metric Coalescent. 

\begin{prop}{For any initial measure $\mu \in P(S)$, there exists a random measure-valued process $\mu_t, t \geq 0$ with $\mu_0 = \mu$ such that for all $t \geq 0$
$$
\mu^N_t \rightarrow \mu_t, \text{ almost surely.}
$$
}
\label{prop:Token Limit}
\end{prop}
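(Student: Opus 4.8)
The plan is to fix $t \geq 0$ and reduce the statement to the \emph{exchangeability} of the location sequence $(\xi_{u_i(t)})_{i \geq 1}$. Granting that, the result is a standard consequence of de Finetti's theorem: for an exchangeable sequence valued in the Polish space $S$ there is a random directing measure $\Theta \in P(S)$ such that, conditionally on $\Theta$, the $\xi_{u_i(t)}$ are i.i.d.\ with law $\Theta$, and the conditional strong law of large numbers gives $\frac{1}{N}\sum_{i=1}^{N} f(\xi_{u_i(t)}) \to \Theta(f)$ almost surely for each $f \in C_b(S)$. Applying this simultaneously over a countable convergence-determining family $\{f_k\} \subset C_b(S)$ shows that $\mu^N_t = \frac{1}{N}\sum_{i=1}^{N}\delta(\xi_{u_i(t)})$ converges weakly to $\Theta$ almost surely, and we set $\mu_t := \Theta$. (The exceptional null set may a priori depend on $t$; this pointwise definition is all that is needed here, with joint path regularity in $t$ postponed to \Cref{sec:Cont and uniq}.)

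The heart of the matter, and the step I expect to be the main obstacle, is exchangeability of $(\xi_{u_i(t)})_{i \geq 1}$. It is not immediate because the merging rule of \Cref{sec:Token Construction} breaks the symmetry between tokens: at every nontrivial meeting the \emph{lower-indexed} token survives. The resolution is to note that, since the $\xi_i$ are i.i.d., labelling the samples $1,2,3,\dots$ and breaking every merge in favour of the lowest index is equal in law to attaching to each sample an independent $\mathrm{Uniform}[0,1]$ mark and breaking every merge in favour of the lowest mark. Under this equivalent marked description the winner rule refers only to the intrinsic marks carried by the points and never to the label $i$, while the meeting times $t_{ij}$ remain, conditionally on $(\xi_i)$, independent with the symmetric rates $\phi(d(\xi_i,\xi_j))$. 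Consequently the whole token construction is a \emph{permutation-equivariant} functional of the i.i.d.\ marked points and their conditionally independent meeting times: relabelling the tokens by a finite permutation $\sigma$ merely renames them and therefore maps the location of token $i$ to the location of token $\sigma(i)$. Since relabelling i.i.d.\ marked points together with the associated meeting times leaves their joint law invariant, the marked description yields
$$ (\xi_{u_{\sigma(i)}(t)})_{i \geq 1} \stackrel{d}{=} (\xi_{u_i(t)})_{i \geq 1}, $$
which is exactly exchangeability; as exchangeability is a property of the law and the marked and index-based descriptions induce the same law on $(\xi_{u_i(t)})_{i\ge1}$, it holds for the token process as originally defined.

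Finally, at $t=0$ we have $u_i(0)=i$ up to the instantaneous merging of coincident samples, which leaves every location unchanged, so $\mu^N_0 = \frac{1}{N}\sum_{i=1}^{N}\delta(\xi_i) \to \mu$ weakly almost surely by the ordinary strong law of large numbers for the i.i.d.\ samples $\xi_i$; hence $\mu_0 = \mu$. I note that this argument needs only that $(S,d)$ is Polish and that the meeting times are well defined as in \Cref{sec:Token Construction}; neither local compactness nor the hypotheses (H1)--(H2) on $\phi$ are required at this stage. Moreover, Kingman's paintbox theory identifies the limit with $\mu_t = \sum_i f_i(t)\,\delta(\xi_i)$, where $f_i(t)$ is the almost sure asymptotic frequency $\lim_N \#Z_i^N(t)/N$ of the block $Z_i(t)$, matching the second expression for $\mu^N_t$.
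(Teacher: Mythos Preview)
Your overall strategy matches the paper's: establish exchangeability, invoke a de~Finetti--type structure theorem, apply the conditional strong law, and pass to a countable convergence-determining class. Where you apply de~Finetti directly to the $S$-valued location sequence $(\xi_{u_i(t)})_i$, the paper instead proves exchangeability of the \emph{partition} $Z(t)$ (\Cref{prop:Partition is Exchangeable}) and uses Kingman's paintbox; these are essentially the same move. Your route even has the advantage that it would not need the paper's separate no-dust argument (\Cref{lem:No Dust}, which in turn rests on the asymmetric exchangeability of \Cref{prop:asymm exchg}), since the de~Finetti directing measure is automatically a probability measure.

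However, your argument for exchangeability has a genuine gap. The permutation-equivariance step for the \emph{marked} description is correct: relabelling permutes the i.i.d.\ pairs $(\xi_i,U_i)$ together with the symmetric meeting clocks, so the marked location sequence is exchangeable. The problem is the assertion that the marked and index-based descriptions ``induce the same law on $(\xi_{u_i(t)})_{i\ge1}$''. If $R$ denotes the rank permutation of the marks, relabelling the marked process by $R^{-1}$ turns it into the index process run on the inputs $(\xi_{R^{-1}(k)})_k$; since those inputs are again i.i.d.\ $\mu$, one obtains $\bigl(\xi_{v_{R^{-1}(k)}(t)}\bigr)_k \stackrel{d}{=} \bigl(\xi_{u_k(t)}\bigr)_k$, where $v$ denotes the mark-based owner. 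But $R$ is \emph{not} independent of the marked location sequence, so you cannot remove the permutation by invoking exchangeability of the marked side: the two statements ``marked $=$ index in law'' and ``index location sequence is exchangeable'' are in fact equivalent, so the argument is circular. (A toy illustration of the fallacy: if the index output were the deterministic sequence $(0,1,1,\dots)$ and the marked output put the $0$ at the position of the smallest mark, then the marked output is exchangeable and its $R^{-1}$-permutation equals the index output, yet the two are not equal in law and the index output is not exchangeable.)

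The paper fills this gap by a different, combinatorial argument (\Cref{prop:Uniform Partitions}): conditional on the history of $\mu^N_t$, the partition $S^N(t_0)$ is uniform over all partitions with the given block sizes, proved by a bijection between ``compatible initial configurations''. That is the substantive step your proposal is missing. Your marked-process idea can be salvaged, but only by comparing the \emph{empirical measures} rather than the location sequences: the relation above does give $\bigl(\mu^N_t\bigr)_{N\ge1} \stackrel{d}{=} \bigl(\nu^N_t\bigr)_{N\ge1}$ for the marked empirical measures $\nu^N_t$ (since the empirical measure is invariant under permuting the summands), and almost-sure convergence of $(\nu^N_t)_N$ then transfers to $(\mu^N_t)_N$ as a statement about their common law. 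As written, though, the key equality-in-law claim is unjustified.
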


In this section our main goal will be to prove this proposition.

\subsection{Exchangeable Partitions, Measure-Valued Markov Processes}

Throughout we'll rely heavily on the theory of exchangeable partitions, which we will set up. This account closely follows that in \cite{berestycki2009recent}.

A partition $\pi$ of $\N$ can be thought of either through its blocks or by the equivalence relation $\sim^{\pi}$ it induces on $\N$; i.e. $n$ and $m$ are in the same block if and only if
$$
n \sim^{\pi} m.
$$

For any permutation $\sigma$ on $\N$, we can consider the partition $\sigma \pi$ given by 
$$
i \sim^{\sigma \pi} j
$$
if and only if
$$
\sigma(i) \sim^{\pi} \sigma(j).
$$

Writing $\Pi$ for the space of partitions of $\N$, a random partition $\pi \in \Pi$ is called \textbf{exchangeable} if for any finite permutation $\sigma$ of $\N$ - i.e. constant except on finitely many $n \in \N$ - $\pi$ and $\sigma \pi$ have the same distribution on $\Pi$.

As defined in \Cref{sec:Token Construction}, the token process gives rise to the random family of partitions $Z(t), t \geq 0$ on $\N$, which we can specify by their equivalence relation $\sim_t, t \geq 0$. Importantly, note that for two tokens $i$ and $j$, we have
$$
i \sim_t j
$$
if and only if the locations of $i$ and $j$ are the same at time $t$; i.e.
$$
\xi_{u_i(t)} = \xi_{u_j(t)}.
$$

To the astute reader familiar with measure-valued Markov processes, our description of the token process is superficially very similar to the Donnelly-Kurtz Lookdown process. \cite{donnelly1999particle} The difference of course is in the details; in our setting mergers do not have corresponding birth events. More importantly, the mechanism by which tokens (corresponding loosely to levels) are selected is both dependent on the empirical measures and chosen by an exchangeable, geometrically dependent (i.e. thus not independent) meeting clock.

A closer analogy can be found between the token process and Kingman's Coalescent. In fact, this analogy will be made explicit (see \Cref{sec:Kingman}) later on, so we'll suffice for now to note that the token process can in fact be thought of as a generalization of Kingman's model, however with (geometrically defined) exchangeable rates in place of independent rates. 

\subsection{Two Types of Exchangeability}

The proof of \Cref{prop:Token Limit} will rely upon two different forms of exchangeability for the token process, corresponding to two different descriptions of the limit process $\mu_t, t \geq 0$. First, we have what we will call the \textbf{symmetric exchangeability} which will show that at any time $t$, conditional on the past (of the measure process), the token partition $Z(t)$ at time $t$ is an exchangeable partition.

The second type of exchangeability needed in our proof of \Cref{prop:Token Limit} we'll call \textbf{asymmetric}. This will show that for any time $t \geq 0$, conditional on the location of the first $K$ tokens, the location of the subsequent tokens are also exchangeable.

Both of these ultimately imply that at time $t$, the partition $Z(t)$ is exchangeable. However, both also say a bit more about that exchangeability, and combined together lead to our proof of \Cref{prop:Token Limit}.

\subsubsection{Symmetric Exchangeability}
\label{sec:Symmetric Exchg}
The first form of exchangeability is a bit harder to fully describe without making reference to $\mu_t, t \geq 0$ which currently still needs to be constructed. A more complete view of this exchangeability is given in \Cref{lem:Conditionally Exchg}.

Our main goal here is to demonstrate an approximation of this exchangeability (\Cref{prop:Uniform Partitions}) and use this to prove the following.

\begin{prop}{From any initial measure $\mu$ and for all times $t \geq 0$, the random partition process $Z(t)$ is exchangeable almost surely.}
\label{prop:Partition is Exchangeable}
\end{prop}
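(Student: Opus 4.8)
The plan is to show that the token partition $Z(t)$ is exchangeable by exploiting the structure of how the partition is built from the meeting times $t_{ij}$, which are themselves functions of the IID samples $\xi_i$. The key observation is that the distribution of the whole driving data — the locations $(\xi_i)_{i \geq 1}$ together with the exponential meeting clocks — has a symmetry under permutations of the token labels, even though the merging rule (lower index absorbs higher) breaks the symmetry deterministically. So the natural approach is to separate the \emph{intrinsic} randomness (the random equivalence relation ``$i$ and $j$ are co-located at time $t$'') from the \emph{labelling convention} (which representative owns a block), and argue that the former is invariant in distribution under finite permutations of $\N$.

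\medskip
\noindent\textbf{First I would} set up the permutation invariance of the input data. Fix a finite permutation $\sigma$ of $\N$. Since the $\xi_i$ are IID $\mu$, the reindexed family $(\xi_{\sigma(i)})_{i\geq 1}$ has the same law as $(\xi_i)_{i\geq 1}$. Conditionally on the locations, the rate between tokens $i$ and $j$ is $\nu_{ij} = \phi(d(\xi_i,\xi_j))$, which depends symmetrically on the unordered pair of \emph{locations}; hence the array of meeting times $(t_{ij})$ transforms consistently, i.e. $(t_{\sigma(i)\sigma(j)})$ has the same conditional law given the permuted locations as $(t_{ij})$ does given the original locations. Combining these, the full driving pair $\bigl((\xi_i), (t_{ij})\bigr)$ is invariant in distribution under the simultaneous relabelling by $\sigma$. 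The next step is to check that the equivalence relation $\sim_t$ — defined by $i \sim_t j \iff \xi_{u_i(t)} = \xi_{u_j(t)}$ — depends on the driving data only through this symmetric structure: two tokens are in the same block at time $t$ precisely when their initial locations have been linked through a chain of nontrivial meetings occurring before time $t$, a condition phrased purely in terms of which pairs have met and is manifestly invariant under relabelling. Therefore $\sigma Z(t)$ and $Z(t)$ have the same law, which is exactly exchangeability.

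\medskip
\noindent\textbf{The main obstacle} is that the merging rule is \emph{not} symmetric: when $i$ and $j$ meet nontrivially, mass always flows to the lower index, and the ``owner'' $u_i(t)$ records this asymmetric choice. So the partition \emph{blocks} are symmetric functions of the data, but the \emph{representatives} (the owners) are not, and one must be careful that exchangeability is a statement about the partition as a set of blocks, not about the labelling of representatives. I expect the cleanest way around this is the approximation route the excerpt flags: work first with the finite $N$-token partitions $Z^N(t)$, invoke \Cref{prop:Uniform Partitions} to show that at each finite level the conditional law of the block structure is that of a uniform (hence finitely-exchangeable) partition, and then take $N \to \infty$. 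Exchangeability at each finite $N$ passes to the limit because exchangeability is a closed condition under the natural projective limit on $\Pi$; the consistency of the $Z^N(t)$ (a block of $Z(t)$ restricted to $[N]$ is $Z^N(t)$) guarantees the limiting partition inherits invariance under every finite permutation.

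\medskip
\noindent\textbf{Finally I would} assemble these pieces: for an arbitrary finite permutation $\sigma$, choose $N$ large enough that $\sigma$ fixes everything outside $[N]$, apply the finite-level exchangeability of $Z^N(t)$ from \Cref{prop:Uniform Partitions}, and conclude that $\sigma Z(t) \stackrel{d}{=} Z(t)$. Since this holds for all finite $\sigma$ and all $t \geq 0$, the partition process $Z(t)$ is exchangeable almost surely, as claimed. The subtle point to verify carefully is that conditioning used in \Cref{prop:Uniform Partitions} (on the measure process, or on the realized locations) does not destroy the symmetry — but this is precisely where the symmetric dependence of $\nu_{ij}$ on the unordered location pair, established in the first step, does the work.
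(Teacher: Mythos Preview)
Your final assembly---reducing to the finite restrictions $Z^N(t)$, invoking \Cref{prop:Uniform Partitions}, and taking $N$ large enough to contain the support of $\sigma$---is exactly the paper's route, and it is correct.

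However, your first proposed approach contains a genuine error that you do not fully diagnose. You write that ``the partition blocks are symmetric functions of the data, but the representatives (the owners) are not.'' This is false: the blocks themselves are \emph{not} a permutation-equivariant function of the meeting times. A three-token counterexample: take $t_{12}=1$, $t_{13}=2$, $t_{23}=3$. At time $2.5$ the partition is $\{\{1,2,3\}\}$ (token $2$ merges into $1$ at time $1$, then $3$ into $1$ at time $2$). Now relabel via $\sigma=(1\;3)$, so that $t'_{23}=1$, $t'_{13}=2$, $t'_{12}=3$: at time $1$ token $3$ merges into $2$; at time $2$ tokens $1$ and $3$ meet, but $Z_3=\emptyset$ so the meeting is \emph{trivial}; at time $2.5$ the partition is $\{\{1\},\{2,3\}\}$. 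Permuting the driving data does not simply permute the resulting partition---the ``trivial meeting'' rule makes the block structure itself depend on the index ordering, not just the owners. Your characterization ``linked through a chain of nontrivial meetings'' is correct but circular, since which meetings are nontrivial already depends on the labelling.

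This is exactly why the paper cannot get by with a one-line symmetry argument and needs the combinatorial bijection inside \Cref{prop:Uniform Partitions}: exchangeability here is a distributional fact coming from the uniformity of the initial configuration conditional on the block-size history, not from pathwise equivariance of the map $\bigl((\xi_i),(t_{ij})\bigr)\mapsto Z(t)$. Your fallback to \Cref{prop:Uniform Partitions} is therefore not just ``cleaner''---it is necessary.
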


Recall that to show this, we'll need to show that for any time $t \geq 0$ and any finite permutation $\sigma$, the distributions of $\pi_t$ and $\sigma \pi_t$ are the same. To prove \Cref{prop:Partition is Exchangeable}, we'll focus on the changing partitions $Z^N$ of $\setN = \{1,2, \ldots, N \}$  and prove that this gives an exchangeable partition of $\setN$ for $N \geq 1$, which is clearly sufficient.

Interestingly, this proof - and thus the exchangeability of the whole partition process - relies only on how the blocks of the partition merge and not in any way on the particulars of the meeting structure (only that these meetings are exchangeable). This gives a nice example of the motivation behind the dichotomy in the framework of FMIE processes, separating the structure of a particle system into the meeting model and the information exchange model.

Let $r_1, \ldots, r_N$ be a uniform random ordering of $1, \ldots, N$. Recall the partition process $Z^N_i(t)$ defined by the token process and write $S^N_i(t)$ for
$$
S^N_i(t) = Z^N_{r_i}(t),
$$
 for $t \geq 0$, $1 \leq i \leq N$; i.e. for the tokens owned by $r_i$ at time $t$. Write $S^N(t)$ for the random partition process
 $$
 S^N(t) = (S^N_1(t), \ldots, S^N_N(t)),
 $$
 of $\setN$.
 
 For any partition $\alpha$ of $\setN$, write $|\alpha|$ for the trace $( \# \alpha_1, \ldots, \# \alpha_K)$ of the partition. We will often not specify the size, i.e. number of blocks, of a partition as in the context it will be clear.
   
  Note that if for some $i, j$ we have $\xi_{r_i} = \xi_{r_j}$, then by construction $t_{r_i r_j} = 0$ and so the merger of blocks $S^N_i$ and $S^N_j$ has ``already'' occurred at time $t = 0$. Thus, as opposed to the standard set-up of coalescing partition processes, we don't necessarily begin from a partition of all singletons. Unfortunately this serves to confuse the notation a bit but is ultimately irrelevant to our proof.
  
\begin{prop}{For any $t_0 > 0$, the conditional distribution of $S^N(t_0)$ given $\mu^N_t, 0 \leq t \leq t_0$ is uniform over all possible partitions. That is, for all $\alpha, \beta$ with $|\alpha| = |\beta|$ we have
$$
\Prb( S^N(t_0) = \alpha \vert \mu^N_t, 0 \leq t \leq t_0 ) = \Prb( S^N(t_0) = \beta \vert \mu^N_t, 0 \leq t \leq t_0).
$$}
\label{prop:Uniform Partitions}
\begin{proof}

Instead of looking at the continuous time partition process, we will consider instead the discrete jump process. Write $T_0 = 0$ and let $T_n, n \geq 1$ be the time of the $n$-th (non-trivial) meeting, i.e. the $n$-th jump of the process. Write $\mathcal{F}_n = \sigma(|S^N|(n))$ for $n \geq 0$. We will show that for $\alpha, \beta$ with $|\alpha| = |\beta|$
$$
\Prb( S^N(n) = \alpha \vert \mathcal{F}_n, \ldots, \mathcal{F}_0 ) = \Prb( S^N(n) = \beta \vert \mathcal{F}_n, \ldots, \mathcal{F}_0)
$$
for any $n \geq 0$, which is clearly equivalent to the continuous time claim. Note that $F_0$ is not trivial, as possibly singleton blocks are already merged at time $t = 0$.

Conditioning on $\mathcal{F}_n, \ldots, \mathcal{F}_0$, we know at each (non-trivial, positive) meeting which two blocks of the partition $S^N$ have merged and which of those blocks has the lowest ranking token - i.e. the ``winner'' of the meeting - but not what these tokens are. For $1 \leq k \leq n$, define $i_k$ and $j_k$ by the rule that the partition $S^N(k)$ follows from $S^N(k - 1)$ by merging the $i_k$ block into the $j_k$ block; that is $S_{i_k}(k) = \emptyset$ and 
$$
S_{j_k}^N(k) = S_{j_k}^N(k - 1) \cup S_{i_k}^N(k - 1).
$$
Equivalently at time $k$ we have $i_k$ meeting and losing to $j_k$.

From any initial configuration $S^N(0) = \gamma$, define $f_{\gamma} \colon \setN \rightarrow \setN$ by
$$
f_{\gamma}(k) = \inf S^N_k(0)
$$
for the owner of block $k$, noting the convention that if $S^N_k = \emptyset$ then $f_{\gamma}(k) = \infty$.

Call an initial configuration $\gamma$ \textbf{compatible} if for all meetings $1 \leq k \leq n$ we have
$$
f_{\gamma}(j_k) < f_{\gamma}(i_k),
$$
and say the initial configuration $\gamma$ \textbf{results} in $\alpha$ if 
$$
\{S^N(0) = \gamma \} \cap \bigcap_{k = 1}^n \{ k-\text{th meeting is between } i_k \text{ and }j_k \} \subset \{S^N(n) = \alpha \}.
$$
By our setup, $S^N(0)$ is uniform over all possible initial configurations - i.e. initial partitions $\gamma$ with $|\gamma| = |S^N(0)|$. Thus $\Prb ( S^N(n) = \alpha \vert F_n, \ldots, F_0)$ is the fraction of compatible initial configurations that result in $S^N(n) = \alpha$.

Instead of counting this directly, consider a fixed time $n$ partition $\alpha$. For any other partition $\beta$ with $|\beta| = |\alpha|$ we will simply show a bijection between compatible initial configurations resulting in each. Fix such a $\beta$.

Let $\tau$ be the unique bijection on $\{1, \ldots, N \}$ which for each $1 \leq i \leq N$ maps $\alpha_i$ to $\beta_i$ in an order preserving fashion; i.e. for each $1 \leq i \leq N$ the first element of $\alpha_i$ maps to the first of $\beta_i$, the second element of $\alpha_i$ maps to the second of $\beta_i$ etc. As we know $\# \alpha_i = \# \beta_i$ for each $i$ this is well defined.

If $\gamma$ results in $\alpha$, we claim the initial configuration $\tilde{\gamma}$ given by
$$
\tilde{\gamma}_i = \tau( \gamma_i)
$$ 
results in $\beta$. To check that $\tilde{\gamma}$ is compatible: after each meeting of blocks $i_k$ and $j_k$, $f_{\gamma}(i_k)$ and $f_{\gamma}(j_k)$ end up in the same block of the partition, and therefore at time $n$ are still in the same block $\alpha_i$ (for some $i$). Therefore as $f_{\gamma}(j_k) < f_{\gamma}(i_k)$ we also have $f_{\tilde{\gamma}}(j_k) < f_{\tilde{\gamma}}(i_k)$ since $\tau$ is order preserving within each block of $\alpha$.

Therefore, we find that for arbitrary $\alpha$, $\beta$ with $|\alpha| = |\beta| = |S^N|(n)$ that
$$
\Prb ( S^N(n) = \alpha \vert F_n, \ldots, F_0) = \Prb ( S^N(n) = \beta \vert F_n, \ldots, F_0).
$$
If $|\alpha| = |\beta| \neq |S^N|(n)$ then the equality is trivially true as both are zero.
\end{proof}
\end{prop}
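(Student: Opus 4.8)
The plan is to replace the continuous-time partition $S^N(t_0)$ by the embedded discrete jump chain at the successive nontrivial meeting times $T_0 = 0 < T_1 < T_2 < \cdots$, and to argue that, as far as the law of $S^N(t_0)$ is concerned, conditioning on the measure path $(\mu^N_t)_{0 \le t \le t_0}$ is equivalent to conditioning on the sequence of block-size profiles $\mathcal{F}_n = \sigma(|S^N|(n))$ up to the last jump before $t_0$. The point to check here is that $\mu^N_t$ records only the masses $\#Z^N_i(t)/N$ carried at each live location, together with the merger times and locations; all of this is a function of the samples $\xi_i$ and the meeting clocks, which are independent of the uniform ordering $r_1,\dots,r_N$ that determines the \emph{contents} of the blocks of $S^N$. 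So, once the profile sequence is fixed, the extra information in the measure path is independent of the labelling and adds nothing to the conditional law of $S^N(t_0)$. It therefore suffices to prove the discrete statement that, conditionally on $\mathcal{F}_n, \dots, \mathcal{F}_0$, the partition $S^N(n)$ is uniform over all partitions of $\setN$ with trace $|S^N|(n)$.

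For the discrete claim I would argue by exhibiting an explicit bijection on initial configurations. Because $S^N$ carries a fixed index set $\setN$, the profile filtration determines at each jump $1 \le k \le n$ exactly which indexed block absorbs which; encode this as a pair $(i_k, j_k)$ recording that block $i_k$ is merged into the surviving block $j_k$. Since $r_1,\dots,r_N$ is a uniform random ordering independent of the clocks, the initial configuration $S^N(0)$ is uniform over all partitions $\gamma$ of $\setN$ with trace $|S^N(0)|$, and the remaining evolution is a deterministic function of $\gamma$ and the merger data $(i_k, j_k)_{k \le n}$. Writing $f_\gamma(k)$ for the smallest token in the $k$-th block of $\gamma$, the lowest-token-wins rule makes $\gamma$ consistent with the prescribed survivors precisely when $f_\gamma(j_k) < f_\gamma(i_k)$ for every $k$ (call such $\gamma$ compatible), so $\Prb(S^N(n) = \alpha \mid \mathcal{F}_n, \dots, \mathcal{F}_0)$ is exactly the fraction of compatible initial configurations whose deterministic evolution lands on $\alpha$. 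Fixing a target $\beta$ with $|\beta| = |\alpha|$, I would take the unique block-by-block order-preserving bijection $\tau$ sending $\alpha_i$ to $\beta_i$ and push each configuration $\gamma$ forward to $\tilde\gamma_i = \tau(\gamma_i)$.

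The hard part will be verifying that $\gamma \mapsto \tilde\gamma$ genuinely carries configurations resulting in $\alpha$ to configurations resulting in $\beta$ while preserving compatibility, since the survivor of each merger is chosen by comparing the \emph{actual} minimal tokens of the two blocks, a comparison that is not obviously stable under relabelling. The observation that makes it work is that the two tokens deciding the $k$-th merger, namely $f_\gamma(i_k)$ and $f_\gamma(j_k)$, end up together in a common final block of $\alpha$; hence $\tau$ acts on both order-preservingly and preserves the inequality $f_{\tilde\gamma}(j_k) < f_{\tilde\gamma}(i_k)$ that encodes compatibility. Granting this, $\tilde\gamma$ is compatible and follows identical merger combinatorics, so it results in $\beta$, and applying $\tau^{-1}$ gives the inverse map; the bijection preserves mass for the uniform initial law. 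This yields equality of the two conditional probabilities for all $\alpha, \beta$ with $|\alpha| = |\beta| = |S^N|(n)$, while the case $|\alpha| \neq |\beta|$ is trivial since both sides vanish. One minor point to dispatch along the way is that $\mathcal{F}_0$ is already nontrivial, because co-located tokens ($\xi_i = \xi_j$) merge instantaneously at $t=0$, so the reference partition need not consist of singletons; this only shifts the base trace $|S^N(0)|$ and leaves the bijection argument untouched.
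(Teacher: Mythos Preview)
Your proposal is correct and follows essentially the same approach as the paper's proof: the reduction to the discrete jump chain, the identification of the merger pairs $(i_k,j_k)$ from the trace filtration, the uniformity of $S^N(0)$ coming from the independent uniform ordering $r_1,\dots,r_N$, and the block-by-block order-preserving bijection $\tau$ with the key observation that the two deciding tokens $f_\gamma(i_k),f_\gamma(j_k)$ land in a common final block of $\alpha$ are all exactly what the paper does. Your only addition is a slightly fuller justification of why conditioning on $(\mu^N_t)_{0\le t\le t_0}$ reduces to conditioning on the trace sequence (the paper simply asserts this is ``clearly equivalent''), and your reasoning there via independence of the uniform ordering from the samples and clocks is sound.
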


From \Cref{prop:Uniform Partitions}, noting that the probability of any given permutation doesn't depend on the labelling $r_k$ of the blocks, \Cref{prop:Partition is Exchangeable} immediately follows.

\subsubsection{Asymmetric Exchangeability}

The second, asymmetric exchangeability property of the token process is exchangeability in the sense of de Finnetti's theorem. For it, we consider a fixed token $K$ and time $t \geq 0$ and look at the locations of the tokens $i \geq K + 1$. Recall that $u_i(t)$ is the owner of token $i$ at time $t$. The asymmetric exchangeability is given by the following.

\begin{prop}{For any fixed $K \geq 1$, $t \geq 0$, the sequence of indicators
$$
1( u_{K + 1}(t) = K), 1( u_{K + 2}(t) = K), 1(u_{K + 3}(t) = K), \ldots
$$
is, conditional on $\xi_1, \ldots, \xi_K$, an infinitely exchangeable sequence.
}
\label{prop:asymm exchg}
\end{prop}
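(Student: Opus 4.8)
The plan is to prove the exchangeability by showing that the joint law of the indicators $Y_m := 1(u_{K+m}(t) = K)$ is invariant under every finite permutation of the index $m$, using the exchangeability of the initial locations $\xi_{K+1}, \xi_{K+2}, \ldots$ as the engine. An infinitely exchangeable sequence is precisely one whose law is invariant under every finite permutation of its index set, so it suffices, for each finite permutation $\pi$ supported on $\{1, \ldots, M\}$, to show that $(Y_1, \ldots, Y_M)$ and $(Y_{\pi(1)}, \ldots, Y_{\pi(M)})$ have the same law conditional on $\mathcal{G} := \sigma(\xi_1, \ldots, \xi_K)$. Because the path of token $K+m$ depends only on the meeting times among tokens $1, \ldots, K+m$ (the consistency noted in \Cref{sec:Token Construction}), the vector $(Y_1, \ldots, Y_M)$ is a function of the finite token system on $\{1, \ldots, K+M\}$ alone, so I may work entirely within this finite system.

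First I would condition on the full rate array $R = (\nu_{ij})_{1 \le i < j \le K+M}$, equivalently on all the locations. Given $R$ the meeting times are independent exponentials, so the conditional law $\mathrm{Law}_R(Y_1, \ldots, Y_M)$ is well defined; it is convenient to compute it through the \emph{annealed} block-merging description, valid by memorylessness of the exponential clocks: the partition evolves as a Markov jump chain in which two live blocks merge at the $\nu$-rate between their minimal (owner) tokens, the merged block keeping the smaller owner, and $\{Y_m = 1\}$ is exactly the event that $K+m$ lies in the block whose owner is $K$ at time $t$. The crux is the following equivariance lemma: writing $R^{(a\,b)}$ for the array obtained from $R$ by interchanging the rows and columns indexed by $K+a$ and $K+b$ while leaving the mutual entry $\nu_{K+a,\,K+b}$ fixed, one has, for the transposition $\pi = (a\,b)$,
\[
\mathrm{Law}_{R^{(a\,b)}}(Y_1, \ldots, Y_M) = \mathrm{Law}_{R}(Y_{\pi(1)}, \ldots, Y_{\pi(M)}).
\]
Granting this the proof finishes quickly: interchanging $\xi_{K+a} \leftrightarrow \xi_{K+b}$ induces exactly the relabeling $R \mapsto R^{(a\,b)}$, and since the high locations are i.i.d.\ given $\mathcal{G}$ the law of $R$ is invariant under this interchange, so averaging the displayed identity over $R$ yields $(Y_1, \ldots, Y_M) \overset{d}{=} (Y_{\pi(1)}, \ldots, Y_{\pi(M)})$; transpositions then generate all finite permutations.

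The main obstacle is the equivariance lemma, and the delicate point is precisely the ``lowest index wins'' rule. One must resist proving it by relabeling tokens directly: swapping the entire data (location and all meeting times) of tokens $K+a$ and $K+b$ is measure preserving, yet it does \emph{not} transpose the indicators pointwise, because the winner of a meeting is decided by token index and the transposition $(a\,b)$ is not order preserving. Indeed a two-token computation already shows $\mathrm{Law}_R(Y_1, Y_2)$ is genuinely asymmetric in the rates, the symmetry being restored only after the exponential clocks are integrated out. The way I would handle this is to note that in the annealed chain the index order enters only through the choice, at each high--high merger, of which of the two competing owner-locations is retained; since that choice is independent of the i.i.d.\ locations, its effect on the law of the block containing $K$ is symmetric once the clocks are integrated. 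Concretely I would prove the lemma by induction on the number of merge events: at each step the next pair of blocks to merge is selected with probability proportional to the $\nu$-rate between their owners, and I would track how the swap $R \mapsto R^{(a\,b)}$ transports this selection and the ensuing owner-inheritance to the corresponding step of the $\pi$-relabeled chain, restarting the exponential competition after each merger by memorylessness. Assembling the steps gives a coupling of the two jump chains realizing the identity above, which is the technical heart of the argument; the remaining bookkeeping (co-located tokens at time $0$, and the trivial case where $K$ has already merged, so that $u_K(t) \ne K$ and all $Y_m = 0$) is routine.
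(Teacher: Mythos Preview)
Your overall architecture matches the paper's: reduce to transpositions of the high indices, use that $\xi_{K+1},\xi_{K+2},\ldots$ are i.i.d.\ given $\xi_1,\ldots,\xi_K$, and work inside the finite $(K+M)$-token system, encoding the history combinatorially (your annealed block-merging chain plays the role of the paper's meeting trees).

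The gap is exactly where you flag it. Your equivariance lemma for an arbitrary transposition $(a\,b)$ is asserted and then deferred to an inductive coupling that is only sketched. You correctly isolate the obstruction---the lowest-index-wins rule breaks the na\"ive relabeling precisely when $K+a$ and $K+b$ meet each other---but the sentence ``its effect on the law of the block containing $K$ is symmetric once the clocks are integrated'' is a hope, not an argument. After such a merger the surviving owner's \emph{location} governs every subsequent rate, so the discrepancy propagates, and your induction would have to track this through all later owner-dependent transitions. You never do this, and for non-adjacent $a,b$ the bookkeeping is genuinely unpleasant because the swap also reverses order relations with every token strictly between $K+a$ and $K+b$.

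The paper avoids the whole difficulty with one extra observation. It restricts to \emph{adjacent} transpositions $(M,\,M+1)$---still enough to generate the symmetric group---so that the only order comparison at stake is between $M$ and $M+1$ themselves. Now in the sole non-trivial case $Y_M\neq Y_{M+1}$ the two tokens lie in different blocks at time $t$, hence their paths have \emph{never} merged (otherwise they would share an owner). Consequently ``swap the paths of $M$ and $M+1$'' is a well-defined bijection $\psi$ between meeting trees producing $(s',0,1,s'')$ and those producing $(s',1,0,s'')$; every arrow remains valid because adjacency preserves all comparisons with third tokens. Since $f_{\psi(T)}(\xi_1,\ldots,\xi_N)=f_T(\xi_1,\ldots,\xi_{M+1},\xi_M,\ldots,\xi_N)$ and $(\xi_M,\xi_{M+1})$ is exchangeable given $\xi_1,\ldots,\xi_K$, the conditional probabilities match tree by tree. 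This single combinatorial remark replaces your unproved lemma entirely.
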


Note that both \Cref{prop:asymm exchg} and \Cref{prop:Uniform Partitions} ultimately imply \Cref{prop:Partition is Exchangeable}, i.e that the token partition is exchangeable. The key difference is in where this exchangeability is coming from and what it is conditional on. The symmetric exchangeability is in relation to the history of the limiting process $\mu_t$, in relation to which (as we will see in \Cref{lem:Conditionally Exchg}) the different tokens behave symmetrically. The exchangeability here in  \Cref{prop:asymm exchg} is clearly asymmetric with respect to the tokens. The relationship between these two viewpoints is key in the sequel.

Our proof of \Cref{prop:asymm exchg} will make use of the combinatorial tool of \textbf{meeting trees} of the token process. For a fixed $N \geq 1$, we define a meeting tree $T$ of the first $N$ tokens as an ordered list
$$
T = (a_1, a_2, \ldots ; b_1, b_2, \ldots \vert i_1 \rightarrow j_1, \ldots, i_n \rightarrow j_n)
$$
which recounts the order of the meetings of the first $N$ tokens but not the (non-zero) times of these meetings.

We read the tree as follows. To the left of the divide are groups of tokens starting at the same initial location and thus meeting at time $t = 0$; i.e. for the generic tree $T$ above
$$
\xi_{a_1} = \xi_{a_2} = \ldots, \xi_{b_1} = \xi_{b_2} = \ldots,
$$
and so each such group meets at time $t = 0$ and merges into the lowest token of the group. We make no attempt at "ordering" meetings that occur simultaneously at time zero.

On the right of the divide, we write $a \rightarrow b$ for the meeting of tokens $a$ and $b$ resulting in $a$ merging into token $b$. We call a tree \textbf{valid} if
\begin{enumerate}
\item For each meeting $i_r \rightarrow j_r$, we have $1 \leq i_r < j_r \leq n$.
\item Each token $1$ to $N$ can merge into another token at most once, including meetings at $t = 0$.
\end{enumerate} 
Of course each token can be \textit{merged into} multiple times in the history of the process.

For any $t \geq 0$, write $S^N(t)$ for the (random) observed meeting tree at time $t$ of the $N$ token process. Then for any valid meeting tree $T$, the probability of $S^N(t) = T$ can be explicitly calculated as
$$
\Prb( S^N(t) = T \vert \xi_1, \ldots, \xi_N) = f_T(\xi_1, \ldots, \xi_N)
$$
for some function $f_T \colon S^N \rightarrow \R$ of the initial locations of the tokens. We'll see later (see \Cref{sec:second moment} and \Cref{sec:moment calc}) that for all but the smallest $N$ calculating $f_T$ explicitly is impractical.

\begin{figure}[h]
   \centering
\includegraphics[scale=0.3]{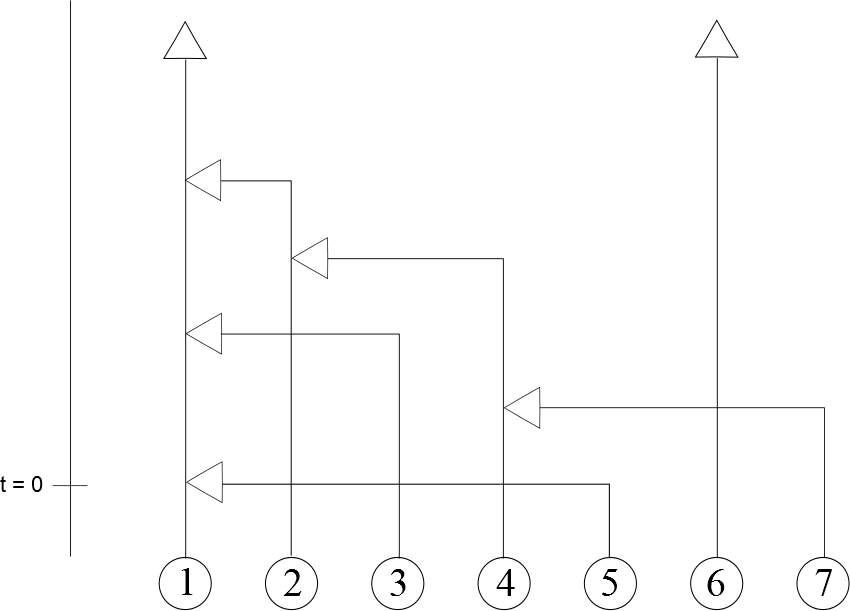}
   \caption{Meeting Tree $T = \left( 5, 1 \vert 7 \rightarrow 4, 3 \rightarrow 1, 4 \rightarrow 2, 2 \rightarrow 1 \right)$}
    \label{fig:tree}
\end{figure}

We are now ready to give a proof of \Cref{prop:asymm exchg}.

\begin{proof}
Fix $K \geq 1$ and $t \geq 0$. To show that the sequence
$$
1( u_{K + 1}(t) = K), 1( u_{K + 2}(t) = K), 1(u_{K + 3}(t) = K), \ldots
$$
is conditionally exchangeable, it suffices to show that for any $N > M \geq K +1$ that the conditional distribution of
$$
V = \Big( 1( u_{K + 1}(t) = K), \ldots 1(u_{M}(t) = K), 1(u_{M + 1}(t) = K), \ldots 1(u_{N}(t) = K) \Big)
$$ 
is the same as the conditional distribution of
$$
\hat{V} = \Big( 1( u_{K + 1}(t) = K), \ldots 1(u_{M + 1}(t) = K), 1(u_{M}(t) = K), \ldots 1(u_{N}(t) = K) \Big)
$$
i.e. under a transposition of $M$ and $M + 1$, since the set of transpositions generates the group of all finite permutations.

Consider any fixed binary sequences $s'$ and $s''$ of length $M - K$ and $N - (M - K + 2)$ respectively. Then for any binary sequence $s$ of length $N - K$ of the form
$$
s = (s', *, **, s'')
$$
where $*, ** \in \{0, 1 \}$ it suffices to show that
$$
\Prb \left( V = (s', *, **, s'') \vert \xi_i, 1 \leq i \leq K \right) = \Prb \left( \hat{V} = (s', *, **, s'')  \vert \xi_i, 1 \leq i \leq K \right)
$$
or equivalently
$$
\Prb \left( V = (s', *, **, s'') \vert \xi_i, 1 \leq i \leq K  \right) = \Prb \left( V = (s', **, *, s'') \vert \xi_i, 1 \leq i \leq K \right).
$$

Now, if $*$ and $**$ are both either $0$ or $1$ then this is trivially true, so we need only consider the case $* \neq **$. Without loss of generality assume $* = 0$ and $** = 1$. Let $\tau$ be the set of all meeting trees resulting in $V = (s', 0, 1, s'')$ and $\tau'$ the set of all meeting trees resulting in $V = (s', 1, 0, s'')$. Note that both $\tau$ and $\tau'$ are finite as there are only finitely many possible meeting trees on $N$ tokens.

We will construct a bijection $\psi \colon \tau \rightarrow \tau'$ with the property that for all $T \in \tau$
$$
\Prb( S^N(t) = T \vert \xi_i, 1 \leq i \leq K) = \Prb ( S^N(t) = \psi(T) \vert \xi_i, 1 \leq i \leq K ),
$$
which will show that
\begin{align*}
\Prb \Big( V = (s', 0, 1, s'') &\vert \xi_i, 1 \leq i \leq K  \Big) \\
&= \sum_{T \in \tau} \Prb \left( S^N(t) = T \vert \xi_i, 1 \leq i \leq K \right) \\
&= \sum_{T \in \tau}  \Prb \left( S^N(t) = \psi(T) \vert \xi_i, 1 \leq i \leq K \right) \\
&= \sum_{T \in \tau'} \Prb \left( S^N(t) = T \vert \xi_i, 1 \leq i \leq K \right) \\
&= \Prb \left( V = (s', 1, 0, s'') \vert \xi_i, 1 \leq i \leq K \right).
\end{align*}

For any $T \in \tau$, we define $\psi(T)$ to be the meeting tree given by switching the paths of the tokens $M$ and $M + 1$. A priori this need not be a valid meeting tree. In particular, if $M + 1 \rightarrow M$ occurs in $T$, then $\psi(T)$ would have the meeting $M \rightarrow M + 1$ and so wouldn't be a valid meeting tree. However, by assumption $T \in \tau$ has
$$
1( u_{M}(t) = K) = 0, \hspace{2 pc} 1( u_{M + 1}(t) = K) = 1
$$
and so at time $t$ token $M+1$ is owned by $K$ and token $M$ is not. Thus the paths of the tokens $M$ and $M + 1$ can not have already met in $T$, as otherwise either both would be owned by $K$ or neither would be. Therefore $\psi(T)$ is a valid meeting tree and so is clearly in $\tau'$. To see that $\psi$ is a bijection, we need only note that reversing the paths of $M$ and $M+1$ again gives the inverse $\psi^{-1} \colon \tau' \rightarrow \tau$.

To see $T$ and $\psi(T)$ have the same conditional probability we need only note that $\xi_M$ and $\xi_{M + 1}$ are exchangeable and
$$
f_{\psi(T)}(\xi_1, \ldots, \xi_N) = f_T (\xi_1, \ldots, \xi_{M + 1}, \xi_M, \ldots \xi_N)
$$
and so
\begin{align*}
\E \Big( f_{\psi(T)}(\xi_1, \ldots, \xi_N) &\vert \xi_i, 1 \leq i \leq K \Big) \\
&=\E \left( f_{T}(\xi_1, \ldots, \xi_{M +1}, \xi_M, \ldots, \xi_N) \vert \xi_i, 1 \leq i \leq K \right) \\
&= \E \left( f_{T}(\xi_1, \ldots, \xi_N) \vert \xi_i, 1 \leq i \leq K \right) 
\end{align*}
completing the proof.
\end{proof}

In fact our proof shows quite a bit more. Fixing a $K \geq 1$ and a time $t \geq 0$, for $n \geq K + 1$ consider the vector $V_n$ given by
$$
V_n = \left( 1(u_n(t) = 1), 1(u_n(t) = 2), \ldots, 1(u_n(t) = K) \right).
$$
It follows easily from our proof of \Cref{prop:asymm exchg} that conditional on the entire history of the first $K$ tokens up to time $t$ - i.e. on $\xi_1, \ldots, \xi_K$ and $Z^K(s), 0 \leq s \leq t$ that
$$
V_{K+ 1}, V_{K + 2}, V_{K + 3}, \ldots
$$
is an infinite exchangeable sequence. The value of \Cref{prop:asymm exchg} in the sequel is the ability to do computations by conditioning on only the initial conditions of finitely many tokens and so we content ourself with the current statement of the result.

\subsection{Proof of \Cref{prop:Token Limit}}

We are now ready to start our proof that the empirical measures $\mu^N_t$ coming from the token process converge weakly for each $t \geq 0$. For time $t = 0$, this is essentially the Glivenko-Cantelli theorem. 

\begin{lemma}{$\mu^N_0 \rightarrow \mu$ almost surely and thus $\mu_0 = \mu$ a.s.}
\end{lemma}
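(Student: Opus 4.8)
The plan is to first recognize that $\mu^N_0$ is nothing but the ordinary empirical measure of the IID sample, after which the statement reduces to the classical almost-sure weak convergence of empirical measures (Varadarajan's theorem, the metric-space form of Glivenko--Cantelli). Indeed, at time $t = 0$ each token $i$ is owned by the lowest token $\bar{i}$ sharing its site, and by construction $\xi_{\bar{i}} = \xi_i$; hence $\delta(\xi_{u_i(0)}) = \delta(\xi_i)$ and
$$
\mu^N_0 = \frac{1}{N} \sum_{i=1}^N \delta(\xi_{u_i(0)}) = \frac{1}{N} \sum_{i=1}^N \delta(\xi_i),
$$
the empirical measure of $N$ IID samples from $\mu$. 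So it suffices to show this empirical measure converges weakly to $\mu$ almost surely.

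The main tool is that, since $(S,d)$ is separable, weak convergence in $P(S)$ can be tested against a fixed countable family: there is a countable set $\{f_k\}_{k \ge 1} \subset C_b(S)$ that is convergence-determining, i.e. $\nu_n \to \nu$ weakly precisely when $\nu_n(f_k) \to \nu(f_k)$ for every $k$. Granting such a family, I would fix $k$ and note that $f_k(\xi_1), f_k(\xi_2), \ldots$ are IID and bounded with mean $\mu(f_k)$, so by the strong law of large numbers
$$
\mu^N_0(f_k) = \frac{1}{N} \sum_{i=1}^N f_k(\xi_i) \longrightarrow \mu(f_k) \quad \text{almost surely.}
$$
Since the family is countable, the intersection over $k$ of these probability-one events again has probability one, and on it $\mu^N_0(f_k) \to \mu(f_k)$ for all $k$ simultaneously, whence $\mu^N_0 \to \mu$ weakly. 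As $\mu_0$ is defined as the ($t=0$) almost-sure limit in \Cref{prop:Token Limit}, this gives $\mu_0 = \mu$ a.s.

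The one genuine point to justify --- and the main obstacle --- is the existence of the countable convergence-determining family, which is exactly where separability (and here also local compactness) enters; without it weak convergence cannot be read off from countably many functionals. I would exploit local compactness and separability of $S$ to note that $C_0(S)$ is separable in the uniform norm, and take $\{f_k\}$ to be a countable dense subset. Since $\mu^N_0$ and $\mu$ have total mass at most one, density in $C_0(S)$ upgrades the almost-sure convergence above from the $f_k$ to all of $C_0(S)$, i.e. $\mu^N_0 \to \mu$ vaguely. To promote vague to weak convergence it remains only to rule out escape of mass: as $\mu$ is a single probability measure it is tight, so for $\epsilon > 0$ one may pick a compact $K$ with $\mu(K) > 1 - \epsilon$ and, by local compactness, $f \in C_0(S)$ with $0 \le f \le 1$ and $f \equiv 1$ on $K$; then
$$
\liminf_N \mu^N_0(S) \ge \lim_N \mu^N_0(f) = \mu(f) \ge 1 - \epsilon,
$$
and since each $\mu^N_0$ has total mass one, no mass is lost in the limit. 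Vague convergence of probability measures with no loss of mass is weak convergence, which completes the argument.
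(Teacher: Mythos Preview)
Your proof is correct. The paper does not actually prove this lemma: it simply remarks beforehand that ``for time $t = 0$, this is essentially the Glivenko-Cantelli theorem'' and states the result without argument. Your approach---identifying $\mu^N_0$ with the ordinary empirical measure and then applying the strong law against a countable convergence-determining class---is the standard proof of Varadarajan's theorem and in fact mirrors exactly the machinery the paper later deploys for the $t>0$ case (\Cref{lem:token limit for f} combined with \Cref{thrm:Test Class} and \Cref{thrm:Random Measure Convergence}); your extra justification of the determining class via separability of $C_0(S)$ and the tightness argument supplies more detail than the paper, which simply cites Billingsley for \Cref{thrm:Test Class}.
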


We note here however that while the convergence is easy, actually quantifying this convergence (in the Wasserstein metric, for instance) is hard and to our knowledge impossible in the absence of finite-dimensional type assumptions on $(S, d)$ \cite{boissard2011mean}. More classically for measures on $\R^d$ a bit more is known \cite{Horowitz1994261}. Interestingly, more can be said for the MC process at positive times $t > 0$, and we discuss later qualitative estimates of this convergence for positive times $t > 0 $ in \Cref{sec:TVD}.

We focus on showing weak convergence for times $t > 0$. Our main tool in this in Kingman's Paintbox Theorem, which serves as a structure theorem for exchangeable partitions.

\subsubsection{Kingman's Paintbox Theorem}

The key application of \Cref{prop:Partition is Exchangeable} is Kingman's Paintbox Theorem. Following Theorem 1.1 in \cite{berestycki2009recent}, a paintbox partition is derived from a mass partition $p = (p_0, p_1, \ldots)$ with $p_i \geq 0$ satisfying
$$
\sum_{i = 0}^{\infty} p_i = 1,
$$
and
$$
p_1 \geq p_2 \geq \ldots.
$$
Note that we do not require $p_0 \geq p_1$. Write $\mathfrak{P}_{\m}$ for the set of all such mass partitions.

Let $U_1, U_2, \ldots$ be an I.I.D. sequence of uniform $(0, 1)$ random variables and write
$$
I(u) = \inf \{ n \geq 0 \colon \sum_{m = 0}^n p_i > u \}.
$$
Intuitively, if $p$ is a partition of the unit interval, then $I(u)$ is the block of partition that $u$ falls into. Then, the paintbox partition given by $p$ is defined by $i \sim j$ if and only if
$$
I(U_i) = I(U_j) \geq 1,
$$
i.e. if $U_i$ and $U_j$ fall in the same (non $p_0$) "slot" of the paintbox. The mass $p_0$ corresponds to singletons blocks of the partition, called \textbf{dust}. An elementary fact of exchangeable partitions is that all blocks must be either singletons or infinite, and don't take on any other finite sizes.

Clearly this construction gives an exchangeable partition. The importance of Kingman's Theorem is that this is a universal construction for such random partitions. Our statement of the theorem follows Theorem 1.1 in \cite{berestycki2009recent}.

\begin{theorem}{Kingman's Paintbox: Let $\pi$ be any exchangeable partition. Then there exists a probability measure $\alpha(dp)$ on $\mathfrak{P}_{\m}$ such that
$$
\Prb(\pi \in \cdot ) = \int_{p \in \mathfrak{P}_{\m}} \alpha(dp) \rho_p( \cdot ),
$$
where for a mass partition $p$, $\rho_p$ is the corresponding law on $\Pi$ given by the paintbox construction.
}
\label{thrm:Kingman Paintbox}
\end{theorem}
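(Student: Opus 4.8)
The plan is to deduce Kingman's theorem from de Finetti's theorem by coloring the blocks of $\pi$ with independent continuous labels. First I would introduce an i.i.d. sequence $V_1, V_2, \ldots$ of uniform $(0,1)$ random variables, independent of $\pi$, attached to the blocks of $\pi$ (say indexed by the least element of each block), and define a colored sequence $(Y_j)_{j \geq 1}$ by setting $Y_j = V_B$ whenever $j$ lies in block $B$. Because $\pi$ is exchangeable and the colors are i.i.d. and independent of $\pi$, a short check (rewriting $(Y_{\sigma(j)})_j$ as the coloring of $\sigma^{-1}\pi$ with the same colors and using exchangeability of $\pi$) shows that $(Y_j)_{j \geq 1}$ is an infinitely exchangeable sequence of $[0,1]$-valued random variables. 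The purpose of the coloring is that it faithfully encodes the partition: since the $V_B$ are almost surely all distinct, we recover $\pi$ from $(Y_j)$ via $i \sim j \iff Y_i = Y_j$, almost surely.

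Next I would apply de Finetti's theorem to $(Y_j)$: there is a random probability measure $\Theta$ on $[0,1]$ such that, conditionally on $\Theta$, the $Y_j$ are i.i.d. with law $\Theta$. I would then read off the mass partition from $\Theta$. The atoms of $\Theta$ correspond exactly to the blocks of $\pi$ carrying positive asymptotic frequency: conditionally on $\Theta$, the strong law of large numbers gives $\tfrac{1}{n}\#\{j \leq n : Y_j = y\} \to \Theta(\{y\})$, so the mass of each atom equals the asymptotic frequency of the corresponding block, while the non-atomic part of $\Theta$ accounts for the dust. Ranking the atom masses in decreasing order as $p_1 \geq p_2 \geq \cdots$ and setting $p_0 = 1 - \sum_{i \geq 1} p_i$ (the total mass of the non-atomic part of $\Theta$) produces a random element $p = (p_0, p_1, \ldots) \in \mathfrak{P}_{\m}$; I let $\alpha$ be its distribution.

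Finally I would verify that, conditionally on $\Theta$, the recovered partition $i \sim j \iff Y_i = Y_j$ has exactly the paintbox law $\rho_p$, and then integrate. Conditionally on $\Theta$, each $Y_j$ lands in a given atom of mass $p_i$ with probability $p_i$ and otherwise lands in the non-atomic part; two indices lie in the same block iff they land in the same atom, while indices landing in the non-atomic part are almost surely pairwise distinct and so form singletons. This is precisely the mechanism of the paintbox construction with mass partition $p$ (with $p_0$ producing the dust), so the conditional law of $\pi$ given $\Theta$ depends on $\Theta$ only through $p$ and equals $\rho_p$. Integrating over the law of $\Theta$ then yields
$$
\Prb(\pi \in \cdot) = \E[\rho_p(\cdot)] = \int_{p \in \mathfrak{P}_{\m}} \alpha(dp)\, \rho_p(\cdot).
$$
The main obstacle is this last identification step: one must check carefully that the conditional law of $\pi$ given $\Theta$ genuinely depends on $\Theta$ only through its ranked atom masses, so that the arbitrary labeling of atoms and the choice of colors wash out, and that the resulting law matches $\rho_p$ at the level of the finite restrictions $\pi|_{[n]}$, where the order-invariance built into the definition of $\rho_p$ must be reconciled with the exchangeability supplied by de Finetti.
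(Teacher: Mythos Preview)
The paper does not actually prove this theorem: it is stated as background and attributed to the literature (``Our statement of the theorem follows Theorem~1.1 in \cite{berestycki2009recent}''), so there is no in-paper proof to compare against. Your proposal is the standard de~Finetti-based proof (essentially the argument in Aldous's Saint-Flour notes and in Bertoin's book): color the blocks with i.i.d.\ uniform labels to obtain an exchangeable $[0,1]$-valued sequence, apply de~Finetti to get a random directing measure $\Theta$, and read off the mass partition from the atoms of $\Theta$. The argument is correct as sketched; the one point you flag yourself---that the conditional law of $\pi$ given $\Theta$ depends on $\Theta$ only through its ranked atom masses---is indeed the only place requiring care, and it goes through because the paintbox law $\rho_p$ is by construction invariant under relabeling of the atoms and because, conditionally on $\Theta$, values landing in the diffuse part of $\Theta$ are almost surely pairwise distinct. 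Checking agreement on finite restrictions $\pi|_{[n]}$ is then a direct computation of multinomial probabilities on both sides.
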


\subsubsection{Defining $\mu_t$}

Applying Kingman's correspondence, the token process' partition $Z(t)$ can be described as follows. First, there exists a (random) mass partition $p(t) = \{ p_k(t), k \geq 0 \}$ with
\begin{align}
\sum_{k = 0}^{\infty} p_k(t) =1, \label{eq:Sum of pk}
\end{align}
giving a random partition
$$
B(t) = \{B_k(t), k \geq 1 \},
$$
of $\N$, which will have asymptotic rates $p_k(t)$. Note that the partition $B(t)$ will be the same as $Z(t)$, however is no longer indexed by the lowest token in each block.

Then, the equivalence relation generating the token partition $\sim_t$ is given by
$$
i \sim_t j
$$
if and only if $i$ and $j$ are in the same block $B_k(t)$ for $k \geq 1$.

Conditional on $p(t)$, each token $i \geq 1$ is assigned a block of $B(t)$ independently according to $p(t)$. That is, token $i$ is assigned to block $B_i$ with probability $p_i(t)$ for $i \geq 1$. The remaining mass $p_0(t)$ corresponds to token $i$ being a singleton (or dust), which in our setting would be tokens that have had no meetings by time $t$. There are many different ways to show that this doesn't happen almost surely; our proof uses the asymmetric exchangeability of \Cref{prop:asymm exchg}.

\begin{lemma}{For any $t > 0$, almost surely $p_0(t) = 0$.}
\label{lem:No Dust}
\begin{proof}
By construction, $\E p_0(t)$ is the probability that any given token is a singleton at time $t$, in particular we'll consider the probability that token $1$ is a singleton at time $t$. The key here is to calculate this using the second form of exchangeability.

By \Cref{prop:asymm exchg} the sequence
$$
1(u_2(t) = 1), 1(u_3(t) = 1 ), \ldots
$$
is conditionally exchangeable given $\xi_1$. Applying de Finetti's theorem to this sequence of Bernoulli variables (Theorem 4.6.6, \cite{durrett2010probability}) we see that there is a $q_1(t) \in \sigma(\xi_1)$ satisfying $0 \leq q_1(t) \leq 1$ such that conditional on $q_1(t)$, the sequence is independent and identically distributed (as $\Ber(q_1(t))$ random variables).

On the other hand, by the definition of the token process we have that
\begin{align*}
\Prb( u_2(t) = 1 \vert \xi_1) &= \Prb( t_{12} \leq t \vert \xi_1) \\
&= \E \left( 1 - \exp( - \phi(d(\xi_1, \xi_2))t) \vert \xi_1 \right) \\
&> 0,
\end{align*}
almost surely for all $t > 0$. Therefore, we have that
$$
q_1(t) > 0
$$
almost surely.

Since $1( u_i(t) = 1)$ are independent conditional on $q_1(t)$, by the Strong Law they can only all be zero if $q_1(t)$ is itself zero. Thus, we can conclude that
\begin{align*}
\Prb( 1 \text{ is dust at } t) &= \Prb \left( \forall i \geq 2, 1(u_i(t) = 1) = 0 \right) \\
&=\E \Prb \left( \forall i \geq 2, 1(u_i(t) = 1) = 0 \vert \xi_1 \right) \\
&=\E \Prb( q_1(t) = 0 \vert \xi_1 ) \\
&= 0
\end{align*}
whenever $t > 0$, completing the proof.
\end{proof}
\end{lemma}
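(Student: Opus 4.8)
The plan is to prove the stronger quantitative statement $\E p_0(t) = 0$, which suffices because $p_0(t) \ge 0$. By the paintbox description of $Z(t)$, conditionally on the mass partition $p(t)$ each token is assigned to a block independently, and a token is a singleton precisely when it lands in the dust, which happens with conditional probability $p_0(t)$; hence $\E p_0(t)$ equals the unconditional probability that a fixed token is a singleton at time $t$. I would work with token $1$. Since token $1$ has the lowest index it is always its own owner, so $u_1(t)=1$ and the block of token $1$ is $\{\,j : u_j(t)=1\,\}$. Thus token $1$ is dust at time $t$ if and only if $1(u_j(t)=1)=0$ for every $j \ge 2$, and the goal becomes $\Prb(\forall j \ge 2:\ u_j(t) \ne 1) = 0$.

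The main tool is the asymmetric exchangeability of \Cref{prop:asymm exchg}: conditionally on $\xi_1$, the sequence $1(u_2(t)=1), 1(u_3(t)=1), \ldots$ is exchangeable. Applying de Finetti's theorem to this conditionally exchangeable sequence of Bernoulli variables produces a directing variable $q_1(t)\in[0,1]$ such that, conditionally on $\xi_1$ and $q_1(t)$, the indicators are i.i.d.\ $\Ber(q_1(t))$, with $q_1(t) = \lim_n \tfrac1n\sum_{i=2}^{n+1} 1(u_i(t)=1)$. By the conditional strong law the indicators all vanish exactly on the event $\{q_1(t)=0\}$, so token $1$ is dust precisely when $q_1(t)=0$ and therefore $\E p_0(t) = \Prb(q_1(t)=0)$. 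The task is reduced to showing $q_1(t) > 0$ almost surely. From the definition of the meeting clocks one records immediately that
$$
\Prb(u_2(t)=1 \mid \xi_1) = \E\!\left(1 - \exp(-\phi(d(\xi_1,\xi_2))\,t)\,\middle|\,\xi_1\right) > 0
$$
almost surely for every $t>0$: if $\xi_2=\xi_1$ the two tokens merge at time $0$, and if $\xi_2\ne\xi_1$ then (H1) gives $\phi(d(\xi_1,\xi_2))>0$, making the integrand strictly positive. Since $\E(q_1(t)\mid\xi_1)$ equals this meeting probability, $q_1(t)$ is not identically zero.

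The hard part is upgrading this to $q_1(t)>0$ almost surely, since the de Finetti directing variable is a tail limit and positivity of its conditional mean does not by itself exclude $\{q_1(t)=0\}$ from having positive probability. I would close this with a conditional second Borel--Cantelli argument supplying independent chances for token $1$ to absorb a token. For each $k \ge 2$ let $E_k$ be the event that among the lower tokens, token $k$ meets token $1$ first and before time $t$, i.e.\ $t_{1k}\le t$ and $t_{1k} < t_{mk}$ for all $1 < m < k$. On $E_k$ token $k$ merges directly into the always-alive token $1$, so $E_k \subseteq \{u_k(t)=1\}$. Crucially $E_k$ depends only on the clocks $\{t_{mk} : 1 \le m < k\}$, and these families are disjoint across $k$, so the $E_k$ are independent given the locations $(\xi_i)_{i\ge1}$; a direct exponential computation gives
$$
\Prb(E_k \mid (\xi_i)_i) = \frac{\nu_{1k}}{\sum_{m=1}^{k-1}\nu_{mk}}\left(1 - e^{-t\sum_{m=1}^{k-1}\nu_{mk}}\right).
$$
By exchangeability of $\nu_{1k},\dots,\nu_{k-1,k}$ given $\xi_k$, the leading fraction has conditional mean $1/(k-1)$, so that $\sum_k \Prb(E_k\mid(\xi_i)_i)$ has infinite mean. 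The remaining obstacle is to promote this to almost-sure divergence of $\sum_k \Prb(E_k\mid(\xi_i)_i)$, which I would extract from a law-of-large-numbers control of $\sum_{m<k}\nu_{mk}$; conditional Borel--Cantelli then forces infinitely many $E_k$ to occur, so some token is owned by $1$, giving $q_1(t)>0$ and hence $\E p_0(t)=0$.
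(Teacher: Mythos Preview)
Your argument through the de Finetti step is exactly the paper's: reduce to $\Prb(\text{token }1\text{ is a singleton})=0$, invoke \Cref{prop:asymm exchg} to get conditional exchangeability of $(1(u_j(t)=1))_{j\ge 2}$ given $\xi_1$, extract a directing Bernoulli parameter $q_1(t)$, and reduce to $q_1(t)>0$ almost surely.

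The divergence is in how $q_1(t)>0$ is obtained. The paper does not share your scruple: it asserts outright that de Finetti yields a directing variable $q_1(t)\in\sigma(\xi_1)$, so that $q_1(t)=\E(q_1(t)\mid\xi_1)=\Prb(u_2(t)=1\mid\xi_1)>0$ and the proof ends in one line. You are right to be cautious here --- a de Finetti directing variable is a tail object and is not in general $\sigma(\xi_1)$-measurable, so the paper's shortcut is at best underjustified --- but that is the route the paper takes, and it replaces your entire Borel--Cantelli detour.

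Your alternative via the events $E_k$ is sound in conception and, if completed, would in fact bypass de Finetti entirely (infinitely many $E_k$ already gives token $1$ an infinite block, hence not dust). The independence of the $E_k$ given the locations and the formula for $\Prb(E_k\mid(\xi_i)_i)$ are correct. But the gap you yourself flag is real: you need $\sum_k \Prb(E_k\mid(\xi_i)_i)=\infty$ almost surely, and having infinite \emph{expectation} does not deliver this. Your gesture toward law-of-large-numbers control handles the factor $1-e^{-t\sum_{m<k}\nu_{mk}}$, but the a.s.\ divergence of $\sum_k \nu_{1k}\big/\sum_{m<k}\nu_{mk}$ (a sum of dependent, non-identically-distributed terms once the locations are fixed) still needs an argument. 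As written, your proof is incomplete at precisely the point you identify; the paper closes the gap by the measurability assertion rather than by any such computation.
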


We are now finally able to define the random measure $\mu_t$. By construction of the token process, two tokens $i$ and $J$ have $i \sim_t j$ if and only if they are at the same location of $S$ at time $t$. By \Cref{lem:No Dust}, this gives us a one-to-one correspondence between blocks $B_k(t)$ and locations $s_k \in S$. Namely, writing
$$
l_k(t) = \inf B_k(t)
$$
for the lowest token in block $B_k$, we have
$$
s_k(t) = \xi_{l_k(t)}.
$$

Therefore we can define our random measure $\mu_t$ by
\begin{align}
\mu_t = \sum_{k = 1}^{\infty} p_k(t) \delta(s_k(t)). \label{def:mut}
\end{align}
By \Cref{lem:No Dust} and \Cref{eq:Sum of pk} we are guaranteed that $\mu_t$ is in fact a probability measure on $S$.

All that remains of \Cref{prop:Token Limit} is to show that for all $t > 0$
$$
\mu^N_t \rightarrow \mu_t
$$
almost surely.

\subsubsection{Convergence}

Later (\Cref{sec:TVD}) we give of a proof of the convergence of the empirical measures using Total Variation distance. However, we'll need to make use later of the concept of a convergence determining class for weak convergence and so we opt here to give such a proof of \Cref{prop:Token Limit}.

For any $k \geq 1$, write
$$
p^N_k(t) = \frac{1}{N} \sum_{n = k}^{N} 1(n \in B_k(t))
$$
for the empirical mass at $s_k$ of the first $N$ tokens. We first show that for any function $f \in C_b(S)$, the measures $\mu_t^N$ converge to $\mu_t$ almost surely.

\begin{lemma}{For any fixed $f \in C_b(S)$ and for all $t > 0$, 
$$
\mu^N_t(f) \rightarrow \mu_t(f)
$$
 almost surely.}
 \label{lem:token limit for f}
\begin{proof}
For $i \geq 1$ write $f_i, i \geq 1$ for the sequence
$$
f_i(t) = \sum_{i = k}^{\infty} f(s_k(t)) 1( i \in B_k(t)).
$$
By assumption $f_i(t)$ is bounded, since $f$ is and only one such indicator occurs. Clearly
$$
\mu_t^N(f) = \frac{1}{N}\sum_{i = 1}^N f_i(t).
$$
Recalling the Paintbox construction, conditional on $p(t)$ the sequence $f_i$ is independent and so applying the Strong Law we can conclude that
$$
\mu_t^N(f) \rightarrow \mu_t
$$
almost surely.
\end{proof}
\end{lemma}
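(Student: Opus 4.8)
The plan is to recognize the average $\mu^N_t(f) = \frac{1}{N}\sum_{i=1}^N f_i(t)$, where $f_i(t) = f(\xi_{u_i(t)})$ is the value of $f$ at the location occupied by token $i$ at time $t$, and then to apply a conditional Strong Law of Large Numbers. The summands are uniformly bounded by $\|f\|_\infty$ since $f \in C_b(S)$, so the only real issue is to exhibit the independence structure under which the average concentrates on $\mu_t(f)$.

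First I would invoke the Paintbox description of $Z(t)$ from \Cref{thrm:Kingman Paintbox}, legitimate by the exchangeability established in \Cref{prop:Partition is Exchangeable}: there is a random mass partition $p(t) = \{p_k(t)\}_{k\ge0}$ together with block locations $s_k(t) = \xi_{l_k(t)}$, and, conditionally on the $\sigma$-field $\mathcal G_t = \sigma(p(t), (s_k(t))_{k\ge1})$ generated by these data, each token $i$ is assigned to block $k$ independently with probability $p_k(t)$ (with the remaining probability $p_0(t)$ it is a singleton at its own location). Consequently, conditionally on $\mathcal G_t$, the sequence $(f_i(t))_{i\ge1}$ is i.i.d., each term equal to $f(s_k(t))$ with probability $p_k(t)$. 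By \Cref{lem:No Dust} the dust mass $p_0(t)$ vanishes almost surely for $t>0$, so the conditional mean is exactly
$$
\E\left( f_i(t) \mid \mathcal G_t \right) = \sum_{k\ge1} p_k(t)\, f(s_k(t)) = \mu_t(f),
$$
using the definition of $\mu_t$ in \eqref{def:mut}.

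With this in hand the conclusion follows from the conditional Strong Law: working on a regular conditional probability given $\mathcal G_t$, the bounded i.i.d. sequence $(f_i(t))_{i\ge1}$ obeys the ordinary Strong Law, so $\frac{1}{N}\sum_{i=1}^N f_i(t) \to \mu_t(f)$ almost surely for $\mathcal G_t$-a.e. realization; integrating over $\mathcal G_t$ yields the unconditional almost sure convergence $\mu^N_t(f)\to\mu_t(f)$. The one point demanding care—and the main obstacle—is upgrading Kingman's Paintbox theorem, which is a statement about the law of the partition, to the pathwise conditional-i.i.d. representation required to run the Strong Law. This needs the $S$-valued location sequence $(\xi_{u_i(t)})_{i\ge1}$ to be genuinely exchangeable, so that de Finetti's theorem supplies a directing random measure whose conditional i.i.d. structure is exploited above, and it needs that directing measure to be identified with $\mu_t$. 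The exchangeability follows from \Cref{prop:Partition is Exchangeable} together with the i.i.d. sampling of the $\xi_i$, while the identification uses that the empirical block frequencies $p^N_k(t)$ converge to $p_k(t)$ and that there is no dust; once the directing measure is pinned down as $\mu_t$, everything else is routine.
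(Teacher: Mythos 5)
Your proposal is correct and follows essentially the same route as the paper's own proof: decompose $\mu^N_t(f)$ as the average of the bounded variables $f_i(t)$, use the paintbox description to get conditional i.i.d.\ structure given the mass partition, identify the conditional mean with $\mu_t(f)$ via \Cref{lem:No Dust}, and conclude by the conditional Strong Law. You merely spell out details the paper leaves implicit (the conditioning $\sigma$-field, the identification of the directing measure, and the integration over the conditioning), which is a faithful elaboration rather than a different argument.
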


To complete the proof of \Cref{prop:Token Limit} we need to show that $\mu_t^N(f) \rightarrow \mu_t(f)$ for all $f \in C_b(S)$, almost surely. To show this from \Cref{lem:token limit for f}, we use a standard reduction taking advantage of the fact that $S$ is separable.

The key is the standard result that there exists a countable class of test functions for weak convergence, called a \textbf{convergence determining class} \cite{billingsley2009convergence}. This is opposed to needing to test all of $C_b(S)$ for convergence, which is (for generic $S$) uncountable. Since we'll apply this same reduction a few times in the sequel, we expand on the details here.


\begin{theorem}{If $S$ is a separable metrizable space, then there exists a countable subset $\mathfrak{C} \subset C_b(S)$ such that for any sequence of measures $\nu^i, 1 \leq i \leq \infty$, $\nu^i \rightarrow \nu^{\infty}$ weakly if and only if $\nu^i(f) \rightarrow \nu^{\infty}(f)$ for all $f \in \mathfrak{C}$.}
\label{thrm:Test Class}
\end{theorem}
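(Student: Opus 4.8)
The plan is to reduce weak convergence on $S$ to weak convergence on a compact metrizable space, where a countable determining family is immediate, and then to transfer back using the open-set form of the Portmanteau theorem. First I would replace $d$ by the bounded metric $d \wedge 1$, which generates the same topology, the same bounded continuous functions, and hence the same notion of weak convergence; so assume $d \le 1$. Fix a countable dense set $\{x_n\}_{n \ge 1} \subset S$ and define the map $\Phi : S \to H := [0,1]^{\mathbb{N}}$ by $\Phi(x) = (d(x, x_n))_{n \ge 1}$. I would check that $\Phi$ is a homeomorphism onto its image: injectivity and continuity are immediate, and the inverse is continuous because, choosing $x_n$ within $\varepsilon/3$ of $x$, any $y$ with $|d(y, x_n) - d(x, x_n)| < \varepsilon/3$ lies in $B(x, \varepsilon)$, so the coordinate functions $x \mapsto d(x, x_n)$ generate the topology.

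Next, since $H$ is compact metrizable, $C(H)$ is separable in the supremum norm; one may take as an explicit dense family the polynomials in finitely many coordinate projections with rational coefficients, which are sup-dense by Stone--Weierstrass. Fixing a countable sup-dense family $\{g_m\} \subset C(H)$, I would set
$$
\mathfrak{C} = \{\, g_m \circ \Phi : m \ge 1 \,\} \subset C_b(S),
$$
which is countable, with each $g_m \circ \Phi$ bounded and continuous on $S$ since $\Phi$ is continuous and $g_m$ is bounded continuous on $H$. The ``only if'' direction of the theorem is then trivial, as $\mathfrak{C} \subset C_b(S)$.

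For the ``if'' direction, suppose $\nu^i(f) \to \nu^\infty(f)$ for every $f \in \mathfrak{C}$, and write $\tilde\nu^i = \Phi_* \nu^i \in P(H)$ for the pushforward measures. The hypothesis reads $\tilde\nu^i(g_m) \to \tilde\nu^\infty(g_m)$ for all $m$; since the $\tilde\nu^i$ are probability measures (so $|\tilde\nu^i(g) - \tilde\nu^i(g_m)| \le \|g - g_m\|_\infty$) and $\{g_m\}$ is sup-dense, a routine $3\varepsilon$ estimate upgrades this to $\tilde\nu^i(g) \to \tilde\nu^\infty(g)$ for all $g \in C(H)$, i.e. $\tilde\nu^i \to \tilde\nu^\infty$ weakly on $H$.

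It remains to transfer this weak convergence back down to $S$, and I expect this to be the main obstacle: $\Phi(S)$ need not be closed in $H$, and a function in $C_b(S)$ need not extend to a function in $C(H)$, so one cannot simply pull test functions across. I would avoid any extension by invoking the Portmanteau theorem in its open-set form, which is valid on any metric space. For an arbitrary open $G \subset S$, the set $\Phi(G)$ is relatively open in $\Phi(S)$, hence $\Phi(G) = V \cap \Phi(S)$ for some open $V \subset H$, and then $\nu^i(G) = \tilde\nu^i(V)$ since $\Phi$ is an injective embedding. Applying Portmanteau on $H$ gives
$$
\liminf_{i} \nu^i(G) = \liminf_{i} \tilde\nu^i(V) \ge \tilde\nu^\infty(V) = \nu^\infty(G).
$$
As $G$ was an arbitrary open subset of $S$, the open-set criterion of Portmanteau on $S$ yields $\nu^i \to \nu^\infty$ weakly, completing the proof.
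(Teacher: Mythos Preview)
The paper does not actually prove this theorem; it is quoted as a standard result with a citation to Billingsley's \emph{Convergence of Probability Measures}, and no argument is given. Your proof, by contrast, is complete and correct: the Urysohn-type embedding of $S$ into the Hilbert cube $H=[0,1]^{\mathbb N}$ via $\Phi(x)=(d(x,x_n))_n$, the use of separability of $C(H)$ to obtain a countable sup-dense family, and the transfer back to $S$ via the open-set form of Portmanteau all go through as written. The one step that genuinely needs care --- that weak convergence of the pushforwards on $H$ implies weak convergence on $S$ even though $\Phi(S)$ need not be closed --- you handle correctly by avoiding extension of test functions and instead comparing open sets, using that $\Phi$ is a homeomorphism onto its image so $\nu^i(G)=\tilde\nu^i(V)$ for an appropriate open $V\subset H$.

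One small remark: your $3\varepsilon$ bound $|\tilde\nu^i(g)-\tilde\nu^i(g_m)|\le \|g-g_m\|_\infty$ and the equivalence of the open-set Portmanteau criterion with weak convergence both tacitly use that the $\nu^i$ are probability measures (or at least have common finite total mass). This is exactly the setting of the paper, which works throughout in $P(S)$, so it is not an issue here, but it is worth making the hypothesis explicit.
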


An immediate application to our setting is the following corollary.

\begin{prop}{If $S$ is a separable metrizable space, then for any sequence of random measures $\nu^i \in P(S), 1 \leq i \leq \infty$ defined on the same probability space, $\nu^i \rightarrow \nu^{\infty}$ in the weak topology almost surely if and only if for every fixed $f \in C_b(S)$, $\nu^i(f) \rightarrow \nu^{\infty}(f)$ almost surely.}
\label{thrm:Random Measure Convergence}
\end{prop}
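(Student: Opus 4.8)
The plan is to deduce both implications from the countable convergence-determining class $\mathfrak{C}$ supplied by \Cref{thrm:Test Class}. The forward direction is essentially immediate: if $\nu^i \rightarrow \nu^\infty$ weakly almost surely, then by the very definition of weak convergence there is a single almost-sure event on which $\nu^i(f) \rightarrow \nu^\infty(f)$ for \emph{all} $f \in C_b(S)$ at once; in particular, for any \emph{fixed} $f$ this convergence holds almost surely.

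The content lies in the reverse direction, and the only genuine subtlety there is measure-theoretic. The hypothesis gives, for each individual $f \in C_b(S)$, a full-measure event $A_f$ on which $\nu^i(f) \rightarrow \nu^\infty(f)$. We cannot simply intersect these over all of $C_b(S)$, since that class is typically uncountable and an uncountable intersection of full-measure events need not have full measure. This is precisely the obstacle that the countable class $\mathfrak{C}$ is designed to circumvent. First I would apply the hypothesis only to the countably many functions $f \in \mathfrak{C}$, obtaining events $A_f$ with $\Prb(A_f) = 1$, and set $A = \bigcap_{f \in \mathfrak{C}} A_f$. Because $\mathfrak{C}$ is countable, $A$ is a countable intersection of full-measure events and hence $\Prb(A) = 1$.

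It then remains to observe that on the event $A$ weak convergence comes for free. Fix any $\omega \in A$. By construction $\nu^i(\omega)(f) \rightarrow \nu^\infty(\omega)(f)$ for every $f \in \mathfrak{C}$ simultaneously, and since $\mathfrak{C}$ is convergence determining, \Cref{thrm:Test Class} yields $\nu^i(\omega) \rightarrow \nu^\infty(\omega)$ weakly. As this holds for every $\omega$ in the full-measure event $A$, we conclude $\nu^i \rightarrow \nu^\infty$ weakly almost surely.

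I expect the only point requiring care to be the passage from ``for each fixed $f$, almost surely'' to ``almost surely, for all test functions'': the entire role of \Cref{thrm:Test Class} is to reduce the relevant family of test functions to a countable one so that this exchange of quantifiers becomes legitimate. Everything else is a routine unwinding of definitions.
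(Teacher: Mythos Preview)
Your proposal is correct and matches the paper's approach exactly: the paper states this proposition as an ``immediate application'' of \Cref{thrm:Test Class} without spelling out the details, and the argument you give --- intersecting the full-measure events over the countable convergence-determining class $\mathfrak{C}$ to legitimize the exchange of quantifiers --- is precisely the intended deduction.
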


Applying \Cref{thrm:Random Measure Convergence} and \Cref{lem:token limit for f} the proof of \Cref{prop:Token Limit} us complete.

\subsection{Support and Total Variation Distance}
\label{sec:TVD}

We conclude this section with a brief discussion of the support of the measures $\mu_t, t \geq 0$ and give an easy corollary of \Cref{prop:Token Limit}.

Note that the measures $\mu^N_t$ have finitely many atoms and that their support coincides with the locations at time $t$ of the 
first $N$ tokens, that is
$$
\supp \mu^N_t = \{ \xi_{u_i(t)} \colon 1 \leq i \leq N \}.
$$
For $\mu_t$, by an abuse of notation for now we'll define the support for $t > 0$ as the similarly defined set of token's 
locations, i.e. we'll define $\supp \mu_t$ by
\begin{align}
\supp \mu_t = \{ \xi_{u_i(t)} \colon i \geq 1 \}. \label{def:supp mut}
\end{align}
We'll show later (see \Cref{sec:Finite Support}) that for positive times the set of token locations is finite on every compact 
set of $S$, and so is nowhere dense. This implies that our definition is in fact equivalent to the standard definition of the 
support of the measure, i.e
$$
\supp \nu = \overline{\{ s \in S \colon \nu(U) > 0 \text{ for all open } U \ni s \} },
$$
as shown in \Cref{lem:Support matches standard}. In particular, this shows that for all positive times $t$, the support of $\mu_t$ is countable. An immediate and useful corollary 
of this is the following.

\begin{lemma}{For all $t > 0$
$$
\supp \mu^N_t \subset \supp \mu_t.
$$}
\label{lem:Support of mu}
\end{lemma}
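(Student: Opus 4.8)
The plan is to read the result off directly from the two definitions of support, the only genuine content being the bookkeeping that matches each atom of the empirical measure with a token location. First I would recall from \Cref{def:Define muN} that $\mu^N_t$ places its mass at the locations $\xi_{u_1(t)}, \ldots, \xi_{u_N(t)}$, so that (these two notions coinciding for a finitely supported measure) its support is exactly $\supp \mu^N_t = \{\xi_{u_i(t)} \colon 1 \leq i \leq N\}$. On the other hand, for $t > 0$ the support of the limit measure was defined in \Cref{def:supp mut} as the set of \emph{all} token locations, $\supp \mu_t = \{\xi_{u_i(t)} \colon i \geq 1\}$.

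With these two descriptions in hand the containment is immediate. Fixing any $i$ with $1 \leq i \leq N$, the atom $\xi_{u_i(t)}$ of $\mu^N_t$ appears on the right-hand side as the token location indexed by that same $i \geq 1$, so $\xi_{u_i(t)} \in \supp \mu_t$. Since the index set $\{1, \ldots, N\}$ is contained in $\N$, taking the union over $1 \leq i \leq N$ yields $\supp \mu^N_t \subset \supp \mu_t$ for every $N$, which is the assertion.

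The one point that requires any care — and which I would simply defer — is that \Cref{def:supp mut} introduces $\supp \mu_t$ through the token-location set rather than through the usual topological support of $\mu_t$. Reconciling the two notions is precisely the role of the finiteness-on-compacts analysis of \Cref{sec:Finite Support} together with \Cref{lem:Support matches standard}; I expect no obstacle there, since the present lemma only compares the two explicitly given sets and is insensitive to that identification. Accordingly the value of the statement lies not in its (essentially trivial) proof but in its subsequent use, where knowing that each finite approximation lives inside $\supp \mu_t$ lets us transfer finiteness and nowhere-density properties from $\mu_t$ back to the empirical measures $\mu^N_t$.
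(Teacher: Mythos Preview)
Your argument is correct and matches the paper's own treatment: the paper states this lemma as ``an immediate and useful corollary'' of the explicit descriptions $\supp \mu^N_t = \{\xi_{u_i(t)} : 1 \le i \le N\}$ and $\supp \mu_t = \{\xi_{u_i(t)} : i \ge 1\}$, which is exactly the inclusion of index sets you spell out. Your remark deferring the identification of \Cref{def:supp mut} with the usual topological support to \Cref{lem:Support matches standard} is also precisely how the paper handles it.
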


We can now prove the following strengthening of \Cref{prop:Token Limit} which we will need later, showing that $\mu^N_t$ converges to $\mu_t$ in the stronger topology of Total Variation distance. We recall the definition of the Total Variation distance for a countable state space\cite{gibbs2002choosing}: for two measures $m_1$ and $m_2$ on a countable state space $X$, the total variation distance $d_{\TV}$ is given by
$$
d_{\TV}(m_1, m_2) = \frac{1}{2} \sum_{x \in X}| m_1(x) - m_2(x) |.
$$
As the token measures $\mu_t^N$ and their limit $\mu_t$ share a countable state space, by a fairly standard argument we can calculate their Total Variation distance.

\begin{lemma}{For any $t > 0$, we have
$$
d_{\TV}(\mu^N_t, \mu_t) \rightarrow 0
$$
almost surely.}
\label{lem:Convergence in TVD}
\begin{proof}
Fix any $\epsilon > 0$. Then, because
$$
\sum_{k = 1}^{\infty} p_k(t) = 1
$$
almost surely, we can almost surely choose a $K$ so that
\begin{align}
\label{eq:sum of pk}
\sum_{k = K + 1}^{\infty} p_k(t) < \epsilon.
\end{align}
Next, the Strong Law applied to the paintbox construction gives that $p_k^N(t) \rightarrow p_k(t)$ for all $ k \geq 1$ almost surely. Thus, we can choose almost surely an $M$ such that if $N \geq M$ then
$$
\sum_{k = 1}^K |p^N_k(t) - p_k(t)| \leq \epsilon.
$$

We claim that
$$
\sum_{k = K + 1}^{\infty} p^N_k(t) < 2 \epsilon.
$$
To see this, we calculate
\begin{align*}
| \sum_{K + 1}^{\infty} p_k^N(t) - \sum_{K + 1}^{\infty} p_k(t)| 
&= |\left( 1 - \sum_{1}^K p_k^N(t)\right) - \left( 1 - \sum_{1}^K p_k(t)\right) | \\
&= |\sum_{1}^K p_k^N(t) - \sum_1^K p_k(t) | \\
&\leq \sum_{k = 1}^K |p^N_k(t) - p_k(t)| \\
&\leq \epsilon,
\end{align*}
which combined with \Cref{eq:sum of pk} proves the claim.

Thus, for $N \geq M$, we have that
\begin{align*}
d_{\TV}(\mu^N_t, \mu_t) &= \frac{1}{2} \sum_{k = 1}^{\infty}|p^N_k(t) - p_k(t)| \\
&\leq \frac{1}{2} \sum_{k = 1}^{K}|p^N_k(t) - p_k(t)| +  \frac{1}{2}\sum_{K + 1}^{\infty} \left( p_k^N(t) + p_k(t) \right) \\
&\leq \frac{5}{2} \epsilon,
\end{align*}
completing the proof.
\end{proof}
\end{lemma}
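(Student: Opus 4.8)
The plan is to reduce the total variation distance to an $\ell^1$ distance between the two mass partitions and then run a Scheffé-type argument, splitting the sum into a head and a tail. First I would invoke \Cref{lem:Support of mu}, which gives $\supp \mu^N_t \subset \supp \mu_t$, so that both measures may be written over a common countable state space indexed by the blocks $B_k(t)$. Writing $p_k(t)$ for the limiting block masses and $p^N_k(t)$ for their empirical counterparts, the definition of total variation distance on a countable state space then yields
$$
d_{\TV}(\mu^N_t, \mu_t) = \frac{1}{2} \sum_{k = 1}^{\infty} |p^N_k(t) - p_k(t)|.
$$

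The two ingredients I would assemble are pointwise convergence of each block mass and control of the total mass. For the former, the paintbox construction underlying \Cref{prop:Token Limit} makes the block assignments of the tokens conditionally i.i.d.\ given $p(t)$, so the Strong Law gives $p^N_k(t) \to p_k(t)$ almost surely for every fixed $k \geq 1$. For the latter, \Cref{lem:No Dust} together with \Cref{eq:Sum of pk} ensures $\sum_k p_k(t) = 1$ almost surely, i.e.\ there is no escaping (dust) mass in the limit.

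With these in hand the argument is routine: fix $\epsilon > 0$, choose $K$ so that the limiting tail $\sum_{k > K} p_k(t)$ is below $\epsilon$, and then use the per-block Strong Law to choose $M$ so that for $N \geq M$ the head discrepancy $\sum_{k \leq K} |p^N_k(t) - p_k(t)|$ is at most $\epsilon$. Since both mass partitions sum to one, the tail masses differ from each other only by the head discrepancy, so $\sum_{k > K} p^N_k(t)$ lies within $\epsilon$ of $\sum_{k > K} p_k(t)$ and is therefore at most $2\epsilon$. Splitting the sum at $K$ and bounding the tail crudely by $\tfrac{1}{2}\sum_{k > K}(p^N_k(t) + p_k(t))$ then gives $d_{\TV}(\mu^N_t, \mu_t) = O(\epsilon)$ for $N \geq M$, and since $\epsilon$ was arbitrary this establishes the almost sure convergence; the precise constant in front of $\epsilon$ is immaterial.

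The only genuine subtlety, and the step I expect to be the main obstacle, is the tail: pointwise convergence of the infinitely many nonnegative terms $p^N_k(t)$ does not by itself force $\ell^1$ convergence, so one cannot simply pass to the limit term by term. This is exactly where \Cref{lem:No Dust} is essential, since pinning both the empirical and the limiting total mass at one is precisely what upgrades pointwise convergence of the block masses into convergence in total variation. Everything else is bookkeeping.
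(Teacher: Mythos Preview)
Your proposal is correct and matches the paper's proof essentially step for step: the same $\ell^1$ representation of $d_{\TV}$ over the common countable support, the same head/tail split at $K$, the Strong Law for the per-block masses, and the same trick of transferring the head discrepancy to bound the empirical tail via the constraint that both mass partitions sum to one. The paper arrives at the explicit constant $\tfrac{5}{2}\epsilon$, but as you note this is immaterial.
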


Note that \Cref{lem:Convergence in TVD} is trivially false for time $t = 0$. In particular, if $\mu$ is non-atomic, as all of the empirical measures $\mu_0^N$ are atomic, they can never converge to $\mu_0 = \mu$ in Total Variation distance.

Note that in light of \Cref{lem:Convergence in TVD} it seems natural to wonder if $\mu^N_t$ converges to $\mu_t$ uniformly in Total Variation distance for $t > 0$. We answer this question in the affirmative for $t$ bounded away from $0$ in \Cref{lem:uniform tvd}.

\section{The Markov Property}
\label{sec:Markov Property}

In this section, we'll prove that from any initial measure $\mu \in P(S)$, that the constructed process $\mu_t, t \geq 0$ satisfies the Markov Property.

\begin{prop}{From any initial $\mu$, the process $\mu_t, t \geq 0$ is a time-homogeneous Markov process.}
\label{prop:mu Markov}
\end{prop}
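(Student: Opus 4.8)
The plan is to prove the Markov property by exhibiting a regeneration property of the construction: conditionally on the measure-history $\mathcal{F}_s = \sigma(\mu_r : 0 \le r \le s)$, I will show that the time-shifted process $(\mu_{s+t})_{t \ge 0}$ is distributed exactly as the process built from a \emph{fresh} token process with initial measure $\mu_s$. Since the law of that construction is a fixed function of its initial measure and does not reference the absolute starting time, this single statement yields both the Markov property and time-homogeneity at once. Concretely, it suffices to produce a kernel $\Psi$ on $P(S)$, not depending on $s$, with $\E[F(\mu_{s+\cdot}) \mid \mathcal{F}_s] = (\Psi F)(\mu_s)$ for bounded measurable $F$ on path space.

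The heart of the argument is to recognize the token configuration at time $s$ as a restarted token process driven by $\mu_s$. First, by \Cref{lem:No Dust} there is almost surely no dust at time $s > 0$, so via Kingman's correspondence (\Cref{thrm:Kingman Paintbox}) every token is assigned to some block $B_k(s)$ with conditional probability $p_k(s)$; consequently the token locations $\xi_{u_i(s)}$, $i \ge 1$, are conditionally IID samples of $\mu_s = \sum_k p_k(s)\,\delta(s_k(s))$, which is precisely the initialization of a token process from $\mu_s$. Second, the residual meeting times between tokens still alive at time $s$ are, by the memorylessness of the exponential clocks, fresh independent exponentials whose rates $\phi(d(\cdot,\cdot))$ are determined by the block locations $s_k(s)$ encoded in $\mu_s$. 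Third --- and this is what ties the labels to the masses --- \Cref{prop:Uniform Partitions} shows that, conditional on the measure-history, the ranking of the tokens realizing a given block structure is uniform over all compatible labelings; hence the deterministic ``lowest token wins'' rule reproduces the mass-proportional winner law, block $a$ absorbing block $b$ with conditional probability $p_a/(p_a+p_b)$.

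Assembling these three facts at the level of the finite empirical measures $\mu^N_t$ will show that, conditionally on $\mathcal{F}_s$, the family $(\mu^N_{s+t})_{t \ge 0}$ evolves as the empirical measures of a token process started from $\mu_s$; passing to the limit with \Cref{prop:Token Limit} then identifies the conditional law of $(\mu_{s+t})_{t\ge0}$ with that of the constructed process from $\mu_s$. Crucially, each of the three inputs references the history only through the block masses and locations, i.e.\ through $\mu_s$ alone, so the resulting kernel $\Psi$ is independent of the finer history and of $s$, which is exactly what time-homogeneity requires.

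The main obstacle I anticipate is the joint conditioning in the second and third steps. Conditioning on the whole trajectory $\mathcal{F}_s$, rather than on $\mu_s$ in isolation, could in principle bias both the residual clocks and the token ranks, and the meeting clocks are geometry-dependent and hence \emph{not} independent across pairs. The crux is therefore to verify that memorylessness and the symmetric exchangeability of \Cref{prop:Uniform Partitions} genuinely decouple: that conditional on $\mathcal{F}_s$ the residual clocks remain exponential with the stated rates while the labels remain uniform given only the masses, and that this holds uniformly enough in $N$ for the limit in \Cref{prop:Token Limit} to go through. Making this decoupling precise, rather than the final assembly, is where the real work lies.
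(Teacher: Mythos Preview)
Your proposal is correct and follows essentially the same route as the paper: work at the level of the empirical measures $\mu^N$, use conditional exchangeability of the token partition together with memorylessness of the clocks to see a fresh token process from $\mu_s$, and then pass to the limit to obtain the Markov property and time-homogeneity simultaneously. The ``main obstacle'' you flag---upgrading \Cref{prop:Uniform Partitions} from conditioning on the $\mu^N$-history to conditioning on $\mathcal{F}_s = \sigma(\mu_r : r \le s)$---is precisely what the paper isolates as \Cref{lem:Conditionally Exchg}, proved by a bounded-convergence and monotone-class argument; once that is in hand, the rest of your assembly goes through exactly as you outline.
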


Our approach to this will be to show first that the empirical measures $\mu_t^N, t \geq 0$ of the token process are Markovian with respect to the adapted filtration of $\mu_t, t \geq 0$. Then, by a limiting argument we will show that this carries over for $\mu_t$. Finally, we use the exchangeability of the token process to prove time homogeneity.

\subsection{Markov Condition for $\mu_t^N$}

To begin, we show that the empirical processes $\mu_t^N, t \geq 0$ are Markov with respect to the filtration generated by the MC process $\mu_t, t \geq 0$.

\begin{prop}{For any Borel $A \subset P(S)$, times $0 \leq t_0 \leq t_1$ and $N \geq 1$ we have
$$
\Prb ( \mu^N_{t_1} \in A \vert F_{t_0}) = \Prb ( \mu^N_{t_1} \in A \vert \mu_{t_0} ).
$$}
\label{lem:muN Markov}
\end{prop}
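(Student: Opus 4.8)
The plan is to reduce the statement to a claim about the conditional law of the labelled token partition at time $t_0$, and then feed in the symmetric exchangeability of \Cref{prop:Uniform Partitions} together with Kingman's paintbox.

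First recall from the construction in \Cref{sec:Token Construction} that the path $u_i(t)$ of each token $i$ depends only on the meeting clocks among tokens $1,\dots,i$; hence $(\mu^N_t)_{t\ge 0}$ is a deterministic functional of $\xi_1,\dots,\xi_N$ and the clocks $t_{jk}$ with $j,k\le N$ alone. Moreover any block of $Z(t)$ containing a token $\le N$ has representative (least element) $\le N$, so the first $N$ tokens never leave $\{\xi_1,\dots,\xi_N\}$ and the evolution of $\mu^N$ is self-contained. Writing $\mathcal T_{t_0}$ for the natural filtration of the whole token process, we have $F_{t_0}\subset\mathcal T_{t_0}$ since, by \Cref{prop:Token Limit}, each $\mu_s=\lim_N\mu^N_s$ is $\mathcal T_s$-measurable. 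The memorylessness of the exponential clocks gives that, conditionally on $\mathcal T_{t_0}$, the residual clocks among tokens $\le N$ are fresh exponentials with the same rates, so
$$
\Prb(\mu^N_{t_1}\in A\mid\mathcal T_{t_0}) = H_N\big(Z^N(t_0),(\xi_k)_{k\le N}\big)
$$
for a deterministic $H_N$, a function of the labelled partition $Z^N(t_0)$ (equivalently, of $\mu^N_{t_0}$ together with the ordering of its atoms by representative, which drives the ``lower wins'' rule). Taking $\E[\,\cdot\mid F_{t_0}]$ and applying the tower property reduces the proposition to showing that $\E\big[H_N(Z^N(t_0),(\xi_k))\mid F_{t_0}\big]$ is a function of $\mu_{t_0}$ alone.

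The core step is therefore to identify the conditional law of the labelled partition $Z^N(t_0)$, with its block locations, given $F_{t_0}$, and to see that it depends only on $\mu_{t_0}=\sum_k p_k(t_0)\,\delta(s_k(t_0))$. By Kingman's paintbox (\Cref{thrm:Kingman Paintbox}) together with \Cref{lem:No Dust}, conditionally on the frequencies $p(t_0)$ the partition $Z(t_0)$ is a dust-free paintbox with frequencies $p(t_0)$, and by the one-to-one correspondence between blocks and locations established in the definition of $\mu_t$ its $k$-th block sits at $s_k(t_0)$. What must be added is that the \emph{assignment} of tokens to blocks carries no further information about the trajectory $(\mu_s,\,0\le s\le t_0)$ beyond $\big(p(t_0),(s_k(t_0))\big)$: this is exactly the symmetric exchangeability of \Cref{prop:Partition is Exchangeable}, in the sharper conditional form of \Cref{prop:Uniform Partitions}. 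Granting it, the conditional law of $(Z^N(t_0),(\xi_k)_{k\le N})$ given $F_{t_0}$ is the restriction to $[N]$ of the paintbox determined by $\mu_{t_0}$: each token $i\le N$ is attached to the atom at $s_k$ independently with probability $p_k(t_0)$, blocks are labelled by their least token, and block $k$ is located at $s_k(t_0)$. This law is a function of the multiset $\{(p_k(t_0),s_k(t_0))\}$, i.e. of $\mu_{t_0}$, so $\E[H_N\mid F_{t_0}]=\Phi_N(\mu_{t_0})$ for a deterministic $\Phi_N$. As a consistency check, the least token among two atoms of masses $p_k,p_{k'}$ lies in the first with probability $p_k/(p_k+p_{k'})$, so the induced dynamics is precisely the mass-proportional Metric Coalescent rule. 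Finally, since $\Phi_N(\mu_{t_0})$ is $\sigma(\mu_{t_0})$-measurable and $\sigma(\mu_{t_0})\subset F_{t_0}$, conditioning both sides of $\Prb(\mu^N_{t_1}\in A\mid F_{t_0})=\Phi_N(\mu_{t_0})$ on $\sigma(\mu_{t_0})$ yields $\Prb(\mu^N_{t_1}\in A\mid F_{t_0})=\Prb(\mu^N_{t_1}\in A\mid \mu_{t_0})$.

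The main obstacle is the conditional exchangeability invoked in the third paragraph. \Cref{prop:Uniform Partitions} is stated conditionally on the \emph{empirical} history $\sigma(\mu^N_s,\,0\le s\le t_0)$, whereas here I must condition on the \emph{limit} history $F_{t_0}=\sigma(\mu_s,\,0\le s\le t_0)$; bridging the two requires letting $N\to\infty$ and checking that the limiting assignment of tokens to blocks remains exchangeable given $F_{t_0}$ and independent of the trajectory beyond $\mu_{t_0}$ — this is the precise content of the conditional-exchangeability statement forward-referenced in \Cref{sec:Symmetric Exchg}. Secondary technical points are the use of regular conditional distributions so that $H_N$ and $\Phi_N$ are genuinely well defined, and discarding the null event on which two of the $\xi_i$ coincide, so that each block is unambiguously identified with its location.
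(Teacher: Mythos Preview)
Your proposal is correct and follows essentially the same route as the paper: reduce to the conditional law of the labelled token partition at $t_0$ given $F_{t_0}$, invoke conditional exchangeability to see this is the paintbox determined by $\mu_{t_0}$ alone, and use memorylessness of the clocks for the forward evolution. The paper packages the argument slightly differently---it introduces $A^N=(A_1,\dots,A_N)$ for the block indices of the first $N$ tokens and splits into two lemmas, one showing $\Prb(A^N=\bar k\mid F_{t_0})=p_{k_1}(t_0)\cdots p_{k_N}(t_0)\in\sigma(\mu_{t_0})$ and one showing $\Prb(\mu^N_{t_1}\in\cdot\mid F_{t_0},A^N)=\Prb(\mu^N_{t_1}\in\cdot\mid \mu_{t_0},A^N)$---but this is your $H_N$/$\Phi_N$ decomposition in different clothing.

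The obstacle you flag is exactly the one the paper isolates and proves as a standalone result (\Cref{lem:Conditionally Exchg}): conditional on $F_{t_0}$, $Z(t_0)$ is still exchangeable. The paper's proof of this is the limiting argument you anticipate---pass from \Cref{prop:Uniform Partitions} (exchangeability conditional on the empirical history $\mu^N_s$) to the limit history $\mu_s$ by testing against products $f_1(\mu^N_{t_1})\cdots f_n(\mu^N_{t_n})$, using bounded convergence and a monotone class argument. Since you correctly identified both the reduction and the missing ingredient, your outline would become a complete proof once that lemma is supplied.
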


To prove \Cref{lem:muN Markov}, we fix a time $t_0$ and make use of the symmetric paintbox description of the measure $\mu_{t_0}$ as
$$
\mu_{t_0} = \sum_{k = 1}^{\infty} p_k(t_0) \delta(s_k).
$$

Now each token $1\leq i \leq N$ at time $t_0$ is in one of the blocks of the paintbox partition given by $p(t_0)$, so write $A_i$ for the block of the paintbox partition containing token $i$. That is, the location in $S$ of token $i$ at $t_0$ is
$$
\xi_{u_i(t_0)} = s_{A_i}.
$$
For simplicity of notation we will often write
$$
A^N = (A_1, \ldots, A_N),
$$
for the location of the first $N$ tokens at time $t_0$. By \Cref{prop:Uniform Partitions}, we already know that at time $t_0$ the token partition $Z(t_0)$ is exchangeable. In fact, we will show that even conditional on $\mu_t, 0 \leq t \leq t_0$, the token process is still exchangeable. This is the complete description of the symmetric form of exchangeability for the MC process (as in \Cref{sec:Symmetric Exchg}).

\begin{prop}{Conditional on $\mu_t, 0 \leq t \leq t_0$ the token partition $Z(t_0)$ is exchangeable.}
\label{lem:Conditionally Exchg}
\begin{proof}
\Cref{prop:Uniform Partitions} shows that conditional on $\mu^N_t, 0 \leq t \leq t_0$ the partition $Z^N(t_0)$ is exchangeable. To complete this proof, we need to show that this exchangeability holds conditional on $\mu_t, t \geq 0$.

A (standard) basis for the topology of the set $\Pi$ of all permutations on $\N$ is given by the finite restrictions, that is by the sets of the form
$$
B_{\pi_0} = \{\pi \in \Pi \colon \pi|_N = \pi_0|_N \}
$$ 
where $\pi|_N = \pi \cap \{1, 2 \ldots, N \}$. Consider any such fixed $B = B_{\pi_0}$.

Let $\sigma$ be a finite permutation of $\N$. By assumption there is an $N_{\sigma} \geq 1$ such that $\sigma$ is constant for all integers after $N_{\sigma}$. Now, \Cref{prop:Uniform Partitions} implies that the conditional law of the partition $Z^N(t_0)$ on the set $\Pi$ of all positive partitions satisfies
$$
\Prb( \sigma Z^N(t_0) \in B \vert \mu^N_t, 0 \leq t \leq t_0) = \Prb( Z^N(t_0) \in B \vert \mu^N_t, 0 \leq t \leq t_0),
$$
for all $N \geq N_{\sigma}$.

To show that $Z(t_0)$ is conditionally exchangeable (on $B$), we need to show that for any such $\sigma$,
\begin{align}
\Prb( \sigma Z(t_0) \in B \vert \mu_t, 0 \leq t \leq t_0) = \Prb( Z(t_0) \in B \vert \mu_t, 0 \leq t \leq t_0). \label{eq:sigmaZ cond}
\end{align}

Now, by the finite definition of $B$, we have
$$
1( Z^N(t_0) \in B ) \rightarrow 1( Z(t_0) \in B)
$$
and similarly for $\sigma Z^N(t_0)$, as for $N \geq N_{\sigma}$ we have
$$
(\sigma Z(t_0) )|_N = \sigma Z^N(t_0).
$$

We can then prove \Cref{eq:sigmaZ cond} by a standard limiting and monotone class argument. First, for any finite set of times $t_1, \ldots, t_n$ we have $\mu^N_{t_i} \rightarrow \mu_{t_i}$ for all $1 \leq i \leq n$ almost surely. Letting $f_i \colon P(S) \rightarrow \R$ be bounded and continuous, we then have
$$
F^N = f_1(\mu^N_{t_1}) \cdots f_n(\mu^N_{t_n}) \rightarrow F = f_1(\mu_{t_1}) \cdots f_n(\mu_{t_n}),
$$
almost surely. Now by \Cref{prop:Uniform Partitions} for $N \geq N_{\sigma}$ we have that
$$
\E \left( 1(\sigma Z^N(t_0) \in B ) F^N \right) = \E \left( 1( Z^N(t_0) \in B ) F^N \right).
$$

Applying the bounded convergence theorem, this shows that for any
$$
F \in \sigma( \mu_t, 0 \leq t \leq t_0)
$$
that can be factored as such, we have
\begin{align*}
\E \left( 1(\sigma Z(t_0) \in B ) F \right) &= \E \lim_N \left( 1(\sigma Z^N(t_0) \in B )F^N \right)\\
&= \lim_N \E \left( 1\left( \sigma Z^N(t_0) \in B \right) F^N \right ) \\
&= \lim_N \E \left( 1\left( Z^N(t_0) \in B \right) F^N \right ) \\
&=  \E \left( 1(Z(t_0) \in B ) F \right).
\end{align*}
By the Monotone Class Theorem (Theorem 5.1.5, \cite{durrett2010probability}), the same equality holds for any bounded $F \in \sigma( \mu_t, 0 \leq t \leq t_0)$, proving \Cref{eq:sigmaZ cond}. As the finitely defined sets $B$ are a basis for the topology on $\Pi$, the $\pi-\lambda$ theorem shows that \Cref{eq:sigmaZ cond} holds for any measurable subset of $\Pi$, completing the proof.
\end{proof}
\end{prop}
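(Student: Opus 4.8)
The plan is to lift the finite-level conditional exchangeability already established in \Cref{prop:Uniform Partitions} — that conditional on $\mu^N_t, 0 \leq t \leq t_0$ the partition $Z^N(t_0)$ is exchangeable — to the infinite partition $Z(t_0)$ conditioned on the full limit process $\mu_t, 0 \leq t \leq t_0$. Since the finite-restriction sets $B_{\pi_0} = \{ \pi \in \Pi \colon \pi|_N = \pi_0|_N \}$ form a basis for the topology on $\Pi$, it suffices to prove, for every finite permutation $\sigma$ and every such $B$, that
$$
\Prb( \sigma Z(t_0) \in B \vert \mu_t, 0 \leq t \leq t_0) = \Prb( Z(t_0) \in B \vert \mu_t, 0 \leq t \leq t_0),
$$
and then to invoke the $\pi$-$\lambda$ theorem to extend from basic sets to all measurable subsets of $\Pi$, giving full conditional exchangeability.

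To verify this identity of conditional probabilities, I would test it against a convenient generating class of bounded $\sigma(\mu_t, 0 \leq t \leq t_0)$-measurable random variables, namely finite products $F = f_1(\mu_{t_1}) \cdots f_n(\mu_{t_n})$ with each $f_i \colon P(S) \to \R$ bounded and continuous and each $t_i \in [0, t_0]$. Such cylinder functionals generate the conditioning $\sigma$-algebra, so by the Monotone Class Theorem it is enough to establish
$$
\E \big( 1(\sigma Z(t_0) \in B) F \big) = \E \big( 1(Z(t_0) \in B) F \big)
$$
for all such $F$, and the conditional-probability statement above then follows.

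The bridge to the finite level is approximation. Fix $\sigma$ acting as the identity on all integers exceeding some $N_\sigma$, and take $N \geq N_\sigma$. Because $B$ depends only on the restriction to $\{1, \ldots, N\}$ and $(\sigma Z(t_0))|_N = \sigma Z^N(t_0)$, we have $1(Z^N(t_0) \in B) \to 1(Z(t_0) \in B)$ and $1(\sigma Z^N(t_0) \in B) \to 1(\sigma Z(t_0) \in B)$ almost surely; meanwhile, by the almost sure convergence $\mu^N_{t_i} \to \mu_{t_i}$ from \Cref{prop:Token Limit} together with continuity of the $f_i$, the approximating products $F^N = f_1(\mu^N_{t_1}) \cdots f_n(\mu^N_{t_n})$ converge almost surely to $F$. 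Now \Cref{prop:Uniform Partitions} supplies the exact finite identity $\E( 1(\sigma Z^N(t_0) \in B) F^N ) = \E( 1(Z^N(t_0) \in B) F^N )$ for $N \geq N_\sigma$, since $F^N$ is a function of the $\mu^N_t$-history and conditional on that history $Z^N(t_0)$ is exchangeable. As every integrand is uniformly bounded (the indicators by $1$, the $f_i$ by their sup norms), the Bounded Convergence Theorem lets me pass to the limit in $N$ on both sides and recover the desired equality for $F$.

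The main obstacle is the interchange of conditioning $\sigma$-algebras: the exchangeability delivered by \Cref{prop:Uniform Partitions} is conditional on the finite empirical history $\sigma(\mu^N_t, 0 \leq t \leq t_0)$, whereas the claim conditions on the limit history $\sigma(\mu_t, 0 \leq t \leq t_0)$, and these filtrations do not coincide. The approximation step is precisely what reconciles them, so the delicate point is to ensure the finite-$N$ indicators and the cylinder functionals $F^N$ converge \emph{jointly} almost surely and that bounded convergence genuinely applies, so the clean finite-$N$ symmetry survives the limit. Beyond carefully maintaining $N \geq N_\sigma$ throughout and confirming that the basic sets $B_{\pi_0}$ generate the Borel $\sigma$-algebra on $\Pi$ (so that the $\pi$-$\lambda$ extension is legitimate), I expect no further difficulty.
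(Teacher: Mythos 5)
Your proposal is correct and follows essentially the same route as the paper's own proof: the same finite-restriction basis $B_{\pi_0}$ on $\Pi$, the same use of \Cref{prop:Uniform Partitions} to get the exact finite-$N$ identity against cylinder functionals $F^N = f_1(\mu^N_{t_1}) \cdots f_n(\mu^N_{t_n})$, the same passage to the limit via \Cref{prop:Token Limit} and bounded convergence, and the same closing appeal to the Monotone Class Theorem and the $\pi$-$\lambda$ theorem. Nothing is missing.
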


Having proved \Cref{lem:Conditionally Exchg}, we can now easily see that conditional on the past of the MC process, the partition at time $t_0$ is still given by the paintbox construction.

\begin{lemma}{For any $k_1, k_2, \ldots, k_N \in \{1, 2 \ldots \}$, we have that
$$
\Prb( A^N = (k_1, \ldots, k_N) \vert \mu_t, 0 \leq t \leq t_0) = \Prb( A^N = (k_1, \ldots, k_N) \vert \mu_{t_0}).
$$}
\label{lem:Cond A 1}
\begin{proof}
\Cref{lem:Conditionally Exchg} shows that conditional on $\mu_t, 0 \leq t \leq t_0$, the token partition $Z(t_0)$ is exchangeable and so given by a paintbox construction, which since the asymptotic rates (i.e mass partition) at time $t_0$ are determined by $\mu_{t_0}$ must coincide with the mass partition in the construction of $\mu_t$. Therefore we have
$$
\Prb( A^N = (k_1, \ldots, k_N) \vert \mu_t, 0 \leq t \leq t_0) = p_{k_1}(t_0) \cdots p_{k_n}(t_0)
$$
which is clearly in $\sigma(\mu_{t_0})$.
\end{proof}
\end{lemma}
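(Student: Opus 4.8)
The plan is to show that the conditional probability on the left-hand side is in fact $\sigma(\mu_{t_0})$-measurable, and then to deduce its equality with the conditional expectation given the smaller $\sigma$-field $\sigma(\mu_{t_0})$ by the tower property. First I would invoke \Cref{lem:Conditionally Exchg}, which already establishes that, conditional on the entire past $\sigma(\mu_t, 0 \leq t \leq t_0)$, the token partition $Z(t_0)$ is exchangeable. By Kingman's Paintbox Theorem (\Cref{thrm:Kingman Paintbox}) this conditional law is a mixture of paintbox laws; but since the asymptotic block frequencies of an exchangeable partition exist almost surely as limits of empirical frequencies, they are measurable with respect to the partition itself, so the conditional law is in fact a single paintbox whose mass partition is precisely the (random) vector of these frequencies. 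By \Cref{lem:No Dust} there is no dust at time $t_0 > 0$, so every token is assigned to a genuine block $k \geq 1$.

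The second step is to identify this mass partition with the $p_k(t_0)$ appearing in the construction \eqref{def:mut} of $\mu_{t_0}$. By definition the $p_k(t_0)$ are exactly the asymptotic frequencies of the blocks $B_k(t_0)$ of $Z(t_0)$, and they are recorded as the atom masses of $\mu_{t_0} = \sum_k p_k(t_0)\delta(s_k(t_0))$; hence the mass partition of the conditional paintbox coincides almost surely with $\{ p_k(t_0) \}$, and each $p_k(t_0)$ is a measurable function of $\mu_{t_0}$ (for instance the decreasingly ordered sequence of atom masses of $\mu_{t_0}$). Because tokens are assigned to blocks independently in the paintbox construction, this yields
$$
\Prb( A^N = (k_1, \ldots, k_N) \mid \mu_t, 0 \leq t \leq t_0) = p_{k_1}(t_0) \cdots p_{k_N}(t_0),
$$
and the right-hand side lies in $\sigma(\mu_{t_0})$.

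Finally I would conclude by the standard measure-theoretic fact that if $\mathcal{H} \subseteq \mathcal{G}$ and $\E(X \mid \mathcal{G})$ happens to be $\mathcal{H}$-measurable, then $\E(X \mid \mathcal{G}) = \E(X \mid \mathcal{H})$. Applying this with $X = 1(A^N = (k_1, \ldots, k_N))$, $\mathcal{G} = \sigma(\mu_t, 0 \leq t \leq t_0)$, and $\mathcal{H} = \sigma(\mu_{t_0})$ gives exactly the claimed identity, since the previous step shows $\E(X \mid \mathcal{G})$ is $\sigma(\mu_{t_0})$-measurable. I expect the main obstacle to be the second step: one must check carefully that the conditional block frequencies of $Z(t_0)$ are almost surely equal to, and measurably recoverable from, the atom masses of $\mu_{t_0}$, and that the block labels are pinned down (e.g.\ by decreasing mass) so that each $p_k(t_0)$ is unambiguously $\sigma(\mu_{t_0})$-measurable. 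Everything else follows directly from \Cref{lem:Conditionally Exchg}, Kingman's theorem, and measure-theoretic routine.
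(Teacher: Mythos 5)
Your proposal is correct and takes essentially the same approach as the paper: it invokes \Cref{lem:Conditionally Exchg} for conditional exchangeability, identifies the paintbox mass partition with the atom masses $p_k(t_0)$ of $\mu_{t_0}$, and reads off the product formula $p_{k_1}(t_0) \cdots p_{k_N}(t_0)$, which lies in $\sigma(\mu_{t_0})$. The only difference is that you make explicit the concluding measure-theoretic step (that a $\sigma(\mu_{t_0})$-measurable version of the conditional probability given the larger $\sigma$-field coincides with the conditional probability given $\sigma(\mu_{t_0})$) and the care needed to pin down the block labelling, both of which the paper leaves implicit.
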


Next, we show that conditional on the past of the MC process, the future is determined solely by $\mu_{t_0}$ and the locations $A^N$ of the first $N$ tokens. 

\begin{lemma}{For $0 \leq t_0 \leq t_1$ we have that
$$
\Prb(\mu^N_{t_1} \in \cdot \vert \mu_t, 0 \leq t \leq t_0, A^N) = \Prb(\mu^N_{t_1} \in \cdot \vert \mu_{t_0}, A^N),
$$
as random measures on $P(S)$.}
\begin{proof}
The basic idea of this proof is that the locations of the first $N$ tokens at time $t_1$ conditional on the past (before $t_0$) depends only on the locations of the first $N$ tokens at time $t_0$.

For shorthand write $\xi_i(t)$ for the location in $S$ of token $i$ at time $t$, i.e.
$$
\xi_i(t) = \xi_{u_i(t)} = s_{A_i},
$$
and let $\xi^N(t)$ be the collection $\xi_i(t), 1 \leq i \leq N$ and similarly $\xi(t)$ for $\xi_i(t), i \geq 1$.

First, by construction the token process $\xi(t), t \geq 0$ is Markov and so as $\mu^N_t$ is a function of $\xi(t)$ we have
\begin{align*}
\Prb( \mu^N_{t_1} \in \cdot \vert \xi(t), 0 \leq t \leq t_0 ) &= \Prb( \mu^N_{t_1} \in \cdot \vert \xi(t_0) )\\
&= \Prb( \mu^N_{t_1} \in \cdot \vert \xi^N(t_0) )
\end{align*}
using for the second equality that the path of the first $N$ tokens doesn't depend on the path of the other tokens.

Therefore using that
$$
\sigma ( \mu_t, 0 \leq t \leq t_0, A^N) \subset \sigma( \xi(t), 0 \leq t \leq t_0 )
$$
we can complete the proof using the tower property of conditional expectation by calculating
\begin{align*}
\Prb( \mu^N_{t_1} \in \cdot &\vert  \mu_t, 0 \leq t \leq t_0, A^N) \\
&=\E \left( \Prb \left( \mu^N_{t_1} \in \cdot \vert \xi(t), 0 \leq t \leq t_0 \right) \vert \mu_t, 0 \leq t \leq t_0, A^N \right) \\
&= \E \left( \Prb \left( \mu^N_{t_1} \in \cdot \vert \xi^N(t_0) \right) \vert \mu_t, 0 \leq t \leq t_0, A^N \right) \\
&= \Prb( \mu^N_{t_1} \in \cdot \vert \mu^N_{t_0}, A).
\end{align*}
For the last step, we use that
$$
\sigma ( \xi^N(t_0) ) = \sigma(\mu^N_{t_0}, A^N ) \subset \sigma(  \mu_t, 0 \leq t \leq t_0, A^N ).
$$
\end{proof}
\label{lem:Cond A 2}
\end{lemma}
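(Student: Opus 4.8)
The plan is to reduce the claimed identity to the Markov property of the underlying token process, together with the structural fact—already noted in \Cref{sec:Token Construction}—that the trajectory of each token $i$ depends only on the meeting times among tokens $1$ through $i$. Write $\xi_i(t) = \xi_{u_i(t)}$ for the location of token $i$ at time $t$, let $\xi^N(t) = (\xi_1(t), \ldots, \xi_N(t))$, and recall that $\mu^N_{t_1}$ is a deterministic function of $\xi^N(t_1)$. The key point is that the conditional law of $\mu^N_{t_1}$ given the entire pre-$t_0$ history should collapse all the way down to a function of $\xi^N(t_0)$ alone, after which only a bookkeeping of $\sigma$-algebras separates this from the two expressions in the statement.

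First I would record that the full token configuration $\xi(t) = (\xi_i(t))_{i \geq 1}$, $t \geq 0$, is a time-homogeneous Markov process: the locations $\xi_i$ are fixed and the meeting clocks $t_{ij}$ are independent exponentials, so by the lack of memory of the exponential the post-$t_0$ evolution given the full past agrees with that given $\xi(t_0)$. Since $\mu^N_{t_1}$ is measurable with respect to $\xi^N(t_1)$, and since the evolution of the first $N$ tokens is driven only by the finitely many clocks $t_{ij}$ with $i, j \leq N$ (so it never consults the remaining tokens), this Markov property sharpens to
$$
\Prb(\mu^N_{t_1} \in \cdot \vert \xi(t),\, 0 \leq t \leq t_0) = \Prb(\mu^N_{t_1} \in \cdot \vert \xi^N(t_0)).
$$
To pass to the coarser conditionings I would then invoke the tower property along the containment $\sigma(\mu_t,\, 0 \leq t \leq t_0,\, A^N) \subset \sigma(\xi(t),\, 0 \leq t \leq t_0)$, valid because every $\mu_t$ and every block label $A_i$ is a function of the token data up to time $t_0$; applying $\E(\,\cdot\,\vert\,\mu_t,\, 0 \leq t \leq t_0,\, A^N)$ to the display and using that its right-hand side is $\sigma(\xi^N(t_0))$-measurable leaves $\Prb(\mu^N_{t_1}\in\cdot\vert\xi^N(t_0))$ on both sides.

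The main obstacle, and the part requiring care, is the final $\sigma$-algebra identification. One must verify $\sigma(\xi^N(t_0)) = \sigma(\mu^N_{t_0}, A^N)$—the locations of the first $N$ tokens are recovered from the block labels $A^N$ together with the atom positions, and conversely the empirical measure and labels are read off from the locations—and then check that this sits inside both $\sigma(\mu_t,\, 0 \le t \le t_0, A^N)$ and $\sigma(\mu_{t_0}, A^N)$ in such a way that the $\sigma(\xi^N(t_0))$-measurable conditional law $\Prb(\mu^N_{t_1}\in\cdot\vert\xi^N(t_0))$ is returned unchanged by each conditioning. The slightly delicate point here is that co-located tokens lie in a common block—no two distinct blocks share a location, since coincident initial positions merge at time $0$—so that $\xi^N(t_0)$ genuinely recovers the partition and hence the labels. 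Once this is in place, both sides of the asserted identity equal $\Prb(\mu^N_{t_1}\in\cdot\vert\xi^N(t_0))$ and the lemma follows.
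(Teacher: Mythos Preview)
Your proposal is correct and follows essentially the same route as the paper: reduce to the Markov property of the full token process $\xi(t)$, sharpen to $\xi^N(t_0)$ via the fact that the first $N$ tokens evolve autonomously, and then use the tower property along $\sigma(\mu_t,\,0\le t\le t_0,\,A^N)\subset\sigma(\xi(t),\,0\le t\le t_0)$ together with the identification $\sigma(\xi^N(t_0))=\sigma(\mu^N_{t_0},A^N)$. If anything, your treatment of the final step is slightly more explicit than the paper's, since you spell out that $\sigma(\xi^N(t_0))$ lies inside \emph{both} $\sigma(\mu_t,\,0\le t\le t_0,\,A^N)$ and $\sigma(\mu_{t_0},A^N)$, so each conditioning returns the $\sigma(\xi^N(t_0))$-measurable law unchanged.
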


We are now able to give our proof of \Cref{lem:muN Markov}.

\begin{proof}
Applying  \Cref{lem:Cond A 1} and \Cref{lem:Cond A 2}, by a simple calculation we have
\begin{align*}
\Prb(\mu^N_{t_1} \in \cdot &\vert \mu_t, 0 \leq t \leq t_0) \\
&= \sum_{A^N = \bar{k}} \Prb(\mu^N_{t_1} \in \cdot, A^N = \bar{k} \vert \mu_t, 0 \leq t \leq t_0 ) \\
&= \sum_{A^N = \bar{k}} \Prb(\mu^N_{t_1} \in \cdot \vert \mu_t, 0 \leq t \leq t_0, A^N) \Prb(A^N = \bar{k} \vert \mu_t, 0 \leq t \leq t_0 ) \\
&= \sum_{A^N = \bar{k}} \Prb(\mu^N_{t_1} \in \cdot \vert \mu_{t_0}, A^N) \Prb(A^N = \bar{k} \vert \mu_{t_0} ) \\
&= \Prb(\mu^N_{t_1} \in \cdot \vert \mu_{t_1})
\end{align*}
where the sum is over all $\bar{k} = (k_1, \ldots, k_N) \subset \{1, 2, \ldots \}^N$.
\end{proof}

\subsection{Proof of \Cref{prop:mu Markov}}

We can now start the proof of \Cref{prop:mu Markov} and show that $\mu_t, t \geq 0$ is Markov, by applying \Cref{lem:muN Markov} and a limiting argument.

\begin{proof}
First, we claim that as random measures on $P(S)$ we have
$$
\Prb(\mu^N_{t_1} \in \cdot \vert \mu_t, 0 \leq t \leq t_0) \rightarrow \Prb(\mu_{t_1} \in \cdot \vert \mu_{t}, 0 \leq t \leq t_0 ),
$$
almost surely.

To see this, let $U \subset P(S)$ be open. Then as $\mu^N_{t_1} \rightarrow \mu_{t_1}$, if $\mu_{t_1} \in U$ then almost surely so is $\mu^N_{t_1}$ eventually; that is
$$
\liminf_N 1(\mu^N_{t_1} \in U) \geq 1(\mu_{t_1} \in U).
$$
Applying Fatou's Lemma we have that
\begin{align*}
\liminf_N \E (1 (\mu^N_{t_1} \in U) \vert \mu_t, 0 \leq t \leq t_0 ) &\geq \E( \liminf_N 1 (\mu^N_{t_1} \in U) \vert \mu_t, 0 \leq t \leq t_0 )  \\
&\geq \E(1(\mu_{t_1} \in U) \vert \mu_t, 0 \leq t \leq t_0),
\end{align*}
that is
$$
\liminf_N \Prb(\mu^N_{t_1} \in U \vert \mu_t, 0 \leq t \leq t_0) \geq \Prb( \mu_{t_1} \in U \vert \mu_t, 0 \leq t \leq t_0)
$$
almost surely. Applying the Portmanteau theorem to $P(S)$, we have proved the claim.

By the same argument, we also have
$$
\Prb(\mu^N_{t_1} \in \cdot \vert \mu_{t_0}) \rightarrow \Prb(\mu_{t_1} \in \cdot \vert \mu_{t_0} ),
$$
almost surely as measures. By \Cref{lem:muN Markov}, we have that
$$
\Prb(\mu^N_{t_1} \in \cdot \vert \mu_{t_0}) = \Prb(\mu^N_{t_1} \in \cdot \vert \mu_{t}, 0 \leq t \leq t_0)
$$
almost surely, and so we can conclude that almost surely
$$
 \Prb(\mu_{t_1} \in \cdot \vert \mu_{t_0} ) =  \Prb(\mu_{t_1} \in \cdot \vert \mu_{t}, 0 \leq t \leq t_0)
$$
completing the proof.
\end{proof}

All that is left to show to complete the proof of \Cref{prop:mu Markov} is to prove that the MC process $\mu_t, t \geq 0$ is time-homogeneous. Write $\Prb_{\mu}$ for the distribution of the token process started at $\mu$. Time homogeneity will follow immediately from the following lemma.

\begin{lemma}{For any time $0 \leq t_0$, the distribution of the token process conditional on $\mu_t, 0 \leq t \leq t_0$ is the same as that of the token process started from $\mu_{t_0}$. That is
$$
\Prb_{\mu}( Z(t_0) \in \cdot \vert \mu_t, 0 \leq t \leq t_0) = \Prb_{\mu_{t_0}}( Z(0) \in \cdot ).
$$}
\label{lem:Time homog}
\begin{proof}
This follows easily from \Cref{lem:Conditionally Exchg}, which shows that conditional on $\mu_t, 0 \leq t \leq t_0$, the location $A_i \in S$ of each token $i \geq 1$ is independent and given by the paintbox $p(t_0)$. It is immediate then that this is the same as the initial distribution of the token process for the countably supported measure $\mu_{t_0}$, since the token process is Markov and both have the same initial distribution of token locations.
\end{proof}
\end{lemma}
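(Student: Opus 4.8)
The plan is to reduce the claim to a single-time comparison of the token configuration at $t_0$ and then to propagate it forward using the Markov property of the token process. Recall that the token process $\xi(t), t \ge 0$ is Markov and that its evolution after $t_0$ is a measurable function of the current configuration — the partition $Z(t_0)$ together with the locations $\xi_{u_i(t_0)}$ of the alive (block-head) tokens, which fix every pairwise meeting rate $\phi(d(\cdot,\cdot))$ — and of the meeting clocks, which restart by the memorylessness of the exponential. So I would first argue that it suffices to show that, conditional on $\mu_t, 0\le t\le t_0$, the pair consisting of $Z(t_0)$ and the token locations $(\xi_{u_i(t_0)})_{i\ge 1}$ has exactly the law of the analogous pair at time $0$ for a fresh token process generated from $\mu_{t_0}$.

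The crux is then to match these two configurations. For $t_0 > 0$ I would invoke \Cref{lem:No Dust} to write $\mu_{t_0} = \sum_{k \ge 1} p_k(t_0)\delta(s_k)$ with no dust, so that a fresh token process from $\mu_{t_0}$ draws its locations IID from $\mu_{t_0}$ — each token landing on atom $s_k$ independently with probability $p_k(t_0)$ — after which the instantaneous same-site merging of \Cref{sec:Token Construction} groups tokens sharing a location into blocks headed by their lowest token. On the conditioned side, \Cref{lem:Conditionally Exchg} gives that $Z(t_0)$ is exchangeable given $\mu_t, 0\le t\le t_0$, hence a paintbox partition, and \Cref{lem:Cond A 1} identifies its mass partition as $p(t_0)$ and shows that each token $i$ is conditionally assigned to atom $A_i$ independently with $\Prb(A_i = k) = p_k(t_0)$. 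Thus the conditioned locations $(\xi_{u_i(t_0)})_{i\ge 1} = (s_{A_i})_{i\ge 1}$ are IID from $\mu_{t_0}$ and two tokens share a block of $Z(t_0)$ exactly when they share a location — which is precisely the fresh initial configuration above. I would conclude that the two laws coincide.

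Putting the pieces together finishes the lemma: conditioning on $\mu_t, 0\le t\le t_0$ and running the process forward yields the same law as running a fresh token process from the matched initial data, i.e. from $\mu_{t_0}$, and time homogeneity of $\mu_t$ then follows immediately since $\mu_t$ is a measurable function of the token configuration at time $t$ and the forward kernel depends on the state only through $\mu_{t_0}$, uniformly in $t_0$. I expect the main obstacle to be not the partition-matching, which \Cref{lem:Conditionally Exchg} and \Cref{lem:Cond A 1} hand us, but the bookkeeping required to make ``the token process started from $\mu_{t_0}$'' both well posed and faithful to the forward dynamics. The delicate points are that the post-$t_0$ meeting rates depend only on the locations of the alive tokens, which are exactly the atoms $s_k$ of $\mu_{t_0}$, so the fresh rates $\phi(d(s_k, s_{k'}))$ match the conditioned residual rates; and that memorylessness genuinely restarts the exponential clocks, so that no information beyond the configuration at $t_0$ leaks into the future. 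Once these are checked the identity of laws is immediate.
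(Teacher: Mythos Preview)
Your proposal is correct and follows essentially the same route as the paper's proof: invoke \Cref{lem:Conditionally Exchg} (and its corollary \Cref{lem:Cond A 1}) to identify the conditional law of the token configuration at $t_0$ as IID sampling from the paintbox $p(t_0)$, match this to the initial law of a fresh token process started from the countably supported $\mu_{t_0}$, and then use the Markov property of the token process to carry the identification forward. Your version is simply more explicit about the bookkeeping --- no dust via \Cref{lem:No Dust}, memorylessness of the exponential clocks, and the fact that post-$t_0$ rates depend only on the alive-token locations --- all of which the paper compresses into a single sentence.
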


\section{Finite Support and Kingman's Coalescent}
\label{sec:Finite Support}

In this section, we begin our comparison between the token process construction and the Metric Coalescent, which we recall is originally only defined on $P_{\fs}(S)$. Our main result here shows that on any compact set in $S$, the support of the token process $\mu_t, t \geq 0$ 'comes down from infinity'.

\begin{prop}{For any initial measure $\mu \in P(S)$ and any compact set $C \subset S$
$$
\#  \left( \supp \mu_t \cap C \right) < \infty
$$
for all $t > 0$, almost surely.}
\label{prop:K finite}
\end{prop}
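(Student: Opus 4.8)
The plan is to show that on any compact set $C$, the number of distinct token locations at time $t$ is finite by comparing the token process to Kingman's Coalescent. The key observation is that the rate function $\phi$ is bounded below on $C \times C$ away from the diagonal by a positive constant, and the hypothesis (H2) controls what happens near the diagonal.

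First I would fix a compact set $C \subset S$ and a time $t > 0$. The heart of the matter is that for tokens whose initial locations $\xi_i$ lie in $C$, the pairwise meeting rates $\nu_{ij} = \phi(d(\xi_i, \xi_j))$ are bounded below. Since $C$ is compact and $\phi$ is continuous with $\phi(x) > 0$ for $x > 0$ by (H1), for any $\delta > 0$ the quantity $\inf\{\phi(d(s,s')) : s, s' \in C,\ d(s,s') \geq \delta\}$ is a positive constant, call it $c_\delta$. Thus restricted to tokens at locations mutually separated by at least $\delta$, the coalescing dynamics dominates (couples above) a Kingman Coalescent run at rate $c_\delta$, which is well known to come down from infinity: starting from any number of blocks (even infinitely many), the number of blocks is almost surely finite at every positive time. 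The role of (H2), namely $\phi(x) \to \infty$ as $x \downarrow 0$, is to handle pairs of tokens that are very close together: such pairs meet and merge almost instantaneously, so they cannot contribute distinct surviving locations.

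The key steps, in order, are as follows. First I would localize to $C$: by \Cref{lem:Support of mu} and the definition of $\supp \mu_t$ in \Cref{def:supp mut}, it suffices to bound the number of distinct locations among tokens $i$ with $\xi_{u_i(t)} \in C$. Second, I would use the empirical measures: since $\mu^N_t \to \mu_t$ in total variation by \Cref{lem:Convergence in TVD}, and the support of $\mu_t$ on $C$ is the limiting set of token locations, it is enough to obtain a bound on $\#(\supp \mu^N_t \cap C)$ that is uniform in $N$. Third, I would set up the coupling: cover the relevant part of $S$ using local compactness, and compare the number of surviving distinct locations in $C$ to the number of blocks in a Kingman Coalescent with rate $c_\delta$, using a standard monotone coupling in which faster meeting rates can only reduce the number of blocks. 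Fourth, I would invoke the classical fact that Kingman's Coalescent comes down from infinity in finite time, giving an almost-sure finite bound at time $t$ independent of the initial number of tokens.

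The main obstacle I anticipate is making the coupling rigorous despite the meeting rates being geometrically dependent (hence not independent) rather than the IID rates of classical Kingman. Two tokens at nearly equal locations have enormous meeting rates, and one must argue that this only helps: merging close tokens faster cannot increase the eventual number of distinct surviving locations. The cleanest route is a monotonicity/domination argument showing that if one replaces all true rates $\nu_{ij}$ by the constant floor $c_\delta$ (for pairs in $C$ separated by at least $\delta$) and ignores the additional fast coalescences among closer pairs, the resulting process has \emph{more} blocks than the true token process, and that comparison process is exactly (dominated by) a Kingman Coalescent. One must then let $\delta \downarrow 0$ carefully, using (H2) to ensure the number of tokens that survive \emph{without} having a partner at distance $\geq \delta$ is controlled — essentially, clusters of mutually $\delta$-close tokens collapse to single representatives almost immediately, so the $\delta$-separated comparison captures all but finitely many locations. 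Handling the interplay between the $\delta \downarrow 0$ limit and the $N \to \infty$ limit is where the care is required.
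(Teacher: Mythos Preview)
Your overall strategy---compare to Kingman's Coalescent via a lower bound on the meeting rates within $C$---is exactly the paper's, but you have made the argument harder than it needs to be by introducing the separation parameter $\delta$. The observation you are missing is that (H1) and (H2) together already give a uniform positive lower bound
\[
\phi_{\min}(C) \;=\; \inf\{\phi(r) : 0 < r \leq \diam(C)\} \;>\; 0
\]
over \emph{all} pairs of distinct points in $C$, not just $\delta$-separated ones: by (H1) $\phi$ is continuous and positive on $(0,\diam C]$, and by (H2) $\phi(r)\to\infty$ as $r\downarrow 0$, so the infimum cannot be zero. With this single constant in hand, every pair of living tokens in $C$ merges at rate at least $\phi_{\min}(C)$, and the whole $\delta\downarrow 0$ limit you flagged as the main obstacle simply disappears.

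The paper then avoids a pathwise monotone coupling altogether. Instead it runs the classical Kingman expectation estimate directly on the $N$-token process: writing $T^N_n$ for the first time there are $n$ living tokens in $C$, the conditional rate of the next merger when $n$ remain is at least $\binom{n}{2}\phi_{\min}(C)$, giving $\E(T^N_{n-1}-T^N_n)\leq 2/(\phi_{\min}(C)\,n(n-1))$ and hence $\E T^N_n \leq 2/(\phi_{\min}(C)\,n)$ uniformly in $N$. Markov's inequality then bounds $\Prb(K^N_t(C)\geq n)$ uniformly in $N$, and letting $N\uparrow\infty$ (using $K^N_t\uparrow K_t$ monotonically, not the total-variation convergence of \Cref{lem:Convergence in TVD}) gives $K_t(C)<\infty$ a.s.\ for each fixed $t$; a countable union over $t_i\downarrow 0$ yields the statement for all $t>0$ simultaneously. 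Your coupling route would work too, but the rate estimate is shorter and sidesteps the need to construct the coupling carefully.
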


In fact, for any compact $C$ and time $t > 0$, we can also show that the expected number of atoms in $C$ at time $t$ is finite. 

\begin{prop}{For any initial measure $\mu \in P(S)$, any compact set $C \subset S$, and any $t > 0$ we have
$$
\E \left( \supp \mu_t \cap C \right) \leq \frac{2}{t \phi_{\min}(C)}.
$$
}
\label{prop:K finite exp}
\end{prop}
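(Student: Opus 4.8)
The plan is to compare the number of atoms of $\mu_t$ lying in $C$ to the block-count of a Kingman coalescent run at a uniform rate, and then invoke the standard ``coming down from infinity'' estimate. Set $\lambda = \phi_{\min}(C) = \inf\{\phi(d(x,y)) : x,y \in C,\ x \neq y\}$. Since any two distinct points of $C$ lie at distance in $(0, \diam C]$, and since $\phi$ is continuous, strictly positive by (H1), and blows up at $0$ by (H2), this infimum equals $\min_{\epsilon \le r \le \diam C}\phi(r)$ for a suitable $\epsilon > 0$ and is therefore positive, so $\lambda > 0$. Write $N_C(t) = \#(\supp \mu_t \cap C)$, which is finite for $t > 0$ by \Cref{prop:K finite}.

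First I would record two structural facts about $N_C(t)$. Because atoms only coalesce and never split, and because at any merger the combined mass is placed at one of the two pre-merger locations, no atom can ever enter $C$ from its complement; hence $N_C(t)$ is non-increasing in $t$. Next, from the token construction two atoms currently located at distinct points $s, s' \in C$ merge at rate at least $\phi(d(s,s')) \ge \lambda$, since the first meeting among the cross-pairs of their constituent tokens already occurs at rate $\ge \phi(d(s,s'))$. Consequently, whenever $N_C(t) = n \ge 2$, the instantaneous rate at which $N_C$ decreases is at least $\binom{n}{2}\lambda$, the total rate of internal-to-$C$ mergers.

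These two facts let me dominate $N_C$ by Kingman's coalescent. Let $D(t)$ be the block-count of a Kingman coalescent with pairwise merger rate $\lambda$ started from infinitely many blocks, i.e. the pure-death chain that jumps from $n$ to $n-1$ at rate $\binom{n}{2}\lambda$. Since $N_C$ is a non-increasing integer-valued process whose downward rate at level $n$ is at least that of $D$ at level $n$, and since $N_C(0^+) \le \aleph_0 = D(0)$, a coupling of the two jump chains gives $N_C(t) \le D(t)$ for all $t$, hence $\E N_C(t) \le \E D(t)$. Finally, the coming-down-from-infinity estimate controls $\E D(t)$: the expected time for $D$ to fall from infinity to $k$ blocks is $\sum_{n > k}\tfrac{2}{\lambda n(n-1)} = \tfrac{2}{\lambda k}$ by telescoping, and comparison with the fluid ODE $\dot x = -\tfrac{\lambda}{2}x^2$, whose solution issuing from infinity is exactly $x(t) = \tfrac{2}{\lambda t}$, yields $\E D(t) \le \tfrac{2}{\lambda t}$. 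Combining gives $\E N_C(t) \le \tfrac{2}{\lambda t} = \tfrac{2}{t\phi_{\min}(C)}$.

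The main obstacle is making the domination fully rigorous: $N_C$ may also decrease through ``external'' mergers (a $C$-atom ceding its mass to one outside $C$) and its initial value may be infinite, so the coupling must be arranged so that these extra decrease events only help and so that the limit of the finite-$N$ token approximations is controlled. A secondary technical point is pinning down the exact constant $2$: the crude generator comparison $\tfrac{d}{dt}\E D \le -\tfrac{\lambda}{2}\E[D(D-1)] \le -\tfrac{\lambda}{2}\,\E[D]\,(\E[D] - 1)$ only yields $\E D(t) \le 1 + \tfrac{2}{\lambda t}$, so the stated constant requires the sharper hitting-time/fluid-limit argument above.
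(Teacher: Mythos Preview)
Your approach is essentially the same as the paper's: both compare the number of surviving atoms in $C$ to Kingman's coalescent with uniform rate $\phi_{\min}(C)$, using that whenever $n$ atoms remain in $C$ the downward rate is at least $\binom{n}{2}\phi_{\min}(C)$. The differences are in execution. The paper never constructs a coupling with an abstract Kingman block-count $D(t)$; instead it works entirely inside the $N$-token approximation, defines $K^N_t(C)$ and the hitting times $T^N_n = \inf\{t : K^N_t(C) = n\}$, and bounds $\E(T^N_{n-1} - T^N_n) \le 2/(\phi_{\min}(C)\,n(n-1))$ directly from the conditional merger rates. This sidesteps both obstacles you flagged: the ``infinite initial value'' issue is handled by taking $N \to \infty$ at the end (using $K^N_t(C) \uparrow K_t(C)$), and the external mergers simply increase the rate $\theta$ in the bound $\theta \ge \binom{n}{2}\phi_{\min}(C)$ and so only help.

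For the sharp constant $2$, the paper does not use a fluid-limit or ODE argument; it instead refers to second-moment estimates on the $T^N_n$ in the style of Berestycki (Section~2.1.2 of \cite{berestycki2009recent}), which is the standard way to upgrade the first-moment bound $\E T^N_n \le 2/(\phi_{\min}(C)\,n)$ to an expectation bound on $K_t(C)$. Your heuristic ``compare to $\dot x = -\tfrac{\lambda}{2}x^2$'' does not by itself give $\E D(t) \le 2/(\lambda t)$, as you correctly note; the paper's route via variance of the hitting times is what actually closes this gap. A minor point: your justification that $\phi_{\min}(C) > 0$ via ``$\min_{\epsilon \le r \le \diam C}\phi(r)$ for a suitable $\epsilon$'' is not quite right, since distances in $C$ are not bounded below---positivity comes instead from (H2) forcing the infimum to be attained away from $0$ on the compact range $[0,\diam C]$.
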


Here we define $\phi_{\min}(C)$ to be the positive infinum of $\phi$ on $C$, that is
$$
\phi_{\min}(C) = \inf_{x,y \in C} \phi(d(x, y)) > 0,
$$
which by compactness of $C$ is immediately bounded away from zero.

An easy consequence of \Cref{prop:K finite} is our claim from \Cref{sec:TVD} that the support of $\mu_t$ - as we have defined it, given by the set of token locations at time $t$ - is nowhere dense and so matches the standard definition of support.

\begin{lemma}{For all $t > 0$, the support of $\mu_t$ is nowhere dense and so almost surely
$$
\supp \mu_t = \overline{\{ s \in S \colon \forall \mu_t(U) > 0 \text{ for all open } U \ni s \} }.
$$}
\label{lem:Support matches standard}
\end{lemma}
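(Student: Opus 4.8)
The plan is to reduce the whole statement to a single topological fact, namely that the set of token locations $A = \{ \xi_{u_i(t)} \colon i \geq 1 \}$ is closed, and then read off both the nowhere-density and the agreement of the two notions of support. As a first bookkeeping step I would note that $A$ coincides with the atom set $\{ s_k(t) \colon k \geq 1 \}$ of $\mu_t$ from \Cref{def:mut}: every token lies in some block $B_k(t)$ and carries the location $s_k(t)$, while every block is non-empty, so the two sets are literally equal. Thus $\supp \mu_t$ (as defined by the token locations) is exactly the atom set.

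The heart of the argument is to upgrade the local finiteness of \Cref{prop:K finite} to closedness of $A$. Fix $t > 0$. Since $S$ is locally compact and separable it is $\sigma$-compact, so I would fix an exhaustion $S = \bigcup_n C_n$ with each $C_n$ compact and $C_n \subset \mathrm{int}(C_{n+1})$. Applying \Cref{prop:K finite} to each $C_n$ and intersecting the resulting almost-sure events over this countable family yields, almost surely, that $\#(A \cap C_n) < \infty$ for all $n$, hence $\#(A \cap C) < \infty$ for every compact $C$ (any such $C$ lies in some $C_N$). On this almost sure event I would show $A$ is closed: given $x \in \overline{A}$, choose by local compactness a compact $K$ with $x \in \mathrm{int}(K)$; then $A \cap K$ is finite, any sequence in $A$ converging to $x$ eventually lies in $A \cap K$, and a convergent sequence taking finitely many values in a metric space is eventually constant equal to its limit, so $x \in A$.

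With $A$ closed, both conclusions follow quickly. For the support identity, I would observe that for the purely atomic measure $\mu_t$ one has $\mu_t(U) > 0$ for an open set $U$ if and only if $U$ meets $A$; hence $\{ s \colon \mu_t(U) > 0 \text{ for all open } U \ni s \} = \overline{A}$, and taking its closure and using $\overline{A} = A$ identifies the standard support with $A = \supp \mu_t$. For nowhere-density, local finiteness makes $A$ a closed discrete set, so it suffices to check $\mathrm{int}(A) = \emptyset$: a non-empty open subset of $A$, intersected with the interior of a compact neighborhood, would be a non-empty open set contained in a finite set, impossible unless $S$ has isolated points.

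The main obstacle is precisely this closedness step: \Cref{prop:K finite} only gives finiteness on a \emph{fixed} compact set, so the real work is (i) globalizing to simultaneous finiteness on all compacts via $\sigma$-compactness and a countable intersection of almost-sure events, and (ii) using local compactness to localize the limit-point analysis. I would also flag the mild caveat that literal nowhere-density can fail at isolated points of $S$; this does not affect the support identity, which needs only closedness, and the conclusion holds as stated whenever $S$ is dense-in-itself.
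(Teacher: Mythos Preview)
Your proposal is correct and follows essentially the same approach as the paper: the paper does not give a detailed proof but simply asserts that the lemma is ``an easy consequence of \Cref{prop:K finite}'', noting that local finiteness on compacts makes the token-location set nowhere dense and hence equal to the standard support. Your write-up supplies the details the paper omits --- the $\sigma$-compactness step to globalize \Cref{prop:K finite}, the local-compactness argument for closedness, and the identification with the standard support --- and your caveat about isolated points is a genuine refinement: the paper's ``nowhere dense'' phrasing is slightly imprecise in that edge case, while the support identity (which is what is actually used downstream) needs only closedness, exactly as you observe.
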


Also trivial, by the definition of the token process and \Cref{lem:Support matches standard}, is that the support of $\mu_t$ is almost surely contained in that of the initial measure $\mu$. This gives as an immediate corollary of \Cref{prop:K finite} a portion of \Cref{thrm:Main Theorem}, namely that for any compactly supported initial measure $\mu \in P_{\cs}(S)$ that $\mu_t, t \geq 0$ comes down from infinity.

\begin{prop}{If $\mu \in P_{\cs}(S)$, then
$$
\# \supp \mu_t < \infty
$$
for all $ t > 0$, almost surely.}
\label{prop:Compact Support Comes Down}
\end{prop}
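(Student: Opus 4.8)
The plan is to derive this directly from \Cref{prop:K finite} together with the observation that the support of $\mu_t$ never escapes the support of the initial measure, which is compact by hypothesis. Concretely, I would set $K = \supp \mu$ and argue in three quick steps: that $K$ is compact, that $\supp \mu_t \subset K$ for all $t$ on a single almost-sure event, and then invoke \Cref{prop:K finite} with the choice $C = K$.

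First I would note that for $\mu \in P_{\cs}(S)$ the support $K = \supp\mu$ is compact: it is closed, being a support, and is contained in a compact set by definition of $P_{\cs}(S)$. Next, recalling the construction of the token process, the initial locations $\xi_i$ are IID samples of $\mu$ and hence lie in $K$ almost surely; since every token location $\xi_{u_i(t)}$ at any later time is one of the $\xi_i$, the set of token locations — which is exactly our definition of $\supp\mu_t$ for $t>0$ via \Cref{def:supp mut} — is contained in $K$ simultaneously for all $t$, on a single almost-sure event. This is the containment $\supp\mu_t \subset \supp\mu$ already observed in the discussion around \Cref{lem:Support matches standard}.

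Finally I would apply \Cref{prop:K finite} with $C = K$ to obtain that, almost surely, $\#(\supp\mu_t \cap K) < \infty$ for all $t > 0$. Intersecting this almost-sure event with the one on which $\supp\mu_t \subset K$, we have $\supp\mu_t \cap K = \supp\mu_t$, and therefore $\#\supp\mu_t < \infty$ for every $t > 0$, which is precisely the claim.

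Since this is genuinely a corollary, I expect no real obstacle: the substantive content — the ``coming down from infinity'' on compact sets — has already been established in \Cref{prop:K finite}. The only points requiring care are purely bookkeeping, namely checking that the support containment and the finiteness statement hold on the \emph{same} almost-sure event, and with matching quantifier order, with ``for all $t>0$'' sitting inside the almost-sure event exactly as it does in \Cref{prop:K finite}. This matching is what lets us avoid taking an (uncountable) union over all positive times, so no extra argument beyond the two cited results is needed.
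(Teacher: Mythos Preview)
Your proposal is correct and matches the paper's approach exactly: the paper also treats this as an immediate corollary, noting that $\supp\mu_t$ is almost surely contained in $\supp\mu$ (via the token construction and \Cref{lem:Support matches standard}) and then applying \Cref{prop:K finite} with $C = \supp\mu$. Your extra care about the quantifier order and combining the almost-sure events is appropriate and in fact a bit more explicit than the paper itself.
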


Note that while \Cref{prop:Compact Support Comes Down} shows that our extension of the MC comes down from infinity on $P_{\cs}(S)$, this doesn't exclude it from doing so on a possibly larger class of measures in $P(S)$. We'll discuss this possibility in \Cref{sec:Open Probs}.

We begin with some bounds on $\# \left( \supp \mu^N_t \cap C \right)$ for $t > 0$. To do this, we compare the token process to the well studied Kingman Coalescent (KC), a classical coalescing partition process.

\subsection{Kingman's Coalescent}
\label{sec:Kingman}

Kingman's Coalescent is a classical coalescent process on the natural numbers $\N$ with applications throughout probability theory, in particular for population models in genetics where it satisfies an important universality property known as M\"{o}hle's Lemma. For our purposes, the connections between Kingman's Coalescent and the Metric Coalescent are deep, with the Metric Coalescent appearing as essentially a version of Kingman's model with dependent (exchangeable) rates.

Kingman's Coalescent is simple to describe, although a construction of it as a limit of finite processes takes some care. Simply put, it is a coalescing partition valued Markov process on $\N$, where each pair of blocks merges independently with some fixed rate $\gamma > 0$. Compare this to the MC, where to the token process, where the rates of merger between blocks are of course geometry dependent.

A standard fact about KC is that it \textit{comes down from infinity}, that is, started from the initial partition of all singletons, for any positive time $t > 0$ the number of blocks $N^{KC}_t < \infty$ almost surely; in fact a stronger statement, bounding $\E N^{KC}_t$ holds as well \cite{berestycki2009recent}.

While the rates of the token process are not independent, for any two atoms $s_i, s_j \in C$ of $\mu_t$ they satisfy 
$$
\nu_{ij} \geq \phi_{\min}(C) > 0,
$$
for any compact set $C$. Thus by comparison to the KC we will show that the token process also comes down from infinity.

\subsection{Coming Down From Infinity}

While a direct comparison between the two process through a coupling may be possible, we find it simpler to apply the classical proof of finiteness for KC to the token process, with few adjustments. Our proof here follows \cite{berestycki2009recent} closely.

Fix some compact set $C \subset S$ and define
$$
K_t(C) = \sum_{k = 1}^{\infty} 1(u_k(t) = k, \xi_k \in C), \label{def:Define K}
$$
that is, $K_t$ is the (possibly infinite) number of tokens that haven't merged by time $t$ in $C$. Similarly we define
$$
K^N_t = \sum_{k = 1}^{N} 1( u_k(t) = k, \xi_k \in C), \label{def:Define KN}
$$
for the $N$ token process. Note that by our construction
$$
K^N_t(C) = \# \left( \supp \mu^N_t \cap C \right)
$$
as at most one alive token can occupy each point of $S$. Our main result in this section is the following.

\begin{prop}{For all $t > 0$, almost surely $K_t(C) < \infty$.}
\label{prop:K finite for t}
\end{prop}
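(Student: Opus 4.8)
Proposition (to prove): For all $t > 0$, almost surely $K_t(C) < \infty$.

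The plan is to prove the finite-$N$ analogue uniformly in $N$ and then pass to the limit. The key object is $K^N_t(C)$, the number of alive tokens in $C$ for the $N$-token process, which equals $\# ( \supp \mu^N_t \cap C )$. Since $K^N_t(C) \uparrow K_t(C)$ as $N \to \infty$ (an alive token in $C$ for the full process is eventually captured once $N$ is large, and aliveness of token $k$ depends only on the meeting times among tokens $1,\dots,k$), by monotone convergence it suffices to bound $\E K^N_t(C)$ uniformly in $N$. Concretely, I would aim to show $\E K^N_t(C) \le \frac{2}{t\,\phi_{\min}(C)}$, which simultaneously yields the desired \Cref{prop:K finite exp}.

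The core of the argument is the classical ``coming down from infinity'' estimate for Kingman's Coalescent, adapted to the token process. The crucial geometric input is that any two alive tokens located in $C$ meet at rate $\nu_{ij} = \phi(d(\xi_i,\xi_j)) \ge \phi_{\min}(C) > 0$, by compactness of $C$ and assumption (H1). Thus, when there are $m$ alive tokens in $C$, mergers among them occur at total rate at least $\binom{m}{2}\phi_{\min}(C)$. Following the proof in \cite{berestycki2009recent}, I would set $\lambda = \phi_{\min}(C)$ and compare the decreasing process $m \mapsto K^N_t(C)$ to a pure coalescent with rate $\lambda$: starting from at most $N$ alive tokens, the expected time to pass from $m$ blocks to $m-1$ is at most $\big(\binom{m}{2}\lambda\big)^{-1}$. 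Summing the telescoping bound $\sum_{m \ge k} \big(\binom{m}{2}\lambda\big)^{-1} = \frac{2}{\lambda(k-1)}$ shows the expected time for the count to drop below $k$ is at most $\frac{2}{\lambda(k-1)}$, independent of $N$. By Markov's inequality, $\Prb(K^N_t(C) \ge k) \le \frac{2}{t\,\lambda(k-1)}$, and integrating over $k$ (or summing the tail) gives the uniform bound $\E K^N_t(C) \le \frac{2}{t\lambda}$.

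The one subtlety is that mergers in $C$ are not a clean Markov chain on the block-count in $C$ alone: a token in $C$ can meet a token outside $C$, and the rates depend on the actual locations rather than being constant. This is where I would be careful, and I expect it to be the main obstacle. The resolution is that such ``leakage'' meetings only decrease $K^N_t(C)$ further, so they can only help; the lower bound $\binom{m}{2}\lambda$ on the merger rate \emph{within} $C$ is all that is used, and it is a genuine lower bound regardless of what happens outside $C$. More precisely, one couples the in-$C$ block-count process against a rate-$\lambda$ Kingman coalescent by a stochastic domination argument, treating the randomness of locations as fixed (conditioning on $\xi_1,\dots,\xi_N$) so that all meeting rates among alive in-$C$ tokens are deterministic and bounded below by $\lambda$. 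Once the conditional bound $\E\big(K^N_t(C) \mid \xi_1,\dots,\xi_N\big) \le \frac{2}{t\lambda}$ holds for every realization of the locations, taking expectations and then $N \to \infty$ by monotone convergence completes the proof and establishes $K_t(C) < \infty$ almost surely.
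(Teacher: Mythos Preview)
Your approach is essentially the paper's: bound the expected time for the in-$C$ count to fall to level $n$ via the rate lower bound $\binom{n}{2}\phi_{\min}(C)$, apply Markov's inequality to obtain a uniform-in-$N$ tail estimate $\Prb(K^N_t(C)\ge n)\le \tfrac{2}{t\,\phi_{\min}(C)\,n}$, and then pass to the limit using $K^N_t(C)\uparrow K_t(C)$. Your handling of the ``leakage'' subtlety (tokens in $C$ meeting tokens outside $C$) also matches the paper's observation that such events only accelerate the decrease of $K^N_t(C)$.

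There is, however, one genuine slip. You write that ``integrating over $k$ (or summing the tail) gives the uniform bound $\E K^N_t(C)\le \tfrac{2}{t\lambda}$.'' This does not follow: the tail bound you derive is of order $1/k$, and $\sum_{k\ge 1}\tfrac{1}{k}$ diverges, so the tail is not summable and cannot by itself yield a finite expectation. Since your overall plan was to deduce almost-sure finiteness \emph{from} the expectation bound, this is a gap as stated.

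The fix is immediate and is exactly what the paper does: drop the detour through $\E K^N_t(C)$ and use the tail bound directly. From $\Prb(K^N_t(C)\ge n)\le \tfrac{2}{t\,\phi_{\min}(C)\,n}$ uniformly in $N$, monotone convergence gives $\Prb(K_t(C)\ge n)\le \tfrac{2}{t\,\phi_{\min}(C)\,n}\to 0$, hence $K_t(C)<\infty$ almost surely. The paper remarks separately that the expectation bound of \Cref{prop:K finite exp} requires a second-moment estimate on the hitting times $T^N_n$ (as in \cite{berestycki2009recent}), not merely the first-moment tail you have.
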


We will follow the classical proof for Kingman's Coalescent to prove \Cref{prop:K finite for t}. As indicated, the exact same proof method borrowed from Kingman's Coalescent in fact yields the expectation bound in \Cref{prop:K finite exp} for $K_t(C)$, (see Theorem $2.1$ in \cite{berestycki2009recent}) whose details we omit here.

Note for \Cref{prop:K finite exp} what will be important for our purposes is not the precise bound on $\E K_t(C)$ but only that such a bound exits for all $t > 0$. The rest this section will be outlining the proof of \Cref{prop:K finite for t}.

Consider a fixed initial measure $\mu \in P(S)$. For $1 \leq n \leq N$, let
$$
T^N_n = \inf \{ t \geq 0 \colon K^N_t(C) = n \}.
$$
The key to our proof is that while the pure death process $K^N_t(C)$ is typically not Markovian, the token process is and so when there are $K^N_t(C) = n$ blocks left, the conditional intensity of a transition from $n$ to $n - 1$ is at least $\binom{n}{2} \phi_{\min}(\mu)$, the rate at which two atoms of $\mu_t^N$ within $C$ merge. In fact, it is a priori much higher than this, since an atom from $C$ can merge into an atom outside of $C$.

 We will make use of the following estimate of $T^N_n$.

\begin{lemma}{For all $1 \leq n \leq N$
$$
\E T^N_n \leq \frac{2}{\phi_{\min}(C) n}.
$$}
\label{lem:ETNn Bound}
\begin{proof}
At time $T^N_n$ let $k_1, \ldots, k_n$ be the tokens still alive in $C$. As the $N$ token process is Markov, from the memoryless property of exponential random variables we have that conditional on the process up to time $T^N_n$, the later meeting times $t_{k_i, k_j} - T^N_n$ are distributed as independent exponentials of rate $\nu_{k_i, k_j}$. Therefore, conditional on the past, the next meeting $T^N_{n - 1} - T^N_n$ is distributed as an exponential of rate
$$
\theta \geq \sum_{i \neq j \leq n} \nu_{k_i, k_j},
$$
where inequality comes from the rate at which the atoms $k_1, \ldots, k_n$ merge with atoms outside of $C$ that are still alive.

As $\nu_{a,b} \geq \phi_{\min}(C)$ for all $a, b \in C$, we can conclude that $\theta \geq \binom{n}{2} \phi_{\min}(C)$.

This allows us to calculate that
$$
\E T^N_{n-1} - T^N_{n} \leq \frac{2}{\phi_{\min}(C) n(n - 1)},
$$
and so
\begin{align*}
\E T^N_n &= \E \sum_{m = n - 1}^N T^N_m - T^N_{m + 1} \\
&\leq \sum_{m = n}^{N - 1} \frac{2}{\phi_{\min}(C) m(m + 1)} \\
&= \frac{2}{\phi_{\min}(C)}\left( \frac{1}{n} - \frac{1}{N} \right).
\end{align*}
\end{proof}
\end{lemma}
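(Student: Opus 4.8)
The plan is to reproduce the classical ``coming down from infinity'' argument for Kingman's Coalescent (as set up in \Cref{sec:Kingman}), exploiting the fact that although the death counter $K^N_t(C)$ is not itself Markov, the full $N$-token process is. The first observation is that $K^N_t(C)$ is non-increasing in $t$: a unit drop occurs exactly when an alive token located in $C$ loses a nontrivial meeting and merges into another token, and merges can never create new alive tokens in $C$. Hence the stopping times $T^N_n$ are nested, with $T^N_N = 0$ (there are at most $N$ alive tokens at time $0$) and $T^N_{n-1} \geq T^N_n$, and each level is hit because $K^N_t(C)$ decreases in unit steps. It therefore suffices to bound the increments $\E\big(T^N_{n-1} - T^N_n\big)$ and then telescope $T^N_n = \sum_{m=n}^{N-1}\big(T^N_m - T^N_{m+1}\big)$.

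The heart of the argument is the increment bound, which I would obtain via the strong Markov property of the token process applied at $T^N_n$. Let $k_1,\dots,k_n$ denote the tokens still alive in $C$ at time $T^N_n$. Since the $N$-token process is Markov and the meeting clocks $t_{ij}$ are independent exponentials, the memoryless property gives that, conditional on the history up to $T^N_n$, the residual times $t_{k_i k_j} - T^N_n$ are again independent exponentials of rates $\nu_{k_i k_j}$. Thus the next merge among these $n$ tokens arrives after an exponential waiting time of some rate $\theta \geq \sum_{i<j \leq n} \nu_{k_i k_j}$, where the inequality only strengthens the bound by also accounting for the possibility that one of the $k_i$ merges with an alive token outside $C$. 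The crucial point is the uniform lower bound on the pairwise rates: because all of $k_1,\dots,k_n$ lie in the compact set $C$, each distance $d(\xi_{k_i},\xi_{k_j})$ is realized between two points of $C$, so $\nu_{k_i k_j} = \phi(d(\xi_{k_i},\xi_{k_j})) \geq \phi_{\min}(C) > 0$ (positivity here using assumption (H1) together with compactness of $C$, as noted after \Cref{prop:K finite exp}). Summing over the $\binom{n}{2}$ pairs yields $\theta \geq \binom{n}{2}\phi_{\min}(C)$, and hence, since $T^N_{n-1} - T^N_n$ is stochastically dominated by an $\Exp\!\big(\binom{n}{2}\phi_{\min}(C)\big)$ variable,
$$
\E\big(T^N_{n-1} - T^N_n\big) \leq \frac{1}{\binom{n}{2}\phi_{\min}(C)} = \frac{2}{\phi_{\min}(C)\,n(n-1)}.
$$

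Finally I would assemble the telescoping sum, using the partial fraction $\frac{1}{m(m+1)} = \frac{1}{m} - \frac{1}{m+1}$:
$$
\E T^N_n = \sum_{m=n}^{N-1}\E\big(T^N_m - T^N_{m+1}\big) \leq \sum_{m=n}^{N-1}\frac{2}{\phi_{\min}(C)\,m(m+1)} = \frac{2}{\phi_{\min}(C)}\Big(\frac{1}{n}-\frac{1}{N}\Big) \leq \frac{2}{\phi_{\min}(C)\,n}.
$$
I expect the main obstacle to be conceptual rather than computational: justifying the rate lower bound in a setting where the counting process $K^N_t(C)$ is not Markov on its own. The resolution is to always condition on the full token process and invoke the strong Markov property at the stopping time $T^N_n$, noting that (i) restricting attention to merges among the $n$ surviving in-$C$ tokens can only \emph{undercount} the true transition rate, so the domination by $\Exp\!\big(\binom{n}{2}\phi_{\min}(C)\big)$ is legitimate, and (ii) the uniform bound $\phi_{\min}(C)$ is available precisely because all the relevant tokens lie in $C$, so that the geometry-dependent rates are controlled exactly as the constant rates are in the Kingman comparison.
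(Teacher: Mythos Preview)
Your proposal is correct and follows essentially the same argument as the paper: condition on the full token process at the stopping time $T^N_n$, use memorylessness to get an exponential waiting time with rate at least $\binom{n}{2}\phi_{\min}(C)$, bound the increments, and telescope. Your write-up is in fact slightly cleaner on the telescoping indices and on making the stochastic domination explicit, but there is no substantive difference in approach.
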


From this we can easily complete our proof of \Cref{prop:K finite for t}.

\begin{proof}
Applying Markov's inequality to \Cref{lem:ETNn Bound}, we find that for any $n \geq 1$
$$
\Prb ( K^N_t(C) \geq n) = \Prb ( T^N_n \geq t) \leq \frac{2}{\phi_{\min}(C) t n}.
$$
As $K^N_t \uparrow K_t$ almost surely, this implies that
$$
\Prb(K_t(C) \geq n) \leq \frac{2}{\phi_{\min}(C) t n}
$$
which easily gives that $K_t(C) < \infty$ almost surely, completing the proof.
\end{proof}

The proof of \Cref{prop:K finite exp} follows by a similar argument, looking at second moment estimates of the stopping times $T^N_n$ (see Section 2.1.2, \cite{berestycki2009recent}).

Applying a standard countable additivity argument to \Cref{prop:K finite for t} at a descending sequence of times $t_i \downarrow 0$ we conclude our proof of \Cref{prop:K finite}.

\section{The Metric Coalescent}
\label{sec:MC}

In this section, we justify calling our constructed process an extension of the Metric Coalescent, by showing that from any finitely supported initial measure $\mu \in P_{\fs}(S)$, the process $\mu_t, t \geq 0$ is distributed as the MC as defined in \Cref{sec:define MC}. Our main result in this section is the following.

\begin{prop}{If $\mu \in P_{fs}(S)$, then the process $\mu_t, t \geq 0$ starting from $\mu_0 = \mu$ is distributed as the Metric Coalescent.}
\label{prop:Is MC}
\end{prop}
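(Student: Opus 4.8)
The plan is to show that the constructed process $\mu_t, t \geq 0$, when started from a finitely supported $\mu \in P_{\fs}(S)$, has exactly the same generator (equivalently, the same transition rates) as the originally defined Metric Coalescent from \Cref{sec:define MC}. Both processes are time-homogeneous Markov processes on the countable-type state space of finitely supported measures with support contained in $\supp \mu$, so to identify their laws it suffices to match their initial jump behavior out of any finitely supported state and then invoke the strong Markov property together with the time homogeneity established in \Cref{lem:Time homog}.

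First I would observe that for $\mu \in P_{\fs}(S)$ the token process has a simple structure: writing $\supp \mu = \{s_1, \ldots, s_m\}$ with masses $p_1, \ldots, p_m$, the law of large numbers (as in the paintbox arguments of \Cref{sec:TTP}) guarantees that almost surely the empirical fraction of the first $N$ tokens landing at $s_i$ converges to $p_i$, so $\mu_0 = \mu$ and the support stays inside $\{s_1, \ldots, s_m\}$ for all time. Thus $\mu_t$ is genuinely a pure-jump process on the finite-dimensional simplex of measures supported on $\{s_1, \ldots, s_m\}$, and the only transitions possible are mass transfers consolidating the atom at one site into another. The heart of the proof is therefore to compute, for the constructed process, the rate of the transition $\mu \to \mu + p_i(\delta(s_j) - \delta(s_i))$ and to check it equals $\frac{p_i}{p_i+p_j}\nu_{ij}$ as prescribed in \Cref{sec:define MC}.

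To carry out this rate computation I would work at the level of the token process and use the block structure of the paintbox description. Condition on $p(0) = (p_1, \ldots, p_m)$; the tokens are partitioned into $m$ blocks $B_1, \ldots, B_m$ of asymptotic densities $p_1, \ldots, p_m$, with each block $B_i$ concentrated at location $s_i$. A transition of the limiting measure occurs precisely when two whole blocks $B_i$ and $B_j$ merge, which in the token construction happens at the first meeting time between an alive token at $s_i$ and an alive token at $s_j$, carrying rate $\nu_{ij} = \phi(d(s_i,s_j))$ between the two site-representatives; the winner (and hence whether the mass flows into $s_i$ or $s_j$) is decided by which site's lowest surviving token is smaller, and by the symmetric exchangeability of \Cref{prop:Uniform Partitions,prop:Partition is Exchangeable} this is proportional to the block sizes, yielding the winning probabilities $\frac{p_i}{p_i+p_j}$ and $\frac{p_j}{p_i+p_j}$. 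Combining the merge rate $\nu_{ij}$ with the conditional winning probability gives exactly the rate $\frac{p_i}{p_i+p_j}\nu_{ij}$ for the transition that sends $s_i$'s mass into $s_j$, matching \Cref{sec:define MC}.

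The main obstacle I anticipate is making rigorous the passage from the finitely-many-token picture to the limit at the level of \emph{rates} rather than just marginal distributions: one must argue that the instantaneous jump rates of $\mu_t$ are the $N \to \infty$ limits of those of $\mu^N_t$, and that no mass is lost to ``dust'' or spurious simultaneous mergers. Here I would lean on \Cref{lem:No Dust} to rule out surviving singletons, on \Cref{lem:Convergence in TVD} to control the approximation by the $N$-token measures in total variation uniformly on the finite state space, and on \Cref{lem:Time homog} to propagate the initial-jump identification forward in time. Once the generator is shown to coincide with that of the original Metric Coalescent on the finite state space of measures supported in $\{s_1,\ldots,s_m\}$, uniqueness of the law of a continuous-time Markov chain with given bounded rates on a countable state space finishes the proof.
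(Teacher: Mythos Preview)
Your approach is essentially correct and close in spirit to the paper's, but the paper avoids the technical obstacle you flag by working directly with the infinite token process rather than passing through the $\mu^N_t$ approximation. Since $\mu_0\in P_{\fs}(S)$ has support $\{s_1,\ldots,s_m\}$, at $t=0$ all tokens at each site $s_i$ merge instantly into the lowest token there; thereafter the process is driven by exactly $m$ ``active'' tokens, one per site, meeting pairwise at rates $\nu_{ij}=\phi(d(s_i,s_j))$. There is thus no need to take an $N\to\infty$ limit of rates: the jump structure of $\mu_t$ is read off directly from these $m$ representatives.

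The paper packages the winner determination via the random partial order $\prec$ on $S$ and \Cref{prop:Prec conditioned}, which asserts that conditional on the entire past $\mu_s,\,0\le s\le t_0$, the order $\prec$ restricted to $\supp\mu_{t_0}$ is size-biased. This immediately gives $\Prb(s_i\prec s_j\mid\mu_{t_0})=p_i/(p_i+p_j)$ at each jump, matching \Cref{sec:define MC}. Your route through \Cref{prop:Uniform Partitions} and \Cref{prop:Partition is Exchangeable} leads to the same conclusion (indeed \Cref{prop:Prec conditioned} is itself derived from \Cref{lem:Conditionally Exchg}), but invoking the size-biased ordering directly is cleaner: it gives the winner probability conditional on the history of $\mu_t$ itself, so the induction over jump times goes through without any appeal to \Cref{lem:No Dust}, \Cref{lem:Convergence in TVD}, or rate-limit arguments.
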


In \Cref{sec:Finite Support} we've shown that for an initial measure $\mu \in P_{\cs}(S)$, for all times $t >0$ the process is contained in $P_{\fs}(S)$. Combined with the Markov property, \Cref{prop:Is MC} proves the claim in \Cref{thrm:Main Theorem} that if $\mu \in P_{\cs}(S)$, then for all $t_0 > 0$, the process $\mu_t, t \geq t_0$ is also distributed as the Metric Coalescent.

\subsection{The Partial Order $\prec$}

The evolution of the MC process $\mu_t, t \geq 0$ is defined in terms of the asymmetric description given by the token process, where tokens merge into lower tokens. Viewing the process through the symmetric description
$$
\mu_t = \sum_{k = 1}^{\infty} p_k(t) \delta(s_k),
$$
we no longer know the locations of the tokens and so no longer can determine the future of the process deterministically from the meeting times.

From \Cref{lem:Conditionally Exchg} however, we know how the distribution of the tokens at time $t$ depends upon $\mu_t$ and so know the relative rankings of the lowest elements of the partitions at each atom $s_k$, or equivalently the blocks of the paintbox partition. This is known as the size biased permutation of the paintbox partition.\cite{gnedin1998}

Conceptually we find it simpler to view the size biased permutation as a (random) partial ordering $\prec$ on $S$. Let $\Ord(S)$ be the set of all partial orders on $S$. The order $\prec$ can be defined easily from the initial locations of the tokens $\xi_i, i \geq 1$. For any $s \in S$, write $l(s)$ for the lowest token starting at $s$, i.e.
$$
l(s) = \inf \{i \geq 1 \colon  \xi_i = s \}
$$
with the convention that $l(s) = \infty$ if no such token exists. Then for $s, \tilde{s} \in S$ we define $s \prec \tilde{s}$ if
$$
l(s) < l(\tilde{s}) < \infty.
$$

Note that for all $t > 0$, by \Cref{lem:Support matches standard}
$$
\supp \mu_t \subset \{ \xi_1, \xi_2, \ldots \}
$$
and so we always have that $\prec$ is a full linear order when restricted to $\supp \mu_t$.

Given the initial locations $\xi_i, i \geq 1$ the ordering $\prec$ is fully known. We however are interested in the conditional distribution of $\prec$ given the starting measure $\mu_0 = \mu$. The key motivation here, is that when $\mu_0$ is finitely supported and so meeting rates are discrete, the future distribution of the process will be seen to be determined entirely by just the locations of the atoms of $\mu_t$ and the ordering $\prec$ of the tokens they contain.

\subsection{Size Biased Ordering}

Our first goal in this section is to explicitly calculate the conditional distribution of $\prec$ on $\supp \mu_{t_0}$ at some time $t_0 \geq 0$ given $\mu_t, 0 \leq t \leq t_0$.  We will show that this distribution is given by the size biased ordering, to be defined shortly.

\begin{prop}{The conditional distribution of $\prec$ given the process $\mu_t, 0 \leq t \leq t_0$ is size biased ordering. That is, for any $s_1, \ldots, s_n \in \supp \mu_{t_0}$
$$
\Prb( s_1 \prec \ldots \prec s_n \vert \mu_t, 0 \leq t \leq t_0) = \SBO(\mu_{t_0}, s_1, \ldots, s_n).
$$
}
\label{prop:Prec conditioned}
\end{prop}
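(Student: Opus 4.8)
The plan is to reduce the statement to a standard characterization of the size-biased permutation via the paintbox description of the conditional law of the token partition. The starting point is \Cref{lem:Conditionally Exchg} together with \Cref{lem:Cond A 1}: conditional on $\mu_t, 0 \leq t \leq t_0$, the block assignments $A_i$ of the tokens (where token $i$ occupies location $s_{A_i}$ at time $t_0$) form an i.i.d.\ sequence with $\Prb(A_i = k \vert \mu_t, 0 \leq t \leq t_0) = p_k(t_0)$, and by \Cref{lem:No Dust} there is no dust, so every token lands in some genuine block $s_k \in \supp \mu_{t_0}$. The whole argument is then a matter of recognizing the conditional law of $\prec$ inside this i.i.d.\ sequence.

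The first step is to re-express the order $\prec$, which is defined through the initial labels $\xi_i$, purely in terms of the sequence $(A_i)_{i \geq 1}$. I would show that for each atom $s_k \in \supp \mu_{t_0}$ the lowest token $l(s_k) = \inf\{i : \xi_i = s_k\}$ coincides with the index of the block sitting at $s_k$ at time $t_0$, i.e.\ with $\min\{i : A_i = k\}$. To see this, note that owners are non-increasing along the process (a meeting only resets a token's owner to a strictly smaller index), and that at time $0^+$ every token initially located at $s_k$ is owned by $l(s_k)$. Hence if $m$ is the block index at $s_k$ at time $t_0$, so that $u_m(t_0)=m$ and $\xi_m = s_k$, then
$$
m = u_m(t_0) \leq u_m(0^+) = l(s_k) \leq m,
$$
forcing $m = l(s_k)$. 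Consequently, for $s_j, s_k \in \supp \mu_{t_0}$ we have $s_j \prec s_k$ if and only if the value $j$ appears before the value $k$ in $(A_i)_{i\geq 1}$; that is, $\prec$ restricted to $\supp \mu_{t_0}$ is exactly the order of first appearance of the conditionally i.i.d.\ sequence $(A_i)$.

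The second step identifies this order of first appearance with the size-biased ordering. The order in which distinct values first appear in an i.i.d.\ sequence drawn from the mass partition $p(t_0) = (p_k(t_0))$ is, by definition — equivalently, by the sample-without-replacement-proportional-to-mass (or exponential race) description — the size-biased permutation of $p(t_0)$. Restricting to a fixed finite subset $s_1, \ldots, s_n$ and reading off the relative order then gives
$$
\Prb(s_1 \prec \cdots \prec s_n \vert \mu_t, 0 \leq t \leq t_0) = \SBO(\mu_{t_0}, s_1, \ldots, s_n)
$$
directly from the definition of $\SBO$, completing the argument.

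I expect the main obstacle to be precisely the identification in the first step: carefully arguing that $\prec$, specified through the initial token labels rather than through the time-$t_0$ partition, really does read off as the first-appearance order of the conditionally i.i.d.\ assignment sequence, and in particular that the time-$0$ instantaneous mergers of co-located tokens do not disturb this correspondence. Once this bookkeeping is in place, the conditioning has already been absorbed into \Cref{lem:Conditionally Exchg}, and what remains is the standard fact that the first-appearance order of an i.i.d.\ sample is the size-biased ordering.
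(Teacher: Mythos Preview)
Your proposal is correct and follows essentially the same route as the paper. The paper's own argument is a single sentence: it invokes \Cref{lem:Conditionally Exchg} to obtain that, conditional on $\mu_t, 0 \leq t \leq t_0$, the token locations at time $t_0$ are i.i.d.\ from $\mu_{t_0}$, and then cites the standard fact that the order of first appearance in such a sequence is the size-biased ordering. Your write-up is simply a more careful expansion of this, and in particular your ``first step'' --- verifying that $l(s_k) = \inf\{i:\xi_i = s_k\}$ coincides with $\inf B_k(t_0)$ via the monotonicity of owners --- makes explicit a piece of bookkeeping that the paper leaves implicit (it is essentially encoded in the identity $s_k(t_0)=\xi_{l_k(t_0)}$ together with the fact that a still-alive token has never moved).
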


\subsubsection{Definition of Size Biased Ordering}
For any $s_1, \ldots, s_n \in S$ and $\nu \in P(S)$ write $\SBO(\nu, s_1, \ldots, s_n)$ as shorthand for the formula 
\begin{align}
SBO(\nu, s_1, \ldots, s_n) = \Prb( \Exp(\nu(s_n)) < \ldots < \Exp(\nu(s_1))), \label{Def SBO}
\end{align}
where here $\Exp(\mu(s_n))$ are independent exponential random variables of rate $\mu(s_n)$. This is one of several equivalent definitions of the size-biased random ordering \cite{gnedin1998} associated with $\nu$. Explicitly this can also be written as
$$
SBO(\nu, s_1, \ldots, s_n) = 
 \frac{1( \nu(s_i) \neq 0, 1 \leq i \leq n) \nu(s_1) \cdots \nu(s_n)}{(\nu(s_1) + \ldots + \nu(s_n))(\nu(s_2) + \ldots) \cdots \nu(s_n)},
$$
but we conceptually prefer the first definition.

We say that a random partial order $\tilde{\prec} \in \Ord(S)$ has $\nu$-\textbf{size biased ordering} if for any $s_1, \ldots, s_n \in S$
$$
\Prb( s_1 \tilde{\prec} \ldots \tilde{\prec} s_n) = SBO(\nu, s_1, \ldots, s_n)
$$

To see that conditionally $\prec$ has size biased ordering, proving \Cref{prop:Prec conditioned}, we need only recall \Cref{lem:Conditionally Exchg} which gives an explicit formula for the conditional location of the tokens. That the ordering then satisfies \Cref{Def SBO} is then standard. \cite{gnedin1998}

\subsection{Proof of \Cref{prop:Is MC}}

We are now able to give a short proof of \Cref{prop:Is MC}.

\begin{proof}
We will prove this inductively over the jumps $T_i$ of $\mu_t, t \geq 0$.

Since $\mu_0 \in P_{\fs}(S)$, each jump clearly occurs when two atoms of $\mu_t$ merge. The meeting rates between two atoms $s_i$ and $s_j$ are given by the meeting rates of the lowest tokens at $s_i$ and $s_j$ - however the rate is $\phi(d(s_i, s_j))$ which clearly depends only on $s_i$ and $s_j$ and not on whatever tokens are located there. This meeting process is obviously Markovian as the meeting rates are exponentially distributed (and so memoryless) and clearly matches the description of the meeting rates of the Metric Coalescent in \Cref{sec:define MC}.

All that's left is to then check the distribution of which token merges with which. Consider the first meeting time $T_1$, between two atoms $s_{i_1}$ and $s_{j_1}$. Which token absorbs the other depends only on their relative ranking, i.e. whether $s_{i_1} \prec s_{j_1}$ or vice versa. By \Cref{prop:Prec conditioned} this is given by
$$
\Prb( s_{i_1} \prec s_{j_1} ) \vert \mu_0) = \frac{\mu_0(s_{i_1})}{\mu_0(s_{i_1}) + \mu_0(s_{j_1})}
$$
matching that of the Metric Coalescent.

Similarly, writing the first $n$ jump (i.e. meeting) times as $T_0 = 0, T_1, \ldots, T_n$,  if at $T_n$ two atoms $s_{i_n}$ and $s_{j_n}$ meet, then the probability of $s_{i_n}$ absorbing $s_{j_n}$ is given by
\begin{align*}
\Prb(s_{i_n} \prec s_{j_n} \vert \mu_{T_{n - 1}}, \ldots, \mu_{T_0} ) &= SBO(\mu_{T_{n -1}}, s_{i_n}, s_{j_n}) \\
&= \frac{\mu_{T_{n - 1}}(s_{i_n})}{\mu_{T_{n - 1}}(s_{i_n}) + \mu_{T_{n - 1}}(s_{j_n})}
\end{align*}
from \Cref{prop:Prec conditioned}. This again clearly matches the Metric Coalescent proving that the two finite jump processes are equal in distribution.
\end{proof}

\section{Martingales of the Metric coalescent}
\label{sec:Martingales}

In this section, we analyse the process $\mu_t, t \geq 0$ by studying a family of associated martingales. In particular, we use martingale arguments to study some of the path properties of our construction. Our main result in this section is the following.

\begin{prop}{From any initial measure $\mu_0 = \mu$ the process $\mu_t$ is right continuous at $t = 0$ almost surely.}
\label{prop:continuous at 0}
\end{prop}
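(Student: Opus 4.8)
The plan is to fix a test function $f \in C_b(S)$, show that the bounded real-valued process $\mu_t(f)$ is a martingale, compute its second moment explicitly via the two-token calculation, and then combine the vanishing of the variance at $t=0$ with Doob's maximal inequality to get almost sure right-continuity of each coordinate $\mu_t(f)$; a final reduction to a countable convergence-determining class then upgrades this to a.s. weak right-continuity of $\mu_t$ itself. The first ingredient is the \emph{first-moment identity} $\E_\mu \mu_t(f) = \mu(f)$ for every initial $\mu$ and $t \geq 0$. Token $1$ is the global minimum and so never merges, giving $u_1(t) = 1$ and $\xi_{u_1(t)} = \xi_1 \sim \mu$; on the other hand, by \Cref{lem:Cond A 1} and \Cref{lem:No Dust} the token locations are, conditionally on $\mu_t$, i.i.d.\ samples from $\mu_t$, so $\E[f(\xi_{u_1(t)}) \mid \mu_t] = \mu_t(f)$. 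Taking expectations gives $\E_\mu \mu_t(f) = \E f(\xi_1) = \mu(f)$. Since this holds from \emph{every} initial measure, the Markov property (\Cref{prop:mu Markov}) together with time-homogeneity (\Cref{lem:Time homog}) yields $\E[\mu_{t_1}(f) \mid \mathcal F_{t_0}] = \E_{\mu_{t_0}}\mu_{t_1 - t_0}(f) = \mu_{t_0}(f)$, so $\mu_t(f)$ is a martingale bounded by $\|f\|_\infty$.

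The second, and main, ingredient is the \textbf{second-moment argument}. Using the same conditional independence of tokens $1$ and $2$ given $\mu_t$, one gets $\E_\mu \mu_t(f)^2 = \E\big[ f(\xi_{u_1(t)}) f(\xi_{u_2(t)}) \big]$. Now the paths of tokens $1$ and $2$ depend only on the single meeting time $t_{12}$: token $1$ stays at $\xi_1$, while token $2$ sits at $\xi_1$ if $t_{12} \leq t$ and at $\xi_2$ otherwise. Conditioning on $\xi_1,\xi_2$ and recalling $t_{12} \sim \Exp(\phi(d(\xi_1,\xi_2)))$, this computes to
$$
\E_\mu \mu_t(f)^2 = \E\!\left[ f(\xi_1)^2\big(1 - e^{-\phi(d(\xi_1,\xi_2))\, t}\big) + f(\xi_1) f(\xi_2)\, e^{-\phi(d(\xi_1,\xi_2))\, t} \right].
$$
Letting $t \downarrow 0$ and applying dominated convergence (the integrand is bounded by $\|f\|_\infty^2$, and on the event $\xi_1 = \xi_2$ the bracket equals $f(\xi_1)^2 = f(\xi_1)f(\xi_2)$ for all $t$), the limit is $\E f(\xi_1) f(\xi_2) = \mu(f)^2$ by independence of $\xi_1,\xi_2$. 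Hence $\var(\mu_t(f)) = \E_\mu \mu_t(f)^2 - \mu(f)^2 \to 0$, and since $\mu_t(f)$ is a martingale this is exactly $\E\big[(\mu_t(f) - \mu(f))^2\big] \to 0$, i.e.\ $\mu_t(f) \to \mu(f)$ in $L^2$ as $t \downarrow 0$.

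To upgrade this $L^2$ convergence to an almost-sure statement I would apply Doob's $L^2$ maximal inequality to the martingale $N_t := \mu_t(f) - \mu(f)$: for each $a > 0$, $\E\big[\sup_{s \leq a} N_s^2\big] \leq 4\,\E N_a^2$. Taking $a = 1/n$ and using that $a \mapsto \E N_a^2$ is nondecreasing (as $\mu_t(f)^2$ is a submartingale) and tends to $0$, the decreasing quantities $\sup_{0 < s \leq 1/n} N_s^2$ converge to $0$ in expectation, hence to $0$ almost surely; thus $\mu_s(f) \to \mu(f)$ a.s.\ as $s \downarrow 0$. The technical point to be careful about here — and the step I expect to be the main obstacle — is the measurability of the supremum over a continuum of times before path-regularity of $\mu_t$ has been established. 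I would resolve this by noting that on $(0,\infty)$ the process is a (locally finite-support) pure-jump process and so is right-continuous there, allowing the supremum to be evaluated over a countable dense set of times where Doob's inequality applies through a monotone limit over finite subsets; this pins the right limit $\mu_{0+}(f)$ at $0$ to the $L^2$-limit $\mu(f)$.

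Finally, to pass from individual coordinates to weak right-continuity of the measure process, I would invoke the countable convergence-determining class $\mathfrak C \subset C_b(S)$ from \Cref{thrm:Test Class}. Applying the previous steps to each $f \in \mathfrak C$ and intersecting the countably many almost-sure events, we obtain a single almost-sure event on which $\mu_s(f) \to \mu(f)$ for every $f \in \mathfrak C$ as $s \downarrow 0$. Since $\mathfrak C$ is convergence-determining, on this event every sequence $s_n \downarrow 0$ satisfies $\mu_{s_n} \to \mu$ weakly, which is precisely almost-sure right-continuity of $\mu_t$ at $t = 0$.
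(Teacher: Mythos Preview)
Your proposal is correct and follows the paper's overall architecture: show $\mu_t(f)$ is a bounded martingale, compute the second moment to get $L^2$ convergence $\mu_t(f)\to\mu(f)$ as $t\downarrow 0$, upgrade to a.s.\ convergence, then pass through a countable convergence-determining class (exactly the paper's \Cref{thrm:Test Class}/\Cref{thrm:Random Measure Convergence}). The differences are in implementation. For the moments, the paper derives the martingale property from the process dynamics of $\mu^N_t$ and the second-moment formula by first treating countably supported initial measures and then passing to the limit; your route via tokens $1$ and $2$ directly (token $1$ never merges; token $2$ jumps to $\xi_1$ at time $t_{12}$) is more streamlined and yields the same formula after symmetrising in $\xi_1,\xi_2$. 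For the a.s.\ upgrade, the paper invokes the \emph{backwards} martingale convergence theorem to produce an a.s.\ limit $M^f_{\lim}$ as $t\downarrow 0$ and then identifies it with $\mu(f)$ via the $L^2$ statement; this sidesteps the measurability-of-the-supremum issue you flag, since no continuum supremum is needed. Your Doob-based argument also works, but note that the right-continuity of $\mu_t$ on $(0,\infty)$ you appeal to is only established later in the paper (\Cref{prop:cadlag}), so at this stage you should, as you suggest, run Doob over a countable dense set of times and identify the resulting a.s.\ rational limit with $\mu(f)$ via $L^2$---functionally the same endgame as the paper's, just reached through the maximal inequality rather than backward convergence.
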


In particular, the same argument (or the Markov property) can be used to show that for any fixed time $t_0$, the MC process $\mu_t, t \geq 0$ is continuous at $t_0$ almost surely. However our proof later that the MC process is cadlag almost surely (see \Cref{sec:Cont and uniq}) only requires right-continuity at time $t = 0$, so we'll focus on that here.

Of course, the process $\mu_t, t \geq 0$ is - for generic initial $\mu \in P(S)$ - not path continuous almost surely. This can easily be seen since from any initial measure in $P_{\fs}(S)$ the process is a discrete jump process and so is never path continuous, only cadlag. For now we'll defer consideration of the path properties of the process until after we prove Feller continuity (see \Cref{sec:TFC}).

To prove \Cref{prop:continuous at 0}, we consider the evolution of functions integrated against our measure-valued process. Specifically, for any $f \in C_b(S)$ we consider the process $\mu_t(f), t \geq 0$ and equivalently $\mu^N_t(f), t \geq 0$ for the $N$-th empirical measures.

Our main result in the study of these processes is that these processes are martingales with explicitly computable quadratic variation.

\begin{prop}{For any $f \in C_b(S)$, $\mu_t(f), t \geq 0$ and $\mu^N_t(f), t \geq 0$ are martingales with quadratic variation satisfying
$$
\E \left( \mu_t(f) - \mu_0(f) \right)^2 = \frac{1}{2} \E \left(1 - \exp( - \phi(d(\xi_1, \xi_2)) t) \right) \left( f(\xi_1) - f(\xi_2) \right)^2
$$
for any time $t \geq 0$.}
\label{prop:M martingale}
\end{prop}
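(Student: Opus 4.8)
The plan is to establish the martingale property from the finite empirical chains, and then to obtain the closed form for the quadratic variation by reducing, via the paintbox description, to a computation involving only tokens $1$ and $2$.

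First I would record the martingale property at finite $N$. By \Cref{prop:Uniform Partitions} the token partition $Z^N$ is, conditionally on $(\mu^N_s : 0 \le s \le t)$, uniform on partitions of its shape, so $\mu^N_t$ evolves as the finite Markov chain in which two atoms at $x,y$ carrying masses $p,q$ meet at rate $\phi(d(x,y))$ and merge to the mass-$p$ location with probability $p/(p+q)$; this is precisely the Metric Coalescent rule. At such a meeting $\mu^N(f)$ changes by $q(f(x)-f(y))$ or $p(f(y)-f(x))$ with probabilities $p/(p+q)$ and $q/(p+q)$, whose weighted sum is $0$, so $\mu^N_t(f)$ is a martingale for $\mathcal F^N_t = \sigma(\mu^N_s : s\le t)$. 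Since $\mu^N_t(f)\to\mu_t(f)$ a.s. by \Cref{prop:Token Limit} and $|\mu^N_t(f)|\le\|f\|_\infty$, testing against bounded functionals of $(\mu^N_r)_{r\le s}$ and passing to the limit gives the martingale property of $\mu_t(f)$ for $\mathcal F_t=\sigma(\mu_s:s\le t)$.

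Next I would compute the quadratic variation at level $N$. Writing $\Phi(\mu)=\mu(f)^2$, the same two outcomes give, after the first-order (drift) terms cancel exactly as above,
\[
\mathcal L^N\Phi(\mu)=\sum_{k<l}\phi(d(s_k,s_l))\,p_k p_l\,(f(s_k)-f(s_l))^2 ,
\]
so by Dynkin's formula and the martingale property
\[
\E\big(\mu^N_t(f)-\mu^N_0(f)\big)^2=\E\int_0^t\sum_{k<l}\phi(d(s_k,s_l))\,p^N_k(s)p^N_l(s)\,(f(s_k)-f(s_l))^2\,ds .
\]
The key step is then to let $N\to\infty$ and reinterpret the integrand. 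By \Cref{lem:Conditionally Exchg}, conditionally on $(\mu_r:r\le s)$ every token is placed in its block independently according to the paintbox $p(s)$, so the inner sum $\tfrac12\sum_{k\neq l}\phi(d(s_k,s_l))p_kp_l(f(s_k)-f(s_l))^2$ equals the conditional expectation of $\tfrac12\,\phi(d(X,Y))(f(X)-f(Y))^2 1(X\neq Y)$, where $X,Y$ are the time-$s$ locations of two conditionally independent paintbox tokens. I would choose these two probe tokens to be tokens $1$ and $2$: token $1$ is always the head of its block, so its location is $\xi_1$ for all $s$, while token $2$ can only ever merge into token $1$, so its location at time $s$ is $\xi_2$ if $t_{12}>s$ and $\xi_1$ otherwise. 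Hence the two probes occupy distinct locations exactly on $\{t_{12}>s\}$, where the integrand is $\tfrac12\phi(d(\xi_1,\xi_2))(f(\xi_1)-f(\xi_2))^2$. Taking expectations and integrating out the exponential meeting clock, using $\E[\phi(d(\xi_1,\xi_2))\min(t_{12},t)\mid \xi_1,\xi_2]=1-e^{-\phi(d(\xi_1,\xi_2))t}$, yields
\[
\E\big(\mu_t(f)-\mu_0(f)\big)^2=\tfrac12\,\E\big[(1-e^{-\phi(d(\xi_1,\xi_2))t})(f(\xi_1)-f(\xi_2))^2\big].
\]

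The main obstacle is the passage to the limit in the quadratic-variation integral, since $\phi$ need not be bounded: condition (H2) forces $\phi(r)\to\infty$ as $r\downarrow 0$, so the summand $\sum_{k<l}\phi\,p^N_kp^N_l(f_k-f_l)^2$ contains arbitrarily large rates coming from nearly coincident atoms. Two features rescue the interchange: the left-hand side is bounded by $4\|f\|_\infty^2$, and after the two-token reduction the relevant quantity is governed by $\phi(r)e^{-\phi(r)s}\le 1/(es)$, uniformly in $r$, so the limiting integrand is integrable on $(0,t)$. I would therefore make the argument rigorous either by first proving the forward equation $\tfrac{d}{dt}\E\Phi(\mu_t)=\E\mathcal L\Phi(\mu_t)$ for the limit process and evaluating $\E\mathcal L\Phi(\mu_s)$ directly through the token-$1$/token-$2$ computation, or by a dominated-convergence argument built on this exponential bound together with the total-variation convergence of \Cref{lem:Convergence in TVD}.
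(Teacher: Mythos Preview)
Your martingale argument matches the paper's. For the quadratic variation, however, the paper takes a simpler route that entirely sidesteps the convergence obstacle you identify. Rather than writing $\E(\mu_t(f)-\mu_0(f))^2$ as a Dynkin integral $\int_0^t \E[\cdots]\,ds$ whose integrand contains the unbounded $\phi$, the paper applies the token-$1$/token-$2$ reduction directly to the \emph{value} at a fixed time $t$: for a countably supported initial measure $\mu_0=\sum p_i(0)\delta(s_i)$, the product $\E p_i(t)p_j(t)$ is simply the probability that token $1$ is at $s_i$ and token $2$ is at $s_j$ at time $t$, which equals $p_i(0)p_j(0)e^{-\phi(d(s_i,s_j))t}$ (\Cref{prop:second moment calc}). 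Expanding $\E\mu_t(f)^2=\sum_{i,j}f(s_i)f(s_j)\E p_i(t)p_j(t)$ then gives the formula for countable $\mu_0$ with no time integration (\Cref{lem:countable 2nd moment}). Applying this to the finitely supported $\mu_0^N$ and using exchangeability of $(\xi_i)$ yields $\E(\mu^N_t(f)-\mu^N_0(f))^2=\tfrac12\E F_t(\xi_1,\xi_2)$ with $F_t(x,y)=(1-e^{-\phi(d(x,y))t})(f(x)-f(y))^2$ \emph{bounded}, so the $N\to\infty$ limit is just bounded convergence. Your approach is correct in principle and uses the same two-token insight, but you apply it to the rate $\tfrac{d}{dt}\E\mu_t(f)^2$ rather than to $\E\mu_t(f)^2$ itself; this introduces the bare $\phi$ into the integrand and forces the extra work you flag in your last paragraph, work the paper never has to do.
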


We begin by studying the process dynamics of $\mu^N_t(f)$ and use that to prove the first half of \Cref{prop:M martingale}. Then, a study of the quadratic variation for the case of countably supported initial measures will lead the result for general initial measures.

\subsection{Process Dynamics of the Metric Coalescent}

We begin by showing that the processes $\mu_t^N(f), t \geq 0$ are martingales. We will need to make use of the conditional size biased permutation of the partitions of $\mu^N_t$ at any time $t > 0$, as seen in \Cref{lem:prec on muN}. This simple extension of \Cref{prop:Prec conditioned} follows easily from the fact that the empirical measures $\mu^N_t$ also as evolve as the MC, in this case as the originally defined MC from the (random) initial measure $\mu_0^N$.

Write 
$$
\mu^N_t = \sum_{i =1}^{K^N(t)} p_i \delta(s_i),
$$
where $K^N(t)$ is the finite number of atoms of $\mu^N_t$ at time $t$.

\begin{lemma}{At any fixed time $t \geq 0$, the size biased permutation of $\mu^N_t$ satisfies
$$
\Prb( s_i \prec s_j \vert \mu_t^N) = \frac{p_i}{p_i + p_j}.
$$}
\label{lem:prec on muN}
\end{lemma}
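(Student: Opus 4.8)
The plan is to reduce this to the finite conditional exchangeability already established in \Cref{prop:Uniform Partitions}, exactly mirroring how \Cref{prop:Prec conditioned} was deduced from \Cref{lem:Conditionally Exchg} in the infinite setting. First I would recall that each atom $s_k$ of $\mu^N_t$ corresponds to a block of the token partition $Z^N(t)$ of size $N p_k$, and that by the definition of $\prec$ we have $s_i \prec s_j$ precisely when the smallest token in the block at $s_i$ is smaller than the smallest token in the block at $s_j$. So the claim is a purely combinatorial statement about which block contains the smaller of two block-minima.

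By \Cref{prop:Uniform Partitions}, conditional on the path $\mu^N_s, 0 \le s \le t$, the partition $Z^N(t)$ is uniform among all partitions of $\setN$ with the given block-size trace; in particular, identifying blocks by their locations, the split of the labels $1,\dots,N$ among the atoms is exchangeable. Restricting attention to the $N(p_i + p_j)$ tokens lying in the two blocks associated to $s_i$ and $s_j$, this uniformity means that, conditional on the set of those labels, the subset forming $s_i$'s block is a uniformly random subset of size $N p_i$. Hence the overall smallest of these labels is equally likely to be any one of them, and it lands in $s_i$'s block with probability $N p_i / \bigl(N(p_i + p_j)\bigr) = p_i/(p_i + p_j)$; that is,
$$
\Prb(s_i \prec s_j \mid \mu^N_s, 0 \le s \le t) = \frac{p_i}{p_i + p_j}.
$$

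Finally I would pass from the whole path to $\mu^N_t$ alone. Since the value $p_i/(p_i+p_j)$ is a function of the atom masses and is therefore $\sigma(\mu^N_t)$-measurable, the tower property gives
$$
\Prb(s_i \prec s_j \mid \mu^N_t) = \E\!\left[\Prb(s_i \prec s_j \mid \mu^N_s, 0 \le s \le t)\,\middle|\,\mu^N_t\right] = \frac{p_i}{p_i + p_j}.
$$
This is exactly the $n=2$ instance of the $\SBO$ formula of \Cref{Def SBO}, confirming that $\prec$ restricted to $\supp \mu^N_t$ is the $\mu^N_t$-size biased ordering, consistent with the fact (via \Cref{prop:Is MC}) that $\mu^N_t$ itself evolves as an original Metric Coalescent from $\mu^N_0$.

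The main obstacle I anticipate is purely bookkeeping: ensuring that the conditional exchangeability of \Cref{prop:Uniform Partitions}—stated there for the relabeled partition $S^N$ with a prescribed trace—transfers cleanly to the statement that, given the path, the smallest label among two prescribed blocks is uniform over those blocks in proportion to their sizes. Once that ``smallest-token-is-uniform'' reduction is pinned down the computation is immediate, and unlike the infinite-token arguments elsewhere no limiting or analytic estimates are needed.
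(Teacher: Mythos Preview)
Your proposal is correct and follows essentially the same route the paper sketches. The paper does not give a proof of this lemma; it simply remarks that it is a ``simple extension of \Cref{prop:Prec conditioned}'' using that $\mu^N_t$ evolves as the original MC from $\mu^N_0$, and your argument unpacks exactly this: the finite conditional exchangeability of \Cref{prop:Uniform Partitions} plays the role that \Cref{lem:Conditionally Exchg} plays in the proof of \Cref{prop:Prec conditioned}, and the $n=2$ case of $\SBO$ drops out from the ``smallest-label-is-uniform'' observation you describe.
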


Let $f$ be any bounded continuous function on $S$.

\begin{lemma}{$\mu^N_t(f), t \geq 0$ is a martingale.}
\label{lem:MN Martingale}
\begin{proof}
We show this from the process dynamics of $\mu^N_t(f)$. For some fixed time $t \geq 0$ write
$$
\mu^N_t = \sum_{i =1}^{K^N(t)} p_i \delta(s_i),
$$
with the support of $\mu^N_t$ in any order. For simplicity of notation write 
$$
\nu_{ij} = \phi(d(s_i, s_j))
$$
for the rate of coalescence between the atoms $s_i$ and $s_j$. Then
\begin{align*}
\E \Big( d \mu^N_t(f) &\vert \mu_s, 0 \leq s \leq t \Big) \\
&= \sum_{i \neq j} \nu_{ij} \Big( \frac{p_i}{p_i + p_j} p_j(f(s_i) - f(s_j))
+ \frac{p_j}{p_i + p_j} p_i(f(s_j) - f(s_i)) \Big)\, dt\\
&= 0 \, dt
\end{align*}
since from time $t$, at rate $\nu_{ij}$ the atoms at $s_i$ and $s_j$ merge with - by \Cref{lem:prec on muN} - $s_i$ winning the meeting (i.e. having the lower token) and absorbing the mass $p_j$ of $s_j$ with probability $\frac{p_i}{p_i + p_j}$ and similarly for $s_j$ winning.
\end{proof}
\end{lemma}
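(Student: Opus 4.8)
The plan is to recognize $\mu^N_t$ as a finite pure-jump Markov process and to show that the functional $\mu \mapsto \mu(f)$ is carried to a process with vanishing infinitesimal drift, from which the martingale property follows by Dynkin's formula. Since the first $N$ tokens can only merge among themselves and each token merges at most once, $\mu^N_t$ has at most $N$ atoms at every time and undergoes at most $N-1$ nontrivial jumps; in particular it is piecewise constant with finitely many jumps on any bounded interval, almost surely, and it evolves as the originally-defined Metric Coalescent of \Cref{sec:define MC} started from the random measure $\mu^N_0$. Because $f \in C_b(S)$, the process $\mu^N_t(f)$ is bounded by $\sup_{s \in S}|f(s)|$, hence integrable, and all of its increments are bounded, so no integrability issues will arise.

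For the drift computation, fix a time $t$ and write $\mu^N_t = \sum_i p_i \delta(s_i)$. A jump occurs precisely when two atoms $s_i$ and $s_j$ meet, at rate $\nu_{ij} = \phi(d(s_i,s_j))$; upon meeting the new state is $\mu^N_t + p_j(\delta(s_i) - \delta(s_j))$ if $s_i$ wins and $\mu^N_t + p_i(\delta(s_j) - \delta(s_i))$ if $s_j$ wins, so that $\mu^N_t(f)$ jumps by $p_j(f(s_i)-f(s_j))$ or $p_i(f(s_j)-f(s_i))$ respectively. By the conditional size-biased ordering of \Cref{lem:prec on muN}, the atom $s_i$ wins with probability $\frac{p_i}{p_i + p_j}$. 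Hence the generator applied to $\mu \mapsto \mu(f)$ is
\[
\sum_{i \neq j} \nu_{ij}\left( \frac{p_i}{p_i+p_j} p_j \left(f(s_i)-f(s_j)\right) + \frac{p_j}{p_i+p_j} p_i \left(f(s_j)-f(s_i)\right)\right) = 0,
\]
since each summand equals $\frac{p_i p_j}{p_i+p_j}\bigl((f(s_i)-f(s_j)) + (f(s_j)-f(s_i))\bigr) = 0$.

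With the drift identically zero, Dynkin's formula shows that $\mu^N_t(f) - \mu^N_0(f)$ is a martingale with respect to the natural filtration of $\mu^N$; equivalently, decomposing over the finitely many jumps in an interval $(s,t]$, each jump contributes a conditionally mean-zero increment, so $\E(\mu^N_t(f) \mid \mathcal{F}_s) = \mu^N_s(f)$. The only substantive input is the conditional win-probability, so the main obstacle is really the justification underlying \Cref{lem:prec on muN}, namely that the winner of each meeting is size-biased in the current masses \emph{conditional on the full past} and not merely at a fixed deterministic time; this is exactly what the symmetric exchangeability of the token process supplies, together with the fact (via \Cref{lem:muN Markov}) that the conditioning may equivalently be taken against the history of the limit process $\mu_t$. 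Everything else — the finite-jump structure, the boundedness giving integrability, and the algebraic cancellation — is routine.
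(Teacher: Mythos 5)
Your proposal is correct and follows essentially the same route as the paper: both compute the infinitesimal drift of $\mu^N_t(f)$ as a finite pure-jump process, invoke the conditional size-biased win probabilities of \Cref{lem:prec on muN} to weight the two merge outcomes, and observe that each pair $(i,j)$ contributes $\frac{p_i p_j}{p_i+p_j}\nu_{ij}\bigl((f(s_i)-f(s_j))+(f(s_j)-f(s_i))\bigr)=0$. Your additional remarks (boundedness of $f$ giving integrability, the finite-jump structure, and the observation that the size-biased ordering must hold conditional on the full past) merely make explicit routine details the paper leaves implicit, so no comparison beyond this is needed.
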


As the sequence of martingales $\mu^N_t(f), t \geq 0$ are uniformly bounded since $f$ is, standard convergence results imply that $\mu_t(f), t \geq 0$ is itself a martingale \cite{pollard1984convergence}. Conversely, as the sequence is uniformly integrable this can be shown directly without much difficulty.

To prove the second claim in \Cref{prop:M martingale} for $\mu_t^N$ we begin by analysing the second moment of $\mu_t(f)$ for a countably supported initial measure $\mu$.

\subsection{Second Moment Calculation}
\label{sec:second moment}
Consider a fixed countably supported initial measure
$$
\mu_0 = \sum_{i = 1}^{\infty} p_i(0) \delta (s_i).
$$
By construction, for all times $t \geq 0$ the atoms of $\mu_t$ are contained within those of $\mu_0$ and so we may write
$$
\mu_t = \sum_{i = 1}^{\infty} p_i(t) \delta (s_i),
$$
with $p_i(t)$ random and possibly zero. For each $i \geq 1$, the process $p_i(t), t \geq 0$ is a martingale as can be seen easily (when $\{s_i \}_{i \geq 1}$ is nowhere dense, otherwise by the same argument as for the higher moments) choosing some $f \in C_b(S)$ separating $s_i$ from the other atoms of $\mu_0$. Therefore the expected mass at $s_i$ at time $t$ is given by
$$
\E p_i(t) = p_i(0),
$$
for all $ t \geq 0$. We begin by calculating the second moments of these masses.

\begin{prop}{For any $i \neq j \geq 1$ and $t \geq 0$ we have
$$
\E p_i(t) p_j(t) = p_i(0) p_j(0) \exp( - \phi(d(s_i, s_j)) t ).
$$
Similarly we have
$$
\E p_i^2(t) = \sum_{j \geq 1} p_i(0) p_j(0) \left( 1 - \exp ( - \phi(d(s_i, s_j)) t) \right).
$$
}
\label{prop:second moment calc}
\begin{proof}
First, by the paintbox construction of the token process at time $t$, we have that $p_i(t) p_j(t)$ is the probability of finding any two fixed tokens at $s_i$ and $s_j$ respectively. Thus
$$
\E p_i(t) p_j(t) = \Prb ( \text{ token } 1 \text{ is at } s_i, \text{ token } 2 \text{ is at } s_j \text{ at time } t). 
$$
Now, the only way this can occur is if $\xi_1 = s_i, \xi_2 = s_j$ and tokens $1$ and $2$ haven't met by time $t$, that is $t_{12} \geq t$. By the definition of the token process (or equivalently its paintbox construction at time zero) we can conclude that
\begin{align*}
\E p_i(t) p_j(t) &= \Prb( \xi_1 = s_i, \xi_2 = s_j, t_{12} \geq t) \\
&= p_i(0) p_j(0) \Prb( t_{12} \geq t \vert  \xi_1 = s_i, \xi_2 = s_j ) \\
&= p_i(0) p_j(0) \exp(- \phi(d(s_i, s_j)) t ).
\end{align*}

The second statement follows by a similar proof, noting that if both tokens $1$ and $2$ are at $s_i$ at time $t$, then token $1$ must have started at $s_i$ and their first meeting must satisfy $t_{12} \leq t$.
\end{proof}
\end{prop}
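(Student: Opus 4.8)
The plan is to reduce both second-moment computations to probabilities about two tagged tokens, exploiting the paintbox structure of the partition $Z(t)$. Recall from the construction of $\mu_t$ that, conditional on the mass partition $p(t)$, each token is assigned to a block of $B(t)$ independently with the corresponding asymptotic frequency (and by \Cref{lem:No Dust} there is no dust for $t>0$). Consequently, for $i \neq j$ the product $p_i(t)p_j(t)$ equals the conditional probability, given $p(t)$, that two fixed distinct tokens — say tokens $1$ and $2$ — lie in the blocks associated to $s_i$ and $s_j$ respectively. Taking expectations removes the conditioning and gives
$$
\E\, p_i(t)p_j(t) = \Prb(\text{token } 1 \text{ at } s_i,\ \text{token } 2 \text{ at } s_j \text{ at time } t),
$$
while the same reduction applied to $p_i(t)^2$ yields $\E\, p_i^2(t) = \Prb(\text{tokens } 1 \text{ and } 2 \text{ both at } s_i \text{ at time } t)$.

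First I would translate these two-token events into conditions on the initial data $\xi_1,\xi_2$ and the single meeting time $t_{12}$, using that token $1$ is the lowest token and so never merges: its location is $\xi_1$ for all time. For $i \neq j$ (so $s_i \neq s_j$), the event ``token $1$ at $s_i$'' forces $\xi_1 = s_i$, and since token $2$ can only leave $\xi_2$ by merging into token $1$ (which would place it at $s_i \neq s_j$), the event ``token $2$ at $s_j$'' forces $\xi_2 = s_j$ together with $t_{12} > t$. Hence the event is exactly $\{\xi_1 = s_i,\ \xi_2 = s_j,\ t_{12} \geq t\}$. For the diagonal, ``tokens $1$ and $2$ both at $s_i$'' is exactly $\{\xi_1 = s_i,\ t_{12} \leq t\}$: given $\xi_1 = s_i$, token $2$ sits at $s_i$ precisely when it has merged into token $1$ by time $t$, and the case $\xi_2 = s_i$ is subsumed since coincident starting sites give $t_{12} = 0$.

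With the events identified, the rest is routine conditioning. Since $\xi_1,\xi_2$ are i.i.d. samples of $\mu_0$ and, conditionally on $\xi_1,\xi_2$, the meeting time $t_{12}$ is exponential of rate $\phi(d(\xi_1,\xi_2))$, conditioning on $\{\xi_1 = s_i,\ \xi_2 = s_j\}$ gives $\Prb(t_{12} \geq t) = \exp(-\phi(d(s_i,s_j))t)$; multiplying by $\Prb(\xi_1 = s_i)\Prb(\xi_2 = s_j) = p_i(0)p_j(0)$ produces the first formula. For the diagonal I would decompose over the value $\xi_2 = s_k$ and use $\Prb(t_{12}\leq t \mid \xi_1 = s_i,\ \xi_2 = s_k) = 1 - \exp(-\phi(d(s_i,s_k))t)$ for each $k$, the $k=i$ term contributing $p_i(0)^2$ via the convention $\phi(0) = \infty$ (assumption (H2)), and summing gives the stated series.

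I expect the main subtlety to lie not in the arithmetic but in the structural translation of step two: correctly reading off the two-token events from the merging dynamics. The distinguished role of token $1$ — that it never merges, so that both events are governed by the single clock $t_{12}$ — is what makes the computation collapse to a one-variable conditioning, and one must argue this carefully rather than treat tokens $1$ and $2$ symmetrically. The accompanying delicate point is the degenerate diagonal term $s_k = s_i$, where coincident initial sites force an instantaneous merger ($t_{12}=0$) and the formula is only consistent under the convention $\phi(0)=\infty$; handling this boundary case is where the cleanest statement requires restricting to $t>0$.
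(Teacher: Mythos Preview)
Your proposal is correct and follows essentially the same approach as the paper: reduce to the probability that tokens $1$ and $2$ occupy the prescribed sites via the paintbox construction, then identify this event in terms of $\xi_1,\xi_2$ and the single clock $t_{12}$. Your write-up is in fact more explicit than the paper's in justifying why the two-token event collapses to $\{\xi_1=s_i,\ \xi_2=s_j,\ t_{12}\geq t\}$ (namely, that token $1$ never merges and token $2$ can only merge into token $1$), and your observation about the $t=0$ boundary case in the diagonal formula is a fair point that the paper glosses over.
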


In theory this same idea gives explicit formulas for higher moments of $p_i(t), i \geq 1$, where an $n$-th moment corresponds to a certain subset of meeting trees on $n$ tokens. In practice however, for anything past the second moment the number of possible meeting trees makes such a calculation impractical. We discuss this further in \Cref{sec:moment calc}.

Using \Cref{prop:second moment calc} we are able to calculate the quadratic variation of $\mu_t(f)$ from a countably supported initial measure $\mu_0$.

\begin{lemma}{For any countably supported initial measure
$$
\mu_0 = \sum_{i = 1}^{\infty} p_i(0) \delta(s_i)
$$
and $f \in C_b(S)$, the quadratic variation of $\mu_t(f)$ satisfies
$$
\E \left(\mu_t(f) - \mu_0(f) \right)^2 
= \sum_{i \neq j} p_i(0) p_j(0) \left(1 - \exp( - \phi(d(s_i, s_j)) t) \right) \left( f(s_i) - f(s_j) \right)^2.
$$}
\label{lem:countable 2nd moment}
\begin{proof}
This follows by an application of \Cref{prop:second moment calc} to
$$
\mu_t(f) = \sum_{i \geq 1} p_i(t) f_i(t),
$$
after some simplification, recalling that $\mu_t(f), t \geq 0$ is a martingale and so
$$
\E \left(\mu_t(f) - \mu_0(f) \right)^2 = \E \left( \mu_t(f) \right)^2 - \left( \mu_0(f) \right)^2.
$$
\end{proof}
\end{lemma}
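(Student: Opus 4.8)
The plan is to follow the route indicated in the statement: use the martingale property to turn the quadratic variation into an ordinary variance, expand $\mu_t(f)^2$ as a double sum over the atoms $s_i$, and then substitute the two moment identities of \Cref{prop:second moment calc}. First, since $\mu_0$ is a fixed countably supported measure, $\mu_0(f)$ is a deterministic constant, and we have already established that $\mu_t(f), t\geq 0$ is a (uniformly bounded) martingale, so $\E\mu_t(f) = \mu_0(f)$. Hence
$$
\E\left(\mu_t(f)-\mu_0(f)\right)^2 = \E\mu_t(f)^2 - \mu_0(f)^2,
$$
and everything reduces to computing $\E\mu_t(f)^2$. Writing $\mu_t(f) = \sum_{i\geq 1}p_i(t)f(s_i)$, the bounds $|f|\leq\|f\|_\infty$ and $\sum_i p_i(t)=1$ give $|\mu_t(f)|\leq\|f\|_\infty$ and show that the doubly-indexed family $p_i(t)p_j(t)f(s_i)f(s_j)$ is absolutely summable with total mass at most $\|f\|_\infty^2$; this justifies expanding the square and interchanging expectation with the double sum, yielding $\E\mu_t(f)^2 = \sum_i f(s_i)^2\,\E p_i(t)^2 + \sum_{i\neq j}f(s_i)f(s_j)\,\E[p_i(t)p_j(t)]$.

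Next I would substitute the two formulas from \Cref{prop:second moment calc}, namely $\E[p_i(t)p_j(t)] = p_i(0)p_j(0)\exp(-\phi(d(s_i,s_j))t)$ for $i\neq j$ and $\E p_i(t)^2 = \sum_{j}p_i(0)p_j(0)(1-\exp(-\phi(d(s_i,s_j))t))$, where the $j=i$ summand of the latter equals $p_i(0)^2$ (two tokens initialized at the same site meet at time $0$). Subtracting $\mu_0(f)^2 = \sum_{i,j}p_i(0)p_j(0)f(s_i)f(s_j)$, the pure-diagonal terms $\sum_i p_i(0)^2 f(s_i)^2$ cancel, and collecting the three surviving off-diagonal sums gives, for each ordered pair $i\neq j$, the single summand
$$
p_i(0)p_j(0)\left(1-\exp(-\phi(d(s_i,s_j))t)\right)f(s_i)\left(f(s_i)-f(s_j)\right).
$$
Symmetrizing over the interchange $(i,j)\leftrightarrow(j,i)$ — legitimate because $p_i(0)p_j(0)$ and $\phi(d(s_i,s_j))$ are symmetric in $i,j$ — replaces $f(s_i)(f(s_i)-f(s_j))$ by $\tfrac12(f(s_i)-f(s_j))^2$, which is the claimed identity.

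The main obstacle here is not conceptual but bookkeeping: there are two technical points to get right. The first is the interchange of expectation with the infinite double sum, which the uniform bound $\|f\|_\infty$ settles by dominated convergence. The second is the careful accounting of the diagonal moment $\E p_i(t)^2$, so that its $j=i$ piece cancels exactly against the diagonal of $\mu_0(f)^2$ and the resulting factor $\tfrac12$ (equivalently, the ordered-versus-unordered reading of $\sum_{i\neq j}$) comes out consistent with \Cref{prop:M martingale}.

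Finally, I would remark that there is a cleaner probabilistic route that avoids the algebra entirely. By the paintbox construction, $\E\mu_t(f)^2 = \E[f(X_1(t))f(X_2(t))]$, where $X_1(t)$ and $X_2(t)$ are the time-$t$ locations of tokens $1$ and $2$. Since token $1$ is pinned at $\xi_1$ for all time while token $2$ remains at $\xi_2$ until the meeting time $t_{12}$ and sits at $\xi_1$ thereafter, one has $f(X_1(t))f(X_2(t)) = f(\xi_1)f(\xi_2) + (f(\xi_1)^2-f(\xi_1)f(\xi_2))\,1(t_{12}\leq t)$; conditioning on $\xi_1,\xi_2$ with $\Prb(t_{12}\leq t\mid\xi_1,\xi_2) = 1-\exp(-\phi(d(\xi_1,\xi_2))t)$ and symmetrizing in $\xi_1,\xi_2$ reproduces the formula directly.
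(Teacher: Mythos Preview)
Your proof is correct and follows exactly the route the paper sketches: reduce to a variance via the martingale property, expand $\mu_t(f)^2$ as a double sum, plug in \Cref{prop:second moment calc}, and simplify; you have simply filled in the ``after some simplification'' that the paper omits, including the dominated-convergence justification and the symmetrization step. Your closing remark on the two-token probabilistic derivation is a genuine alternative not in the paper's proof---it is in fact the argument that underlies \Cref{prop:second moment calc} itself, applied directly to $f$ rather than to indicator masses---and it has the advantage of bypassing the off-diagonal bookkeeping entirely.
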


We can now complete our proof of \Cref{prop:M martingale} by applying \Cref{lem:countable 2nd moment} to the sequence of empirical measures $\mu^N_t, N \geq 1$ - which for any initial measure $\mu \in P(S)$ are still countably supported.

\begin{proof}
 Write $F_t \colon S^2 \rightarrow \R$ for the function
$$
F_t(x, y) = \begin{cases}
\left(1 - \exp(-\phi(d(x, y))t) \right)(f(x) - f(y))^2 &\mbox{if } x \neq y \\
0 &\mbox{if } x = y.
\end{cases}
$$
Importantly note that $F_t(x, y)$ is continuous, in particular along the diagonal $x = y$. This is easy to see as $\left(1 - \exp(-\phi(d(x, y))t) \right)$ is bounded and so if for some sequences $x_i, i \geq 1$ and $y_i, i \geq 1$ we have $d(x_i, y_i) \rightarrow 0$ then
$$
(f(x_i) - f(y_i))^2 \rightarrow 0
$$
showing that $F_t(x_i, y_i) \rightarrow 0$. Since by assumption $f \in C_b(S)$ is bounded, so is $F_t(x, y)$ and so clearly $\E F_t(\xi_1, \xi_2)$ is well defined and also bounded.

We claim that
\begin{align}
\E \left( \left( \mu^N_t(f) - \mu^N_0(f) \right)^2 \vert \mu^N_0 \right) 
= \sum_{i, j = 1}^N \frac{1}{2 N^2} F_t(\xi_i, \xi_j). \label{eq:muN moment}
\end{align}

\Cref{lem:countable 2nd moment} gives us a formula for $\E \left( \left( \mu^N_t - \mu^N_0 \right)^2 \vert \mu^N_0 \right)$, however some care is needed to prove \Cref{eq:muN moment} as
$$
\mu^N_0 = \frac{1}{N} \sum_{i = 1}^N \delta(\xi_i)
$$
may not be a atomic decomposition of $\mu_0^N$ as some tokens may already be at the same location at time $t = 0$. To see that \Cref{eq:muN moment} still holds, consider two atoms $p_1 \delta(s_1)$ and $p_2 \delta(s_2)$ of $\mu^N_0$. Then we have that
\begin{align*}
p_1 p_2 F_t(s_1, s_2) &= \left(\sum_{i = 1}^N \frac{1(\xi_i = s_1)}{N} \right)\left(\sum_{i = 1}^N \frac{1(\xi_j = s_2)}{N} \right) F_t(s_1, s_2) \\
&= \sum_{i = 1}^N \sum_{j = 1}^N \frac{1(\xi_i = s_1) 1(\xi_j = s_2)}{N^2} F_t(s_1, s_2) \\
&=\sum_{i = 1}^N \sum_{j = 1}^N \frac{1(\xi_i = s_1) 1(\xi_j = s_2)}{N^2} F_t(\xi_i, \xi_j). 
\end{align*}
Thus writing an atomic decomposition for $\mu^N_0$ as
$$
\mu^N_0 = \sum_{k = 1}^K p_k \delta(s_k)
$$
we have that, summing over all pairs of atoms of $\mu^N_0$ and using that $F_t$ is zero on the diagonal, i.e
$$
F_t(s_{k}, s_{k}) = 0
$$
for all $1 \leq k \leq K$ we have that
\begin{align*}
\E \left( \left( \mu^N_t(f) - \mu^N_0(f) \right)^2 \vert \mu^N_0 \right) 
&= \sum_{k_1 \neq k_2} p_{k_1} p_{k_2} F_t(s_{k_1}, s_{k_2}) \\
&= \frac{1}{2} \sum_{k_1 = 1}^K \sum_{k_2 = 1}^K p_{k_1} p_{k_2} F_t(s_{k_1}, s_{k_2}) \\
&= \frac{1}{2} \sum_{k_1 = 1}^K \sum_{k_2 = 1}^K \sum_{i = 1}^N \sum_{j = 1}^N \frac{1(\xi_i = s_{k_1}) 1(\xi_j = s_{k_2})}{N^2} F_t(\xi_i, \xi_j) \\
&= \frac{1}{2} \sum_{i = 1}^N \sum_{j = 1}^N \frac{1}{N^2}  F_t(\xi_i, \xi_j),
\end{align*}
proving \Cref{eq:muN moment}.

To finish the proof of \Cref{prop:M martingale}, taking the expectation of \Cref{eq:muN moment} using the exchangeability of the initial locations $\xi_i, 1 \leq i \leq N$ we have that
\begin{align*}
\E \left( \mu^N_t(f) - \mu^N_0(f) \right)^2 &= \frac{N^2}{2 N^2} \E F_t(\xi_1, \xi_2) \\
&= \frac{1}{2} \E F_t(\xi_1, \xi_2).
\end{align*}
By \Cref{prop:Token Limit}, $\mu^N_t \rightarrow \mu_t$ weakly almost surely and so $\mu^N_t(f) \rightarrow \mu_t(f)$ almost surely; and similarly for $\mu^N_0$. Therefore as $f$ is bounded we can apply the Bounded Convergence theorem to complete the proof.
\end{proof}

We are now able to complete our proof of \Cref{prop:continuous at 0}.

\subsection{Proof of \Cref{prop:continuous at 0}}

To show the almost sure continuity of $\mu_t, t \geq 0$ at time $t = 0$, we first show the almost sure continuity of $\mu_t(f)$ at $t = 0$ for any $f \in C_b(S)$ and then apply \Cref{thrm:Random Measure Convergence} as was done in the proof of \Cref{prop:Token Limit}.

\begin{lemma}{For a fixed function $f \in C_b(S)$ the process $\mu_t(f), t \geq 0$ is almost surely continuous at $t = 0$.}
\begin{proof}
\label{lem:Mtf continuous at t}
First, by \Cref{prop:M martingale} $\mu_t(f), t \geq 0$ is a bounded martingale. So, applying the martingale convergence theorem for backwards martingales - Theorem 4.6.1, \cite{durrett2010probability} - at $t = 0$ we see that $\mu_t(f), t \geq 0$ has an almost sure limit $M_{\lim}^f$ as $t \downarrow 0$. So, to complete the proof we need to show that $M_{\lim}^f = \mu_0(f)$ almost surely.

The second claim of \Cref{prop:M martingale} tells us that
$$
\E \left(\mu_t(f) - \mu_0(f) \right)^2 = \E F_t(\xi_1, \xi_2)
$$
writing $F_t(x, y)$ as in the proof of \Cref{prop:M martingale}. We claim that
$$
F_t(\xi_1, \xi_2) \rightarrow 0
$$
almost surely as $t \downarrow 0$. To see this, when $\xi_1 = \xi_2$ already we have $F_t(\xi_1, \xi_2) = 0$ as
$$
f(\xi_1) = f(\xi_2).
$$
When $\xi_1 \neq \xi_2$, the limit follows from
$$
1 - \exp( - \phi(d(\xi_1, \xi_2)) t ) \rightarrow 0
$$
and the fact that $f$ is bounded. Thus applying the Bounded Convergence theorem to $F_t(\xi_1, \xi_2)$ we get that
$$
\E \left(\mu_t(f) - \mu_0(f) \right)^2 \rightarrow 0,
$$
as $t \downarrow 0$ showing that $\mu_t(f) \rightarrow \mu_0(f)$ in $L^2$. As we already know that $\mu_t(f), t \geq 0$ has an almost surely limit $M_{\lim}^f$ at $t = 0$, we can conclude that $\mu_0(f) = M_{\lim}^f$ almost surely completing the proof.
\end{proof}
\end{lemma}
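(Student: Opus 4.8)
The plan is to separate the claim into two logically independent halves and then glue them: first show that the one-sided limit $\lim_{t \downarrow 0} \mu_t(f)$ exists almost surely, and second show that whenever it exists it must equal $\mu_0(f)$. The first half I would obtain from soft martingale theory, and the second from the explicit second-moment formula; combining an almost sure limit with an $L^2$ limit then forces right-continuity at $0$.

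For the existence of the boundary limit I would lean entirely on \Cref{prop:M martingale}. Since $f \in C_b(S)$, the process $\mu_t(f)$ is uniformly bounded (by $\|f\|_\infty$) and is a martingale. Read in reverse time, the family $(\mu_t(f))_{t>0}$ is a backward martingale with respect to the decreasing filtration obtained as $t \downarrow 0$: for $0 < s < t$ the forward martingale property $\E(\mu_t(f)\mid \mathcal{F}_s) = \mu_s(f)$ is exactly the defining relation of a reverse martingale along any sequence $t_n \downarrow 0$. The backward martingale convergence theorem (Theorem 4.6.1 of \cite{durrett2010probability}) then yields an almost sure (and $L^1$) limit, call it $M^f_{0+}$, as $t \downarrow 0$; boundedness is what makes this clean, since it supplies uniform integrability and hence a genuine limit rather than merely subsequential convergence.

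To identify $M^f_{0+}$ with $\mu_0(f)$ I would invoke the quadratic-variation identity in the second half of \Cref{prop:M martingale}, namely $\E(\mu_t(f) - \mu_0(f))^2 = \tfrac{1}{2}\,\E\big(1 - \exp(-\phi(d(\xi_1,\xi_2))t)\big)(f(\xi_1) - f(\xi_2))^2$, and argue that the integrand tends to $0$ almost surely. On $\{\xi_1 = \xi_2\}$ it vanishes identically because $f(\xi_1) = f(\xi_2)$, while on $\{\xi_1 \neq \xi_2\}$ the factor $1 - \exp(-\phi(d(\xi_1,\xi_2))t) \to 0$ as $t \downarrow 0$ with $(f(\xi_1)-f(\xi_2))^2$ fixed. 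Since the integrand is dominated by the constant $4\|f\|_\infty^2$, bounded convergence gives $\E(\mu_t(f) - \mu_0(f))^2 \to 0$, i.e. $\mu_t(f) \to \mu_0(f)$ in $L^2$. Comparing modes of convergence, the almost sure limit $M^f_{0+}$ and the $L^2$ limit $\mu_0(f)$ must coincide almost surely, which is the claim.

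The main obstacle here is conceptual rather than computational: one must recognize that right-continuity at $0$ is precisely a statement about the existence of a boundary limit, and that the reverse-time martingale convergence theorem is exactly the tool that supplies this, so that the explicit but weaker $L^2$ estimate suffices only to pin down the \emph{value} of the limit. The remaining ingredients — the case split on $\{\xi_1 = \xi_2\}$ versus $\{\xi_1 \neq \xi_2\}$ and the domination by $4\|f\|_\infty^2$ — are routine. Finally, I note that once this lemma is established, the full statement \Cref{prop:continuous at 0} follows by applying \Cref{thrm:Random Measure Convergence} over a countable convergence-determining class of test functions, exactly as in the proof of \Cref{prop:Token Limit}.
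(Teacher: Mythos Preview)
Your proof is correct and follows essentially the same approach as the paper: backward martingale convergence to obtain an almost sure limit, then the explicit second-moment formula together with the case split on $\{\xi_1=\xi_2\}$ and bounded convergence to force $L^2$ convergence to $\mu_0(f)$, hence identifying the limit. You even cite the same theorem from Durrett and close with the same reduction to \Cref{thrm:Random Measure Convergence} for \Cref{prop:continuous at 0}.
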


To conclude that $\mu_t, t \geq 0$ is right continuous at $t = 0$, we need only recall the standard \Cref{thrm:Test Class} giving a countable convergence determining class  for weak convergence. Therefore, applying \Cref{thrm:Random Measure Convergence} with \Cref{lem:Mtf continuous at t} we complete the proof of \Cref{prop:continuous at 0}.

\section{Feller Continuity}
\label{sec:TFC}

The final step of our proof of existence in \Cref{thrm:Main Theorem} is to show that our extended Metric Coalescent process $\mu_t$ is Feller continuous. Following the terminology of \cite{revuz1999continuous}, write $C_b(P(S))$ for continuous bounded functions on $P(S)$. Then to show that our process satisfies the definition of ``Feller continuous'' we must show that for $F \in C_b(P(S))$ and any $t \geq 0$, the transition operator $P_t$ on $C_b(P(S))$ given by
$$
(P_t F) (x) \E_{\mu_0 = x} F(\mu_t)
$$
is also in $C_b(P(S))$. In our setting, this is equivalent to showing that the random measures $\mu_t$ are weakly continuous, that is, if $\mu^{(i)} \rightarrow \mu$ weakly in $P(S)$, then
$$
\mu^{(i)}_t \rightarrow \mu^{(i)}_t
$$
weakly in $P(P(S))$. Our goal in this section is to prove Feller continuity.

\begin{prop}{The extension of the Metric coalescent process is Feller continuous.}
\label{prop:Feller}
\end{prop}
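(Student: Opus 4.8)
The plan is to prove Feller continuity by constructing, for a weakly convergent sequence $\mu^{(i)} \to \mu$ in $P(S)$, a single coupling of all of the associated token processes on one probability space, and then showing that $\mu^{(i)}_t \to \mu_t$ in probability in the weak topology of $P(S)$ for each fixed $t \geq 0$; since convergence in probability implies convergence in distribution, this yields the required weak convergence of laws in $P(P(S))$. By \Cref{thrm:Random Measure Convergence} together with the countable convergence determining class $\mathfrak{C}$ of \Cref{thrm:Test Class}, it suffices to show that $\mu^{(i)}_t(f) \to \mu_t(f)$ in probability for each fixed $f \in \mathfrak{C}$, and in fact I will obtain $L^2$ convergence.

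First I would build the coupling. Since $S$ is separable and $\mu^{(i)} \to \mu$ weakly, a Skorokhod-type representation furnishes measurable maps $h_i, h \colon [0,1] \to S$ with $h_i(U) \sim \mu^{(i)}$, $h(U) \sim \mu$ for $U$ uniform, and $h_i \to h$ Lebesgue-a.e. Taking an i.i.d.\ uniform sequence $V_1, V_2, \ldots$ and setting $\xi^{(i)}_n = h_i(V_n)$ and $\xi_n = h(V_n)$, the sequences $(\xi^{(i)}_n)_n$ are i.i.d.\ $\mu^{(i)}$, the sequence $(\xi_n)_n$ is i.i.d.\ $\mu$, and $\xi^{(i)}_n \to \xi_n$ almost surely for every $n$. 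I then drive all the token processes with one family of independent rate-one exponentials $E_{jk}$, setting $t^{(i)}_{jk} = E_{jk}/\phi(d(\xi^{(i)}_j,\xi^{(i)}_k))$ and $t_{jk} = E_{jk}/\phi(d(\xi_j,\xi_k))$. By continuity of $\phi$ together with hypothesis (H2) — which forces $t^{(i)}_{jk} \to 0 = t_{jk}$ precisely on the pairs with $\xi_j = \xi_k$, while (H1) and continuity handle the distinct pairs — one gets $t^{(i)}_{jk} \to t_{jk}$ almost surely for every pair. This is exactly the step where (H2) is indispensable.

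Next, for fixed $N$ and $t$ I would establish combinatorial stability of the finite token process. Almost surely no two of the finitely many meeting times among the first $N$ tokens coincide and none equals $t$, so the set of meetings occurring before $t$ and their relative order are stable under the a.s.\ convergence $t^{(i)}_{jk} \to t_{jk}$; the coincident-location mergers of the limit (happening at time $0$) appear in the $i$-th process as mergers at times tending to $0$, which — forming a complete meeting graph on each coincident cluster — produce the same blocks owned by the lowest token regardless of their internal order. Hence for $i$ large the owner maps agree, $u^{(i)}_m(t) = u_m(t)$ for $m \leq N$, and since $\xi^{(i)}_{u_m(t)} \to \xi_{u_m(t)}$ with $f$ continuous, $\mu^{N,(i)}_t(f) \to \mu^N_t(f)$ almost surely. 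The last ingredient is a token-approximation bound uniform over the initial measure: conditioning on the paintbox $p(t)$ as in \Cref{lem:token limit for f}, the summands of $\mu^N_t(f) = \frac{1}{N}\sum_{m=1}^N f(\xi_{u_m(t)})$ are conditionally i.i.d.\ with conditional mean $\mu_t(f)$ and variance at most $\|f\|_\infty^2$, giving $\E(\mu^N_t(f) - \mu_t(f))^2 \leq \|f\|_\infty^2/N$ for every initial measure (and likewise for each $\mu^{(i)}$). Feeding $\mu^{N,(i)}_t(f)$ and $\mu^N_t(f)$ into a triangle inequality in $L^2$, the two token-approximation errors are each $\leq \|f\|_\infty/\sqrt{N}$ uniformly in $i$, while the middle term $\E(\mu^{N,(i)}_t(f) - \mu^N_t(f))^2 \to 0$ as $i \to \infty$ by bounded convergence; letting $i \to \infty$ and then $N \to \infty$ yields $\mu^{(i)}_t(f) \to \mu_t(f)$ in $L^2$.

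The main obstacle is the combinatorial stability step: one must verify that an arbitrarily small perturbation of the meeting times does not alter the time-$t$ partition, which requires ruling out ties and boundary coincidences (handled by the a.s.\ continuity of the meeting-time laws) and, crucially, controlling the simultaneous time-$0$ mergers at coincident locations through (H2). Once this is in place, the interchange of the $N \to \infty$ and $i \to \infty$ limits is clean precisely because the token-approximation bound $\|f\|_\infty^2/N$ is uniform over all initial measures.
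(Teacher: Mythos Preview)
Your argument is correct and takes a genuinely different route from the paper's. Both approaches couple the $N$-token processes and use the uniform bound $\E(\mu^N_t(f)-\mu_t(f))^2\le \|f\|_\infty^2/N$ (this is exactly \Cref{lem:Token integral convergence bound}), and both correctly isolate hypothesis (H2) as the ingredient that controls the coincident-location mergers. The difference is in how the coupling is built and analysed. The paper couples each pair $(\mu^{(i)},\mu)$ separately: initial positions via Strassen's representation of the Prokhorov distance, meeting times via an exponential-coupling lemma, and then introduces six explicit conditions (G1)--(G6) describing a ``good'' outcome, bounding the probability of failure term by term (\Cref{lem:Pgood bound}, \Cref{lem:Pbad bound}); it concludes via moment convergence (\Cref{thrm:Convergence Random Measures Equivalence}) after a compactly supported approximation of $f$. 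You instead couple \emph{all} token processes simultaneously by a Skorokhod representation of the initial laws and a single family of unit exponentials, obtain almost-sure convergence of all meeting times, and deduce combinatorial stability of the time-$t$ partition directly; this lets you bypass both the (G1)--(G6) bookkeeping and the compact-support reduction, and yields the stronger conclusion of convergence in probability on the coupled space rather than merely in distribution.

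Two small points worth making precise in a full write-up. First, your appeal to \Cref{thrm:Random Measure Convergence} is stated for almost-sure convergence; to upgrade ``$\mu^{(i)}_t(f)\to\mu_t(f)$ in probability for every $f\in\mathfrak{C}$'' to ``$\mu^{(i)}_t\to\mu_t$ in probability in $P(S)$'' you need the standard subsequence/diagonalisation argument over the countable class $\mathfrak{C}$. Second, the combinatorial stability claim that all intra-cluster meetings in the $i$-th process produce the same block owned by the lowest token, regardless of their internal order, is correct (the lowest token in a cluster never merges, and every other token is dead once all its meetings with lower tokens have occurred), but deserves a line of justification since it is the heart of the argument. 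Modulo these, your proof is a clean alternative to the paper's.
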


Note the slight distinction here between a Feller continuous process and the other definition (e.g. \cite{revuz1999continuous}) of a ``Feller process'' i.e. a process generated by a Feller semi-group $P_t, t \geq 0$, whose state space is required to be locally compact, which (generically) does not hold in our setting for $P(S)$. Nonetheless, we will still show that a few properties of Feller processes still hold, in particular the existence of a cadlag version -- in our case the constructed version, see \Cref{prop:cadlag} -- and its subsequent uniqueness in distribution.

In the case when $S$ is compact, the space of Borel probability measures $P(S)$ is compact (and so trivially locally compact) and as an immediate consequence of \Cref{prop:Feller} the MC can be seen to be a Feller process.

\subsection{Convergence of Random Measures}

Our goal in this section is to show the weak convergence of the MC processes for all times $t \geq 0 $ for a sequence of weakly convergent initial measures.

\begin{prop}{Let $\mu^{(i)} \rightarrow \mu$ in $P(S)$ and $t \geq 0$ any fixed time. Then
$$
\mu^{(i)}_t \rightarrow \mu_t
$$
weakly in $P(P(S))$.}
\label{prop:Weak convergence of transition}
\end{prop}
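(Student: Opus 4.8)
The plan is to prove this by coupling the token processes for the different initial measures so that the time-$t$ data converge, and then to interchange the limit in $i$ with the limit in the number of tokens $N$. Since $S$ is Polish and $\mu^{(i)} \to \mu$ weakly, the Skorokhod representation theorem produces a single probability space carrying random elements $X^{(i)} \sim \mu^{(i)}$ and $X \sim \mu$ with $X^{(i)} \to X$ almost surely. Taking independent copies of the whole coupled family indexed by the tokens $k \geq 1$, I obtain initial samples $\xi^{(i)}_k \sim \mu^{(i)}$ and $\xi_k \sim \mu$ that are, for each fixed $i$, i.i.d.\ in $k$, and that satisfy $\xi^{(i)}_k \to \xi_k$ almost surely as $i \to \infty$ for every fixed $k$. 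Driving all the token processes with one shared family of independent standard exponentials $E_{jk}$ and setting $t^{(i)}_{jk} = E_{jk}/\phi(d(\xi^{(i)}_j, \xi^{(i)}_k))$ (and similarly $t_{jk}$ from the $\xi_k$) couples the meeting clocks as well. Under this coupling the time-$t$ measures $\mu^{(i)}_t$ and $\mu_t$ constructed from the respective token processes carry exactly the laws of the Metric Coalescent at time $t$ from $\mu^{(i)}$ and from $\mu$, so it suffices to show $\mu^{(i)}_t \to \mu_t$ weakly on $P(S)$ in probability.

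The first ingredient is convergence of the finite data. By continuity of $\phi \circ d$, for each fixed pair $j,k$ we have $t^{(i)}_{jk} \to t_{jk}$ almost surely: when $\xi_j \neq \xi_k$ the rates converge to a finite positive limit by (H1), and when $\xi_j = \xi_k$ the limiting time is $0$, which is reproduced since $d(\xi^{(i)}_j,\xi^{(i)}_k) \to 0$ forces $\phi \to \infty$ by (H2). Conditionally on the initial samples the finitely many relevant meeting times among tokens $1, \ldots, N$ are independent and continuously distributed, hence almost surely pairwise distinct and almost surely different from the fixed time $t$. Consequently, for each fixed $N$, the combinatorial type of the coalescent restricted to $[0,t]$ is stable under the small perturbations $t^{(i)}_{jk} \to t_{jk}$: almost surely there is a random $i_0$ such that for all $i \geq i_0$ the owners agree, $u^{(i)}_k(t) = u_k(t)$ for every $k \leq N$. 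Combined with $\xi^{(i)}_{u_k(t)} \to \xi_{u_k(t)}$ and continuity of $f$, this yields, for each fixed $N$ and each $f \in C_b(S)$, that $\mu^{N,(i)}_t(f) \to \mu^N_t(f)$ almost surely as $i \to \infty$, where $\mu^{N,(i)}_t$ denotes the $N$-token empirical measure of the $i$-th process.

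The main obstacle is interchanging this limit with the large-$N$ limit, for which I need an estimate on $\mu^{N,(i)}_t(f) - \mu^{(i)}_t(f)$ that is uniform in $i$. This is supplied by the paintbox structure used in \Cref{lem:token limit for f}: conditionally on the mass partition $p^{(i)}(t)$ the token values $f(\xi^{(i)}_{u_k(t)})$ are i.i.d.\ with conditional mean $\mu^{(i)}_t(f)$ and conditional variance at most $\|f\|_\infty^2$, so that $\E\bigl(\mu^{N,(i)}_t(f) - \mu^{(i)}_t(f)\bigr)^2 \leq \|f\|_\infty^2/N$ for every $i$, and likewise $\E\bigl(\mu^{N}_t(f) - \mu_t(f)\bigr)^2 \leq \|f\|_\infty^2/N$. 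Writing $\mu^{(i)}_t(f) - \mu_t(f)$ as the sum of these two approximation errors and the term $\mu^{N,(i)}_t(f) - \mu^N_t(f)$ from the previous paragraph, Chebyshev's inequality bounds the two error terms uniformly in $i$ by a quantity of order $1/N$, while the middle term vanishes as $i \to \infty$ for fixed $N$. Taking $\limsup_i$ and then letting $N \to \infty$ gives $\mu^{(i)}_t(f) \to \mu_t(f)$ in probability for every fixed $f \in C_b(S)$.

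Finally, fixing the countable convergence determining class $\mathfrak{C}$ of \Cref{thrm:Test Class}, a diagonal subsequence argument upgrades the in-probability statement to the assertion that every subsequence has a further subsequence along which $\mu^{(i)}_t(f) \to \mu_t(f)$ almost surely simultaneously for all $f \in \mathfrak{C}$. By \Cref{thrm:Random Measure Convergence} this forces $\mu^{(i)}_t \to \mu_t$ weakly in $P(S)$ along that subsequence, hence $\mu^{(i)}_t \to \mu_t$ weakly in probability under the coupling. Since the coupling realizes the correct marginal laws and $P(S)$ is Polish, convergence in probability implies convergence in distribution, which is exactly weak convergence in $P(P(S))$, completing the proof.
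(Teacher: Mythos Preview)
Your argument is essentially correct and takes a genuinely different route from the paper's. The paper proceeds via the moment criterion of \Cref{thrm:Convergence Random Measures Equivalence}: it first reduces to compactly supported $f$ via \Cref{lem:compact support approx}, then sandwiches $\E(\mu_t(f))^k - \E(\mu^{(i)}_t(f))^k$ between token-process errors bounded by \Cref{lem:Token integral convergence bound} and a coupling term controlled by an explicit Strassen-type coupling (\Cref{prop:Token Moment Convergence}) with the elaborate $(G1)$--$(G6)$ machinery. Your approach instead couples everything at once via Skorokhod and shared exponentials, proves $\mu^{N,(i)}_t(f) \to \mu^N_t(f)$ a.s.\ for each fixed $N$, and passes to the limit using the same $\|f\|_\infty^2/N$ variance bound (your version of \Cref{lem:Token integral convergence bound}) together with the countable determining class. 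This avoids both the compact-support reduction and the moment method, at the cost of a less quantitative coupling.

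There is one point where your justification is incomplete. You assert that ``the combinatorial type of the coalescent restricted to $[0,t]$ is stable under the small perturbations'', but when $\mu$ has atoms and $\xi_j = \xi_k$ for several pairs among the first $N$ tokens, the limiting meeting times are all exactly $0$ while the approximating times $t^{(i)}_{jk}$ are small and positive with an order that need not stabilise as $i \to \infty$. So the meeting tree does \emph{not} converge. What is true---and what you actually need---is that the \emph{owners} $u_k(t)$ stabilise: for $i$ large all intra-class meetings (within a $\sim_0$-class) occur before any inter-class meeting, and regardless of the order in which they occur, every token in a class ends up owned by the minimum of that class (if two owners $a<b$ survived, the meeting $t_{ab}$ would have been nontrivial). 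This is exactly the content of the paper's \Cref{lem:exp bound if good}, and you should state it explicitly rather than appeal to stability of the combinatorial type. With that repair your proof goes through.
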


For time $t = 0$, this is trivial, so we focus on proving this for positive times $t$. Our proof relies on the following criteria for weak convergence of random measures (\cite{daley1988introduction}, Theorem 11.1.8), as well as a standard moment method for convergence in distribution of bounded random variables (Section 2.3.e, \cite{durrett2010probability}).

\begin{theorem}{Let $\nu^i, 1 \leq i \leq \infty$ be a sequence of random measures in $P(S)$. Then the following are equivalent:
\begin{enumerate}
\item $\nu^i \rightarrow \nu^{\infty}$ weakly in $P(P(S))$.
\item For all $f \in C_b(S)$ with bounded support,
$$
\nu^i(f) \xrightarrow{d} \nu^{\infty}(f).
$$
\item For all $f \in C_b(S)$ with bounded support and $k \geq 1$
$$
\E ( \nu^i(f) )^{k} \rightarrow \E (\nu^{\infty}(f))^{k}.
$$
\end{enumerate}}
\label{thrm:Convergence Random Measures Equivalence}
\end{theorem}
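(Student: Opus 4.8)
The plan is to split the three-way equivalence into two essentially independent pieces: the equivalence $(1) \Leftrightarrow (2)$, which is the genuine weak-convergence theory for random measures and which I would take from Theorem 11.1.8 of \cite{daley1988introduction}, and the equivalence $(2) \Leftrightarrow (3)$, which is a method-of-moments statement about uniformly bounded real-valued random variables that I would prove directly. Establishing both pieces and chaining them gives that all three conditions are equivalent.

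For $(2) \Leftrightarrow (3)$, the key observation is that for any $f \in C_b(S)$ with $\|f\|_{\infty} \leq M$ and any probability measure $\nu^i \in P(S)$, the scalar $\nu^i(f)$ lies in $[-M, M]$ surely, so the random variables $\{\nu^i(f)\}_i$ are uniformly bounded. Given this, $(2) \Rightarrow (3)$ is immediate: each map $x \mapsto x^k$ is bounded and continuous on the common range $[-M, M]$, so convergence in distribution $\nu^i(f) \xrightarrow{d} \nu^{\infty}(f)$ forces $\E(\nu^i(f))^k \rightarrow \E(\nu^{\infty}(f))^k$ (equivalently, the powers $(\nu^i(f))^k$ are uniformly integrable). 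Conversely $(3) \Rightarrow (2)$ is the method of moments: a law supported in the bounded interval $[-M, M]$ is determined by its moment sequence (the Hausdorff moment problem on a box is determinate), and convergence of all moments to those of a moment-determinate limit yields convergence in distribution (Section 2.3.e, \cite{durrett2010probability}).

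It remains to address $(1) \Leftrightarrow (2)$, and this is where I expect the real difficulty. The direction $(1) \Rightarrow (2)$ is soft: the evaluation map $e_f \colon P(S) \rightarrow \R$, $e_f(\mu) = \mu(f)$, is continuous for $f \in C_b(S)$ by the very definition of the weak topology, so $\nu^i \rightarrow \nu^{\infty}$ in distribution on $P(S)$ gives $\nu^i(f) = e_f(\nu^i) \xrightarrow{d} e_f(\nu^{\infty})$ by the continuous mapping theorem. The hard part will be the converse $(2) \Rightarrow (1)$, i.e. upgrading convergence of the individual one-dimensional projections $\nu^i(f)$ to weak convergence of the laws of $\nu^i$ in $P(P(S))$. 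I would invoke \cite{daley1988introduction} for this, but the content that makes it work is twofold: first, convergence of the finite-dimensional distributions, which holds because the class of $f \in C_b(S)$ with bounded support is closed under linear combinations, so $(2)$ applied to $\sum_j a_j f_j$ controls the joint law of $(\nu^i(f_1), \ldots, \nu^i(f_m))$ through a Cram\'er--Wold / joint-moment argument (and this is exactly why the moment reformulation $(3)$ is the practically useful one, since expanding $(\sum_j a_j \nu^i(f_j))^k$ and varying the $a_j$ recovers every mixed moment); and second, tightness of the laws of $\nu^i$ on $P(S)$, which uses that $(S,d)$ is Polish and locally compact so that no mass escapes to infinity, controlled uniformly in $i$ by applying the moment bounds to functions approximating indicators of large compact sets. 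Finite-dimensional convergence together with tightness then yields $(1)$ via Prokhorov's theorem. Single-function convergence in distribution says nothing on its own about joint behaviour or about mass at infinity, so it is precisely this passage from marginals to the full random-measure topology that is the crux of the theorem.
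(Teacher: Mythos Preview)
Your proposal is correct and aligns exactly with how the paper handles this statement: the paper does not supply its own proof but instead cites precisely the two ingredients you use, namely Theorem 11.1.8 of \cite{daley1988introduction} for the equivalence $(1)\Leftrightarrow(2)$ and the moment method for bounded random variables (Section 2.3.e, \cite{durrett2010probability}) for $(2)\Leftrightarrow(3)$. Your additional exposition of why these pieces work is accurate and goes beyond what the paper provides, which treats the result as black-box input.
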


The outline of our proof of \Cref{prop:Weak convergence of transition} is as follows:  for any fixed function $f \in C_b(S)$, we first reduce the moment calculation to a compactly supported approximation of $f$, and then reduce the comparison between $\mu_t^{(t)}$ and $\mu_t$ to a comparison between their $N$-th token processes. To do this, we give a coupling of the token processes. The key proposition in the proof is the following.

\begin{prop}{Let $\mu^{(i)} \rightarrow \mu$ weakly in $P(S)$. Then for all $t \geq 0, N \geq 1$ and $f \in C_b(S)$, there is an $I(N, f)$ such that if $i \geq I$ then
$$
\inf_{\Gamma} \E_{\Gamma} (\mu^{(i), N}_t(f) - \mu_t^{N}(f) )^2 \leq \frac{f_{\max} + 1}{N},
$$
where the infinum is over all couplings $\Gamma$ of $\mu^{(i), N}_t$ and $\mu_t^N$.}
\label{prop:Token Moment Convergence}
\end{prop}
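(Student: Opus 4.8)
The plan is to exhibit a single good coupling $\Gamma$ and to show that along it the $L^2$ discrepancy in fact tends to $0$ as $i \to \infty$, which in particular drops below $\frac{f_{\max}+1}{N}$ once $i \geq I(N,f)$. The key simplification is that $\mu^N_t(f) = \frac1N\sum_{k=1}^N f(\xi_{u_k(t)})$ is a function of only the first $N$ initial locations $\xi_1,\dots,\xi_N$ together with the $\binom{N}{2}$ meeting times $t_{kl}$, $1\le k<l\le N$, since (as noted in \Cref{sec:Token Construction}) the path $u_k(\cdot)$ of each token $k\le N$ depends only on the meeting clocks among tokens $1,\dots,N$. So I only need to couple these finitely many ingredients.

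To build $\Gamma$, I would first couple the initial locations. Since $S$ is Polish, $S^N$ is separable, and $\mu^{(i)}\to\mu$ weakly gives $(\mu^{(i)})^{\otimes N}\to\mu^{\otimes N}$ weakly on $S^N$; by Skorokhod's representation theorem I realize all of these on one probability space so that $(\xi^{(i)}_1,\dots,\xi^{(i)}_N)\to(\xi_1,\dots,\xi_N)$ almost surely. I then adjoin an independent family of rate-$1$ exponentials $E_{kl}$ and drive both token processes with the same clocks, setting $t_{kl}=E_{kl}/\phi(d(\xi_k,\xi_l))$ and $t^{(i)}_{kl}=E_{kl}/\phi(d(\xi^{(i)}_k,\xi^{(i)}_l))$, with the convention $t_{kl}=0$ when the rate is infinite. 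This keeps the marginals correct: each side is the $N$-token process run from its own initial measure.

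Next I would show the meeting times converge almost surely, $t^{(i)}_{kl}\to t_{kl}$. When $\xi_k\neq\xi_l$ this is immediate from continuity of $d$ and $\phi$ together with (H1), which keeps the limiting rate positive and finite. When $\xi_k=\xi_l$ (possible if $\mu$ has atoms) I use (H2): $d(\xi^{(i)}_k,\xi^{(i)}_l)\to 0$ forces $\phi(d(\xi^{(i)}_k,\xi^{(i)}_l))\to\infty$, so $t^{(i)}_{kl}\to 0 = t_{kl}$. The heart of the argument is then to upgrade convergence of the clocks to convergence of the token configuration at time $t$. Conditional on the locations, the finitely many limiting meeting times are continuous and independent, so almost surely they are pairwise distinct and none equals $t$; on that full-probability event the strict ordering of the $t_{kl}$ and the identity of which meetings occur before $t$ are stable under small perturbations, whence for $i$ large (depending on $\omega$) the $\mu^{(i)}$-process executes exactly the same sequence of merges and $u^{(i)}_k(t)=u_k(t)$ for every $k\le N$. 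Continuity of $f$ then gives $f(\xi^{(i)}_{u_k(t)})\to f(\xi_{u_k(t)})$ and therefore $\mu^{(i),N}_t(f)\to\mu^N_t(f)$ almost surely.

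Finally, since $|\mu^{(i),N}_t(f)-\mu^N_t(f)|\le 2f_{\max}$ uniformly, the bounded convergence theorem yields $\E_\Gamma(\mu^{(i),N}_t(f)-\mu_t^N(f))^2\to 0$, and I choose $I(N,f)$ so this quantity lies below $\frac{f_{\max}+1}{N}$ for $i\ge I$ (indeed a stronger bound than stated). The main obstacle is exactly the upgrade in the previous paragraph: the map from driving randomness to $\mu^N_t(f)$ is genuinely discontinuous at clock-ties and at meeting times equal to $t$, so everything hinges on showing those degenerate configurations have probability zero for the limit process. The delicate sub-case is coincident limit locations, i.e. atoms of $\mu$, where one must verify via (H2) that the near-zero clocks of the approximating process collapse precisely the same clusters by time $t$ rather than merging them in a different order.
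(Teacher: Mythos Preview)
Your approach is correct and genuinely different from the paper's. You build a single almost-sure coupling via Skorokhod representation and a shared family of rate-$1$ exponential clocks, then argue pathwise convergence of $\mu^{(i),N}_t(f)$ and finish with bounded convergence. The paper instead couples locations via Strassen's Prokhorov representation (so they are only $\epsilon$-close with high probability), couples each pair of meeting clocks separately (\Cref{lem:Exp coupling}), and isolates an explicit $(\epsilon,d_1,d_2,t_*)$-\textbf{good} event whose complement it bounds by a union bound over conditions $(G1)$--$(G6)$. Your route is shorter and works directly for any $f \in C_b(S)$; the paper's quantitative route needs a modulus of uniform continuity for $f$ (hence the reduction to compactly supported $f_\epsilon$ in the proof of \Cref{prop:Weak convergence of transition}) but in exchange gives explicit control of the bad set rather than a soft ``for $i$ large depending on $\omega$''.

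The concern you flag at the end is real but harmless, and you should write down the resolution rather than leave it as ``one must verify''. When several tokens share a limit location (a cluster $C$), the $\mu^{(i)}$-process will not ``execute exactly the same sequence of merges'' as the limit, so that sentence is false as written. What survives is the owner map: once all $\binom{|C|}{2}$ within-cluster meetings have fired, in any order, every token of $C$ is owned by $\min C$, since the lowest index never loses and each nontrivial meeting kills exactly one alive token. Because the within-cluster clocks tend to $0$ while the between-cluster clocks tend to distinct positive limits (almost surely pairwise distinct and distinct from $t$), for $i$ large all within-cluster merges complete before any between-cluster merge fires; at that moment both processes are in the same state, and from then on the between-cluster merges do occur in the same order. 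Hence $u^{(i)}_k(t) = u_k(t)$ for every $k$, which is all you need. The paper encodes the same idea through its parameter $t_*$ and conditions $(G4)$--$(G5)$; see \Cref{lem:exp bound if good}.
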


In fact, our proof of \Cref{prop:Token Moment Convergence} shows that as $i \rightarrow \infty$
$$
\inf_{\Gamma} \E_{\Gamma} (\mu^{(i), N}_t(f) - \mu_t^{N}(f) )^2 \rightarrow 0,
$$
however for notational purposes it is far more convenient (and still sufficient) to leave our result as stated. See \Cref{sec:moment calc} for discussion of an alternative approach to \Cref{prop:Token Moment Convergence}.

A quick note on notation: we will use $\mu^{(i)}$ to refer to the sequence of initial measures and $\mu^{(i)}_t, t \geq 0$ to the associated processes. The corresponding $N$-th token process will be written as $\mu_t^{(i), N}$ for $\mu^{(i)}$ and as usual $\mu^N_t$ for $\mu$.

\subsection{Proof of \Cref{prop:Weak convergence of transition}}

We defer the proof \Cref{prop:Token Moment Convergence} which is rather technical and first complete the proof of \Cref{prop:Weak convergence of transition}. We will need the following convergence bound.

\begin{lemma}{Let $ f \in C_b(S)$ be bounded by $|f| \leq f_{\max}$. Then for any $\nu \in P(S)$ and all $ t \geq 0$
$$
\E | \nu_t^N (f) - \nu_t(f) | \leq \frac{f_{\max}}{\sqrt{N}}.
$$}

\label{lem:Token integral convergence bound}
\begin{proof}
This is essentially an extension of the almost sure convergence shown in \Cref{lem:token limit for f}, using boundedness to prove convergence in $L^2$. Write
$$
\nu_t^N(f) = \frac{1}{N}\sum_{i = 1}^N f_i(t)
$$
where $f_i(t)$ are conditionally independent given $\nu_t$, defined by
$$
f_i(t) = \sum_{i = k}^{\infty} f(s_k(t)) 1( i \in B_k(t)),
$$
and thus bounded by $|f_i(t)| \leq f_{\max}$. Recall from the paintbox partition that $B_k$ is the block of $Z(t)$ corresponding to the atom $s_k$ of $\nu_t$.

Then, by conditional independence we have
$$
\E \left( \frac{1}{N}\sum_{i = 1}^N f_i(t) - \nu_t(f) \right)^2 \leq \frac{f_{\max}^2}{N},
$$
from which our claim follows easily.
\end{proof}
\end{lemma}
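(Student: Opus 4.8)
The plan is to deduce the stated $L^1$ bound from the stronger $L^2$ estimate $\E\big(\nu_t^N(f) - \nu_t(f)\big)^2 \le f_{\max}^2/N$, since Cauchy--Schwarz (equivalently Jensen's inequality) gives $\E|X| \le (\E X^2)^{1/2}$. This reduces the whole lemma to a single conditional variance computation that exploits the paintbox structure of $\nu_t$ already established in the construction of the limiting measure.

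First I would recall from that paintbox description that, conditional on $\nu_t$ (which determines both the mass partition $p(t)$ and the atom locations $s_k(t)$), each token $i$ is assigned to a block $B_k(t)$ independently with probability $p_k(t)$. Consequently the summands $f_i(t) = \sum_k f(s_k(t)) 1(i \in B_k(t))$ appearing in $\nu_t^N(f) = \frac{1}{N}\sum_{i=1}^N f_i(t)$ are, conditional on $\nu_t$, independent and identically distributed, with conditional mean $\E(f_i(t) \mid \nu_t) = \sum_k p_k(t) f(s_k(t)) = \nu_t(f)$ and with $|f_i(t)| \le f_{\max}$ almost surely. This is precisely the structure used in \Cref{lem:token limit for f}.

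Next I would bound the conditional variance. Since $|f_i(t)| \le f_{\max}$, we have $\var(f_i(t) \mid \nu_t) \le \E(f_i(t)^2 \mid \nu_t) \le f_{\max}^2$, and the conditional mean of $\nu_t^N(f) - \nu_t(f)$ vanishes, so by conditional independence
$$
\E\big((\nu_t^N(f) - \nu_t(f))^2 \mid \nu_t\big) = \frac{1}{N^2}\sum_{i=1}^N \var(f_i(t)\mid\nu_t) \le \frac{f_{\max}^2}{N}.
$$
Taking expectations over $\nu_t$ removes the conditioning and yields the $L^2$ bound, after which Cauchy--Schwarz gives $\E|\nu_t^N(f) - \nu_t(f)| \le f_{\max}/\sqrt{N}$.

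The computation itself is short and routine; the only point requiring care is the justification of the conditional i.i.d.\ structure of the $f_i(t)$ given $\nu_t$, which is exactly the content of the paintbox construction underlying $\mu_t$. Thus the main obstacle is to invoke that structure cleanly rather than to establish anything new, and the variance estimate together with the Jensen step then complete the proof.
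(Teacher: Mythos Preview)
Your proposal is correct and follows essentially the same approach as the paper: both express $\nu_t^N(f)$ as an average of the conditionally i.i.d.\ variables $f_i(t)$ coming from the paintbox construction, bound the conditional variance by $f_{\max}^2/N$, and then pass to the $L^1$ bound via Cauchy--Schwarz. You are simply more explicit about the conditional mean being $\nu_t(f)$ and about the final Jensen/Cauchy--Schwarz step, which the paper leaves as ``follows easily.''
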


 Our general method for proving \Cref{prop:Weak convergence of transition} will be to first reduce the calculation for $f$ to a compactly supported approximation $f_{\epsilon}$, then by the inherent coupling of the token process to the MC reduce to a calculation for $\mu^{(i), N}_t$ and $\mu^N_t$. Finally, applying \Cref{prop:Token Moment Convergence} the result will follow easily.

We make use of the following lemma on compactly supported approximation.

\begin{lemma}{For any $f \in C_b(S)$ and compact $K \subset S$, there exists a compactly supported $\tilde{f} \in C_0(S)$ such that:
\begin{enumerate}
\item $|\tilde{f}(x)| \leq |f(x)|$ for all $x \in S$,
\item $\tilde{f}(x) = f(x)$ for all $x \in K$.
\end{enumerate}}
\label{lem:compact support approx}
\end{lemma}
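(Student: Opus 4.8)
The plan is to obtain $\tilde{f}$ by multiplying $f$ by a continuous cutoff function that equals $1$ on $K$ and has compact support. The only structural hypothesis genuinely used is the local compactness of $S$, which furnishes a precompact neighborhood of $K$; everything else is a routine verification.

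First I would use local compactness together with the compactness of $K$ to produce an open set $U$ with $K \subset U$ and $\overline{U}$ compact. Concretely, each point of $K$ has a neighborhood with compact closure; cover $K$ by finitely many of these and let $U$ be their union, so that $\overline{U}$, being a finite union of compact sets, is compact. If it happens that $S \setminus U = \emptyset$, then $S = \overline{U}$ is itself compact, $f$ is already compactly supported, and we may simply take $\tilde{f} = f$; so I may assume $S \setminus U \neq \emptyset$.

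Next I would build the cutoff. Since $K$ is compact and $S \setminus U$ is closed and disjoint from $K$, the distance $\delta = \inf\{ d(x,y) : x \in K, \ y \in S \setminus U \}$ is strictly positive (this is where compactness of $K$ is used). Fixing any $\epsilon \in (0, \delta)$, I would set $g(x) = \max\!\big( 0, \, 1 - d(x,K)/\epsilon \big)$, where $d(x,K) = \inf_{y \in K} d(x,y)$. This $g$ is continuous, takes values in $[0,1]$, satisfies $g \equiv 1$ on $K$, and vanishes whenever $d(x,K) \geq \epsilon$. By the choice $\epsilon < \delta$, the set $\{x : d(x,K) < \epsilon\}$ is contained in $U$, so $\supp g \subseteq \overline{U}$ and hence $\supp g$ is compact.

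Finally I would set $\tilde{f} = g f$ and check the three requirements. It is continuous as a product of continuous functions; its support is contained in $\supp g$ and is therefore compact, giving $\tilde{f} \in C_0(S)$. Since $0 \leq g \leq 1$ we have $|\tilde{f}(x)| = g(x)\,|f(x)| \leq |f(x)|$ for all $x$, and since $g \equiv 1$ on $K$ we have $\tilde{f}(x) = f(x)$ there. The one step carrying any real content is the construction of the precompact $U$: without local compactness of $S$ there need be no compactly supported continuous function agreeing with $f$ on $K$, so this is the essential use of the standing assumptions on $(S,d)$.
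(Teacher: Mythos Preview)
Your proof is correct and follows the same approach the paper has in mind: the paper simply remarks that the lemma is ``essentially an application of Urysohn's Lemma'' (Theorem 2.12 in Rudin), i.e.\ one obtains a compactly supported cutoff $g$ with $K \prec g \prec U$ for some precompact open $U \supset K$ and sets $\tilde f = gf$. You have spelled out the metric-space construction of this Urysohn function explicitly via $g(x) = \max(0,\,1 - d(x,K)/\epsilon)$, which is exactly the standard proof of Urysohn's Lemma in the locally compact metric setting; so your argument is the paper's one-line citation unpacked.
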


This is essentially an application of Urysohn's Lemma (Theorem 2.12, \cite{rudin1987real}).

We are now ready to prove \Cref{prop:Weak convergence of transition}.

\begin{proof}
By \Cref{thrm:Convergence Random Measures Equivalence}, it suffices to consider a fixed $f \in C_b(S)$ and $k \geq 1$. We then need to show that
$$
\E \left( \mu_t^{(i)}(f) \right)^{k} \rightarrow \E \left( \mu_t(f) \right)^{k}.
$$
Write $f_{\max}$ for the supremum of $|f|$ on $S$.

First, by assumption $\mu^{(i)} \rightarrow \mu$ and so the measures in $P(S)$ are tight. Thus, for any $\epsilon > 0$ there is a compact $K_{\epsilon} \subset S$ such that
$$
\mu^{(i)}(K_{\epsilon}) \geq 1 - \epsilon
$$
and similarly for $\mu$. Let $f_{\epsilon}$ be the continuous, compactly supported extension of $f$ on $K_{\epsilon}$ guaranteed by \Cref{lem:compact support approx}.

We claim that
\begin{align}
\E | \mu_t(f) - \mu_t(f_{\epsilon}) | \leq 2 f_{\max} \epsilon,
\label{eq:mutf on K}
\end{align}
and similarly for $\mu_t^{(i)}$. This can be seen easily from
\begin{align*}
\E | \mu_t(f) - \mu_t(f_{\epsilon}) | &\leq \E \mu_t(| f - f_{\epsilon}|) \\
&\leq \E \mu_0(|f - f_{\epsilon}|) \\
&= 2 f_{\max} \E \mu_0(S \setminus K_{\epsilon} ) \\
&\leq 2 f_{\max} \epsilon
\end{align*}
using the fact that $\mu_t(|f - f_{\epsilon}|)$ is a bounded martingale (as in \Cref{prop:M martingale}).

Next, we note that for any $X, Y$ with $|X|, |Y| \leq f_{\max}$ that
$$
|X^{k} - Y^{k}| \leq k f_{\max}^{k - 1} |X - Y|
$$
by the simple Lipschitz bound on $f(x) = x^{k}$.

Putting this all together, we can bound
$$
| E (\mu_t(f))^{2k} - \E (\mu_t^{(i)}(f) )^{2k} |
$$
as follows:
\begin{align*}
|\E &(\mu_t(f))^{k} - \E (\mu_t^{(i)}(f) )^{k} | \\
&\leq | \E(\mu_t(f))^{k} - \E (\mu_t(f_{\epsilon}))^{k}| + |\E (\mu_t(f_{\epsilon}))^{k} - \E (\mu^{(i)}_t(f_{\epsilon}))^{k}|
+ |\E (\mu^{i}_t(f_{\epsilon}))^{k} - \E (\mu^{i}_t(f))^{k} | \\
&\leq   \E|(\mu_t(f))^{k} - (\mu_t(f_{\epsilon}))^{k}| + \E| (\mu^{(i)}_t(f)^{k} -(\mu^{(i)}_t(f_{{\epsilon}}))^{k}|
+ | \E (\mu_t(f_{\epsilon}))^{k} - \E (\mu^{(i)}_t(f_{\epsilon}))^{k}| \\
&\leq k f_{\max}^{k - 1} \E \left(|\mu_t(f) - \mu_t(f_{\epsilon})|  + |\mu^{(i)}_t(f) - \mu^{(i)}_t(f_{\epsilon})| \right)
+ | \E (\mu_t(f_{\epsilon}))^{k} - \E (\mu^{(i)}_t(f_{\epsilon}))^{k}| \\
&\leq 4k f_{\max}^{k} \epsilon + \E | (\mu_t(f_{\epsilon}))^{k} - (\mu^N_t(f_{\epsilon}))^{k}| 
+ \E | (\mu^{(i)}_t(f_{\epsilon}))^{k} - (\mu^{(i), N}_t(f_{\epsilon}))^{k}|\\
&+ |\E(\mu^N_t(f_{\epsilon}))^{k} - \E (\mu^{(i), N}_t(f_{\epsilon}))^{k}| \\
&\leq 4k f_{\max}^{k} \epsilon + \frac{2 f_{\max}}{\sqrt{N}} + \inf_{\Gamma} \E |(\mu^N_t(f_{\epsilon}))^{k} - (\mu^{(i), N}_t(f_{\epsilon}))^{k}|
\end{align*}

Now by \Cref{prop:Token Moment Convergence}, since $f_{\epsilon}$ is compactly supported, for all $N$, there exists an $I(N, \epsilon)$ so that if $i \geq I(N, \epsilon)$ then 
$$
\inf_{\Gamma} \E |(\mu^N_t(f_{\epsilon}))^{k} - (\mu^{(i), N}_t(f_{\epsilon}))^{k}| \leq \frac{6 f_{\max} + 1}{N},
$$
giving that for $i \geq I(N, \epsilon)$
$$
|\E (\mu_t(f))^{k} - \E (\mu_t^{(i)}(f) )^{k} | \leq 4k f_{\max}^{k} \epsilon + \frac{2 f_{\max}}{\sqrt{N}} + \frac{6 f_{\max} + 1}{N}.
$$

Choosing $\epsilon$ small enough and $N$ big enough, this can be made arbitrarily small for $i \geq I(N, \epsilon)$, completing the proof.
\end{proof}

All that's left to show in proving \Cref{prop:Weak convergence of transition} is our proof of \Cref{prop:Token Moment Convergence}, which is rather technical. The proof begins by constructing a coupling between $\mu^N_t$ and $\mu_t^{(i), N}$.

\subsection{The Coupling}
\label{sec:the coupling}

To couple the token process for two initial measures $\mu$ and $\nu$, we make use of Strassen's classical representation of the Prokhorov distance, which itself metrizes weak convergence in $P(S)$. That is, the Prokhorov metric $d_P$ on $P(S)$ can be interpreted for two measures $\mu$ and $\nu$ as
$$
d_P(\mu, \nu) = \inf \{ \epsilon \colon \exists \lambda \in \Lambda \text{ s.t. } \Prb_{\lambda}(d(\xi, \tilde{\xi}) > \epsilon) \leq \epsilon \},
$$
where $\Lambda$ is the set of all couplings $\lambda = (\xi, \tilde{\xi})$ with marginals $\mu$ and $\nu$ \cite{strassen1965existence}.

Fix $t > 0, N \geq 1$ and consider two initial measures $\mu$ and $\nu$ with $d_P(\nu, \mu) \leq \epsilon$ for some $\epsilon$ small. Write $\xi_i, 1 \leq i \leq N$ and $t_{ij}, 1 \leq i, j \leq N$ for the initial locations and meeting times of the token process generated from $\mu$. Similarly write $\tilde{\xi}_i, 1 \leq i \leq N$ and $\tilde{t}_{ij}, 1 \leq i, j \leq N$ for the token process generated from $\nu$. We will define a coupling of these to prove \Cref{prop:Token Moment Convergence}.

Intuitively, our coupling pairs the paths of each of $N$ tokens between the two measures. By assumption, we can couple each token's starting location so $\xi_i$ and $\tilde{\xi}_i$ are close with high probability. Then, whenever possible we couple all the meeting times, thus coupling the partition process of the tokens over time. Unfortunately, the inclusion of atomic measures - i.e. meetings at time $t = 0$ - means that a bit of care is needed to carry this out. Heuristically, we finesse this issue by smoothing out meetings at $t = 0$ by coupling them with meetings in some small interval $(0, t_*]$.

We will need the following standard lemma on exponential random variables.

\begin{lemma}{Let $X \sim \exp(a)$ and $Y \sim \exp(b)$ be exponential random variables with rates $a$ and $b$. Then there is a coupling of $X$ and $Y$ so that
$$
\Prb(X \neq Y) \leq 1 - \frac{\min(a, b)}{\max(a, b)}.
$$}
\label{lem:Exp coupling}
\begin{proof}
This follows from the simple calculation of integrating the overlap in the two densities, i.e.
$$
\int_0^{\infty} \min(a \exp(- a t), b \exp(- b t)) dt = \frac{\min(a, b)}{\max(a, b)}.
$$
\end{proof}
\end{lemma}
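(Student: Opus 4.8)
The plan is to realize the bound by an explicit coupling built from the superposition property of exponentials, rather than through a total-variation or overlap computation. First I would reduce to the case $0 < a \le b < \infty$, so that $\min(a,b) = a$ and $\max(a,b) = b$; the rates arising in the token process are positive, and the possibility of an infinite rate (when two tokens share a location) is already handled by setting the meeting time to $0$, so this normalization loses nothing.

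The core of the argument is the observation that a rate-$b$ exponential splits as a minimum of independent exponentials: if $X \sim \exp(a)$ and $W \sim \exp(b-a)$ are independent, then
$$
Y := \min(X, W) \sim \exp(b),
$$
since the minimum of independent exponentials is exponential with rate the sum of the rates. Taking this $X$ as the rate-$a$ variable and this $Y$ as the rate-$b$ variable defines a coupling of $\exp(a)$ and $\exp(b)$ on one probability space. Under it the event $\{X = Y\}$ is exactly $\{X \le W\}$, and the standard competing-exponentials computation gives
$$
\Prb(X = Y) = \Prb(X \le W) = \frac{a}{a + (b-a)} = \frac{a}{b} = \frac{\min(a,b)}{\max(a,b)}.
$$
Thus $\Prb(X \ne Y) = 1 - \min(a,b)/\max(a,b)$, which is the claimed inequality, in fact with equality.

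I do not expect a real obstacle; the only care needed is in the boundary cases. When $a = b$ the auxiliary rate $b - a$ is $0$, so $W = +\infty$ almost surely, $Y = X$, and the bound degenerates to $\Prb(X \ne Y) \le 0$, consistent with the construction. One could alternatively invoke the maximal coupling, for which $\Prb(X \ne Y) = 1 - \int_0^{\infty} \min\!\big(a\exp(-at),\, b\exp(-bt)\big)\,dt$; this yields a strictly smaller probability when $a \ne b$, but the overlap integral does not simplify to the clean ratio $\min(a,b)/\max(a,b)$, so the superposition coupling above is the more transparent route to the stated right-hand side, and is moreover exactly the kind of per-meeting coupling that will be applied token-by-token in \Cref{sec:the coupling}.
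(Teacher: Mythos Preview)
Your argument is correct and takes a genuinely different route from the paper. The paper appeals to the maximal coupling, asserting that the overlap integral $\int_0^\infty \min(ae^{-at},be^{-bt})\,dt$ equals $\min(a,b)/\max(a,b)$. You instead build the coupling via the superposition property, writing the faster exponential as $Y=\min(X,W)$ with $W\sim\exp(b-a)$ independent of $X$, and read off $\Prb(X=Y)=\Prb(X\le W)=a/b$ directly.

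Your construction hits the stated bound with equality and is the cleaner proof here. Indeed, as you observe, the overlap integral does \emph{not} simplify to $\min(a,b)/\max(a,b)$: for $a\le b$ one gets $1-(a/b)^{a/(b-a)}+(a/b)^{b/(b-a)}$, which strictly exceeds $a/b$ when $a<b$ (e.g.\ $3/4$ versus $1/2$ for $a=1$, $b=2$). So the paper's claimed equality is actually a minor slip; the lemma's \emph{inequality} survives because the true overlap is larger than the asserted value, but your coupling is the one that matches the right-hand side exactly and makes the bound transparent.
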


Our coupling between the token processes $\mu^N_t$ and $\nu^N_t$ is then defined as follows. First, by definition of the Prokhorov distance we may couple $\xi_i$ and $\tilde{\xi}_i$ for each $1 \leq i \leq N$ such that
$$
\Prb (d(\xi_i, \tilde{\xi}_i) > \epsilon ) \leq \epsilon
$$
where $\epsilon \geq d_P(\mu, \nu)$. 

Then, for every pair $i, j$ with both $d(\xi_i, \xi_j) > 0$ and $d(\tilde{\xi}_i, \tilde{\xi}_j) > 0$, we couple the meeting times $t_{ij}$ and $\tilde{t}_{ij}$ as in \Cref{lem:Exp coupling}. In the case $d(\xi_i, \xi_j) = 0$ we have $t_{ij} = 0$ and there is no possibility of a better than independent coupling with (a non-zero) $\tilde{t}_{ij}$. That is, in this case all couplings of $t_{ij}$ with $\tilde{t}_{ij}$ are equivalent. We similarly handle the case $d(\tilde{\xi}_i, \tilde{\xi}_j) = 0$.

\subsection{Proof of \Cref{prop:Token Moment Convergence}}

We being by recalling the (random) equivalence relation $\sim_0$ on $1, 2, \ldots, N$ given by $i \sim_0 j$ if $\xi_i = \xi_j$. For simplicity of notation we'll shorten $\sim_0$ to $\sim$.

We say that an outcome of the coupling is $(\epsilon, d_1, d_2, t_*)$-\textbf{good} for $0 < d_1 < d_2 < \infty$ if
\begin{enumerate}
\item[(G1)] For all $i, j$ either $i \sim j$ or $d(\xi_i, \xi_j) \geq d_1$,
\item[(G2)] For all $i$, $d(\xi_i, \tilde{\xi}_i) \leq \epsilon$,
\item[(G3)] For all $i, j$ s.t. $i \nsim j$ we have $t_{ij} = \tilde{t}_{ij}$,
\item[(G4)] For all $i, j$ s.t. $i \nsim j$ we have $t_{ij} \geq t_* > 0$,
\item[(G5)] For all $i, j$ s.t. $i \sim j$ we have $\tilde{t}_{ij} \leq t_*$,
\item[(G6)] For all $i, j$ we have $d(\xi_i, \xi_j) \leq d_2$ and $d(\tilde{\xi}_i, \tilde{\xi}_j) \leq d_2$.
\end{enumerate}

In words, we allow the possibility that some tokens $i$ and $j$ meet at $t_{ij} = 0$ even if  $\tilde{t}_{ij} \neq 0$. However, we insist that all such meetings $\tilde{t}_{ij}$ happen before $t_*$ and no other meetings happen before $t_*$. As part of this, we require tokens $i$ and $j$ with $\xi_i \neq \xi_j$ to be spaced by at least $d_1$, and require that no pair of tokens for either process are further than $d_2$. Then, our outcome is \textbf{good} if in addition, all the meetings happening after $t_*$ are coupled.

Our motivation for this definition is that in the case that the outcome is $(\epsilon, d_1, d_2, t_*)$-\textbf{good} it becomes simple to bound
$$
|\mu_t^N(f) - \nu_t^{N}(f)|,
$$
for compactly supported $f$. Consider a fixed, compactly supported $f \in C_b(S)$. Then by compactness $f$ must be uniformly continuous, so for every $\alpha > 0$ there is a $\delta_f(\alpha) > 0$ so that for all $x, y \in S$, if $d(x, y) \leq \delta_f(\alpha)$ then
$$
|f(x) - f(y)| \leq \alpha.
$$

\begin{lemma}{If an outcome of the coupling is $(\delta_f(\alpha), d_1, d_2, t_*)$-\textbf{good}, then for $t \geq t_*$
$$
|\mu_t^N(f) - \nu_t^N(f)| \leq \alpha.
$$
\label{lem:exp bound if good}
}
\begin{proof}
First we examine the process at time $t_*$. By $(G2)$, for $\mu^N$ all meetings $t_{ij}$ happen either at time $t = 0$ or at times $t \geq t_*$. By $(G3),(G4)$ and $(G5)$ the only meetings of $\nu^N$ that occur before $t_*$ are those between tokens $i$ and $j$ with $t_{ij} = 0$, and all such meetings occur before $t_*$. 

From this we will be able to show that by time $t_*$, the two partitions processes of tokens are the same. Let $I_i$ be the equivalence class of a token $i$ under $\sim_0$. By definition, then for $\mu^N$ all tokens in $I_i$ meet at time $t = 0$ and have no other meetings until at least time $t_*$ and so
$$
u_i(t_*) = \inf I_i,
$$
since all tokens in $I_i$ are absorbed into token $\inf I_i$ instantly at $t = 0$. Recall here that $u_i(t)$ is the owner of token $i$ at time $t$. Note this trivially true if $i$ is a singleton under $\sim$.

By assumption, for $\nu^N$ all pairs of tokens in $I_i$ meet before $t_*$ and have no meetings with tokens outside of $I_i$ until after $t_*$. These meetings could come in any order but by time $t_*$ all tokens of $\nu^N$ in a class $I_i$ are also owned by $\inf I_i$. That is, we can't say exactly what meeting tree occurs, but we know the final result.

Therefore, for all $1 \leq i \leq N$ we have
\begin{align}
u_i(t_*) = \tilde{u}_i(t_*). \label{eq:u eq}
\end{align}
We claim that \Cref{eq:u eq} also holds for all times $t \geq t_*$. This however follows easily from $(G3)$ as the only meetings for $\mu^N$ that occur after $t_*$ are coupled to the corresponding meetings for $\nu^N$. Thus once the two token partitions are synced at $t_*$, they remain so for all times after.

Therefore, applying $(G2)$ we then have that for all tokens $1 \leq i \leq N$ and times $t \geq t_*$
$$
d(\xi_{u_i(t)}, \tilde{\xi}_{\tilde{u}_i(t)}) \leq \delta(\alpha).
$$
This then implies that
$$
| f(\xi_{u_i(t)}) - f( \tilde{\xi}_{\tilde{u}_i(t)} ) | \leq \alpha,
$$
for $t \geq t_*$.

Recalling \Cref{def:Define muN} we can conclude that for all times $t \geq t_*$
$$
|\mu_t^N(f) - \nu_t^N(f)| \leq \sum_{i = 1}^N \frac{1}{N} | f(\xi_{u_i(t)}) - f( \tilde{\xi}_{\tilde{u}_i(t)} ) | \leq \alpha.
$$
\end{proof}
\end{lemma}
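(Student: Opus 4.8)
The plan is to show that the two coupled token processes agree on their ownership structure from time $t_*$ onward, and then convert that agreement, together with the spatial closeness guaranteed by (G2), into the stated bound via the modulus of continuity $\delta_f(\alpha)$. Writing $I_i$ for the $\sim_0$-equivalence class of token $i$ (the tokens sharing $i$'s initial $\mu$-location), the crux of the argument is the identity $u_i(t) = \tilde u_i(t)$ for every $1 \le i \le N$ and every $t \ge t_*$.

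First I would analyze both processes on $[0, t_*)$. For the $\mu$-process, (G4) forces every nontrivial cross-class meeting ($i \nsim j$) to occur at time $\ge t_*$, so the only mergers before $t_*$ are the instantaneous ones at $t = 0$ collapsing each class $I_i$ into $\inf I_i$; hence $u_i(t_*) = \inf I_i$. For the $\nu$-process, (G3) together with (G4) forces every cross-class meeting to be coupled to a $\mu$-meeting and therefore to occur at time $\ge t_*$, while (G5) forces every within-class meeting $\tilde t_{ij}$ ($i \sim j$) to occur by $t_*$. Thus on $[0, t_*)$ the $\nu$-process merges only tokens lying in a common class, and since every pair within a class meets, the whole class coalesces into its lowest token irrespective of the order in which those within-class meetings fire, because mergers always absorb into the lower index. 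This yields $\tilde u_i(t_*) = \inf I_i = u_i(t_*)$, establishing the identity at $t_*$.

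Next I would propagate the identity forward. After $t_*$, (G5) leaves the $\nu$-process with no further within-class meetings, and (G3)--(G4) make all remaining meetings of both processes the same coupled cross-class meetings $t_{ij} = \tilde t_{ij}$ at identical times. Since the ownership (hence partition) states coincide at $t_*$, each subsequent coupled meeting is trivial for one process exactly when it is trivial for the other, and the nontrivial ones apply the identical update; an induction over the ordered meeting times then gives $u_i(t) = \tilde u_i(t)$ for all $t \ge t_*$. The conclusion is now immediate: for such $t$ we have $\tilde u_i(t) = u_i(t)$, so (G2) gives $d(\xi_{u_i(t)}, \tilde\xi_{u_i(t)}) \le \delta_f(\alpha)$, whence $|f(\xi_{u_i(t)}) - f(\tilde\xi_{\tilde u_i(t)})| \le \alpha$, and averaging over $1 \le i \le N$ via \Cref{def:Define muN} yields $|\mu_t^N(f) - \nu_t^N(f)| \le \alpha$.

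I expect the synchronization step at $t_*$ to be the main obstacle. One must carefully reconcile the instantaneous atomic mergers of the $\mu$-process at $t = 0$ with the $\nu$-process's spreading of those same mergers across $(0, t_*]$, and argue that the possibly different intra-class meeting orders in $\nu$ nonetheless produce the same coalesced classes as in $\mu$ by time $t_*$; conditions (G3)--(G5) are precisely what is engineered to guarantee this, isolating a buffer interval in which no cross-class mixing can occur while every within-class coalescence completes.
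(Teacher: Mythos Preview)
Your proposal is correct and follows essentially the same route as the paper: synchronize the two token partitions at $t_*$ via the equivalence classes $I_i$, propagate the identity $u_i(t)=\tilde u_i(t)$ forward using the coupled cross-class meetings, and finish with (G2) and the modulus of continuity. If anything, you are a bit more careful than the paper in attributing each step to the correct condition (the paper's opening sentence cites (G2) where (G4) is meant) and in spelling out why the intra-class coalescence for $\nu^N$ lands on $\inf I_i$ regardless of meeting order.
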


The rest of the difficulty in our proof of \Cref{prop:Token Moment Convergence} is calculating the probability that the outcome is $(\epsilon, d_1, d_2, t_*)$-\textbf{good}, or more specifically bounding the probability that it is not. To start with, we need a technical lemma on the smoothness of the rate function $\phi(x)$. This can be more easily avoided with some smoothness conditions on $\phi$ but we prefer to give a proof in more generality.

\begin{lemma}{Fix $0 < d_1 \leq d_2 < \infty$. There exists a non-decreasing $G_{d_1, d_2} \colon \R_{\geq 0} \rightarrow \R_{\geq 0}$ such that
$$
1 - \frac{\min( \phi(x), \phi(y) )}{\max (\phi(x), \phi(y) )} \leq G_{d_1, d_2}(|x - y|)
$$
for all $x, y \in [d_1, d_2]$ and also $\lim_{z \downarrow 0} G_{d_1, d_2}(z) = 0$.
}
\label{lem:Phi bound}
\begin{proof}
Write $C = [d_1, d_2]^2$ and define
$$
G_{d_1, d_2}(z) = \sup_{(x, y) \in C \colon |x - y| \leq z} 1 - \frac{\min( \phi(x), \phi(y) )}{\max (\phi(x), \phi(y) )}.
$$
Clearly $G_{d_1, d_2}(z)$, as the supremum on a compact set of a function bounded by $1$, is well defined and itself satisfies  $0 \leq G_{d_1, d_2}(z) \leq 1$ for all $z$. Also clear is that $G_{d_1, d_2}(0) = 0$ and $G_{d_1, d_2}(z)$ is non-decreasing in $z$, since as $z$ increases so does the domain of the supremum. So we need only show that $\lim_{z \downarrow 0} G_{d_1, d_2}(z) = 0$.

Assume otherwise and so there exists an $\epsilon > 0$ and a sequence $z_i \downarrow 0$ such that $G_{d_1, d_2}(z_i) \geq \epsilon$. By the compactness of $\{(x, y) \in C \colon |x - y| \leq z \}$ there exists a sequence of points $(x_i, y_i) \in C$ with $|x_i - y_i| \leq z_i$ attaining the supremum, that is
$$
1 - \frac{\min( \phi(x_i), \phi(y_i) )}{\max (\phi(x_i), \phi(y_i) )} = G_{d_1, d_2}(z_i) \geq \epsilon.
$$

By compactness again, there exists a limiting subsequence $(x_j, y_j)$ converging to some point $(x_{\infty}, y_{\infty}) \in C$. As $z_j \downarrow 0$, we must have $|x_j - y_j| \downarrow 0$ and so $|x_{\infty} - y_{\infty}| = 0$, i.e. $x_{\infty} = y_{\infty}$. Since
$$
1 - \frac{\min( \phi(x), \phi(y) )}{\max (\phi(x), \phi(y) )}
$$
is continuous for $(x, y)$ in $C$ - as $\phi$ is bounded away from zero for $x, y \in [d_1, d_2]$ - this implies that
$$
1 - \frac{\min( \phi(x_{\infty}), \phi(y_{\infty}) )}{\max (\phi(x_{\infty}), \phi(y_{\infty}) )} = \lim_j G_{d_1, d_2}(z_j) \geq \epsilon,
$$
clearly a contradiction as it must be $0$.
\end{proof}
\end{lemma}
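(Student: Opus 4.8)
The plan is to define $G_{d_1,d_2}$ as the worst-case value of the quantity in question over all pairs in $[d_1,d_2]^2$ whose coordinates differ by at most $z$, namely
$$
G_{d_1,d_2}(z) = \sup_{(x,y)\in[d_1,d_2]^2 \colon |x-y|\le z} \left(1 - \frac{\min(\phi(x),\phi(y))}{\max(\phi(x),\phi(y))}\right).
$$
With this definition the stated inequality holds by construction, and three of the four required properties are immediate: the quantity inside the supremum lies in $[0,1]$, so $G$ is well defined and bounded; enlarging $z$ enlarges the feasible set, so $G$ is non-decreasing; and when $x=y$ the quantity vanishes, giving $G_{d_1,d_2}(0)=0$. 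The entire content of the lemma is therefore the limit $\lim_{z\downarrow 0}G_{d_1,d_2}(z)=0$.

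To obtain this limit I would argue via uniform continuity rather than by a subsequence extraction. First observe that because $d_1>0$ and $\phi$ is continuous and strictly positive on $(0,\infty)$ by assumption (H1), the value $m=\min_{x\in[d_1,d_2]}\phi(x)$ is attained and strictly positive. Hence on the compact square $[d_1,d_2]^2$ the denominator satisfies $\max(\phi(x),\phi(y))\ge m>0$, so the map
$$
h(x,y)=1-\frac{\min(\phi(x),\phi(y))}{\max(\phi(x),\phi(y))}
$$
is a quotient of continuous functions with non-vanishing denominator, and is therefore continuous on $[d_1,d_2]^2$ — including along the diagonal, where it equals $0$.

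Being continuous on a compact set, $h$ is uniformly continuous. Thus for every $\epsilon>0$ there is a $\delta>0$ such that $|h(x,y)-h(x',y')|<\epsilon$ whenever the two points of $[d_1,d_2]^2$ lie within distance $\delta$ of each other. Applying this with $(x',y')=(x,x)$ on the diagonal, we find that $|x-y|<\delta$ forces $h(x,y)=|h(x,y)-h(x,x)|<\epsilon$. Taking the supremum over all such pairs gives $G_{d_1,d_2}(z)\le\epsilon$ for every $z<\delta$, which is precisely $\lim_{z\downarrow 0}G_{d_1,d_2}(z)=0$.

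There is no serious obstacle here; the only point needing care is the continuity of $h$ on the diagonal, which is exactly where the hypotheses $d_1>0$ and (H1) enter, keeping $\phi$ bounded away from zero on $[d_1,d_2]$ (note that (H2) plays no role in this lemma). If one preferred to avoid invoking uniform continuity, the same conclusion follows by contradiction: were $G_{d_1,d_2}(z_i)\ge\epsilon$ along some $z_i\downarrow 0$, compactness would yield maximizing pairs $(x_i,y_i)$ with $|x_i-y_i|\le z_i$, a convergent subsequence would force a diagonal limit point $(x_\infty,x_\infty)$, and continuity of $h$ there would contradict $h(x_\infty,x_\infty)=0$.
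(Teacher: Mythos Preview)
Your proof is correct and takes essentially the same approach as the paper: you define $G_{d_1,d_2}$ identically and rely on the same key point, namely that $h$ is continuous on the compact square because $\phi$ is bounded away from zero on $[d_1,d_2]$. The only cosmetic difference is that you deduce $\lim_{z\downarrow 0}G_{d_1,d_2}(z)=0$ directly from uniform continuity, whereas the paper uses the compactness--subsequence contradiction argument you mention at the end as an alternative.
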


We'll need to introduce a bit of temporary notation before moving forward. Write
$$
\phi_{\max}(a,b) = \sup_{x \in [a, b]} \phi(x)
$$
which by the continuity of $\phi$ is finite for all $0 < a \leq b < \infty$.

Also write
$$
\phi_{\min}(z) = \inf_{x \in (0, z]} \phi(x)
$$
for $z < \infty$, which by the assumption $\lim_{x \downarrow 0} \phi(x) = \infty$ must be non-zero. Note the difference in the two domains being optimized over.

Also, for $z > 0$, we write
$$
F_{\mu}(z) = \Prb( 0 < d(\xi_1, \xi_2) \leq z),
$$
which by right-continuity must satisfy $F_{\mu}(z) \downarrow 0 $ as $z \downarrow 0$. Also write
$$
\bar{F}_{\mu}(z) = \Prb(z \leq d(\xi_1, \xi_2)),
$$
which satisfies $\bar{F}_{\mu}(z) \downarrow 0$ as $z \uparrow \infty$. Also clearly
$$
F_{\mu}(z) + \bar{F}_{\mu}(z) + \Prb(d(\xi_1, \xi_2) \in \{0, z \}) = 1.
$$

We're finally to state our initial bound on the probability that the coupling is good.

\begin{lemma}{Consider initial measures $\mu$ and $\nu$ with $d_P(\mu, \nu) \leq \frac{\epsilon^2}{2}$. If $\epsilon \leq \frac{d_1}{5}$ then the probability that the outcome is not $(\epsilon, d_1, d_2 t_*)$-\textbf{good} is bounded as follows:
\begin{align*}
\Prb(\text{Not } &(\epsilon, d_1, d_2, t_*)-\text{good}) 
\leq \binom{N}{2} F_{\mu}(d_1) + N \epsilon + \binom{N}{2} G_{\frac{d_1}{2}, d_2}(2 \epsilon) \\
&+ \binom{N}{2} (1 - \exp(- \phi_{\max}(d_1, d_2) t_*)) + \binom{N}{2} \exp( - \phi_{\min}(2\epsilon) t_*) \\
&+ \binom{N}{2}\bar{F}_{\mu}(\frac{d_2}{2}).
\end{align*}}
\label{lem:Pgood bound}
\end{lemma}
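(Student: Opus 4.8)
The plan is to control the complementary event $\{\text{not }(\epsilon,d_1,d_2,t_*)\text{-good}\}$ by a union bound over the failures of the six conditions (G1)--(G6), arranging matters so that the six resulting summands are exactly the six terms on the right-hand side. The one genuine subtlety is that the conditions are interdependent: the meeting-time estimates (G3)--(G5) only behave well once the geometric conditions (G1), (G2), (G6) are known to hold. To handle this cleanly, and to avoid any double counting, I would work with the cover
$$
\{\text{not good}\} \subseteq (G1)^c \cup (G2)^c \cup \big[(G6)^c \cap (G2)\big] \cup \big[(G3)^c \cap (G1)\cap(G2)\cap(G6)\big] \cup \big[(G4)^c \cap (G1)\cap(G6)\big] \cup \big[(G5)^c \cap (G2)\big],
$$
which is valid since if none of these six events occurs then all of (G1)--(G6) hold, and then apply Boole's inequality.

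For the three geometric terms I would argue as follows. A pair $i\nsim j$ violates (G1) exactly when $0<d(\xi_i,\xi_j)\le d_1$, an event of probability $F_{\mu}(d_1)$, so summing over $\binom{N}{2}$ pairs gives the first term. Condition (G2) fails for a token with probability at most $\epsilon$ by the Strassen coupling built in \Cref{sec:the coupling} (using $d_P(\mu,\nu)\le \epsilon^2/2\le\epsilon$), so a union over the $N$ tokens gives $N\epsilon$. For (G6) on the event (G2): the event $\{d(\xi_i,\xi_j)>d_2\}$ is contained in $\{d(\xi_i,\xi_j)\ge d_2/2\}$, and under (G2) the triangle inequality gives $d(\xi_i,\xi_j)\ge d(\tilde\xi_i,\tilde\xi_j)-2\epsilon$, so $\{d(\tilde\xi_i,\tilde\xi_j)>d_2\}$ forces $d(\xi_i,\xi_j)>d_2-2\epsilon\ge d_2/2$ once $\epsilon\le d_1/5\le d_2/5$; both cases are thus controlled by $\bar F_{\mu}(d_2/2)$, and summing over pairs gives the last term.

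The two one-sided meeting-time terms use the exponential tail, working conditionally on the initial locations. On (G1) and (G6) each distance $d(\xi_i,\xi_j)\in[d_1,d_2]$, hence $\phi(d(\xi_i,\xi_j))\le\phi_{\max}(d_1,d_2)$ and $\Prb(t_{ij}<t_*\mid\text{locations})=1-\exp(-\phi(d(\xi_i,\xi_j))t_*)\le 1-\exp(-\phi_{\max}(d_1,d_2)t_*)$, giving the (G4) term after summing over pairs. For (G5), on (G2) a pair with $i\sim j$ (so $\xi_i=\xi_j$) has $d(\tilde\xi_i,\tilde\xi_j)\le 2\epsilon$, so either $\tilde t_{ij}=0\le t_*$ or $\phi(d(\tilde\xi_i,\tilde\xi_j))\ge\phi_{\min}(2\epsilon)$ and $\Prb(\tilde t_{ij}>t_*\mid\text{locations})=\exp(-\phi(d(\tilde\xi_i,\tilde\xi_j))t_*)\le\exp(-\phi_{\min}(2\epsilon)t_*)$; summing gives the (G5) term.

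The main obstacle is the (G3) coupling term, and it is where \Cref{lem:Phi bound} enters. For a pair $i\nsim j$ the exponential coupling of \Cref{lem:Exp coupling} gives $\Prb(t_{ij}\ne\tilde t_{ij}\mid\text{locations})\le 1-\min(\nu_{ij},\tilde\nu_{ij})/\max(\nu_{ij},\tilde\nu_{ij})$, where $\tilde\nu_{ij}=\phi(d(\tilde\xi_i,\tilde\xi_j))$, and to replace this by $G_{d_1/2,d_2}(2\epsilon)$ I must verify that both coupled distances lie in the common window $[d_1/2,d_2]$ and differ by at most $2\epsilon$. On (G1), (G2), (G6) the triangle inequality yields $|d(\xi_i,\xi_j)-d(\tilde\xi_i,\tilde\xi_j)|\le 2\epsilon$; the true distance lies in $[d_1,d_2]$ by (G1) and (G6); and the tilde distance satisfies $d(\tilde\xi_i,\tilde\xi_j)\ge d_1-2\epsilon\ge d_1/2$ (using $\epsilon\le d_1/5$) together with $d(\tilde\xi_i,\tilde\xi_j)\le d_2$ by (G6). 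Hence \Cref{lem:Phi bound} applies on $[d_1/2,d_2]$ with argument $2\epsilon$, bounding the per-pair failure by $G_{d_1/2,d_2}(2\epsilon)$, and summing over the $\binom{N}{2}$ pairs gives the remaining term. This triangle-inequality bookkeeping, needed to keep \emph{both} distances inside the fixed window $[d_1/2,d_2]$ and which is precisely what forces the hypothesis $\epsilon\le d_1/5$, is the delicate step of the argument.
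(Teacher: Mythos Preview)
Your proposal is correct and follows essentially the same approach as the paper: the paper likewise decomposes $\{\text{not good}\}$ into the six events $(G1)^c$, $(G2)^c$, $(G1)\cap(G2)\cap\text{Not }(G5)$, $(G2)\cap\text{Not }(G6)$, $(G1)\cap(G2)\cap(G6)\cap\text{Not }(G3)$, $(G1)\cap(G6)\cap\text{Not }(G4)$, bounds each one separately by the corresponding term via the exponential coupling, \Cref{lem:Phi bound}, and the triangle-inequality bookkeeping you describe, and then applies Boole's inequality. Your decomposition differs only in that you condition on $(G2)$ alone rather than $(G1)\cap(G2)$ when handling the failure of $(G5)$, which is indeed sufficient since $i\sim j$ already forces $\xi_i=\xi_j$; this is a harmless simplification.
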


To prove \Cref{lem:Pgood bound}, we first bound the probability of failure for each of the requirements for the coupling to be $(\epsilon, d_1, d_2, t_*)$-good. For each of these lemmas, assume that $\mu$ and $\nu$ satisfy the conditions of \Cref{lem:Pgood bound}.

\begin{lemma}{$$
\Prb \left( \text{Not } (G1) \right) \leq \binom{N}{2} F_{\mu}(d_1).
$$}
\label{lem:g1bound}
\begin{proof}
For any pair $i, j$, we have that the probability $(G1)$ fails is
$$
\Prb(  i \nsim j \text{ and } d(\xi_i, \xi_j) < d_1) \leq \Prb( d(\xi_i, \xi_j) \in (0, d_1]) = F_{\mu}(d_1).
$$
The lemma then follows by a simple union bound.
\end{proof}
\end{lemma}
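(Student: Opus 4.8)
The plan is to decompose the failure event $\{\text{Not }(G1)\}$ over the $\binom{N}{2}$ unordered pairs of tokens and finish with a union bound. By the definition of $(G1)$, the requirement fails exactly when there is some pair $i \neq j$ with $i \nsim j$ yet $d(\xi_i, \xi_j) < d_1$, so I would first record the set identity
$$
\{\text{Not }(G1)\} = \bigcup_{1 \leq i < j \leq N} \big\{\, i \nsim j \text{ and } d(\xi_i, \xi_j) < d_1 \,\big\}.
$$

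Next I would bound a single pairwise term. The key observation is that $i \nsim j$ means precisely $\xi_i \neq \xi_j$, i.e. $d(\xi_i, \xi_j) > 0$, so the pairwise bad event is contained in $\{\, 0 < d(\xi_i, \xi_j) \leq d_1 \,\}$. Since the $\xi_i$ are IID samples of $\mu$, the pair $(\xi_i, \xi_j)$ has the same joint law as $(\xi_1, \xi_2)$, and hence its probability is at most
$$
\Prb\big( 0 < d(\xi_1, \xi_2) \leq d_1 \big) = F_{\mu}(d_1)
$$
by the very definition of $F_{\mu}$. Subadditivity of probability across the $\binom{N}{2}$ pairs then gives $\Prb(\text{Not }(G1)) \leq \binom{N}{2} F_{\mu}(d_1)$, as claimed.

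There is essentially no real obstacle in this argument; it is a clean union bound. The only two points that warrant a moment's care are the identification $i \nsim j \iff d(\xi_i, \xi_j) > 0$ (so that the ``$\nsim$'' condition is automatically folded into the strict positivity recorded in $F_{\mu}$) and the harmless passage from the strict inequality $< d_1$ to $\leq d_1$, which only weakens the estimate. The exchangeability of the initial locations $\xi_i$ is what makes every pairwise term identical and lets the union bound collapse to a single factor of $F_{\mu}(d_1)$.
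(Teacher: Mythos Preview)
Your proposal is correct and matches the paper's argument essentially line for line: bound each pairwise failure event by $\Prb(0<d(\xi_i,\xi_j)\leq d_1)=F_{\mu}(d_1)$ using that $i\nsim j$ is equivalent to $d(\xi_i,\xi_j)>0$, then apply a union bound over the $\binom{N}{2}$ pairs. The extra remarks you make about exchangeability and the strict-vs-nonstrict inequality are sound and only make the reasoning more explicit than the paper's version.
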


\begin{lemma}{$$
\Prb \left(  \text{Not } (G2) \right) \leq N \epsilon.
$$}
\label{lem:g2bound}
\begin{proof}
For each $i$, by the definition of the coupling we have
$$
\Prb(d(\xi_i, \tilde{\xi}_i) > \epsilon) \leq \epsilon.
$$
Therefore by a simple union bound
$$
\Prb( \exists i \text{ s.t. } (G1) \text{ fails for } i) \leq N \epsilon.
$$
\end{proof}
\end{lemma}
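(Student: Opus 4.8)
The plan is to prove the bound by a direct union bound over the $N$ tokens, using the per-coordinate control that is built into the coupling. Recall that condition $(G2)$ asks that $d(\xi_i, \tilde{\xi}_i) \leq \epsilon$ simultaneously for all $1 \leq i \leq N$, so its negation is the event $\{\exists\, i \colon d(\xi_i, \tilde{\xi}_i) > \epsilon\}$. I would therefore reduce the whole question to controlling, one token at a time, the probability that a single coupled pair is displaced by more than $\epsilon$.

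The key input is the marginal estimate supplied by the construction of the coupling in \Cref{sec:the coupling}. There the initial locations $\xi_i$ and $\tilde{\xi}_i$ were coupled, via Strassen's representation of the Prokhorov distance, so that $\Prb(d(\xi_i, \tilde{\xi}_i) > \epsilon) \leq \epsilon$ for each $i$. This is legitimate precisely because the standing hypothesis $d_P(\mu, \nu) \leq \frac{\epsilon^2}{2}$ guarantees $\epsilon \geq d_P(\mu, \nu)$, which is the only thing needed for the Strassen coupling to achieve displacement exceeding $\epsilon$ with probability at most $\epsilon$.

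With this marginal bound in hand the conclusion follows from subadditivity:
\[
\Prb(\text{Not } (G2)) = \Prb\Big( \bigcup_{i=1}^N \{ d(\xi_i, \tilde{\xi}_i) > \epsilon \} \Big) \leq \sum_{i=1}^N \Prb\big( d(\xi_i, \tilde{\xi}_i) > \epsilon \big) \leq N \epsilon .
\]
There is no genuine obstacle here: the entire content is a union bound over the $N$ coupled pairs. The only point that merits any attention, and it is bookkeeping rather than mathematics, is to confirm that the $\epsilon$ appearing in the displacement estimate of the coupling is the same $\epsilon$ as in the statement of the lemma, which is exactly what the hypothesis relating $d_P(\mu, \nu)$ to $\epsilon$ provides.
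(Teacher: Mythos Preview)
Your proposal is correct and follows exactly the same approach as the paper's proof: invoke the per-token Strassen coupling bound $\Prb(d(\xi_i,\tilde{\xi}_i)>\epsilon)\leq\epsilon$ and apply a union bound over the $N$ tokens. Your write-up is in fact slightly more careful than the paper's, which contains a harmless typo (it writes ``$(G1)$ fails for $i$'' where it means $(G2)$).
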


Write $\sigma(\Xi)$ for the sigma field of the initial locations of the tokens, that is
$$
\sigma(\Xi) = \sigma(\xi_i, \tilde{\xi}_i, 1 \leq i \leq N ).
$$

\begin{lemma}{$$
\Prb \left( (G1), (G2), (G6), \text{Not } (G3) \right) \leq \binom{N}{2} G_{\frac{d_1}{2},d_2}(2 \epsilon).
$$}
\label{lem:g3bound}
\begin{proof}
For simplicity of notation, write 
$$
d_{ij} = d(\xi_i, \xi_j), \hspace{ 2 pc } \tilde{d}_{ij} = d(\tilde{\xi}_i, \tilde{\xi}_j).
$$
 First when $(G2)$ holds, by the triangle inequality we always have
$$
|d_{ij} - \tilde{d}_{ij}| \leq d(\xi_i, \tilde{\xi}_i) + d(\xi_j, \tilde{\xi}_j) \leq 2 \epsilon.
$$
When $(G1)$ holds, if $i \nsim j$ then
$$
d_{ij} \geq d_1
$$
and so as $\epsilon \leq \frac{d_1}{5}$
$$
\tilde{d}_{ij} \geq d_1 - 2 \epsilon \geq \frac{d_1}{2},
$$
giving both $d_{ij}$ and $\tilde{d}_{ij}$ at least $\frac{d_1}{2}$.

By \Cref{lem:Exp coupling}, for each such pair $i, j$ the probability that $(G3)$ fails given give the initial locations satisfies
$$
\Prb(t_{ij} \neq \tilde{t}_{ij} \vert \Xi) \leq 1 - \frac{\min(\phi(d_{ij}), \phi(\tilde{d}_{ij}))}{\max(\phi(d_{ij}), \phi(\tilde{d}_{ij}))}.
$$

When $(G6)$ also holds, we have
$$
\frac{d_1}{2} \leq d_{ij}, \tilde{d}_{ij} \leq d_2.
$$
Applying \Cref{lem:Phi bound} with $\frac{d_1}{2}$ and $d_2$, using that $(G1), (G2)$ and $(G6)$ are determined by the initial locations and so in $\sigma(\Xi)$, we have that
\begin{align*}
\Prb \left(t_{ij} \neq \tilde{t}_{ij}, (G1), (G2), (G6)\right) 
&= \E \Prb \left( t_{ij} \neq \tilde{t}_{ij}, (G1), (G2), (G6) \vert \Xi \right)\\
&= \E \left( 1 \left( (G1), (G2), (G6) \right)\Prb(t_{ij} \neq \tilde{t}_{ij}\vert \Xi) \right) \\
&\leq \E \left( 1 \left( (G1), (G2), (G6) \right) \left(  1 - \frac{\min(\phi(d_{ij}), \phi(\tilde{d}_{ij}))}{\max(\phi(d_{ij}), \phi(\tilde{d}_{ij}))}. \right) \right) \\
&\leq \E  \left( 1 \left( (G1), (G2), (G6) \right) G_{\frac{d_1}{2}, d_2}(|d_{ij} - \tilde{d}_{ij} | ) \right) \\
&\leq \E \left(  1 \left( (G1), (G2), (G6) \right) G_{\frac{d_1}{2}, d_2}(2 \epsilon) \right) \\
&\leq G_{\frac{d_1}{2},d_2}(2 \epsilon)
\end{align*}
recalling that $G_{\frac{d_1}{2},d_2}$ is non-decreasing and $|d_{ij} - \tilde{d}_{ij}| \leq 2\epsilon$.

Thus we have that
$$
\Prb \left( (G1), (G2), (G6) \text{ but } (G3) \text{ fails for } i,j \right) \leq G_{\frac{d_1}{2},d_2}(2 \epsilon)
$$
and the proof follows by a simple union bound.
\end{proof}
\end{lemma}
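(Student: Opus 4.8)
The plan is to reduce the bound to a single-pair estimate via a union bound over the $\binom{N}{2}$ pairs, and to control each pair by conditioning on the initial locations. First I would note that, since $(G3)$ only constrains pairs with $i \nsim j$, any failure must come from such a pair, so it suffices to bound, for each fixed pair with $i \nsim j$, the probability $\Prb\big((G1),(G2),(G6),\ t_{ij}\neq\tilde t_{ij}\big)$ and then sum. Because the events $(G1),(G2),(G6)$ depend only on the initial token locations, I would work on the $\sigma$-field $\sigma(\Xi)=\sigma(\xi_i,\tilde\xi_i,1\le i\le N)$, pull the indicator $1((G1),(G2),(G6))$ outside the conditional expectation, and be left to estimate $\Prb(t_{ij}\neq\tilde t_{ij}\mid\Xi)$ under the coupling of meeting times from \Cref{lem:Exp coupling}.

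The key geometric step is to show that, on the conjunction of the good events, both interpoint distances $d_{ij}=d(\xi_i,\xi_j)$ and $\tilde d_{ij}=d(\tilde\xi_i,\tilde\xi_j)$ lie in the compact window $[\tfrac{d_1}{2},d_2]$. Under $(G2)$ the triangle inequality gives $|d_{ij}-\tilde d_{ij}|\le 2\epsilon$; under $(G1)$ with $i\nsim j$ we have $d_{ij}\ge d_1$, so the hypothesis $\epsilon\le \tfrac{d_1}{5}$ forces $\tilde d_{ij}\ge d_1-2\epsilon\ge \tfrac{d_1}{2}$, and $(G6)$ caps both distances by $d_2$. In particular both rates $\phi(d_{ij}),\phi(\tilde d_{ij})$ are strictly positive and comparable, so the exponential coupling applies and yields
$$
\Prb(t_{ij}\neq\tilde t_{ij}\mid\Xi)\le 1-\frac{\min(\phi(d_{ij}),\phi(\tilde d_{ij}))}{\max(\phi(d_{ij}),\phi(\tilde d_{ij}))}.
$$
I would then invoke \Cref{lem:Phi bound} on the window $[\tfrac{d_1}{2},d_2]$ together with $|d_{ij}-\tilde d_{ij}|\le 2\epsilon$ and the monotonicity of $G_{\frac{d_1}{2},d_2}$ to bound the right-hand side by the deterministic constant $G_{\frac{d_1}{2},d_2}(2\epsilon)$. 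Taking expectations, the indicator $1((G1),(G2),(G6))$ is at most $1$, and summing over the $\binom{N}{2}$ pairs gives the claimed bound.

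The main obstacle — and the reason $(G1),(G2),(G6)$ must be bundled into the event rather than estimated in isolation — is guaranteeing that both distance values stay inside a single compact interval bounded away from $0$ and from $\infty$, since that is exactly where \Cref{lem:Phi bound} supplies a uniform modulus of continuity for the ratio $\min\phi/\max\phi$ and where the exponential coupling is nondegenerate. The quantitative slack $\epsilon\le \tfrac{d_1}{5}$ is what ensures the lower bound $\tfrac{d_1}{2}$ survives the $2\epsilon$ perturbation of $d_{ij}$; without pinning $\tilde d_{ij}$ away from $0$ the coupling estimate could be vacuous (as $\phi$ blows up near $0$). The remaining work is routine bookkeeping: checking that $(G1),(G2),(G6)$ are $\sigma(\Xi)$-measurable so the conditioning is valid, and the final union bound.
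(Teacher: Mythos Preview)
Your proposal is correct and follows essentially the same route as the paper's proof: condition on $\sigma(\Xi)$, use $(G1),(G2),(G6)$ together with $\epsilon\le d_1/5$ and the triangle inequality to pin both $d_{ij}$ and $\tilde d_{ij}$ into $[\tfrac{d_1}{2},d_2]$, apply \Cref{lem:Exp coupling} and then \Cref{lem:Phi bound} to bound the conditional probability by $G_{\frac{d_1}{2},d_2}(2\epsilon)$, and finish with a union bound over the $\binom{N}{2}$ pairs. Your explanation of why the three good events must be bundled (to keep both distances in a compact window where the ratio modulus applies and the coupling is nondegenerate) matches the paper's reasoning exactly.
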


\begin{lemma}{$$
\Prb\left( (G1), (G6), \text{Not } (G4) \right) \leq \binom{N}{2} (1 - \exp(- \phi_{\max}(d_1, d_2) t_*)).
$$
}
\label{lem:g4bound}
\begin{proof}
When $(G1)$ and $(G6)$ hold, if $i \nsim j$ then $d_1 \leq d(\xi_i, \xi_j) \leq d_2$. Therefore by definition of $\phi_{\max}(d_1, d_2)$ we have
$$
\phi(d(\xi_i, \xi_j)) \leq \phi_{\max}(d_1, d_2).
$$
So the probability that $(G4)$ fails for $i,j$ can be calculated, using that $(G1)$ and $(G6)$ are in $\sigma(\Xi)$, and that $t_{ij}$ is (conditionally) exponentially distributed, by
\begin{align*}
\Prb(t_{ij} < t_*, (G1), (G6)) &= \E\left( 1\left((G1), (G6) \right)\Prb (t_{ij} < t_* \vert \Xi) \right) \\
&= \E \left( 1 \left((G1), (G6) \right) \left( 1 - \exp(- \phi(d(\xi_i, \xi_j)) t_*) \right) \right) \\
&\leq \E \left( 1 \left( (G1), (G6) \right) \left( 1 - \exp(- \phi_{\max}(d_1, d_2) t_*) \right)\right) \\
&\leq  1 - \exp(- \phi_{\max}(d_1, d_2) t_*).
\end{align*}

A simple union bound completes the proof.
\end{proof}
\end{lemma}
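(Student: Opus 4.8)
The plan is to reduce the event to a pairwise statement and finish with a union bound, mirroring the arguments used for the other goodness conditions (e.g.\ \Cref{lem:g3bound}). Since condition $(G4)$ constrains only pairs with $i \nsim j$, the event $\text{Not }(G4)$ is exactly the existence of some pair $i \nsim j$ with $t_{ij} < t_*$; pairs with $i \sim j$ have $t_{ij} = 0$ by construction of the token process but are excluded from $(G4)$ and so never witness a violation. Thus
$$
\{(G1),(G6),\text{Not }(G4)\} \subset \bigcup_{i < j} \{(G1),(G6),\, i \nsim j,\, t_{ij} < t_*\},
$$
and it suffices to bound the probability of each event in the union by $1 - \exp(-\phi_{\max}(d_1,d_2)\,t_*)$, after which summing over the $\binom{N}{2}$ pairs gives the claim.

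To bound a single pair, I would condition on the sigma field $\sigma(\Xi)$ of initial locations. The constraints $(G1)$ and $(G6)$ are deterministic functions of the initial locations and hence $\sigma(\Xi)$-measurable, while given $\Xi$ the meeting time $t_{ij}$ is exponential of rate $\phi(d(\xi_i,\xi_j))$ whenever $d(\xi_i,\xi_j) > 0$ (which holds automatically on $\{i \nsim j\}$). So on that event $\Prb(t_{ij} < t_* \mid \Xi) = 1 - \exp(-\phi(d(\xi_i,\xi_j))\,t_*)$. Writing the pairwise probability as $\E\big(1((G1),(G6),\, i\nsim j)\,\Prb(t_{ij} < t_* \mid \Xi)\big)$ isolates the randomness of the meeting time inside the conditional expectation, leaving the purely geometric constraints in the indicator.

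The only genuine input is the geometric observation that on $(G1) \cap (G6) \cap \{i \nsim j\}$ the distance $d(\xi_i,\xi_j)$ lies in $[d_1,d_2]$: condition $(G1)$ forces $d(\xi_i,\xi_j) \geq d_1$ for non-equivalent pairs, and $(G6)$ forces every pairwise distance to be at most $d_2$. By the definition of $\phi_{\max}(d_1,d_2)$ as the supremum of $\phi$ over $[d_1,d_2]$ this yields $\phi(d(\xi_i,\xi_j)) \leq \phi_{\max}(d_1,d_2)$, and since $x \mapsto 1 - \exp(-x\,t_*)$ is increasing we obtain $1 - \exp(-\phi(d(\xi_i,\xi_j))\,t_*) \leq 1 - \exp(-\phi_{\max}(d_1,d_2)\,t_*)$ on that event. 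Pulling this deterministic bound out of the expectation gives the per-pair estimate. I do not expect a real obstacle here; the statement is routine and the only care needed is the bookkeeping of the conditioning, namely keeping $(G1),(G6)$ inside $\sigma(\Xi)$ so that the conditional exponential law of $t_{ij}$ applies cleanly, and correctly excluding the $i \sim j$ pairs from the analysis of $(G4)$.
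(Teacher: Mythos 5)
Your proposal is correct and follows essentially the same route as the paper's proof: a union bound over the $\binom{N}{2}$ pairs, conditioning on $\sigma(\Xi)$ so that $(G1)$ and $(G6)$ sit inside the conditioning while $t_{ij}$ is conditionally exponential, and the geometric observation that $(G1)$, $(G6)$ force $d(\xi_i,\xi_j) \in [d_1, d_2]$ for $i \nsim j$, whence $\phi(d(\xi_i,\xi_j)) \leq \phi_{\max}(d_1,d_2)$ and monotonicity of $x \mapsto 1 - \exp(-x t_*)$ finishes the per-pair bound. Your explicit remark that pairs with $i \sim j$ (where $t_{ij} = 0$) are excluded from $(G4)$ is a small clarification the paper leaves implicit, but the argument is otherwise identical.
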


\begin{lemma}{$$
\Prb \left( (G1), (G2), \text{Not } (G5) \right) \leq \binom{N}{2} \exp( - \phi_{\min}(2\epsilon) t_*).
$$}
\label{lem:g5bound}
\begin{proof}
When $(G1)$ and $(G2)$ hold, if $i \sim j$, using $\epsilon \leq \frac{d_1}{5}$, from the triangle inequality we have 
$$
d(\tilde{\xi}_i, \tilde{\xi}_j) \leq 2 \epsilon.
$$
Therefore 
$$
\phi(d(\tilde{\xi}_i, \tilde{\xi}_j)) \geq \phi_{\min}(2\epsilon),
$$
and so the probability that $(G5)$ fails for $i,j$, using that $(G1)$ and $(G2)$ are in $\sigma(\Xi)$ is
\begin{align*}
\Prb \left( \tilde{t}_{ij} \geq t_*, (G1), (G2) \right) 
&= \E \left( 1 \left( (G1), (G2) \right) \Prb( \tilde{t}_{ij} \geq t_* \vert \Xi) \right) \\
&\leq \E \left( 1 \left((G1), (G2) \right) \exp( - \phi(d(\tilde{\xi}_i, \tilde{\xi}_j)) t_* ) \right)\\
&\leq \exp( - \phi_{\min}(2\epsilon) t_*).
\end{align*}

The lemma then follows from a simple union bound.
\end{proof}
\end{lemma}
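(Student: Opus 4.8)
The plan is to bound the probability that $(G5)$ fails for each pair $i\sim j$ separately and then combine via a union bound over the at most $\binom{N}{2}$ such pairs. Fix a pair $i,j$ with $i\sim j$, i.e. $\xi_i=\xi_j$, so that $d(\xi_i,\xi_j)=0$ and the $\mu$-meeting satisfies $t_{ij}=0$. The content of $(G5)$ is that the coupled meeting time $\tilde t_{ij}$ in the $\nu$-process also occurs quickly, namely before $t_*$; the obstruction is that $\tilde\xi_i$ and $\tilde\xi_j$ need not coincide even though $\xi_i=\xi_j$.

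First I would extract the geometric estimate. Since $i\sim j$ gives $d(\xi_i,\xi_j)=0$, the triangle inequality together with $(G2)$---which controls $d(\xi_k,\tilde\xi_k)\le\epsilon$ for every token $k$---yields $d(\tilde\xi_i,\tilde\xi_j)\le d(\tilde\xi_i,\xi_i)+d(\xi_j,\tilde\xi_j)\le 2\epsilon$. If $\tilde\xi_i=\tilde\xi_j$ then by construction $\tilde t_{ij}=0\le t_*$ and $(G5)$ holds for this pair automatically, so it suffices to treat the case $0<d(\tilde\xi_i,\tilde\xi_j)\le 2\epsilon$, where by the definition of $\phi_{\min}$ we obtain the rate lower bound $\phi(d(\tilde\xi_i,\tilde\xi_j))\ge\phi_{\min}(2\epsilon)$.

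The next step is a conditioning argument on $\sigma(\Xi)$, the $\sigma$-field generated by all initial token locations $\xi_k,\tilde\xi_k$. The events $(G1)$ and $(G2)$ are $\sigma(\Xi)$-measurable, while conditional on $\Xi$ the time $\tilde t_{ij}$ is exponential with rate $\phi(d(\tilde\xi_i,\tilde\xi_j))$, so on the event that $(G1),(G2)$ hold we have $\Prb(\tilde t_{ij}\ge t_*\mid\Xi)=\exp(-\phi(d(\tilde\xi_i,\tilde\xi_j))\,t_*)\le\exp(-\phi_{\min}(2\epsilon)\,t_*)$. Taking the expectation of $1((G1),(G2))\,\Prb(\tilde t_{ij}\ge t_*\mid\Xi)$ then bounds the joint probability $\Prb(\tilde t_{ij}\ge t_*,(G1),(G2))$ by $\exp(-\phi_{\min}(2\epsilon)\,t_*)$, and summing this over the $\binom{N}{2}$ pairs completes the argument.

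The main obstacle---and the only place where the hypotheses on $\phi$ are genuinely used---is ensuring $\phi_{\min}(2\epsilon)>0$: this is exactly where assumption $(H2)$, namely $\lim_{x\downarrow 0}\phi(x)=\infty$, is essential, since it forces $\phi$ to stay bounded away from zero on $(0,2\epsilon]$ and thereby guarantees that the coupled meeting in the $\nu$-process occurs fast with high probability. A secondary technical point to handle cleanly is the degenerate case $\tilde\xi_i=\tilde\xi_j$, which I dispose of above so that the exponential rate bound is applied only on the complementary event $d(\tilde\xi_i,\tilde\xi_j)>0$.
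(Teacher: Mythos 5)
Your proposal is correct and follows essentially the same route as the paper: bound each pair $i \sim j$ by conditioning on $\sigma(\Xi)$, use $(G2)$ and the triangle inequality to get $d(\tilde{\xi}_i, \tilde{\xi}_j) \leq 2\epsilon$ and hence the rate lower bound $\phi_{\min}(2\epsilon)$, then apply the exponential tail and a union bound. Your explicit treatment of the degenerate case $\tilde{\xi}_i = \tilde{\xi}_j$ (where $\tilde{t}_{ij} = 0$ and $(G5)$ holds trivially, consistent with $\phi_{\min}$ being an infimum over $(0, 2\epsilon]$) is a small point the paper leaves implicit, but it does not change the argument.
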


\begin{lemma}{$$
\Prb \left((G2), \text{Not } (G6) \right) \leq \frac{N}{2}  \bar{F}_{\mu}(\frac{d_2}{2}).
$$}
\label{lem:g6bound}
\begin{proof}
First, by definition we have that
$$
\Prb(d(\xi_1, \xi_2) \geq \frac{d_2}{2} ) = \bar{F}_{\mu}(\frac{d_2}{2}).
$$
Therefore the probability that
\begin{align}
d(\xi_i, \xi_j) \leq \frac{d_2}{2} \label{eq:dij}
\end{align}
fails for any pair $\xi_i, \xi_j$ can be bounded by the union bound
$$
\frac{N}{2} \bar{F}_{\mu}(\frac{d_2}{2}).
$$
Next, by assumption
$$
d_2 \geq d_1 > 4 \epsilon
$$
 and so if \Cref{eq:dij} holds for all $i$ and $j$, then by the triangle inequality
\begin{align*}
d(\tilde{\xi}_i, \tilde{\xi}_j) &\leq d(\xi_i, \tilde{\xi_i}) + d(\xi_j, \tilde{\xi}_j) + d (\xi_i, \xi_j) \\
&\leq 2\epsilon + \frac{d_2}{2} \\
&\leq d_2
\end{align*}
showing that $(G2)$ holds for all $\tilde{\xi}_i$ and $\tilde{\xi}_j$ completing the proof.
\end{proof}
\end{lemma}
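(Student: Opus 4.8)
The plan is to show that, once $(G2)$ is assumed, the $\nu$-side distances appearing in $(G6)$ are controlled automatically by the $\mu$-side distances through the triangle inequality, so that the only genuine source of a $(G6)$-failure is a pair of $\mu$-tokens landing far apart. Thus the event reduces to an event depending only on the i.i.d.\ locations $\xi_i$, which is then handled by a union bound.

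First I would record the deterministic implication. Suppose $(G2)$ holds and suppose in addition that $d(\xi_i,\xi_j) \leq \tfrac{d_2}{2}$ for every pair $i,j$. Then for any $i,j$ the triangle inequality gives
$$
d(\tilde\xi_i,\tilde\xi_j) \leq d(\xi_i,\tilde\xi_i) + d(\xi_i,\xi_j) + d(\xi_j,\tilde\xi_j) \leq 2\epsilon + \tfrac{d_2}{2}.
$$
Under the standing hypotheses of \Cref{lem:Pgood bound} we have $\epsilon \leq \tfrac{d_1}{5} \leq \tfrac{d_2}{5}$, so $2\epsilon \leq \tfrac{d_2}{2}$ and hence $d(\tilde\xi_i,\tilde\xi_j) \leq d_2$; since also $d(\xi_i,\xi_j) \leq \tfrac{d_2}{2} \leq d_2$, every distance appearing in $(G6)$ is at most $d_2$, i.e.\ $(G6)$ holds. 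Contrapositively, on the event $(G2)$ a failure of $(G6)$ forces some pair with $d(\xi_i,\xi_j) > \tfrac{d_2}{2}$, so
$$
\{(G2)\} \cap \{\text{Not } (G6)\} \subseteq \bigcup_{i<j} \left\{ d(\xi_i,\xi_j) > \tfrac{d_2}{2} \right\}.
$$

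The remaining step is a union bound. Because the $\xi_i$ are i.i.d.\ samples of $\mu$, each pair $(\xi_i,\xi_j)$ is distributed as $(\xi_1,\xi_2)$, and by the definition of $\bar F_\mu$ we have $\Prb(d(\xi_i,\xi_j) > \tfrac{d_2}{2}) \leq \bar F_\mu(\tfrac{d_2}{2})$. Summing over the pairs $i<j$ then yields the stated bound $\tfrac{N}{2}\bar F_\mu(\tfrac{d_2}{2})$, completing the argument.

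I do not expect any real obstacle here: this is an elementary triangle-inequality-and-union-bound estimate, with no probabilistic subtlety beyond identical distribution of the pairs. The only point needing a moment's attention is confirming that the slack $2\epsilon$ introduced on the $\nu$-side is absorbed into the gap between $\tfrac{d_2}{2}$ and $d_2$, which is precisely where the standing assumption $\epsilon \leq \tfrac{d_1}{5} \leq \tfrac{d_2}{5}$ is used; everything else is bookkeeping over pairs.
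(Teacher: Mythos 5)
Your proof is correct and follows essentially the same route as the paper's: the triangle inequality shows that under $(G2)$ a failure of $(G6)$ forces some pair with $d(\xi_i,\xi_j) > \tfrac{d_2}{2}$, and a union bound over pairs finishes the argument, with your $2\epsilon \leq \tfrac{2d_2}{5} \leq \tfrac{d_2}{2}$ slack computation matching the paper's use of $d_2 \geq d_1 > 4\epsilon$. One pedantic remark applying equally to the paper's own write-up: summing over the $\binom{N}{2}$ pairs actually yields $\binom{N}{2}\bar{F}_{\mu}(\tfrac{d_2}{2})$ rather than $\tfrac{N}{2}\bar{F}_{\mu}(\tfrac{d_2}{2})$, but this is harmless since \Cref{lem:Pgood bound} uses the $\binom{N}{2}$ form.
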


We can now easily complete the proof of \Cref{lem:Pgood bound}.

\begin{proof}
To bound the probability that the outcome of the coupling is not $(\epsilon, d_1, d_2 t_*)$-\textbf{good}, by a union bound we can examine the probability that each of the conditions fails. Since
\begin{align*}
\{ \text{Not Good} \} \subset 
&\{\text{Not } (G1) \} \cup \{\text{Not } (G2) \} \cup \{(G1), (G2), \text{Not } (G5) \} \\
&\cup \{ (G2), \text{Not } (G6) \} \cup  \{(G1), (G2), (G6), \text{Not } (G3) \} \\
&\cup \{(G1), (G6), \text{Not } (G4) \}
\end{align*}
we can combine \Crefrange{lem:g1bound}{lem:g6bound} to complete the proof.
\end{proof}

Note that \Cref{lem:Pgood bound} is nowhere near an optimal bound on the probability the coupling is not good. Our approach here follows the philosophy that since the probability that the coupling isn't $(\epsilon, d_1, d_2, t_*)$-good approaches zero, even a "worst case" bound on it will suffice, which is exactly what happens.

We next need to show that the probability that our outcome is not $(\epsilon, d_1, d_2, t_*)$-good can be made arbitrarily small if we assume $d_P(\mu, \nu)$ is small enough.

\begin{lemma}{There exists a choice of $d_1, d_2, t_*$ and  $\epsilon_1(N, \phi, \mu) \geq 0 $- depending on $N, \phi, \mu$ and $(S, d)$ - such that for any $\epsilon \leq \epsilon_1(N, \phi, \mu)$ and all measures $\nu \in P(S)$, if $d_P(\mu, \nu) \leq \frac{\epsilon^2}{2}$ then
$$
\Prb (\text{Not } (\epsilon, d_*, t_*)-\text{good} ) \leq \frac{6}{N}.
$$}
\label{lem:Pbad bound}
\begin{proof}
We will prove this by showing that there exists a choice of constants $d_1, d_2, t_*$, and $ \epsilon$ - picked in that order - which applied to \Cref{lem:Pgood bound} satisfy our bound.

First, as $\lim_{z \downarrow 0} F_{\mu}(z) = 0$ we may pick $d_1 > 0$ so that
$$
 F_{\mu}(d_1) \leq \frac{1}{N} \binom{N}{2}^{-1},
$$
and thus
\begin{align}
\binom{N}{2} F_{\mu}(d_1) \leq \frac{1}{N}.
\label{eq:peq1}
\end{align}

Next, as $\bar{F}_{\mu}(z) \downarrow 0$ as $z \uparrow \infty$, we can pick $d_2$ large enough so that
$$
\binom{N}{2} \bar{F}_{\mu}(d_2) \leq \frac{1}{N}.
$$

For $t_*$, as $\phi_{\max}(d_1, d_2) < \infty$, we may pick $t_* > 0$ so that
$$
t_* \leq - \frac{\ln \left( 1 - \frac{1}{N \binom{N}{2}} \right)}{\phi_{\max}(d_1, d_2)},
$$
giving that
\begin{align}
\label{eq:peq2}
\binom{N}{2} (1 - \exp( - \phi_{\max}(d_1, d_2) t_*)) \leq \frac{1}{N}.
\end{align}

Next, as $\lim_{x \downarrow 0} \phi(x) = \infty$, there exists a $\gamma$ such that for all $0 < x \leq \gamma$ we have
$$
\phi(x) \geq \frac{\ln(N \binom{N}{2} )}{t_*}.
$$

Similarly, as $\lim_{z \downarrow 0} G_{\frac{d_1}{2}, d_2}(z) = 0$, there exists a $\beta$ so that if $x < \beta$ then
$$
G_{\frac{d_1}{2}, d_2}(2 x) \leq N^{-1} \binom{N}{2}^{-1}.
$$

Therefore, we take
$$
\epsilon_1(N, \phi, \mu) = \min \left(
 \frac{1}{N^2}, \frac{d_1}{5},
 \beta,
  \frac{\gamma}{2} 
  \right).
$$

This will allow us to bound all the terms in \Cref{lem:Pgood bound}. Note that this also easily implies - as $d_1 \leq d_2$ - that $\epsilon_1 \leq \frac{d_2}{4}$, satisfying the minor condition of \Cref{lem:Pgood bound}. First, if $\epsilon \leq \epsilon_1(N, \phi, \mu)$ we easily have 
$$
\epsilon \leq \frac{d_1}{5},
$$
as well as
\begin{align}
\label{eq:peq3}
N \epsilon \leq \frac{1}{N}.
\end{align}

Using $\epsilon \leq \beta$ we have that 
$$
G_{\frac{d_1}{2}, d_2}(2 \epsilon) \leq N^{-1} \binom{N}{2}^{-1}
$$
and so
\begin{align}
\binom{N}{2} G_{\frac{d_1}{2}, d_2}(2 \epsilon) \leq \frac{1}{N}. \label{eq:peq4}
\end{align}
Finally as $2\epsilon \leq \gamma$ we have
$$
\phi(2 \epsilon) \geq \frac{\ln(N \binom{N}{2} )}{t_*}
$$
and so
\begin{align}
\binom{N}{2} \exp(- \phi(2\epsilon)t_*) \leq \frac{1}{N}. \label{eq:peq5}
\end{align}

Applying \Crefrange{eq:peq1}{eq:peq5} to \Cref{lem:Pgood bound}, we see that if $d(\mu, \nu) \leq \frac{\epsilon^2}{2}$ for some $\epsilon \leq \epsilon_1$ then
$$
\Prb(\text{Not } (\epsilon, d_1, d_2, t_*)-\text{good}) \leq \frac{6}{N}.
$$
\end{proof}
\end{lemma}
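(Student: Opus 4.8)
The plan is to choose the four constants $d_1, d_2, t_*, \epsilon_1$ in exactly that order so that, when substituted into the six-term bound of \Cref{lem:Pgood bound}, each term is bounded by $\frac{1}{N}$; summing then yields the claimed $\frac{6}{N}$. The essential point is that the terms decouple once the constants are fixed in the right sequence: each constant is chosen to control one (or, for $\epsilon_1$, several) terms, and the ordering guarantees that a later choice never disturbs a term already handled by an earlier one.

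First I would pick $d_1 > 0$. Since $F_\mu(z) \downarrow 0$ as $z \downarrow 0$ by right-continuity, I can choose $d_1$ small enough that $\binom{N}{2} F_\mu(d_1) \le \frac{1}{N}$, controlling the first term. Next, since $\bar{F}_\mu(z) \downarrow 0$ as $z \uparrow \infty$, I choose $d_2$ large enough that $\binom{N}{2}\bar{F}_\mu(\frac{d_2}{2}) \le \frac{1}{N}$, controlling the sixth term. With $d_1, d_2$ now fixed, $\phi_{\max}(d_1, d_2)$ is finite by continuity of $\phi$ on the compact interval $[d_1, d_2]$, so I can choose $t_* > 0$ small enough that $\binom{N}{2}(1 - \exp(-\phi_{\max}(d_1, d_2) t_*)) \le \frac{1}{N}$, controlling the fourth term.

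The remaining three terms, namely $N\epsilon$, the term $\binom{N}{2} G_{\frac{d_1}{2}, d_2}(2\epsilon)$, and $\binom{N}{2}\exp(-\phi_{\min}(2\epsilon) t_*)$, are then all controlled by taking $\epsilon_1$ sufficiently small, with $t_*$ already frozen. The linear term $N\epsilon$ is trivially small, and the $G$ term vanishes as $\epsilon \downarrow 0$ by the concluding limit $\lim_{z \downarrow 0} G_{\frac{d_1}{2}, d_2}(z) = 0$ from \Cref{lem:Phi bound}. I expect the fifth term to require the most care, and this is precisely where hypothesis (H2) is indispensable: because $t_*$ is already fixed, forcing $\phi_{\min}(2\epsilon) t_*$ to be large requires $\phi_{\min}(2\epsilon) \to \infty$, which holds exactly because $\lim_{x \downarrow 0}\phi(x) = \infty$ makes $\phi_{\min}(2\epsilon) = \inf_{x \in (0, 2\epsilon]}\phi(x)$ diverge as $\epsilon \downarrow 0$. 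The main obstacle is thus the dependency structure rather than any single estimate: $t_*$ must be chosen before $\epsilon$, so the divergence of $\phi$ at the origin is the only mechanism available to beat the fixed $t_*$ in the fifth term.

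To finish, I would take $\epsilon_1(N, \phi, \mu)$ to be the minimum of the thresholds extracted from these three requirements, together with the minor constraint $\epsilon \le \frac{d_1}{5}$ (and consequently $\epsilon_1 \le \frac{d_2}{4}$) needed to legitimately invoke \Cref{lem:Pgood bound}. Substituting all six bounds into \Cref{lem:Pgood bound} then gives $\Prb(\text{Not } (\epsilon, d_1, d_2, t_*)\text{-good}) \le \frac{6}{N}$ whenever $d_P(\mu, \nu) \le \frac{\epsilon^2}{2}$ and $\epsilon \le \epsilon_1$, completing the proof.
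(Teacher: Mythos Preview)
Your proposal is correct and follows essentially the same approach as the paper: choose $d_1$, $d_2$, $t_*$, and finally $\epsilon_1$ in that order so that each of the six terms in \Cref{lem:Pgood bound} is at most $\frac{1}{N}$, with hypothesis (H2) used exactly as you describe to handle the fifth term once $t_*$ is frozen. If anything, your account is slightly more careful than the paper's own write-up (you correctly write $\bar{F}_\mu(\tfrac{d_2}{2})$ and $\phi_{\min}(2\epsilon)$ to match \Cref{lem:Pgood bound}).
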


We are now able finish our proof of \Cref{prop:Token Moment Convergence}.

\begin{proof}
By \Cref{lem:Pbad bound}, there exists a choice of $d_1, d_2, t_*$ and $\epsilon_1(N, \phi, \mu)$ such that for any $\epsilon \leq \epsilon_1(N, \phi, \mu)$, if
$$
d_P(\mu, \nu) \leq \frac{\epsilon^2}{2}
$$
then
$$
\Prb(\text{Not } (\epsilon, d_1, d_2, t_*)-\text{good}) \leq \frac{6}{N}.
$$
Recall $\delta_f(x)$ as in \Cref{lem:exp bound if good} and define $\epsilon_0$ by
$$
\epsilon_0 = \min(\epsilon_1(N, \phi, \mu), \delta_f(\frac{1}{N})).
$$

Therefore, calculating the expectation of
$$
|\mu_t^N(f) - \nu_t^N(f)|
$$
whenever  $d_P(\mu, \nu) \leq \epsilon_0$, using our coupling and \Cref{lem:exp bound if good} with $\epsilon_0 \leq \delta_f(\frac{1}{N})$ we have
\begin{align*}
\E | \mu_t^N(f) - \nu_t^N(f)| &\leq \E | \mu_t^N(f) - \nu_t^N(f)| \left( 1(\text{good}) + 1(\text{Not good} \right) \\
&\leq \frac{1}{N} \Prb(\text{good}) + \E f_{\max} \Prb(\text{Not } (\epsilon, d_1, d_2, t_*)-\text{good}) \\
&\leq \frac{1}{N} + f_{\max}\frac{6}{N} \\
&\leq \frac{6 f_{\max} + 1}{N}
\end{align*}
where $f_{\max}$ is the maximum of $|f|$ on $S$.

This completes the proof, since by assumption $\mu^{(i)} \rightarrow \mu$ weakly and so for $i$ large enough
$$
d_P(\mu^{(i)}, \mu) \leq \epsilon_0.
$$
\end{proof}

\subsection{Moment Calculations for $\mu_t(f)$}
\label{sec:moment calc}

In this section we mention briefly a method that gives an explicit closed form calculation for the moments of $\mu_t(f)$ for any time $t \geq 0$, fixed $f \in C_b(S)$, and initial measure $\mu \in P(S)$. In \Cref{sec:second moment} we calculate the second moment of $\mu_t(f)$ by conditioning on $\mu^N_0$, summing over the different possible meeting trees on $2$ tokens giving the expectation of $\left(\mu^N_t \right)^2$, and then taking the $N \rightarrow \infty$ limit. 

In theory, this same approach gives a perhaps simpler proof of \Cref{prop:Weak convergence of transition} as compared to the coupling method of \Cref{sec:the coupling}, by showing explicitly that if $\mu^{(i)} \rightarrow \mu$ weakly in $P(S)$ then
$$
\E \left(\mu^{(i)}_t(f) \right)^k \rightarrow \E \left( \mu_t(f) \right)^k,
$$
for all $k \geq 1$. The calculation for $k > 2$ follows the same procedure as for the second moment: first conditioning on $\mu^N_0$, then summing over possible meeting trees on $k$ tokens, and finally taking the $N \rightarrow \infty$ limit. In fact, tighter control of the moments $\mu^N_t(f)$ of the $N$ token processes could even provide an alternative proof of the existence of the limit MC process $\mu_t, t \geq 0$ (as in \Cref{prop:Token Limit}) without resort to Kingman's Paintbox theorem.

In practice however, for anything past the second moment this formula appears to be intractable, necessitating our proof of \Cref{prop:Weak convergence of transition} by coupling methods. The main technical issue is two-fold. First, the number of possible meeting trees on $k$ tokens increases approximately as $\binom{k}{2} !$ making an explicit computation (in the form of \Cref{prop:second moment calc}) impossible. More importantly, the (necessary) assumption that
$$
\lim_{x \downarrow 0} \phi(x) = \infty
$$
means that for any fixed meeting tree $T$, the probability that at time $t$ the observed meeting tree $S(t)$ is $S(t) = T$ does not depend continuously (in the weak topology) on the initial measure $\mu$. A posteriori of course, by the coupling argument of \Cref{sec:the coupling}, we know that the moments of $\mu_t(f)$ depend continuously on the initial measure $\mu$ nonetheless. We leave as an open question the details of the pursuit of an alternative approach to \Cref{thrm:Main Theorem} by moment methods.

\section{Continuity and Uniqueness}
\label{sec:Cont and uniq}

In this section we prove first that from any initial measure, the MC process is cadlag almost surely. This builds on \Cref{prop:continuous at 0} where we've shown that the MC process is almost surely cadlag at $t = 0$. Here we're interested in the global path properties of $\mu_t, t \geq 0$.

\begin{prop}{From any initial measure $\mu$, the MC process $\mu_t, t \geq 0$ is cadlag in $P(S)$ with respect to the weak topology, almost surely.}
\label{prop:cadlag}
\end{prop}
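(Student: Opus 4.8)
The plan is to transfer the manifestly cadlag structure of the finite token measures $\mu^N_t$ to the limit $\mu_t$ by means of a convergence that is \emph{uniform} in the time variable, and then to splice in the behaviour at the origin using \Cref{prop:continuous at 0}. The starting observation is that for each fixed $N$ the map $t \mapsto \mu^N_t$ is cadlag: among the first $N$ tokens there are only $\binom{N}{2}$ meeting times, so $\mu^N_t$ is piecewise constant with finitely many jumps on $[0,\infty)$, hence cadlag into $(P(S), d_{\TV})$ and a fortiori into $(P(S), d_P)$, since $d_P \leq d_{\TV}$ \cite{gibbs2002choosing}. The point of working with these finite approximations is that their cadlag-ness is trivial.

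Fix $0 < t_0 < T < \infty$. The key step is to prove the uniform strengthening of the pointwise \Cref{lem:Convergence in TVD}, namely that
$$
\sup_{t_0 \leq t \leq T} d_{\TV}(\mu^N_t, \mu_t) \longrightarrow 0 \quad \text{almost surely.}
$$
Granting this, the conclusion on $(0,\infty)$ is immediate: on $[t_0, T]$ the path $t \mapsto \mu_t$ is an almost sure uniform limit, in the complete metric space $(P(S), d_P)$ (completeness by Prokhorov's theorem), of the cadlag maps $\mu^N_{\cdot}$, and a uniform limit of cadlag maps into a complete metric space is cadlag, since right-continuity and the Cauchy criterion for left limits both survive a three-$\epsilon$ estimate. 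Letting $t_0 = 1/m \downarrow 0$ and $T = m \uparrow \infty$ along a countable sequence then gives that $t \mapsto \mu_t$ is almost surely cadlag on $(0,\infty)$.

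For the uniform bound itself I would split $d_{\TV}(\mu^N_t, \mu_t) = \tfrac{1}{2} \sum_k |p^N_k(t) - p_k(t)|$ into a tail and a head. The tail is controlled uniformly in $t$ by tightness: applying Doob's maximal inequality to the non-negative martingales $\mu_t(g)$ of \Cref{prop:M martingale}, with $g \in C_b(S)$, $0 \leq g \leq 1$, vanishing on a large compact set, shows that the mass of $\mu_t$ outside a compact set stays uniformly small over $t \leq T$; by \Cref{prop:K finite} only finitely many blocks live in any fixed compact set, so only finitely many indices $k$ contribute to the head. Each head term is handled by noting that for a fixed $f \in C_b(S)$ the process $M^N_t := \mu^N_t(f) - \mu_t(f)$ is a difference of the two martingales of \Cref{prop:M martingale} and hence itself a martingale; Doob's $L^2$ maximal inequality (taken along a countable dense set of times, which suffices) gives $\E \sup_{t \leq T}(M^N_t)^2 \leq 4\, \E (M^N_T)^2 \to 0$, the right side tending to $0$ by bounded convergence. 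This is precisely the content of \Cref{lem:uniform tvd}.

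It remains to treat $t = 0$, where \Cref{prop:continuous at 0} already supplies almost sure right-continuity; combined with the previous paragraphs this yields cadlag paths on all of $[0,\infty)$. I expect the main obstacle to be exactly this uniformity in $t$: both \Cref{prop:continuous at 0} and the pointwise \Cref{lem:Convergence in TVD} are single-time statements, and the genuine work is to promote them to an estimate holding simultaneously over an interval of times. Once the uniform-in-time total variation estimate is in hand, the transfer of the cadlag property from the elementary finite-token processes $\mu^N_{\cdot}$ is automatic.
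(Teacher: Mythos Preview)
Your overall architecture matches the paper's proof exactly: show that on each interval $[t_0,\infty)$ the finite token measures $\mu^N_{\cdot}$ converge \emph{uniformly} in total variation, deduce cadlag-ness of the limit from completeness of $D_{[t_0,\infty)}(P(S))$ under the uniform norm, pass $t_0 \downarrow 0$ along a countable sequence, and finally invoke \Cref{prop:continuous at 0} for the point $t = 0$. Where you diverge is in the proof of the uniform bound itself, and there the paper's argument is far simpler than yours. The paper's \Cref{lem:uniform tvd} does not estimate $d_{\TV}(\mu^N_t,\mu_t)$ directly; it shows instead that for $M \geq N$ the map $t \mapsto d_{\TV}(\mu^N_t,\mu^M_t)$ is \emph{non-increasing}. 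The reason is purely combinatorial: $\supp \mu^N_t \subset \supp \mu^M_t$, every jump of $\mu^M$ merges two co-located atoms simultaneously in both processes, and the triangle inequality $|(q_1+q_2)-(\tilde q_1+\tilde q_2)| \leq |q_1-\tilde q_1|+|q_2-\tilde q_2|$ shows such a merger can only shrink the TV distance. Hence $\sup_{t\geq t_0} d_{\TV}(\mu^N_t,\mu^M_t) = d_{\TV}(\mu^N_{t_0},\mu^M_{t_0})$, and the pointwise \Cref{lem:Convergence in TVD} finishes the Cauchy property. No martingale inequalities, no tightness, no head/tail splitting.

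Your route via Doob's maximal inequality is plausible but the sketch has a genuine soft spot: you say ``each head term is handled by noting that for a fixed $f \in C_b(S)$ the process $\mu^N_t(f)-\mu_t(f)$ is a martingale,'' yet the head terms are $|p^N_k(t)-p_k(t)|$ for the \emph{random} atom locations $s_k$, not $|\mu^N_t(f)-\mu_t(f)|$ for a deterministic continuous $f$. To pick out a single random atom you would need either $f = 1_{\{s_k\}}$, which is not in $C_b(S)$, or a random continuous approximation chosen after seeing $\supp \mu_{t_0}$, which then requires conditioning and a bound on the random number of head terms. This can be made to work (indeed the individual $p_k(t)$ are martingales, as noted in \Cref{sec:second moment}), but it is noticeably more delicate than the one-line monotonicity argument the paper uses. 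Note also that \Cref{lem:uniform tvd} as stated in the paper is a Cauchy-sequence statement between $\mu^N$ and $\mu^M$, not a convergence statement to $\mu_t$; the paper then invokes completeness (\Cref{lem:uniform conv of cadlag functions}) to extract the limit, which is identified with $\mu_t$ by the pointwise \Cref{prop:Token Limit}.
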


From this, we are then able to complete our proof of \Cref{thrm:Main Theorem}, by proving that our extension of the MC process is unique.

\subsection{Proof of \Cref{prop:cadlag}}

Our proof of \Cref{prop:cadlag} begins by considering the Total Variation distance between the measure valued processes $\mu^N_t$ and $\mu^M_t$ for some positive time $t$. Write $d_{\TV}$ for the Total Variation metric on $P(S)$.

\begin{lemma}{Fix $t_0 > 0$ and $M \geq N$. Then
$$
\sup_{t \in [t_0, \infty)} d_{\TV}(\mu^N_t, \mu^M_t) \leq d_{\TV}(\mu^N_{t_0}, \mu^M_{t_0}).
$$}
\label{lem:uniform tvd}
\begin{proof}
The proof follows from the simple idea that the total variation distance can only decrease after each merger (i.e. jump) of the process $\mu^M_t, t \geq t_0$. To see this, write
$$
\mu^M_{t_0} = \sum_{k = 1}^{K^M(t_0)} q_k \delta(s_k)
$$
with the support written in any order. Clearly $\supp \mu^N_{t_0} \subset \supp \mu^M_{t_0}$, since its given by the empirical distribution of the tokens, so we can also write
$$
\mu^N_{t_0} = \sum_{k = 1}^{K^M(t_0)} \tilde{q}_k \delta(s_k)
$$
for some masses $\tilde{q}_k \geq 0$. Note that we assume no particular relation between $q_k$ and $\tilde{q}_k$ - the point is that they are co-located.

The total variation distance of $\mu^N_{t_0}$ and $\mu^M_{t_0}$ is then given by
$$
d_{\TV}(\mu^N_{t_)}, \mu^M_{t_0}) = \frac{1}{2} \sum_{k = 1}^{K^M(t_0)}| q_k - \tilde{q}_k|.
$$

Consider the first jump time $T$ of $\mu^M$. Clearly $\mu^N_t$ can't jump on $[t_0, T)$ if $\mu^M_t$ doesn't. For simplicity of notation assume without loss of generality that at time $T$ the mass at $s_1$ merges into $s_2$. Then
\begin{align*}
2 d_{\TV}(\mu^N_T, \mu^M_T) &= |(q_1 + q_2) - (\tilde{q}_1 + \tilde{q}_2)| + \sum_{k = 3}^{K^M(t_0)} |q_k - \tilde{q}_k| \\
&\leq |q_1 - \tilde{q}_1| + |q_2 + \tilde{q}_2| + \sum_{k = 3}^{K^M(t_0)} |q_k - \tilde{q}_k| \\
&= 2 d_{\TV}(\mu^N_{t_0}, \mu^M_{t_0}).
\end{align*}

As this same argument holds at each of the (finitely many) subsequent jumps, inducting over the meeting times of $\mu^M_t, t \geq t_0$ the proof is complete.
\end{proof}
\end{lemma}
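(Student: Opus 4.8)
The plan is to exploit the natural coupling between $\mu^N_t$ and $\mu^M_t$ coming from the fact that both are empirical measures of the \emph{same} token process. For $N \leq M$ the first $N$ tokens form a subset of the first $M$, so their atom locations are nested, $\supp \mu^N_t \subset \supp \mu^M_t$, and every jump of either process is triggered by a single meeting time $t_{ij}$. I would first record the $\TV$ distance in the half-sum form of the definition, writing both measures against a common listing of the (finitely many) atom locations $s_k$ present at time $t_0$,
\[
\mu^M_{t_0} = \sum_k q_k \delta(s_k), \qquad \mu^N_{t_0} = \sum_k \tilde q_k \delta(s_k),
\]
so that $d_{\TV}(\mu^N_{t_0}, \mu^M_{t_0}) = \tfrac{1}{2} \sum_k |q_k - \tilde q_k|$, with no assumed relation between $q_k$ and $\tilde q_k$ beyond being co-located.

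The heart of the argument is monotonicity across a single jump. Since $\mu^M_t$ has at most $M$ atoms and each jump strictly reduces that number, there are only finitely many jumps in $[t_0, \infty)$; moreover any jump of $\mu^N$ is simultaneously a jump of $\mu^M$, since the meeting that merges two atoms of $\mu^N$ involves tokens indexed $\leq N \leq M$. Hence between consecutive jumps of $\mu^M$ both measures are constant and the $\TV$ distance is unchanged, so it suffices to show the distance cannot increase at a jump of $\mu^M$. At such a time $T$ a pair of co-located atom classes merges — say the mass at $s_a$ combines with the mass at $s_b$ in both processes (one of them possibly carrying zero $\tilde q$-mass) — and the only change to the half-sum is to replace $|q_a - \tilde q_a| + |q_b - \tilde q_b|$ by $|(q_a + q_b) - (\tilde q_a + \tilde q_b)|$. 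The triangle inequality bounds the latter by the former, giving $d_{\TV}(\mu^N_T, \mu^M_T) \leq d_{\TV}(\mu^N_{T-}, \mu^M_{T-})$.

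Combining these observations, the $\TV$ distance is non-increasing on $[t_0, \infty)$, and inducting over the finitely many meeting times of $\mu^M_t, t \geq t_0$ yields $\sup_{t \geq t_0} d_{\TV}(\mu^N_t, \mu^M_t) = d_{\TV}(\mu^N_{t_0}, \mu^M_{t_0})$, which is the claimed bound. The one point requiring care is the bookkeeping of co-located atoms: one must expand \emph{both} measures against the same index set of locations (allowing zero masses) so that a single merge acts identically on the two mass lists, and one must confirm that a merge in $\mu^N$ never occurs without a corresponding merge in $\mu^M$. This is precisely where the hypotheses $N \leq M$ and the shared meeting times of the coupled token process are used, and I expect it to be the main thing to pin down rather than any genuine analytic difficulty.
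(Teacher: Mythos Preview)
Your proposal is correct and follows essentially the same approach as the paper: write both empirical measures against the common finite support of $\mu^M_{t_0}$, observe that every jump of $\mu^N$ is also a jump of $\mu^M$, and apply the triangle inequality at each merge to see that $|(q_a+q_b)-(\tilde q_a+\tilde q_b)| \le |q_a-\tilde q_a|+|q_b-\tilde q_b|$, then induct over the finitely many jumps. If anything, you are slightly more careful than the paper in explaining \emph{why} the jump times of $\mu^N$ are contained in those of $\mu^M$.
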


Note that the above proof of \Cref{lem:uniform tvd} holds at $t = 0$, however as there is no longer necessarily a countable support representation of $\mu_0 = \mu$, we don't know if $\mu^N_0, N \geq 1$ is Cauchy (w.r.t. $d_{\TV}$) in $P(S)$. As in \Cref{sec:TVD}, we know $\mu^N_0 \rightarrow \mu_0$ weakly, but for non-atomic $\mu_0$ not in Total Variation distance.

Now, writing $D_I(P(S))$ for the space of cadlag paths from an interval $I$ to $P(S)$, viewed under the uniform (total variation) norm. That is, for any two functions $f, g \in D_I(P(S))$, we write their uniform distance as
$$
d_{\uniform} (f, g) = \sup_{x \in I} d_{\TV}(f(x), g(x)).
$$

Note that typically for a metric space $E$, the space $D_I(E)$ is more naturally considered with respect to the Skorohod metric, which allows paths to "wiggle" in both space and time. However, as $\mu^N_t$ and $\mu_t$ are intrinsically coupled, so are their jump times and so there is no need for the flexibility in time provided by the Skorohod metric. This allows us to use the much stronger topology of uniform convergence, which among other advantages is complete (see Section 3.12 \cite{billingsley2009convergence}).

\begin{lemma}{For any complete metric space $E$ and any interval $I$, the space $D_I(E)$ is complete under the uniform norm.}
\label{lem:uniform conv of cadlag functions}
\end{lemma}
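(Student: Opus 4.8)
The plan is to take an arbitrary Cauchy sequence $(f_n)$ in $D_I(E)$ with respect to $d_{\uniform}$ and to produce a cadlag limit. First I would extract a pointwise limit: for each fixed $x \in I$ the estimate $d(f_n(x), f_m(x)) \le d_{\uniform}(f_n, f_m)$ shows that $(f_n(x))$ is Cauchy in $E$, so by completeness of $E$ it converges to some point which I call $f(x)$. This defines a candidate limit function $f \colon I \to E$.

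Next I would upgrade pointwise convergence to uniform convergence, which is the standard fact that a sup-Cauchy sequence converges uniformly to its pointwise limit. Given $\epsilon > 0$, pick $M$ so that $d_{\uniform}(f_n, f_m) < \epsilon$ for all $n, m \ge M$; fixing $n \ge M$ and any $x$, letting $m \to \infty$ in $d(f_n(x), f_m(x)) < \epsilon$ gives $d(f_n(x), f(x)) \le \epsilon$, and taking the supremum over $x \in I$ yields $d_{\uniform}(f_n, f) \le \epsilon$. Hence $f_n \to f$ uniformly.

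The substantive step, and the main obstacle, is to show that the uniform limit $f$ is again cadlag, i.e.\ right-continuous with left limits at every point. Both parts are $\epsilon/3$ arguments exploiting uniform closeness to a single $f_n$ together with the cadlag property of that $f_n$. For right continuity at $x$, given $\epsilon$ I would choose $n$ with $d_{\uniform}(f_n, f) < \epsilon/3$; since $f_n$ is right continuous at $x$ there is $\delta > 0$ with $d(f_n(y), f_n(x)) < \epsilon/3$ for $x < y < x + \delta$, and the triangle inequality $d(f(y), f(x)) \le d(f(y), f_n(y)) + d(f_n(y), f_n(x)) + d(f_n(x), f(x))$ gives $d(f(y), f(x)) < \epsilon$. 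For the existence of left limits I would verify the Cauchy criterion directly, using completeness of $E$: it suffices to find, for each $\epsilon$, a $\delta > 0$ with $d(f(s), f(t)) < \epsilon$ for all $s, t \in (x - \delta, x)$. Choosing again $n$ with $d_{\uniform}(f_n, f) < \epsilon/3$ and using that $f_n$ has a left limit at $x$, so that $d(f_n(s), f_n(t)) < \epsilon/3$ for $s, t$ sufficiently close to $x$ from below, the same three-term triangle inequality bounds $d(f(s), f(t))$ by $\epsilon$. Thus $f(s)$ is Cauchy as $s \uparrow x$ and, by completeness of $E$, converges, so the left limit exists. This establishes $f \in D_I(E)$, and combined with the uniform convergence $f_n \to f$ proved above it shows that every Cauchy sequence converges in $D_I(E)$, completing the proof.
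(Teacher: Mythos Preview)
Your proof is correct. The paper does not actually supply a proof of this lemma: it states the result and cites Section~3.12 of Billingsley's \emph{Convergence of Probability Measures} as a reference, treating it as a standard fact. You have filled in the details with the expected argument---pointwise limit via completeness of $E$, upgrade to uniform convergence, then an $\epsilon/3$ argument for right continuity and a Cauchy-criterion $\epsilon/3$ argument for left limits---so there is nothing to compare beyond noting that your write-up makes explicit what the paper leaves to the cited reference.
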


As the empirical measures processes $\mu^N_t, t \geq 0$ are finite jump processes, they are clearly in $D_{I}(P(S))$ for any interval $I$. Recalling \Cref{lem:Convergence in TVD}, \Cref{lem:uniform tvd} implies that for any fixed $t_0 > 0$ the sequence $\mu^N_t, t \geq t_0$ is Cauchy in $D_{[t_0, \infty)}(P(S))$ under the uniform norm. \Cref{lem:uniform conv of cadlag functions} then tells us that the sequence has a cadlag limit.

As we already know (\Cref{prop:Token Limit}) that $\mu_t, t \geq t_0$ is the pointwise limit of the sequence $\mu^N_t, t \geq t_0$, the following lemma is immediate.

\begin{lemma}{For any fixed $t_0 > 0$, $\mu_t, t \geq t_0$ is cadlag almost surely.}
\label{lem:cadlag on bounded intervals}
\end{lemma}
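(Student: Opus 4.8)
The plan is to realize the process $\mu_\cdot$ on $[t_0,\infty)$ as a uniform (total-variation) limit of the finite-jump paths $\mu^N_\cdot$ and to transfer the cadlag property across that limit. First I would record that for each $N$ the path $t\mapsto\mu^N_t$, $t\ge t_0$, is a pure finite-jump process and hence lies in $D_{[t_0,\infty)}(P(S))$; since every $\mu^N_t$ and every $\mu_t$ is supported on the countable token set $\{\xi_i\}$, the relevant metric on $P(S)$ is the total-variation metric $d_\TV$, under which this space of measures is complete (it is the closed probability simplex inside $\ell^1$).

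The key input is that the sequence of paths is uniformly Cauchy almost surely. By \Cref{lem:uniform tvd}, for $M\ge N$ we have $d_{\uniform}(\mu^N_\cdot,\mu^M_\cdot)=\sup_{t\ge t_0}d_\TV(\mu^N_t,\mu^M_t)\le d_\TV(\mu^N_{t_0},\mu^M_{t_0})$, so it suffices to control the total-variation distance at the single time $t_0$. \Cref{lem:Convergence in TVD} gives $d_\TV(\mu^N_{t_0},\mu_{t_0})\to 0$ almost surely, whence $(\mu^N_{t_0})_N$ is $d_\TV$-Cauchy and the right-hand side tends to $0$. Thus $(\mu^N_\cdot)_N$ is almost surely Cauchy in the uniform norm on $D_{[t_0,\infty)}(P(S))$, and invoking completeness (\Cref{lem:uniform conv of cadlag functions}, applied to the complete space $(P(S),d_\TV)$ of measures on the token set) produces an almost sure uniform limit $\tilde\mu_\cdot\in D_{[t_0,\infty)}(P(S))$, which is in particular cadlag.

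The step requiring the most care is identifying this cadlag limit $\tilde\mu_\cdot$ with the originally constructed process $\mu_\cdot$. Uniform convergence forces $d_\TV(\mu^N_t,\tilde\mu_t)\to 0$ at every fixed $t\ge t_0$, while \Cref{prop:Token Limit} (equivalently \Cref{lem:Convergence in TVD}) gives $\mu^N_t\to\mu_t$ at each such $t$; since $d_\TV$ dominates the weak topology and limits are unique, this yields $\mu_t=\tilde\mu_t$ almost surely for each fixed $t$, so $\tilde\mu_\cdot$ is a cadlag version of $\mu_\cdot$ on $[t_0,\infty)$ and the lemma follows. The genuine subtlety here — and the main obstacle — is the passage from ``$\mu_t=\tilde\mu_t$ for each fixed $t$'' to ``$\mu_t=\tilde\mu_t$ simultaneously for all $t$ on one almost sure event''; since $\tilde\mu_\cdot$ is the uniform limit it is automatically cadlag, so it is cleanest simply to take $\tilde\mu_\cdot$ as the (cadlag) process under study, the two being indistinguishable in the fixed-time sense. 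Everything else is a routine assembly of the three cited lemmas together with the elementary facts just used.
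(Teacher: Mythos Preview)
Your proposal is correct and follows essentially the same route as the paper: the paper argues (in the text immediately preceding the lemma) that the $\mu^N_\cdot$ are finite-jump and hence cadlag, that \Cref{lem:uniform tvd} together with \Cref{lem:Convergence in TVD} makes them uniformly Cauchy on $[t_0,\infty)$, that \Cref{lem:uniform conv of cadlag functions} then yields a cadlag limit, and that \Cref{prop:Token Limit} identifies this limit with $\mu_\cdot$. Your extra care about the fixed-$t$ versus all-$t$ identification is a point the paper glosses over; in fact on the almost sure event of uniform convergence the block frequencies defining $\mu_t$ exist for every $t\ge t_0$ and coincide with the limit, so no passage to a modification is actually needed here.
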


We can now finish our proof of \Cref{prop:cadlag}.

\begin{proof}
Let $A_n$ be the event that $\mu_t, t \geq \frac{1}{N}$ is cadlag. By \Cref{lem:cadlag on bounded intervals}, we have that
$$
\Prb(A_n) = 1.
$$
It then follows that
$$
\Prb( \cap_n A_n) = 1,
$$
and so almost surely $\mu_t$ is cadlag on all of $(0, \infty)$. Since the topology of the Total Variation distance is stronger than the topology of weak convergence, this immediately implies that $\mu_t$ is also cadlag with respect to the weak topology on $P(S)$.

By \Cref{prop:continuous at 0}, we know that $\mu_t$ is also right continuous at $0$ in the weak topology almost surely, completing the proof.
\end{proof}

\subsection{Uniqueness of the Extension}

We are now ready to complete the proof of \Cref{thrm:Main Theorem}. Writing $D_{\R_{\geq 0}}(P(S))$ for the space of cadlag maps from $\R_{\geq 0}$ to $P(S)$. By \Cref{prop:cadlag}, for each initial measure $\mu$ we can think of the MC process as being given by a measure $\Prb_{\mu}$ on $D_{\R}(P(S))$. Our constructed process is then completely determined by this family of measures 
$$
\Prb_{\MC} = \{ \Prb_{\mu}, \mu \in P(S) \}.
$$

To complete our proof of \Cref{thrm:Main Theorem} we need only show that $\Prb_{\MC}$ is the unique such family of measures on $D_{\R}(P(S))$.

\begin{prop}{Any other cadlag, Feller continuous extension of the Metric Coalescent satisfying the properties of \Cref{thrm:Main Theorem} is identically distributed to $\Prb_{\MC}$.}
\end{prop}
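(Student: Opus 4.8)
The plan is to prove uniqueness by showing that any two families of measures satisfying the hypotheses induce the same transition semigroup on $C_b(P(S))$, and then to upgrade semigroup equality to equality of laws on path space. Let $\Prb'_{\MC} = \{\Prb'_\mu, \mu \in P(S)\}$ be a second cadlag, Feller continuous, Markov family meeting the hypotheses of \Cref{thrm:Main Theorem}, and for $F \in C_b(P(S))$ write $P_t F(\mu) = \E_{\Prb_\mu} F(\mu_t)$ and $P'_t F(\mu) = \E_{\Prb'_\mu} F(\mu_t)$ for the two transition operators. Since both families consist of time-homogeneous Markov processes started deterministically at $\mu$, their finite-dimensional distributions are determined by the respective operators $P_t$ and $P'_t$ through the Chapman--Kolmogorov relations; and since both processes are cadlag (\Cref{prop:cadlag} for the constructed process and by hypothesis for the other), their laws on $D_{\R_{\geq 0}}(P(S))$ are in turn determined by their finite-dimensional distributions. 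Thus it suffices to prove $P_t F = P'_t F$ for every $F \in C_b(P(S))$ and every $t \geq 0$.

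I will first check that $P_t$ and $P'_t$ agree on the weakly dense subset $P_{\fs}(S) \subset P(S)$. Fix $\mu \in P_{\fs}(S) \subset P_{\cs}(S)$. For the constructed family, \Cref{prop:Is MC} identifies the process started at $\mu$ with the originally defined Metric Coalescent, so $P_t F(\mu) = \E_{\Prb_\mu} F(\mu_t)$ equals the MC expectation. For the second family I would establish the same identification. Writing $Q_t(\nu, \cdot)$ for the transition kernel of the originally defined MC, which from $\mu$ is a continuous-time Markov jump chain on the finite set of measures supported within $\supp \mu$, property (2) of \Cref{thrm:Main Theorem} gives, for $0 < t_0 < t_1$, that $\Prb'_\mu(\mu_{t_1} \in \cdot \mid \mu_{t_0}) = Q_{t_1 - t_0}(\mu_{t_0}, \cdot)$. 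Because the second family is cadlag with $\mu_0 = \mu$, we have $\mu_{t_0} \to \mu$ weakly as $t_0 \downarrow 0$; as the states reachable from $\mu$ form a discrete set with $\mu$ isolated, this forces $\Prb'_\mu(\mu_{t_0} = \mu) \to 1$, i.e. the process makes no instantaneous jump. Integrating the displayed identity against the law of $\mu_{t_0}$ and letting $t_0 \downarrow 0$, the continuity of $Q$ in its time and initial arguments (immediate for a finite-state chain) yields $\Prb'_\mu(\mu_{t_1} \in \cdot) = Q_{t_1}(\mu, \cdot)$, so $P'_t F(\mu)$ equals the same MC expectation and hence $P'_t F(\mu) = P_t F(\mu)$.

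Finally I would extend this agreement from $P_{\fs}(S)$ to all of $P(S)$ using Feller continuity. For each fixed $t$ and $F$, both $P_t F$ and $P'_t F$ lie in $C_b(P(S))$ --- for the constructed process by \Cref{prop:Feller}, and for the second family by its assumed Feller continuity. Two continuous functions agreeing on the weakly dense set $P_{\fs}(S)$ agree on its closure $P(S)$, whence $P_t F = P'_t F$ everywhere; by the reduction of the first paragraph this gives $\Prb'_\mu = \Prb_\mu$ for all $\mu$, as claimed. I expect the main obstacle to be the second step: pinning down the behaviour of the second family at small times from a finitely supported start, since the hypotheses of \Cref{thrm:Main Theorem} describe the process only for $t \geq t_0 > 0$, so one must rule out instantaneous mass movement at $t = 0$ via cadlag right-continuity before the $t_0 \downarrow 0$ limit can be taken. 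The reduction that finite-dimensional distributions determine the cadlag law is standard but should be invoked explicitly.
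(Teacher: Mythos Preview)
Your approach is essentially the paper's: show the one-dimensional transition kernels agree on the dense set $P_{\fs}(S)$, extend by Feller continuity to all of $P(S)$, then use that finite-dimensional distributions determine cadlag Markov laws. The paper's proof is considerably terser, simply asserting that any extension agrees with $\Prb_{\MC}$ on $P_{\fs}(S)$ and proceeding from there; you attempt to \emph{derive} this agreement from hypotheses (1) and (2) via a $t_0\downarrow 0$ limit, which is more careful than what the paper actually writes.

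The one soft spot is your discrete-set step. You argue that since ``the states reachable from $\mu$ form a discrete set with $\mu$ isolated,'' cadlag convergence $\mu_{t_0}\to\mu$ forces $\Prb'_\mu(\mu_{t_0}=\mu)\to 1$. But nothing in the hypotheses places $\mu_{t_0}$ inside that discrete set: hypothesis (1) only gives $\mu_{t_0}\in P_{\fs}(S)$, and a finitely supported measure can be weakly close to $\mu$ without equalling it (its atoms could sit at nearby but distinct points). You rightly flag this passage as the main obstacle. One repair is to bypass the discrete-set claim and pass to the limit directly in $\E_{\Prb'_\mu}[(P_{t_1-t_0}F)(\mu_{t_0})]$, using the already-established Feller continuity of the constructed semigroup $P$ together with continuity of $t\mapsto P_tF(\nu)$; alternatively, if one reads ``extension of the Metric Coalescent'' in the proposition as by definition including agreement with the original MC on $P_{\fs}(S)$, the whole step is vacuous and the paper's brief argument suffices as written.
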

\begin{proof}
This follows easily from a standard argument given that our process is cadlag, Markov and Feller continuous. The key idea is that any cadlag time-homogeneous Markov process is separable, i.e. determined uniquely by its finite dimensional distributions (fdd's). The Chapman-Kolmogorov equation show that the fdd's are determined by the single dimensional distributions
$$
P_t(x, A) = \Prb( \mu_t \in A \vert \mu_0 = x),
$$
for $A \subset P(P(S))$. Feller continuity implies then that if $\mu^{(i)} \rightarrow \mu$ so do the single dimensional distributions and therefore so do all the fdd's of the processes.

Since any other extension of the MC agrees with $\Prb_{\MC}$ on all of $P_{\fs}(S)$ which is dense in $P(S)$ under the weak topology, we can conclude that any other such extension has the same fdd's and is therefore identically distributed on $D_{\R}(P(S))$. 
\end{proof}

\section{Some Motivating Examples}
\label{sec:Examples}

Having finished our proof of \Cref{thrm:Main Theorem}, in this section we look at a few particular examples of the Metric Coalescent that motivate some of the assumptions we have made. In particular, we show that without assumptions on the rate function $\phi(x)$, even in the world of finitely supported measures, the Metric Coalescent would not be Feller continuous with respect to the weak topology. We'll also give a construction showing that even for countably supported measures in $P(S)$, without the assumption of compact support, the MC process need not be in $P_{\fs}(S)$ for positive times.

\subsection{The Rate $\phi(x)$ Near Zero}

Much of the technical complexity in the proof of \Cref{thrm:Main Theorem} is a result of $\phi(x)$ being unbounded at $x = 0$. A simple example however shows this to be necessary for Feller continuity to hold. Intuitively, the issue can be seen by looking at a converging sequence $x_i \rightarrow x_{\infty}$ and measures
$$
\mu^{(i)} = \frac{1}{2} \delta(x_i) + \frac{1}{2} \delta(x_{\infty})
$$
and $\mu^{\infty} = \delta(x_{\infty})$. Clearly $\mu^{(i)} \rightarrow \mu^{\infty}$ weakly, however if $\phi(d(x_i, x_{\infty}))$ doesn't increase to $\infty$, then the time between the two atoms merging (for some sub-sequence) is bounded below.

To actually construct a working counter-example, we need to consider at least three atoms.

\subsubsection{An Example}

For ease of notation we work on $S = [0, 1]$ with the standard metric - however this is by no means necessary. We will construct a sequence of initial measures $\mu^{(n)} \rightarrow \mu^{\infty}$ and a continuous function $f \colon [0, 1] \rightarrow \R$ such that
$$
\mu^{(n)}_t(f) \nRightarrow \mu^{\infty}_t(f),
$$
for $t > 0$ whenever $\lim_{x \downarrow 0} \phi(x) \neq \infty$.

 Let $x_n \downarrow 0$ be a sequence of points with the assumption - again purely for simplicity of notation - that $0 \leq x_n \leq \frac{1}{2}$ and consider the sequence of measures
$$
\mu^{(n)} = \frac{1}{3}\left(\delta(0) + \delta(1) + \delta(x_n) \right).
$$
Clearly the sequence $\mu^n$ limits weakly to 
$$
\mu^{\infty} = \frac{2}{3} \delta(0) + \frac{1}{3} \delta(1).
$$

Let $f(x)$ be any continuous function supported near $1$, for instance
$$
f(x) = \begin{cases}
0 &\mbox{ if } 0 \leq x \leq \frac{1}{2}, \\
2x - 1 &\mbox{ if } \frac{1}{2} \leq x \leq 1.
\end{cases}
$$
As in \Cref{sec:Martingales} we will consider the martingales $\mu_t^{(n)}(f)$ and similarly $\mu_t^{\infty}(f)$.

First consider $\mu_t^{\infty}(f)$. Let $T_{0,1}$ be the first meeting time of the atoms at $0$ and $1$, which is distributed as a rate $\phi(1)$ exponential. Before time $T_{0,1}$, we have $\mu_t^{\infty}(f) = \frac{1}{3}$. After time $T_{0, 1}$, with probability $\frac{2}{3}$, the atom at $0$ wins and $\mu_t^{\infty}(f) = 0$. Otherwise, with probability $\frac{1}{3}$, the atom at $1$ wins and $\mu_t^{\infty}(f) = 1$. Thus, the distribution of $\mu_t^{\infty}(f)$ on $\R$ - supported at $\{0, \frac{1}{3}, 1 \}$ - is given by
$$
\exp(- \phi(1) t) \delta(\frac{1}{3}) + \frac{1}{3}(1 - \exp( - \phi(1) t)) \delta (1) + \frac{2}{3}(1 - \exp( - \phi(1) t)) \delta (0),
$$
where here $\delta(x)$ is the point mass at $x$. Most importantly, note that $\mu_t^{\infty}(f)$ has no support at $\frac{2}{3}$.

Next, consider the martingales $\mu^{(n)}_t(f)$. Along with $T_{0, 1}$, write $T_{0, x_n}$ and $T_{x_n, 1}$ for first meeting times between the other two pairs of atoms of $\mu^{(n)}$. We will show that for all $n$ and times $t > 0$, the distribution of $\mu^{(n)}_t(f)$ has non-zero (and bounded below) support at $\frac{2}{3}$.

One way the outcome $\mu^{(n)}_t(f) = \frac{2}{3}$ occurs is if first meeting before time $t$ is $T_{0,1}$, with the atom at $1$ winning, and then $T_{x_n, 1}$ not happening until after $t$. Using standard properties of independent exponential random variables, the chance of this is given by
\begin{align*}
\Prb( T_{0,1} &= \min(T_{0, 1}, T_{0, x_n}) \leq t \leq T_{x_n, 1}  \text{ and  1 wins } T_{0,1}) \\
&= \frac{1}{2} \Prb( T_{0,1} = \min(T_{0, 1}, T_{0, x_n})) \Prb(\min(T_{0, 1}, T_{0, x_n}) \leq t) \Prb( T_{x_n, 1} \geq t) \\
&= \frac{1}{2} \frac{\phi(1)}{\phi(1) + \phi(x_n)} \left(1 - \exp( -(\phi(1) + \phi(x_n))t) \right) \exp(- \phi(1 - x_n) t).
\end{align*}

By continuity $\phi(1 - x_n) \rightarrow \phi(1)$ and clearly $\exp(- \phi(x_n) t) \leq 1$ and so
$$
\liminf_n \Prb(\mu_t^{(n)}(f) = \frac{2}{3}) \geq \liminf_n \frac{1}{2} \frac{\phi(1)}{\phi(1) + \phi(x_n)}(1 - \exp( - \phi(1) t)) \exp( - \phi(1)t).
$$

Therefore, if for our sequence $x_n$ we have $\limsup_{n} \phi(x_n) \neq \infty$, then 
$$
\liminf_n \Prb(\mu^{(n)}_t(f) = \frac{2}{3}) > 0,$$ which implies that
$$
\mu^{(n)}_t(f) \nRightarrow \mu_t^{\infty}(f)
$$
as each of $\mu_t^{(n)}(f), n \geq 1$ and $\mu_t(f)$ have discrete finite support contained within $\{ 0, \frac{1}{3}, \frac{2}{3}, 1 \}$.

Note that this counter-example applies as long as there is any sequence of points $x_n \downarrow 0$ with $\lim_n \phi(x_n) \neq \infty$. Therefore, for the MC process to be Feller continuous the rate function $\phi(x)$ must satisfy
$$
\lim_{x \downarrow 0} \phi(x) = \infty.
$$

\subsection{Finite Support for Positive Times}
\label{sec:Finite Supp Ex}

In \Cref{sec:Finite Support} we showed that if our initial measure $\mu_0 \in P_{\cs}(S)$ then for all positive times $t > 0$, $\mu_t \in P_{fs}(S)$ and the process evolves as the Metric Coalescent. Our proof, via a comparison to Kingman's Coalescent, relies heavily on bounding the meeting rates of atoms away from zero using compactness. In the case of
$$
\inf_{x > 0} \phi(x) > 0,
$$
the same comparison to Kingman's Coalescent holds.

\begin{prop}{If $\inf_{x > 0} \phi(x) > 0$, then for any $\mu \in P(S)$, $\mu_t \in P_{\fs}(S)$ for all $t > 0$, almost surely.}
\end{prop}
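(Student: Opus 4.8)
The plan is to repeat the Kingman's Coalescent comparison of \Cref{sec:Finite Support} essentially verbatim, with the global lower bound $\gamma := \inf_{x > 0} \phi(x) > 0$ playing the role that $\phi_{\min}(C)$ played there. The whole point is that the hypothesis hands us a uniform lower bound on meeting rates \emph{without} any appeal to compactness, so the restriction of the count to a compact set $C$ can simply be dropped.

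Concretely, I would set $\gamma = \inf_{x>0}\phi(x)$ and define the total alive-token counts
$$
K_t = \sum_{k=1}^\infty 1(u_k(t) = k), \qquad K^N_t = \sum_{k=1}^N 1(u_k(t)=k),
$$
exactly as $K_t(C)$ and $K^N_t(C)$ in \Cref{sec:Finite Support} but without the indicator $1(\xi_k \in C)$. The key observation is that any two distinct alive tokens occupy distinct points of $S$: co-located tokens are merged instantaneously at time $0$ into the lowest among them, so at most one alive token sits at each site. Hence for distinct alive tokens $i, j$ we have $d(\xi_i, \xi_j) > 0$ and therefore $\nu_{ij} = \phi(d(\xi_i,\xi_j)) \geq \gamma$.

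With this in hand the estimate of \Cref{lem:ETNn Bound} goes through unchanged: letting $T^N_n = \inf\{t \geq 0 : K^N_t = n\}$, whenever $n$ tokens remain alive the next merger occurs at rate at least $\binom{n}{2}\gamma$, so $\E(T^N_{n-1} - T^N_n) \leq \frac{2}{\gamma n(n-1)}$ and telescoping gives $\E T^N_n \leq \frac{2}{\gamma n}$. Markov's inequality then yields $\Prb(K^N_t \geq n) = \Prb(T^N_n \geq t) \leq \frac{2}{\gamma t n}$, and since $K^N_t \uparrow K_t$ monotonically we obtain $\Prb(K_t \geq n) \leq \frac{2}{\gamma t n}$, which tends to $0$ as $n \to \infty$. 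Thus for each fixed $t > 0$ we have $K_t < \infty$ almost surely, i.e. $\mu_t$ has finitely many atoms. Finally, as in the passage from \Cref{prop:K finite for t} to \Cref{prop:K finite}, since $K_t$ is non-increasing in $t$, intersecting the almost-sure events $\{K_{1/i} < \infty\}$ over $i \geq 1$ upgrades this to the statement that almost surely $\mu_t \in P_{\fs}(S)$ for all $t > 0$.

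I do not expect a genuine obstacle here, since the argument is a near-verbatim specialization of the compact case. The only point that truly requires the hypothesis (rather than compactness) is the uniform lower bound $\nu_{ij} \geq \gamma$ on the meeting rates of distinct alive tokens; everything downstream is the same pure-death comparison to Kingman's Coalescent, and the subtlety to watch is merely confirming that distinct alive tokens are always at distinct locations so that this bound is applicable.
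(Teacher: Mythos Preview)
Your proposal is correct and is exactly the approach the paper indicates: the paper simply remarks that under the hypothesis $\inf_{x>0}\phi(x)>0$ ``the same comparison to Kingman's Coalescent holds'' and states the proposition without further detail, so your write-up is precisely the omitted argument, with $\gamma$ replacing $\phi_{\min}(C)$ and the compact-set indicator dropped.
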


Perhaps ideally this would still hold even without such an assumption, however as we'll see that isn't the case. For the rest of this section, we assume that instead
$$
\liminf_{x \rightarrow \infty} \phi(x) = 0.
$$
We'll then give an example of a countably supported initial measure $\mu$, satisfying any moment condition we'd like, which for all positive times $t > 0$ has non-zero probability of not being finitely supported.

\subsubsection{An Example}

For simplicity we'll work over $S = \R$ with the Euclidean metric and assume that $ \phi(x) \rightarrow 0$ as $x \rightarrow \infty$. We can then construct an initial measure $\mu$ dispersed enough so that $K(t)$, defined as the cardinality of the support of the infinite token process, has
$$
K(t) = \infty
$$
for some positive times $t > 0$.

Let $r_i \geq 0, i \geq 1$ be a -- to be specified -- decreasing sequence. We claim that we can then select a sequence of points $s_i, i \geq 1$ in $\R$ such that for each $i$ and $j \neq i$, we have
$$
\phi ( d (s_i, s_j) ) \leq r_i.
$$

We choose our initial measure $\mu$ to have countable support given by these points, i.e.
$$
\mu = \sum_{i = 1}^{\infty} m_i \delta(s_i),
$$
where
$$
\sum_{i = 1}^{\infty} m_i = 1.
$$
We make no assumptions whatsoever on the mass at each point $s_i$ other than $m_i > 0$.

Initially, as all points $s_i \in \supp \mu$ have positive mass, almost surely there is at least one token (actually infinitely many) with initial location $s_i$. At time $t = 0$ all the tokens at each point $s_i$ merge into the lowest there. Let $l(i)$ be the lowest token at $s_i$. Now the total rate $q_i$ at which $l(i)$ merges is clearly bounded by
\begin{align*}
q_i &\leq \sum_{j = 1}^{\infty} \phi(d(s_i, s_j)) \\
&\leq \sum_{j = 1}^{i - 1} r_i + \sum_{j = i + 1}^{\infty} r_j \\
&\leq (i - 1) r_i +  \sum_{j = i + 1}^{\infty} r_j,
\end{align*}
and is in fact much lower since only finitely many (in fact $l(i) - 1$) of these corresponding merges are actually possibly.

Therefore, the total rate $q$ at which the next meeting occurs is bounded by
\begin{align*}
q &\leq \sum_{i = 1}^{\infty} q_i \\
&\leq \sum_{i = 1}^{\infty} \left( (i - 1) r_i +  \sum_{j = i + 1}^{\infty} r_j \right) \\
&= \sum_{i = 1}^{\infty} 2(i - 1) r_i.
\end{align*}

Now we can clearly choose $r_i$ so that the total meeting rate $q$ is finite, for instance $r_i = \frac{1}{i^3}$. Therefore, choosing such an $r_i$, for all times $t > 0$ there is a positive (i.e. non-zero) probability that $K(t) = \infty$. In fact, we have shown something much stronger - that there is a positive probability of \textbf{no} meetings occurring before any time $t$.

Note something else important about this example: it depends only on the support of $\mu$, not in any way on the distribution of mass of $\mu$. Therefore we can choose the mass $m_i$ so that $\mu$ satisfies any moment/concentration condition, for instance
$$
\E \exp( d(\xi_1, \xi_2) t) < \infty
$$
for all $t$, where $\xi_1, \xi_2 \sim \mu$. 

\section{Further Directions}
\label{sec:Open Probs}

In this section we present some possible further directions of research on the Metric Coalescent process.

\subsection{Coming Down From Infinity}

Consider a fixed metric space $(S, d)$ and rate function $\phi$. Our primary interest in the extended MC process is from its connection to the finite MC process, and so its reasonable to consider for which initial measures this connection holds.

In \Cref{sec:Finite Support} we have seen that for compactly supported initial measures $\mu \in P_{\cs(S)}$ that for all $ t > 0$, $\mu_t$ is finitely supported almost surely. In \Cref{sec:Finite Supp Ex} we've seen that even for countably supported (but non-compact) measures, this need not be true.

In the world of $\Lambda$-Coalescents, a class including Kingman's Coalescent, under some mild assumptions there is a zero-one law stating that a coalescent has either finitely many blocks for all $t > 0$ almost surely, corresponding to
$$
\int_0^1 x^{-1} \Lambda(dx) < \infty,
$$
or in the opposite case infinitely many blocks for all $t > 0$ almost surely (Theorem 3.6, \cite{berestycki2009recent}). Here $\Lambda$ is the finite measure on $[0,1]$ generating the coalescent.

In the world of the Metric Coalescent and token process, the situation is a bit more complicated as there is a clear dependence between when partition blocks merge and their location. At any positive time $ t > 0$, it seems likely that the location of the remaining blocks are not independently distributed as $\mu$, since there is a clear bias for "far away" blocks to persist. Nevertheless, we conjecture that an analogous zero-one law holds.

\begin{open}{Is there an analogous zero-one law for the Metric Coalsecent, i.e., is it true that for any initial measure $\mu \in P(S)$, that one of the following holds:
\begin{enumerate}
\item Almost surely, for all $ t \geq 0 $, $\mu_t \notin P_{\fs}(S)$.
\item Almost surely, for all $t > 0$, $\mu_t \in P_{\fs}(S)$.
\end{enumerate}}
\end{open}

If such a zero-one law holds, a natural next step is to consider the class of \textbf{finite type} measures $P_{\finitetype}(S) \subset P(S)$ given by
$$
P_{\finitetype}(S) = \{ \mu \colon \text{ almost surely }, \forall t > 0,  \mu_t \in P_{\fs} \}.
$$
We have shown that $P_{\cs}(S) \subset P_{\finitetype}(S)$, but that not all countably supported measures are finite type.

\begin{open}{Characterize the class $P_{\finitetype}(S)$ of finite type measures.}
\end{open}

\subsection{Time Reversal}

A classical result about Kingman's Coalescent is its duality under a time reversal to a conditioned Yule process\cite{berestycki2009kingman}. Viewing the Metric Coalescent as a generalization of Kingman's model, it's natural then to consider what can be said about the time reversed Metric Coalescent process.

For a compactly supported initial measure $\mu \in P_{\cs}(S)$ we know (by \Cref{sec:Finite Support})  - writing $T_n$ for the first time that $\mu_t, t \geq 0$ has $n$ atoms - that $T_n < \infty$ almost surely for all $n$. Then, the time reversed process
$$
\mu_{T_1}, \mu_{T_2}, \ldots
$$
is a measure-valued branching process in $S$ with initial condition $\mu_{T_1} = \delta(\xi_1)$, i.e. a point mass at a location chosen from $\mu$. Now by \Cref{prop:continuous at 0} we know that
$$
\mu_{T_n} \rightarrow \mu
$$
weakly almost surely. This suggests that the time reversed process can be viewed as a branching process in $S$ conditioned to converge to $\mu$.

\begin{open}{Give an explicit description of the time reversed Metric Coalescent process.}
\end{open}

For a local description of the time reversal at $t = 0$, consider the MC on $S = [0, 1]^2$ started from an initial measure $\mu$ that's absolutely continuous with respect to Lebesgue measure. We can then view the MC process $\mu_t, t \geq 0$ as a point process on $S \times (0, \infty)$ with the point $(s, x)$ representing an atom $x \delta(s)$. Then, analogous to the case of the self-similar $t \rightarrow \infty$ asymptotics in the setting of the classical Smoluchowski coagulation equation \cite{aldous1999deterministic}\cite{fournier2005existence}, we make the following conjecture.

\begin{open}{For some scaling function $\psi(t) \uparrow \infty$ as $t \downarrow 0$, the point process in a shrinking window around some fixed $s_0$, rescaled by the map
$$
(s_0 + s, x) \rightarrow (s \psi(t), x \psi^2(t) )
$$
converges as $t \downarrow 0$ to a translation invariant Poisson Point process on $\R^2 \times (0, \infty)$ of some intensity $\delta(s_0, x)$ which is given by the solution of a certain equation.}
\end{open}

\bibliographystyle{plain}

\bibliography{research}

\end{document}